\newtheorem{thm}{Theorem}[section]
\newtheorem{theoremAlph}{Theorem}
\newtheorem{prop}[thm]{Proposition}
\theoremstyle{definition}
\newtheorem{lem}[thm]{Lemma}
\newtheorem{definition}[thm]{Definition}
\newtheorem{cor}[thm]{Corollary}
\newtheorem{rem}[thm]{Remark}
\newtheorem{claim}[thm]{Claim}
\newtheorem{ques}[thm]{Question} 
\newtheorem{conjecture}[thm]{Conjecture}
\newtheorem{problem}[thm]{Problem}
\newtheorem{convention}[thm]{Convention}
\newtheorem*{acknowledgement}{Acknowledgements}
\newcommand\hol{\mathrm{hol}}
\newcommand{\dist}{{\rm dist}}
\renewcommand{\Re}{\mrm{Re}}
\renewcommand{\Im}{\mrm{Im}}
\newcommand{\N}{\mathbb{N}}
\newcommand{\Z}{\mathbb{Z}}
\newcommand{\Q}{\mathbb{Q}}
\newcommand{\R}{\mathbb{R}}
\newcommand{\C}{\mathbb{C}}
\newcommand{\mc}{\mathcal}
\newcommand{\mrm}{\mathrm}
\renewcommand{\a}{\alpha}
\renewcommand{\b}{\beta}
\newcommand{\g}{\gamma}
\newcommand{\G}{\Gamma}
\renewcommand{\d}{\delta}
\newcommand{\e}{\varepsilon}
\renewcommand{\l}{\lambda}
\renewcommand{\L}{\Lambda}
\newcommand{\w}{\omega}
\newcommand{\s}{\sigma}
\newcommand{\vp}{\varphi}
\renewcommand{\t}{\tau}
\renewcommand{\th}{\theta}
\renewcommand{\k}{\kappa}
\newcommand{\set}[1]{\left\{#1\right\}}
\renewcommand{\r}{\rightarrow}
\newcommand{\norm}[1]{\left\lVert#1\right\rVert}
\newcommand{\Lcal}{\mc{L}}
\newcommand{\Kcal}{\mc{K}}
\newcommand{\Pcal}{\mc{P}}
\newcommand{\Wcal}{\mc{W}}
\newcommand{\Ecal}{\mc{E}}
\newcommand{\Hcal}{\mc{H}}
\newcommand{\Dcal}{\mc{D}}
\newcommand{\Mcal}{\mc{M}}
\newcommand{\Ncal}{\mc{N}}
\newcommand{\Rcal}{\mc{R}}
\newcommand{\Vcal}{\mc{V}}
\newcommand{\Ucal}{\mc{U}}
\newcommand{\Ccal}{\mc{C}}
\newcommand{\Qcal}{\mc{Q}}
\newcommand{\supp}{\mrm{supp}}
\newcommand{\dAGY}{\dist_{\mrm{AGY}}}
\newcommand{\Trem}{\mrm{Trem}}
\newcommand{\hV}{\widehat{\Vcal}}
\newcommand{\SL}{\mathrm{SL}_2(\R)}
\newcommand{\RP}{\R\mathbb{P}^{d}}
\newcommand{\KZ}[2]{\mrm{KZ}(#1,#2)}
\newcommand{\twist}{\mrm{Twist}}
\newcommand{\cylspace}{\mrm{Cyl}}
\newcommand{\T}{\mathbb{T}}
\newcommand{\HHm}{{\mathcal{H}_{\mathrm{m}}}}
\newcommand{\HHu}{\mathcal{H}_{\mrm{u}}}
\renewcommand{\P}{\mathbb{P}}
\newcommand{\tHHu}{\widetilde{\Hcal}_\mrm{u}}
\newcommand{\babs}{\b_{\mrm{abs}}}
\numberwithin{equation}{section}
   \def\MR#1{}
\title[Density of Mirzakhani's Twist Tori]{Veech Surfaces and Expanding Twist Tori on Moduli Spaces of Abelian Differentials}
\author{Jon Chaika}
\address{Department of Mathematics, University of Utah, Salt Lake City, UT.}
\email{chaika@math.utah.edu}
\author{Osama Khalil}
\address{Department of Mathematics, Statistics, and Computer Science, University of Illinois Chicago, IL.}
\email{okhalil@uic.edu}
\begin{document}

\begin{abstract}

Let $(M,\w)$ be a translation surface such that every leaf of its horizontal foliation is either closed, or joins two zeros of $\w$.
Then, $M$ decomposes as a union of horizontal Euclidean cylinders.  
The \textit{twist torus} of $(M,\w)$, denoted $\T(\w)$, consists of all translation surfaces obtained from $(M,\w)$ by applying the horocycle flow independently to each of these cylinders. 
Let $g_t$ be the Teichm\"uller geodesic flow.
We study the distribution of the expanding tori $g_t\cdot \T(\w)$ on moduli spaces of translation surfaces in cases where $(M,\w)$ is a \textit{Veech surface}.
We provide sufficient criteria for these tori to become dense within the conjectured limiting locus $\Mcal :=\overline{\SL\cdot \T(\w)}$ as $t\r\infty$.
We also provide criteria guaranteeing a uniform lower bound on the mass a given open set $U\subset\Mcal$ must receive with respect to any weak-$\ast$ limit of the uniform measures on $g_t\cdot \T(\w)$ as $t\r\infty$. 
In particular, \textit{all} such limits must be fully supported in $\Mcal$ in such cases.
Finally, we exhibit infinite families of well-known examples of Veech surfaces satisfying each of these results.
A key feature of our results in comparison to previous work is that they do not require passage to subsequences.

\end{abstract}

\maketitle

\tableofcontents
\section{Introduction}

This article studies the distribution of translates of certain tori in moduli spaces of translation surfaces under the action of the Teichm\"uller geodesic flow.
The tori we study arise from horizontal shearing deformations of a fixed horizontally periodic translation surface. 
Our main results provide sufficient criteria under which these translated tori become dense (Theorem~\ref{thm:density of tori}), and, in other cases, guaranteeing that all weak-$\ast$ limits of their uniform measures are fully supported within the conjectured locus (Theorem~\ref{thm:full support}).
Among the ingredients in the proof are a strengthening of Forni's full density convergence for expanding horocycle arcs (Theorem~\ref{thm:uniform Banach density one}), and a measure rigidity result for horocycle flow-invariant measures on projective bundles arising from locally constant cocycles over quotients of $\SL$ (Theorem~\ref{thm:A-invariance of flag distribution}).
These results are motivated by the problem of equidistribution of expanding horocycle arcs conjectured by Forni, as well as by the analogous  twist torus conjecture of Mirzakhani in the context of hyperbolic surfaces as we now describe.

\subsection{Twist tori in moduli spaces}

A \textit{translation surface} is a pair $(M,\w)$ of a Riemann surface $M$, equipped with a holomorphic $1$-form $\w$. 
A \textit{stratum} is a moduli space of translation surfaces, where the number and orders of the zeros of the $1$-form are fixed. Strata are equipped with an action of $\SL$, induced from its linear action on polygonal presentations of translation surfaces. 
Of interest in this article are actions of the subgroups
\begin{align}
\label{eq:A and U}
    A =\set{ g_t =\begin{pmatrix}
        e^t & 0 \\ 0 & e^{-t}
\end{pmatrix}: t\in\R},
    \qquad \text{and} \qquad 
     U=\set{u(s) = \begin{pmatrix}
        1 & s \\ 0 & 1
    \end{pmatrix}: s\in\R}
\end{align}
 of $\SL$ generating the Teichm\"uller geodesic and horocycle flows respectively.
We refer the reader to the surveys~\cite{Zorich-survey,Yoccoz-ClaySurvey,ForMat,AthreyaMasur-book} for background on these objects.

Fix a translation surface $(M,\w)$ , and let $\Sigma\subset M$ denote the zeros of $\w$. 
The (singular) foliation induced by the imaginary (resp. the real) part of $\w$ is called the \textit{horizontal} (resp. \textit{vertical}) \textit{foliation}.
Throughout this introduction, we assume $(M,\w)$ to be \textit{horizontally periodic}, i.e., every leaf of its horizontal foliation is either closed, or a \textit{saddle connection} (i.e., a leaf joining points in $\Sigma$).

A \textit{horizontal cylinder} is a connected component of $M\setminus \set{\text{horizontal saddle connections}}$, i.e., a maximal connected family of closed horizontal leaves on $M$.
Let $\set{C_i:1\leq i\leq n}$ denote the set of such cylinders.
 We can realize each of the flat surfaces $(C_i,\w |_{C_i})$ as the image of a Euclidean parallelogram $P_i\subset \C$, with two edges parallel to the real axis, under identification of the two other edges by horizontal translation. 
In these coordinates, $\w |_{C_i}$ is given by pullback of the canonical $1$-form $dz$ restricted to $P_i$.
The \textit{core curve} of $C_i$ is the homology class of one (and hence any) of the closed horizontal leaves contained in $C_i$.

The \textit{twist torus} of $(M,\w)$, henceforth denoted $\T(\w)$, is the set of all translation surfaces obtained from $(M,\w)$ by first applying some horocycle flow matrix $u_i\in U$ to each parallelogram $P_i$, re-gluing the non-horizontal edges of $u_i P_i$ by translation to get a new set of horizontal cylinders $u_i \cdot (C_i,\w |_{C_i})$, then re-gluing the new cylinders along their horizontal boundaries using the same pattern of identifications of the old cylinders; cf.~\textsection\ref{sec:twists} for precise definitions.

 Let $h_i=\int_{C_i} \Im(\w), w_i = \int_{C_i} \Re(\w),$ and $m_i=h_i/w_i$ denote the height, circumference, and modulus of $C_i$ respectively. 
 In particular, $u(m_i) \cdot (C_i,\w|_{C_i})$ is isometric to $(C_i,\w|_{C_i})$, 
 Hence, the surjective map $U^n\cong \R^n \rightarrow \T(\w)$, sending a tuple $(u_i)$ to the surface built from the polygons $(u_i P_i)$ as above, factors through the torus $\T^n\cong \R^n/\prod_{i=1}^n m_i\Z $.
 This parametrization also endows $\T(\w)$ with a Lebesgue probability measure, denoted $\mu_\T$, inherited from the uniform measure on $\T^n$.

\subsection{Motivating conjectures}
The tori $\T(\w)$ give rise to submanifolds within the \textit{unstable horospherical leaf} $W^u(\w)$ of $(M,\w)$ inside the locus $\Mcal := \overline{\SL\cdot\T(\w)}$.
Roughly, $W^u(\w)$ is a disk of maximal dimension around $\w$ in $\Mcal$ with the property that the diameter of $g_{-t} W^u(\w)$ tends to $0$ as $t\r\infty$; cf.~\textsection\ref{sec:(un)stable} for definitions.
The study of expanding translates of horospherical leaves under the action of (non-uniformly) hyperbolic flows has a long history, in part due to its connection to counting asymptotics of periodic orbits \cite{Margulis-thesis}, and is intimately tied to mixing properties of the flow; cf.~\cite{Smillie2WeissYgouf} and references therein for results in this direction in the context of Teichm\"uller dynamics.
In particular, it is known that pushforwards of suitable Lebesgue class measures on $W^u(\w)$ under $g_t$ equidistribute towards the unique $\SL$-invariant and ergodic, probability measure $\mu_\Mcal$ fully supported on\footnote{That $\Mcal$ is the support of an ergodic $\SL$-invariant measure is a consequence of the work of Eskin, Mirzakhani, and Mohammadi~\cite{EMM}; cf.~Lemma~\ref{lem:Mcal is ergodic}.} $\Mcal $ \cite[Theorem 1.4]{Smillie2WeissYgouf}, see also~\cite{LindenstraussMirzakhani,ForniDensity1,EMM-EffectiveSCC} for related results.

The problem of understanding the limiting distributions of the tori $g_t\cdot \T(\w)$ is far more delicate.
Indeed, in general, the tori $\T(\w)$ have positive codimension within the unstable leaf $W^u(\w)$, and, hence, the distribution of their pushforwards is not directly connected to mixing of $g_t$.
 Nonetheless, 
 it was shown by Forni in~\cite{ForniDensity1} that the work of Eskin, Mirzakhani, and Mohammadi~\cite{EM,EMM} implies that the measures $(g_t)_\ast\mu_\T$ converge towards $\mu_\Mcal$ a along a sequence of times $t\r\infty$ of full density in $\R_{\geq 0}$.
 Indeed, the tori $\T(\w)$ are foliated by $U$-orbit segments to which the aforementioned results apply.
In light of these results, it is natural to raise the following problem.

\begin{problem}
\label{prob:Mirzakhani}
In the above notation, do the measures $(g_t)_\ast\mu_\T$ converge in the weak-$\ast$ topology towards $\mu_\Mcal$ as $t\to\infty$ ? 
\end{problem}

Problem~\ref{prob:Mirzakhani} provides intermediate grounds towards the following well-known problem in Teichm\"uller dynamics regarding equidistribution of expanding horocycle arcs under the geodesic flow, which served as our primary motivation for studying Problem~\ref{prob:Mirzakhani}.

\begin{problem}[{\cite[Conjecture 1.4]{ForniDensity1}}]
\label{prob:horocycles}
    Let $q=(M',\w')$ be an arbitrary translation surface. Do the measures $\int_0^1 \d_{g_t u(s) q}\;ds$, supported on expanding horocycle arcs through $q$, converge to the unique $\SL$-invariant, ergodic, probability measure fully supported on $\overline{\SL\cdot q}$ as $t\to\infty$? Here, $\d_x$ denotes the Dirac mass at $x$. 
\end{problem}

Beyond its intrinsic interest, Problem~\ref{prob:horocycles} has many applications to counting problems in flat geometry~\cite{EskinMasur}.
Nonetheless, it is currently wide open outside of certain special settings~\cite{EskinMarklofMorris,EskinMasurSchmoll,BSW}. 
We refer the reader to~\cite[Conjecture 1.5]{LindenstraussMohammadiWang-effective} for a discussion of connections between Problem~\ref{prob:horocycles} and recent developments on its effective analogues in homogeneous dynamics.

Another motivation for studying Problem~\ref{prob:Mirzakhani} comes from \textit{Mirzakhani's twist torus conjecture} in the related context of moduli spaces of hyperbolic surfaces; cf.~\cite[Problem 13.2]{Wright-Survey}.
 To formulate this conjecture, fix a pants decomposition $\Pcal$ of a genus $g$ surface $S$. 
 For each $L>0$, there is a torus in the moduli space of hyperbolic structures on $S$, consisting of surfaces obtained by taking each cuff in $\Pcal$ to be length $L$ and performing all possible Dehn twists around those cuffs. The aforementioned conjecture of Mirzakhani predicts the limiting distributions of those tori as $L \to\infty$.

The connection between these two problems was made precise in the work of Calderon and Farre~\cite{CalderonFarre-TwistTorus}, who generalized Mirzakhani's work on measurable conjugacies between horocycle flows on the space of flat structures and earthquake flows on the space of hyperbolic structures; cf.~\cite{Mirzakhani-conjugacy,CalderonFarre-ContinuityMirzakhaniConjugacy}.
Roughly speaking, under this conjugacy, Mirzakhani's twist tori give rise to certain expanding flat twist tori of the form in Problem~\ref{prob:Mirzakhani}; cf.~\cite[Section 4.1]{CalderonFarre-TwistTorus} for the precise correspondence.
Moreover, and crucially, they show that this, apriori measurable conjugacy, maps weak-$\ast$ limits of uniform measures on certain sequences of hyperbolic twist tori to limits of the corresponding measures on flat tori.
Combined with the above full density results of~\cite{EMM,ForniDensity1}, they identified the limit of the hyperbolic twist tori along the corresponding sequence of $L\r\infty$.
In particular, it follows from their work that a complete answer to Mirzakhani's twist torus conjecture follows from an affirmative answer to Problem~\ref{prob:Mirzakhani}.

\subsection{Main results}

Our main results, Theorems~\ref{thm:density of tori} and~\ref{thm:full support}, provide partial progress on Problem~\ref{prob:Mirzakhani} by addressing its topological forms in certain cases where the twist torus contains a Veech surface.

\subsubsection{Density of twist tori}
    
    The \textit{Veech group} of a translation surface $(M,\w)$, denoted $\mrm{SL}(M,\w)$, is its stabilizer for the action of $\SL$ on the stratum containing $(M,\w)$.
    Recall that $(M,\w)$ is said to be a \textit{Veech surface} if $\mrm{SL}(M,\w)$ is a lattice in $\SL$, i.e., $\mrm{SL}(M,\w)$ is discrete with finite covolume.
    It is known by a result of Smillie that $(M,\w)$ is a Veech surface if and only if the orbit $\Vcal := \SL\cdot (M,\w)$ is closed \cite[pg 226]{Veech-SmillieThm}.
    We refer to such closed orbits as \textit{Veech curves} based on the fact that Veech initiated their modern study in dynamics.
    
    Given an $\SL$-orbit closure $\Mcal$  containing $(M,\w)$, we say that $(M,\w)$ is \textit{$\Mcal$-primitive} if there is no proper intermediate $\SL$-orbit closure between $\Vcal$ and $\Mcal$,  i.e., if $\Ncal$ is an $\SL$-orbit closure with $\Vcal \subseteq \Ncal\subseteq \Mcal$, then $\Ncal=\Vcal$,  or  $\Ncal=\Mcal$.

\begin{theoremAlph}\label{thm:density of tori}
    Let $(M,\w)$ be a horizontally periodic Veech surface, and let $\Mcal = \overline{\SL\cdot\T(\w)}$.
    Assume that $(M,\w)$ is $\Mcal$-primitive.
    Then, 
    the expanding tori $g_t\cdot \T(\w)$ become dense in $\Mcal$ as $t\r\infty$,
    i.e., for every $\e>0$ and compact set $\Kcal\subset \Mcal$, there is $t_0>0$ so that $\Kcal\cap g_t\cdot \T(\w)$ is $\e$-dense in $\Kcal$ for all $t\geq t_0$.
\end{theoremAlph}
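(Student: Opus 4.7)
The plan is to argue by contradiction. Suppose the density conclusion fails; then there exist $\e>0$, a compact set $\Kcal\subset\Mcal$, a subsequence $t_k\r\infty$, and points $x_k\in\Kcal$ with $B(x_k,\e)\cap g_{t_k}\T(\w)=\emptyset$. After passing to subsequences, $x_k\r x_\infty\in\Kcal$, and the measures $\mu_k := (g_{t_k})_\ast\mu_\T$ converge weakly-$\ast$ to a probability measure $\mu$ on $\Mcal$. Non-escape of mass along the subsequence is ensured by the uniform Banach density one statement (Theorem~\ref{thm:uniform Banach density one}), which prevents the $\mu_k$ from losing mass to the cusp along any sequence of times. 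In particular $\mu(B(x_\infty,\e/2))=0$, so $\mu\neq\mu_\Mcal$, and the theorem reduces to proving that every weak-$\ast$ limit of the family $\set{(g_t)_\ast\mu_\T : t\geq 0}$ equals $\mu_\Mcal$.

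The measure $\mu_\T$ is translation-invariant on $\T^n$, and the subgroup $U$ acts on $\T(\w)$ via the diagonal translation; combined with the conjugation relation $u(s)g_t=g_tu(e^{-2t}s)$, this shows each $\mu_k$, and hence the limit $\mu$, is $U$-invariant. The torus $\T(\w)$ splits naturally into a one-dimensional ``diagonal'' $U$-direction, tangent to the Veech curve $\Vcal=\SL\cdot(M,\w)$, together with $n-1$ transverse twist directions. The transverse directions are encoded by cohomology classes spanned by the cylinder core curves modulo the diagonal, and they assemble into a locally constant real cocycle $V\r\Vcal$ coming from the action of the Veech group on this subspace of $H^1(M,\R)$.

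The key step is to lift $\mu$ to the projective bundle $\P(V)\r\Vcal$ by recording the limiting ``direction'' of the collapsing torus fibers under $g_{t_k}$. The resulting measure $\widehat\mu$ is a $U$-invariant probability on $\P(V)$, for which the measure rigidity result (Theorem~\ref{thm:A-invariance of flag distribution}) applies. That theorem forces $\widehat\mu$ either to be $A$-invariant, or to concentrate on the projectivization of a proper $\Vcal$-equivariant subbundle of $V$. In the latter case, such a subbundle descends, via the cylinder deformation / tangent space classification underlying~\cite{EMM}, to a proper $\SL$-invariant submanifold strictly between $\Vcal$ and $\Mcal$, contradicting $\Mcal$-primitivity. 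Thus $\widehat\mu$ is $A$-invariant, whence $\mu$ itself is $A$-invariant and therefore $\SL$-invariant; by the ergodicity of $\mu_\Mcal$ and the fact that $\mu_\Mcal$ is the unique ergodic $\SL$-invariant probability measure with full support on $\Mcal$, we conclude $\mu=\mu_\Mcal$, contradicting $\mu(B(x_\infty,\e/2))=0$.

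The hardest step is expected to be the bundle reduction that sets up the application of Theorem~\ref{thm:A-invariance of flag distribution}: identifying the correct locally constant cocycle over $\Vcal$ that captures the directional behavior of the collapsing torus fibers, verifying that the pushforward of $\mu$ to $\P(V)$ is well-defined and falls within the hypotheses of the rigidity theorem (in particular that no mass concentrates on degenerate strata where the projection fails to make sense), and translating the dichotomy produced there back to $\Mcal$-primitivity in a form strong enough to upgrade $U$-invariance to full $\SL$-invariance. The non-divergence estimates underlying Theorem~\ref{thm:uniform Banach density one}, together with the quantitative equidistribution of the closed horocycle $U\cdot(M,\w)\subset\Vcal$ under $g_t$, are expected to be essential in controlling this projection.
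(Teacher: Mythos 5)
Your proposal attempts to prove much more than Theorem~\ref{thm:density of tori} — namely that \emph{every} weak-$\ast$ limit of $(g_t)_\ast\mu_\T$ equals $\mu_\Mcal$ — and this is precisely what the paper cannot and does not prove. That statement is Problem~\ref{prob:Mirzakhani} (Mirzakhani's twist-torus conjecture), and the paper explicitly remarks that even upgrading Theorem~\ref{thm:density of tori} from density to \emph{full support} of weak-$\ast$ limits ``seems to require significant new ideas." The paper's route to density avoids this by never attempting to identify the limiting measure: it only produces, for each large $t$, one point of $g_t\cdot\T(\w)$ near each target ball. Concretely, the key matching proposition (Proposition~\ref{prop:key matching}) shows that an $\e$-neighborhood of $g_t\cdot\T(\w)$ contains a long, almost-full piece of a $P$-orbit through a tremor based near $\Vcal$, and Theorem~\ref{thm:uniform Banach density one} is then used to guarantee that this $P$-orbit segment visits the target ball. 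The $\Mcal$-primitivity hypothesis enters only in Lemma~\ref{lem:dist to exceptional orbit closures}, to keep the tremored basepoints in $\Rcal_{\d_0}$ uniformly away from the exceptional orbit closures $\Ncal_i$ that appear in Theorem~\ref{thm:uniform Banach density one}.

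Beyond the overreach of the overall strategy, the individual steps contain real gaps. First, the ``lift'' of $\mu$ to $\P\hV$ is not well-defined: $\mu$ is a measure on $\Mcal$, while $\P\hV$ is a bundle over the much smaller closed orbit $\Vcal$; there is no natural map from $\Mcal$ to $\P\hV$, and ``recording the limiting direction of the collapsing fibers'' does not produce one, since the torus fibers stretch rather than collapse under $g_t$ and generic points of $\Mcal$ are nowhere near $\Vcal$. The object the paper actually feeds into Theorem~\ref{thm:A-invariance of flag distribution} is the push of the horocycle arc through $\w$ \emph{together with} the parallel-transported tremor direction $\b(t,s)$, which lives canonically in $\P\hV$ over $\Vcal$ — a different and much more tractable measure. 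Second, Theorem~\ref{thm:A-invariance of flag distribution} contains no dichotomy: under the boundedness hypothesis~\eqref{eq:bounded cocycle} it delivers $A$-invariance unconditionally, not ``$A$-invariant or concentrated on a proper subbundle,'' so the place where you propose to invoke $\Mcal$-primitivity does not exist. Third, even granting $A$-invariance of some lift, the deduction that $\mu$ itself is $A$-invariant is unjustified, and the further upgrade from $A$-invariance to $\SL$-invariance requires the entropy argument of Proposition~\ref{prop:SL-invariance}, which the paper carries out only under restrictive hypotheses (bounded image, Appendix~\ref{sec:appendix}) and which would generally fail for the Kontsevich--Zorich cocycle on $\cylspace^0(\w)$.

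A useful takeaway: the contrast between your proposal and the paper's argument is exactly the distance between Problem~\ref{prob:Mirzakhani} and Theorem~\ref{thm:density of tori}. Your instinct that Theorem~\ref{thm:A-invariance of flag distribution} and non-divergence should both appear is correct, but they appear upstream on $\Vcal$ (to prove the matching proposition), and the uniformity in $L_0$ over the compact set $F$ in Theorem~\ref{thm:uniform Banach density one} — rather than any identification of the limit measure — is what converts the matching into density.
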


We note that, if $(M,\w)$ is square-tiled\footnote{Recall that $(M,\w)$ is said to be \textit{square-tiled} if its Veech group is commensurable with $\mrm{SL}_2(\Z)$, or equivalently, if $M$ is a finite-sheeted translation cover of a flat torus branched over one point~\cite{Veech-TeichEisenstein,GutkinJudge}.}, 
then it can be shown that the torus $\T(\w)$ has a dense subset of square-tiled Veech surfaces. 
Moreover, $\T(\w)$ meets the closed $\SL$-orbit of each of these surfaces in a periodic horocycle. In this case, Theorem~\ref{thm:density of tori} is an immediate consequence of the work of Eskin, Mirzakhani, and Mohammadi, and the fact that these periodic horocycles become dense within their respective closed $\SL$-orbits.

On the other hand, it follows from the finiteness results of~\cite{EskinFilipWright} that, if the Veech group of a Veech surface $(M,\w)$ has trace field of degree $\geq 3$, then $\T(\w)$ intersects at most finitely many Veech curves; cf.~Proposition~\ref{prop:dense examples}.  
In Section~\ref{sec:eg density}, we use this criterion to provide an infinite family of examples satisfying Theorem~\ref{thm:density of tori}, but where the above direct argument for square-tiled surfaces is not available. 
These examples are obtained by gluing parallel sides of regular $2n$-gons by translations, for $n>5$.
That these examples are $\Mcal$-primitive
is a consequence of strong results on orbit closures in hyperelliptic components of strata~\cite{McMullen-ClassificationGenusTwo,Ca,Apisa-HighRankInHyp,Apisa-Rk1InHyp}; cf.~Proposition~\ref{prop:dense examples}.

\begin{rem}
\label{rem:generalizations}
    We note some generalizations of Theorem~\ref{thm:density of tori} that follow from our arguments:
    \begin{enumerate}
        \item \textbf{The Decagon.}
        The proof of Theorem~\ref{thm:density of tori} proceeds by analyzing small pieces of $\T(\w)$ locally near the Veech curve $\Vcal=\SL\cdot (M,\w)$, which are well-approximated by their linearizations.
        The $\Mcal$-primitivity hypothesis is used to rule out that such linearizations collapse on tangent spaces of intermediate orbit closures under the action of $g_t$; cf.~Lemma~\ref{lem:dist to exceptional orbit closures}. 
        In some cases, this collapse can be ruled out in absence of $\Mcal$-primitivity using information on the position of these intermediate tangent spaces relative to the Lyapunov spaces of the derivative of $g_t$.
        To highlight this flexibility in our methods, we show in Appendix \ref{sec:decagon} that the conclusion of Theorem \ref{thm:density of tori} continues to hold for the \textit{decagon surface}, even though it is not $\mathcal{M}$-primitive.
        Note that the decagon is the unique Veech surface within its twist torus, up to the action of $U\subset \SL$; cf. Proposition~\ref{prop:dense examples}\eqref{item:2n-gon finite}.
        
        \item \textbf{Proper sub-tori.} The analogue of Theorem~\ref{thm:density of tori} holds if the full torus $\T(\w)$ is replaced with a proper, $U$-invariant, sub-torus $\T'\leqslant \T(\w)$ containing the periodic horocycle through $\w$, and $\Mcal$ is replaced with $\Mcal':=\overline{\SL\cdot \T'}$, under the hypothesis that $(M,\w)$ is $\Mcal'$-primitive.
        
    \end{enumerate}
\end{rem}

\subsubsection{Full support of limiting measures}

Our next result provides criteria under which \textit{all} limit measures of the expanding tori $g_t\cdot \T(\w)$ as $t\r\infty$ are fully supported in the locus $\overline{\SL\cdot \T(\w)}$.
Note that the property of weak-$\ast$ limits having full support is much stronger than density of $g_t\cdot \T(\w)$ as $t\r\infty$, and is new in all cases considered, even when $\T(\w)$ contains a dense set of Veech surfaces.

 To formulate the result, let $\Sigma(\w)\subset M$ denote the finite set of zeros of $\w$.
 Recall that $(M,\w)$ admits an atlas of charts to $\C$ where transition maps are given by translations, and in which $\w$ is the pullback of the canonical $1$-form $dz$.
 Let $\mrm{Aff}^+(M,\w)$ denote the group of orientation preserving homeomorphisms of $M$ that fix $\Sigma(\w)$, and are given by affine maps in these charts. 

Denote by $\cylspace(\w)\subseteq H^1(M,\Sigma(\w);\R)$ the smallest $\mrm{Aff}^+(M,\w)$-invariant subspace containing all dual classes to core curves of horizontal cylinders of $(M,\w)$.
In particular, $\cylspace(\w)$ contains the \textit{tautological plane} of $\w$ spanned by its real and imaginary parts. The tautological plane is invariant by $\mrm{Aff}^+(M,\w)$, and admits an $\mrm{Aff}^+(M,\w)$-invariant complement inside $\cylspace(\w)$; cf.~\textsection\ref{sec:balanced spaces} for the precise construction. We denote this invariant complement by $\cylspace^0(\w)$.

\begin{theoremAlph}
    \label{thm:full support}
        Let $(M,\w)$ be a horizontally periodic Veech surface, and let $\Mcal = \overline{\SL\cdot\T(\w)}$.
    Assume that 
    \begin{equation}\label{eq:pA=id hypothesis}
        \text{a pseudo-Anosov element of } \mrm{Aff}^+(M,\w) \text{ acts as the identity matrix on } \cylspace^0(\w).
    \end{equation}
    Then, every weak-$\ast$ limit of the measures $(g_t)_\ast\mu_\T$ as $t\r\infty$ has full support in $\Mcal$, where $\mu_\T$ is any fully supported Lebesgue probability measure on $\T(\w)$.
    More precisely, for every non-empty open set $V\subseteq \Mcal$, there is $\e>0$ and $t_0>0$ so that $(g_t)_\ast\mu_\T(V)>\e$ for all $t\geq t_0$.
\end{theoremAlph}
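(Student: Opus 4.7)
The plan is to argue by contradiction. Assume there exist a nonempty open $V \subseteq \Mcal$, a sequence $t_k \to \infty$, and a weak-$\ast$ subsequential limit $\nu = \lim_k (g_{t_k})_\ast \mu_\T$ with $\nu(V) = 0$; the quantitative uniform lower bound will then follow by a standard Banach--Alaoglu compactness argument. Since $\mu_\T$ is $U$-invariant (the horocycle flow acts diagonally on all cylinder twist parameters) and $g_t$ normalizes $U$, every $(g_t)_\ast \mu_\T$ is $U$-invariant, and hence so is $\nu$. Disintegrate $\mu_\T = \int m_\tau \, d\lambda(\tau)$ over the transversal torus $\T(\w)/U$, with $m_\tau$ the uniform probability on the periodic $U$-orbit through $\tau$; then $(g_t)_\ast m_\tau$ is an expanding horocycle arc through $g_t \tau$.

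The first step uses Theorem~\ref{thm:uniform Banach density one}. For $\lambda$-a.e.\ $\tau$ we have $\overline{\SL\cdot\tau} = \Mcal$: by~\cite{EMM} the set of $\tau$ lying on some proper affine invariant submanifold is a countable union, each of whose members meets $\T(\w)$ in a Lebesgue-null set (an intermediate orbit closure containing a positive Lebesgue portion of $\T(\w)$ would, by affine invariance in period coordinates, be forced to contain all of $\overline{\SL\cdot\T(\w)} = \Mcal$). Applying Theorem~\ref{thm:uniform Banach density one} to each typical $\tau$ produces a set $T \subseteq \R_{\geq 0}$ of \emph{uniform Banach density one}, independent of $\tau$, such that $(g_t)_\ast \mu_\T \to \mu_\Mcal$ weakly along $t \in T$. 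If any sub-subsequence of $(t_k)$ lay in $T$ we would obtain $\nu = \mu_\Mcal$, whose support is all of $\Mcal$, contradicting $\nu(V) = 0$.

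It therefore remains to handle the case $t_k \notin T$ for all large $k$, which is where Theorem~\ref{thm:A-invariance of flag distribution} enters. In period coordinates near $\Vcal = \SL\cdot(M,\w)$, the expanding tori $g_t\cdot\T(\w)$ linearize to fibers of the flat bundle modelled on $\cylspace(\w)\to\Vcal$ whose monodromy is the Kontsevich--Zorich cocycle; after quotienting by the tautological plane (which is absorbed into motions within $\Vcal$), the asymptotic directions in which the $g_{t_k}\cdot\T(\w)$ accumulate are recorded in the projective bundle $\P\cylspace^0(\w)\to\Vcal$. Constructing a natural $U$-invariant lift $\widehat\nu$ of $\nu$ to this projective bundle by tracking those limiting tangent directions, Theorem~\ref{thm:A-invariance of flag distribution} forces $\widehat\nu$ to be $A$-invariant. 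Hypothesis~\eqref{eq:pA=id hypothesis} trivializes the monodromy of the bundle over a pseudo-Anosov periodic $A$-orbit in $\Vcal$, and combining $A$- and $U$-invariance with this trivialization rules out concentration of $\widehat\nu$ on any proper fibrewise subvariety. Pushing $\widehat\nu$ back down to $\Mcal$ recovers $\nu$ with full support, the desired contradiction.

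The main obstacle is this final step: making the linearization of $g_t\cdot\T(\w)$ rigorous, constructing the $U$-invariant lift $\widehat\nu$ on $\P\cylspace^0(\w)$ so that Theorem~\ref{thm:A-invariance of flag distribution} is applicable, and translating the resulting $A$-invariance, via~\eqref{eq:pA=id hypothesis}, into full support of $\nu$ in $\Mcal$. The role of the pseudo-Anosov hypothesis is precisely to eliminate nontrivial $A$-invariant sections of the projective bundle on which a weak-$\ast$ limit of $(g_t)_\ast\mu_\T$ could otherwise concentrate.
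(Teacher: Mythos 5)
Your proposal diverges fundamentally from the paper's argument, and the divergence is not just a different route to the same destination --- it contains a genuine gap. The paper explicitly notes (right after Theorem~\ref{thm:A-invariance of flag distribution}) that that theorem ``is not needed in the proof of Theorem~\ref{thm:full support}''; this should already be a warning sign, since you build your entire handling of the ``bad times'' $t_k\notin T$ on it.

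The central problem is your final step: constructing a $U$-invariant lift $\widehat\nu$ on $\P\cylspace^0(\w)\to\Vcal$, obtaining $A$-invariance via Theorem~\ref{thm:A-invariance of flag distribution}, and then ``pushing $\widehat\nu$ back down to $\Mcal$'' to conclude full support. The lift $\widehat\nu$ can only record the asymptotic tangent directions of $g_{t_k}\T(\w)$ \emph{at points near $\Vcal$}, which is an $\Ocal(\delta)$-neighborhood of a $3$-dimensional submanifold. A weak-$\ast$ limit $\nu$ assigns only a vanishing fraction of its mass to such a neighborhood as $\delta\to 0$ (since $\mu_\Mcal(\Vcal)=0$), so $\widehat\nu$ tells you nothing about $\nu$ on the bulk of $\Mcal$. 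There is no step in your argument that propagates information from near $\Vcal$ to a general open set $V\subset\Mcal$. Moreover, your claim that the pseudo-Anosov trivialization ``rules out concentration of $\widehat\nu$ on any proper fibrewise subvariety'' is unsubstantiated and, if anything, backwards: trivial monodromy over a periodic geodesic makes \emph{every} direction invariant, so it cannot preclude concentration by itself.

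What the pseudo-Anosov hypothesis actually does in the paper (Proposition~\ref{prop:match on Vcal trivial monodromy}) is very different from what $A$-invariance of $\widehat\nu$ gives. Theorem~\ref{thm:A-invariance of flag distribution} yields only asymptotic \emph{projective proximity} of tremor directions at nearby times, which is the input to the density theorem (Theorem~\ref{thm:density of tori}) and gives density, not full support. The full-support proof instead uses that a pseudo-Anosov acts as the \emph{identity matrix} on $\cylspace^0(\w)$ to obtain exact \emph{equality} of Kontsevich--Zorich cocycle matrices at matched points on an expanded horocycle: $\mrm{KZ}(g_t,u(s)\w) = \mrm{KZ}(g_{t-\ell},u(\vp(s))\w)$. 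This equality produces a weak-stable matching of whole pieces of twist tori (Lemma~\ref{lem:stable paths}) that \emph{survives the geodesic flow for all future times}, not just asymptotically. This is then packaged in Proposition~\ref{prop:match tori trivial monodromy}, which shows that $(g_t)_\ast\lambda$ (for a fixed absolutely continuous $\lambda$ with a positive Radon--Nikodym lower bound) stays uniformly close to the arithmetic-progression average $\frac{1}{N}\sum_{k=1}^N (g_{t+k\ell_0})_\ast\mu_{\T_\kappa}$ for all large $t$, where $\ell_0$ is the pseudo-Anosov period. Combining this with the Banach-density equidistribution (via Proposition~\ref{prop:Banach applied to torus slivers}), one gets a lower bound for $(g_t)_\ast\mu_\T(V)$ at \emph{every} large $t$, with no exceptional times to handle separately. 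Your proposal has neither the exact cocycle matching nor the arithmetic-progression averaging, and the $A$-invariance substitute cannot close the gap.
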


\begin{rem}\begin{enumerate}
    \item Note that we do not require $(M,\w)$ to be $\Mcal$-primitive in Theorem~\ref{thm:full support}.    

    \item Hypothesis~\eqref{eq:pA=id hypothesis} holds whenever a pseudo-Anosov element acts as the identity matrix on the entire complement of the tautological plane with respect to the intersection form; cf.~\textsection\ref{sec:balanced spaces}.
\end{enumerate}
\end{rem}

In \textsection\ref{sec:eg full support}, we recall results of Matheus and Yoccoz in~\cite{MatYoc} implying that the infinite family of square-tiled surfaces constructed in \textit{loc.~cit.} satisfy
Hypothesis~\eqref{eq:pA=id hypothesis}; cf.~\cite[Section 2]{ApisaWright-EW} for a recent generalization of this construction.
These examples include the well-known  \textit{Eierlegende-Wollmilchsau} surface first studied in~\cite{Forni-EW,HS-EW}, and the \textit{Ornithorynque} studied in~\cite{ForniMatheus-Orni}. In the latter two examples, the subgroup acting trivially on the complement of the tautological plane in fact has finite index in the affine group, while in all the other examples in the Matheus-Yoccoz family, such subgroup has infinite index.

The proof of Theorem~\ref{thm:full support} suggests that Problem~\ref{prob:Mirzakhani} admits an affirmative answer for Veech surfaces satisfying~\eqref{eq:pA=id hypothesis}; see \textsection\ref{sec:outline} for a sketch of the proof of Theorem~\ref{thm:full support}.
In particular, we suspect the following conjecture may be approachable with further refinements of our methods.
\begin{conjecture}
    Let $(M,\w), \mu_\T,$ and $\Mcal$ be as in Theorem~\ref{thm:full support}, and let $\mu_\Mcal $ be the unique $\SL$-ergodic probability measure fully supported on $\Mcal$. Then, $(g_t)_\ast\mu_\T$ converges to $\mu_\Mcal$ as $t\r\infty$.
\end{conjecture}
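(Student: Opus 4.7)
The strategy is to show that every weak-$\ast$ subsequential limit $\mu_\infty$ of $\{(g_t)_\ast\mu_\T\}_{t\geq 0}$ equals $\mu_\Mcal$; together with precompactness of probability measures on $\Mcal$, this forces convergence of the whole family. Since the torus parametrization of $\T(\w)$ identifies $U$ with a one-parameter subgroup of translations, $\mu_\T$ is $U$-invariant, and because $g_t U g_{-t} = U$, each $(g_t)_\ast\mu_\T$ is also $U$-invariant. Standard non-divergence for the $U$-action on strata, applied along the $U$-orbits foliating $\T(\w)$, rules out escape of mass, so $\mu_\infty$ is a $U$-invariant probability measure supported in $\Mcal$. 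The Eskin--Mirzakhani--Mohammadi measure classification, combined with the countability of affine invariant submanifolds of strata from Eskin--Filip--Wright, then decomposes
\begin{equation*}
    \mu_\infty \;=\; \alpha\, \mu_\Mcal \;+\; \sum_{\Ncal \subsetneq \Mcal}\, \alpha_\Ncal\, \mu_\Ncal, \qquad \alpha + \sum_\Ncal \alpha_\Ncal = 1,
\end{equation*}
where $\Ncal$ ranges over proper $\SL$-orbit closures contained in $\Mcal$.

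The task reduces to showing $\alpha_\Ncal = 0$ for every such proper $\Ncal$; since $\mu_\Ncal$ is a probability measure supported on $\Ncal$, it suffices to prove $\mu_\infty(\Ncal)=0$. The plan is to establish a uniform tubular mass bound: for every proper orbit closure $\Ncal \subsetneq \Mcal$ and every $\delta > 0$,
\begin{equation*}
    \limsup_{t\to\infty}\,(g_t)_\ast \mu_\T\!\left( \mrm{Nbhd}_\delta(\Ncal)\right) \;\leq\; C_\Ncal\, \delta^{c_\Ncal},
\end{equation*}
with exponent $c_\Ncal > 0$ and constant $C_\Ncal > 0$. Applying the open-set portmanteau inequality $\mu_\infty(\mrm{Nbhd}_\delta(\Ncal)) \leq \liminf_t (g_t)_\ast\mu_\T(\mrm{Nbhd}_\delta(\Ncal))$ and letting $\delta\to 0$ forces $\mu_\infty(\Ncal) = 0$, hence $\alpha_\Ncal = 0$ and so $\alpha = 1$.

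To prove the tubular decay, I would linearize $\T(\w)$ near the Veech curve $\Vcal$ inside the cylinder deformation subspace $\cylspace(\w) \subseteq H^1(M,\Sigma;\R)$, following the strategy of the proofs of Theorems~\ref{thm:density of tori} and~\ref{thm:full support}. On this space, $g_t$ acts via the Kontsevich--Zorich cocycle, and a point in $g_t\cdot\T(\w)$ lies in $\mrm{Nbhd}_\delta(\Ncal)$ only if its linearized twist parameter has been driven by $g_t$ into a $\delta$-tube around the tangent space $T\Ncal \cap \cylspace(\w)$. Hypothesis~\eqref{eq:pA=id hypothesis} rules out any proper $\mrm{Aff}^+(M,\w)$-invariant subspace of $\cylspace(\w)$ containing all of $\cylspace^0(\w)$, so some direction transverse to $T\Ncal$ inside $\cylspace(\w)$ is uniformly expanded by the pseudo-Anosov element, and a Chebyshev/covering argument on the torus $\T^n$ converts this expansion into the desired power-law decay in $\delta$. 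The main obstacle is making this expansion uniform in $t$ without passing to subsequences: this is precisely where Theorem~\ref{thm:A-invariance of flag distribution}, applied on the projectivization $\P(\cylspace(\w)/T\Ncal)$, supplies the qualitative $A$-invariance needed to control the orientation of the linearized torus, but upgrading that $A$-invariance to a quantitative spectral gap and coupling it with quantitative non-divergence of long $U$-orbits is the essential new technical input still required for the conjecture.
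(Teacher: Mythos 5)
This statement is labeled a \emph{Conjecture} in the paper, and the paper offers no proof of it: the authors explicitly say only that they ``suspect'' it ``may be approachable with further refinements of our methods.'' So your proposal cannot be compared against a proof in the paper — there is none — and the right question is whether your outline is on solid ground.

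It is not, and the gap is more fundamental than the one you flag at the end. After producing a $U$-invariant weak-$\ast$ limit $\mu_\infty$, you invoke the Eskin--Mirzakhani--Mohammadi classification to write $\mu_\infty$ as a convex combination of the $\SL$-invariant measures $\mu_\Ncal$. But EMM classify $P=AU$-invariant ergodic measures, not $U$-invariant ones; mere $U$-invariance of $\mu_\infty$ does not give you that decomposition. Indeed, the paper itself (in the ``Further open questions'' section) recalls that Calta and Smillie--Weiss have constructed $U$-ergodic measures on strata that are \emph{not} $\SL$-invariant, so the space of $U$-invariant measures is genuinely larger than the convex hull of the $\mu_\Ncal$. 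The missing ingredient is $A$-invariance of $\mu_\infty$, and establishing it is precisely the content of the open Problems~\ref{prob:Mirzakhani} and~\ref{prob:horocycles}. Theorem~\ref{thm:A-invariance of flag distribution} gives $A$-invariance only of limit measures on the skew product $\P\hV$ over the Veech curve $\Vcal$ — a linearized, closed-orbit model — not of limit measures on $\Mcal$ itself, and the paper's own arguments never upgrade one to the other.

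Two smaller misstatements in the tubular-decay paragraph are also worth flagging. Hypothesis~\eqref{eq:pA=id hypothesis} asserts that some pseudo-Anosov acts as the \emph{identity} on $\cylspace^0(\w)$; it does not rule out proper $\mrm{Aff}^+(M,\w)$-invariant subspaces of $\cylspace^0(\w)$, and it certainly does not supply a direction that the pseudo-Anosov ``uniformly expands'' — by hypothesis, it fixes all of $\cylspace^0(\w)$. In the paper's argument the expansion of twist directions comes from the explicit scalar $e^t$ in $\b(t,s)=e^t\cdot\KZ{g_t}{u(s)\w}\cdot\b$, while the hypothesis is used to force exact equality of Kontsevich--Zorich matrices at matched points (Proposition~\ref{prop:match on Vcal trivial monodromy}\eqref{item:equal matrices}), which is a qualitatively different mechanism from the ``spectral gap'' you postulate. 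So even the ingredient you correctly identify as missing (a quantitative version of $A$-invariance) would sit on top of a linearization picture that, as written, misdescribes how the hypothesis is actually used.
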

On the other hand, even strengthening the conclusion of Theorem~\ref{thm:density of tori} from density to full support of limit measures seems to require significant new ideas.

\subsection{Convergence along full Banach density of times}

Among the ingredients in the proof of Theorems~\ref{thm:density of tori} and~\ref{thm:full support} is Theorem~\ref{thm:uniform Banach density one} below, which is an equidistribution result for $g_t$ pushes of horocycle arcs along a full \textit{Banach density} set of times $t$. 
This result is deduced from the fundamental results of~\cite{EMM}, and strengthens an analogous statement obtained by Forni in~\cite{ForniDensity1}, where convergence along \textit{full density} sequences of times was established by more abstract arguments.
We note that our proof of Theorem~\ref{thm:uniform Banach density one} uses the full strength of the results of \cite{EMM}, and, in particular, Theorem \ref{thm:uniform Banach density one} does not hold in the generality of the results of \cite{ForniDensity1}.

\begin{thm}\label{thm:uniform Banach density one}

Let $\Mcal$ be an $\SL$-orbit closure, 
$\epsilon>0$, and $f\in C_c(\Mcal)$.
Then, there exist $L_0>0$ and proper $\SL$-orbit closures $\mathcal{N}_1, \dots, \mathcal{N}_k$ in $\Mcal$ such that for any compact set $F\subset \Mcal\setminus \cup_{i=1}^k \mathcal{N}_i$, we can find $S_0\geq 0$, so that for all $L\geq L_0$,  $S\geq S_0$ and $x\in F$, we have
\begin{equation}
\# \set{ \ell \in [S,S+L]\cap \mathbb{N}: \left|\int_0^1 f(g_\ell u(s) x)\;ds - \int f\;d\mu_{\Mcal}\right| <\epsilon }  >(1-\epsilon)L.
\end{equation}
Here, $\mu_{\Mcal}$ is the unique $\SL$-invariant Borel probability measure whose support is $\Mcal$.
\end{thm}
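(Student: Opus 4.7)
The plan is to argue by contradiction, leveraging the $U$-ergodicity of $\mu_{\Mcal}$ together with EMM's uniform equidistribution of horocyclic arcs pushed by the Teichm\"uller geodesic flow. Suppose the conclusion fails. Then, after selecting a suitable finite family of proper orbit closures $\Ncal_1, \ldots, \Ncal_k$ via EMM (specified below), we obtain sequences $x_n \in F$, with $F \subset \Mcal \setminus \cup_i \Ncal_i$ compact, $S_n \to \infty$, $L_n \to \infty$, and bad sets $B_n := \set{ \ell \in [S_n, S_n+L_n]\cap \N : |\psi_\ell(x_n) - c| \geq \epsilon }$ of size $|B_n| \geq \epsilon L_n$, where $\psi_\ell(x) := \int_0^1 f(g_\ell u(s) x)\,ds$ and $c := \int f\,d\mu_\Mcal$. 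After passing to a subsequence and possibly replacing $f$ by $-f$ (and halving $\epsilon$), we may assume $\psi_\ell(x_n) \leq c - \epsilon$ for all $\ell \in B_n$.

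Associate to each pair $(n, \ell)$ the normalized horocyclic arc-length measure $\lambda_\ell^{x_n}$ on the arc of length $e^{2\ell}$ at $g_\ell x_n$, that is, the pushforward of Lebesgue measure on $[0, 1]$ under $s \mapsto g_\ell u(s) x_n$. Form the Cesaro averages $\nu_n^b := \frac{1}{|B_n|}\sum_{\ell \in B_n} \lambda_\ell^{x_n}$ and $\nu_n^g := \frac{1}{L_n - |B_n|}\sum_{\ell \notin B_n}\lambda_\ell^{x_n}$, together with the total average $\nu_n := \frac{1}{L_n}\sum_{\ell \in [S_n, S_n+L_n]\cap \N} \lambda_\ell^{x_n}$, satisfying $\nu_n = p_n \nu_n^b + (1-p_n)\nu_n^g$ with $p_n := |B_n|/L_n \geq \epsilon$. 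The crucial step is to select the $\Ncal_i$'s so that $\nu_n \to \mu_\Mcal$ weakly: this uses the full strength of EMM's classification of weak-$\ast$ limits of horocyclic averages, together with Eskin-Masur / Minsky-Weiss non-divergence estimates for long horocyclic arcs to preclude escape of mass. Passing to further subsequences, extract $\nu_n^b \to \nu^b$, $\nu_n^g \to \nu^g$ weakly, and $p_n \to p \geq \epsilon$, yielding $\mu_\Mcal = p \nu^b + (1-p)\nu^g$ in the limit.

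Since $\lambda_\ell^{x_n}$ is supported on a horocyclic arc of length $e^{2\ell} \geq e^{2S_n} \to \infty$, translation by bounded elements of $U$ alters it by $O(e^{-2S_n})$ in total variation, so both $\nu^b$ and $\nu^g$ are $U$-invariant probability measures on $\Mcal$. By the $U$-ergodicity of $\mu_\Mcal$---a consequence of the Eskin-Mirzakhani classification of $P$-invariant measures combined with the uniqueness of the $\SL$-invariant ergodic probability measure fully supported on $\Mcal$---the measure $\mu_\Mcal$ is an extremal point of the simplex of $U$-invariant Borel probability measures on $\Mcal$. Hence the decomposition $\mu_\Mcal = p \nu^b + (1-p)\nu^g$ with $p \in [\epsilon, 1]$ forces $\nu^b = \mu_\Mcal$. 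But by construction $\int f\,d\nu^b = \lim_n \frac{1}{|B_n|}\sum_{\ell \in B_n}\psi_\ell(x_n) \leq c - \epsilon$, contradicting $\int f\,d\mu_\Mcal = c$.

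The main technical obstacle is the selection of the finite list $\Ncal_1, \ldots, \Ncal_k$, depending only on $\Mcal, \epsilon, f$, that makes the first step work uniformly: namely, establishing that for every admissible sequence with $x_n \in F \subset \Mcal \setminus \cup \Ncal_i$ compact and $S_n, L_n \to \infty$, the total averages $\nu_n$ necessarily converge to $\mu_\Mcal$. Because the base points $g_{S_n} x_n$ of the underlying long horocyclic arcs may escape any fixed compact set even while $x_n$ remains in $F$, this does not follow from the compact version of EMM; rather, it requires the stronger, uniform form of their equidistribution result, which controls the possible concentration of mass on proper affine invariant submanifolds and near the boundary of $\Mcal$, and this is the source of the $L_0$ that is uniform in $F$.
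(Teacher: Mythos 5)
Your scheme (form Cesaro averages $\nu_n$ of the pushed arc measures over the window $[S_n,S_n+L_n]$, extract weak-$\ast$ limits, and use extremality of $\mu_\Mcal$ among $U$-invariant probability measures to rule out a positive-density bad set) is genuinely different from the paper's route: the paper never forms these Cesaro averages. Instead, it builds an explicit bump function $\phi$ whose positivity certifies Birkhoff genericity for the horocycle flow (Proposition~\ref{prop:func unif}), derives a lower bound on $\int_S^{S+T}\int_0^1\phi\bigl(g_\ell u(s)x\bigr)\,ds\,d\ell$ via a change of base point to $g_S x$ combined with the Margulis-function non-divergence of Lemma~\ref{lem:marg big}, and then runs two Chebyshev arguments to obtain Corollary~\ref{cor:weak close}, which directly asserts that $d_\ast\bigl(\int_0^1\delta_{g_\ell u(s)x}\,ds,\mu_\Mcal\bigr)<\epsilon$ for a $(1-\epsilon)$ proportion of $\ell\in[S,S+T]$. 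Your ergodicity argument and their Chebyshev argument are two ways of upgrading an averaged statement to a pointwise-density-one statement, and both are reasonable once the hard input is in hand. (A small quibble: the clean reason $\mu_\Mcal$ is $U$-ergodic is the Mautner/Howe--Moore phenomenon applied to the $\SL$-ergodic measure $\mu_\Mcal$; invoking the EMM classification of $P$-invariant measures is circuitous, since a $U$-ergodic component of $\mu_\Mcal$ need not be $P$-invariant.)

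There are, however, two genuine gaps. First, the claim $\nu_n\to\mu_\Mcal$ is not a corollary of the EMM Theorem~\ref{thm:EMM 2.7}: that theorem controls averages over $[0,T]$, and your window starts at $S_n\to\infty$, so subtracting two $[0,T]$-averages produces coefficients of size $S_n/L_n$ that blow up when $L_n/S_n\to0$. The passage to windows $[S,S+L]$ requires precisely the change-of-base-point identity $g_{\ell}u(s)x = g_{\ell-S}u(e^{2S}s)\,g_S x$ together with the Margulis-function estimates of Proposition~\ref{prop:EMM 2.13} and Lemma~\ref{lem:marg big} that guarantee most of the expanded horocycle through $g_S x$ sits in a fixed compact set where Theorem~\ref{thm:EMM 2.7} applies. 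This is the content of Corollary~\ref{cor:weak close}, and you have deferred it rather than supplied it; it is the hard part, not a black-box invocation. Second, there is a quantifier problem: the negation of the theorem, after fixing $\Ncal_1,\dots,\Ncal_k$, gives you either a fixed compact $F$ with $S_n\to\infty$ and $L_n\ge L_0$ \emph{possibly bounded}, or, upon diagonalizing over $L_0\to\infty$, sequences with $L_n\to\infty$ but $x_n\in F_n$ for compact sets $F_n$ that may grow. Your contradiction needs both $F$ fixed and $L_n\to\infty$ simultaneously, which the negation does not hand you; and for bounded $L_n$ your Cesaro average is over $O(1)$ terms, so establishing $\nu_n\to\mu_\Mcal$ for such $n$ is essentially equivalent to the theorem itself (a single stray $\ell$ carries weight $\asymp 1/L_0$). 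This is exactly the uniform-in-$F$ $L_0$ flagged in the paper's remark as the key feature; the paper's argument supplies it because Corollary~\ref{cor:weak close} holds for every $T\ge T_0$ at once, with $T_0$ determined before $F$ is chosen. As written your argument would only produce an $F$-dependent $L_0$.
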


\begin{rem}
    The key feature of Theorem~\ref{thm:uniform Banach density one} is the uniformity of the parameter $L_0$ over the entire generic set $\Mcal\setminus \cup_{i=1}^k \mathcal{N}_i$. 
    Such uniformity is crucial for the application towards Theorem~\ref{thm:density of tori}.  
\end{rem}

\subsection{A-invariance of the distribution of cocycle output directions}\label{sec:A intro}

Another key ingredient in our proof of Theorem~\ref{thm:density of tori} is the following general rigidity result regarding $\SL$-actions on fiber bundles over its finite volume quotients, induced from linear representations of its lattices, which may be of independent interest. 
The result asserts that all limiting measures of expanding horocycle arcs on such fiber bundles are necessarily invariant by the geodesic flow.  We note that this result is not needed in the proof of Theorem~\ref{thm:full support}; cf.~Section~\ref{sec:outline} for further discussion.

To state the result, we need some notation.
Let $\G$ be a lattice in $G=\SL$, $\Vcal=G/\G$, and $\mu_\Vcal$ be the $G$-invariant probability measure on $\Vcal$. 
Let $\rho:\G \r \mrm{GL}_{d+1}(\R)$ be a representation of $\G$, $d\geq 1$, and denote by $\RP$ the $d$-dimensional projective space.
Then, $\G$ acts diagonally on $G\times \R^{d+1}$ by $\g\cdot (g,v)=(g\g^{-1}, \rho(\g)v)$, and $G$ acts on the first factor by left multiplication.
This induces similar actions of $\G$ and $G$ on $G\times \RP$.
Denote by $\widehat{\Vcal}$ and $\R\widehat\Vcal$ the quotient spaces of $G\times\R^{d+1}$ and $G\times \RP$ by $\G$ respectively.
In particular, $\hV$ and $\P\hV$ are fiber-bundles over $\Vcal$.  Since the actions of $G$ and $\G$ on $G\times \RP$ commute, $G$ also acts on the quotient $\P\hV$.

For $x\in\Vcal$,  we denote its fiber in $\hV$ by $V_x$.
We shall assume that the fibers $V_x$ are equipped with a family of continuously varying norms $\norm{\cdot}_x$.
For $g\in\SL$ and $x\in\Vcal$, we use the notation $B(g,x):V_x\r V_{gx}$ to denote the linear map on the fibers induced from left multiplication by $g$ on $\hV$. We also use the same notation for the action on fibers of $\P\hV$.
Finally, we use $\norm{B(g,x)}_{\mrm{op}}$ to denote the operator norm of the linear map $B(g,x): V_x\r V_{gx}$.

\begin{thm}\label{thm:A-invariance of flag distribution}
Assume that for some $C\geq 1$ and a fixed norm $\norm{-}$ on $\SL$, we have
\begin{align}\label{eq:bounded cocycle}
    |\log \norm{B(g,x)}_{\mrm{op}} | \leq C \log \norm{g},
    \qquad
    \text{for all } g\in \SL,x\in\Vcal.
\end{align}
Then, every $U$-invariant probability measure, which projects to $\mu_\Vcal$, is $A$-invariant, where $U$ and $A$ are the subgroups of $\SL$ in \eqref{eq:A and U}.

In particular, for every $z\in\P\hV$, we have that every weak-$\ast$ limit measure as $t\to\infty$ of the collection of measures
        \begin{align}\label{eq:exp horocycles skewprods}
            \set{\int_0^1 \d_{g_tu(s) \cdot z}\;ds :t \geq 0}
        \end{align}
    is $A$-invariant.
\end{thm}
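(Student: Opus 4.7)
The plan is first to prove the main claim---that any $U$-invariant probability $\mu$ on $\P\hV$ projecting to $\mu_\Vcal$ is $A$-invariant---and then to derive the ``in particular'' clause from it. For the latter, any weak-$\ast$ limit $\nu$ of $\int_0^1 \d_{g_t u(s) z}\,ds$ is automatically $U$-invariant: using the identity $g_t u(s) = u(e^{2t} s)\,g_t$, the averages rewrite as horocycle arcs of length $e^{2t}$ through $g_t z$, so any $u(r)$-translate alters only an $O(1)$ portion of the arc, which is negligible in the weak-$\ast$ limit as $t\to\infty$. The projection of $\nu$ to $\Vcal$ is $\mu_\Vcal$ by Dani's equidistribution theorem on $\SL/\Gamma$ (the exceptional periodic case reduces to a direct check on a finite cover), and the main claim then gives that $\nu$ is $A$-invariant.

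For the main claim, I would first reduce to $\mu$ being $U$-ergodic via ergodic decomposition: each $U$-ergodic component of $\mu$ projects to a $U$-ergodic measure on $\Vcal$, which by Dani's classification is either $\mu_\Vcal$ itself or a periodic horocycle measure, and the latter possibilities are $\mu_\Vcal$-null. Disintegrating $\mu = \int \mu_x\,d\mu_\Vcal(x)$ with $\mu_x$ a probability on the fiber $\P(V_x)$, the $U$-invariance translates into the covariance $(B(u(s),x))_\ast \mu_x = \mu_{u(s) x}$ for $\mu_\Vcal$-a.e.\ $x$ and every $s \in \R$.

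The cocycle bound \eqref{eq:bounded cocycle} is then crucial for forcing vanishing Lyapunov behavior. Since $\log \norm{B(u(s), x)}_{\mrm{op}} = O(\log(1 + |s|))$ uniformly in $x$, Kingman's subadditive ergodic theorem applied to the ergodic $U$-flow on $(\Vcal, \mu_\Vcal)$ yields that every Lyapunov exponent of the $U$-cocycle, together with its exterior powers, vanishes $\mu_\Vcal$-almost surely. With this input, I would invoke an invariance principle of Avila--Viana--Ledrappier type: a $U$-equivariant fiber disintegration whose underlying cocycle has all Lyapunov exponents equal to zero must in fact be equivariant under the opposite unipotent subgroup $U^- \leqslant \SL$ as well. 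Because $U$ and $U^-$ together generate $\SL$, this upgrades $\mu$ to an $\SL$-invariant measure, yielding $A$-invariance in particular.

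The main technical obstacle is the precise implementation of the invariance principle in this setting, since the classical Avila--Viana theorem assumes that $\mu$ is invariant under the base partially hyperbolic diffeomorphism (namely $g_1$), which is exactly the conclusion sought. A natural workaround is to form the $A$-average $\bar\mu := \lim_{T\to\infty} \frac{1}{T}\int_0^T (g_t)_\ast \mu\,dt$ along a weak-$\ast$ convergent subsequence, producing a measure that is both $U$- and $A$-invariant and to which the classical principle applies, giving $\SL$-invariance of $\bar\mu$. One must then separately verify the identity $\mu = \bar\mu$; this ought to follow from the $U$-ergodicity of $\mu$ combined with the Howe--Moore mixing of $g_t$ on $(\Vcal, \mu_\Vcal)$, exploiting that the $U$-equivariant disintegration is determined by its values on $\mu_\Vcal$-generic fibers.
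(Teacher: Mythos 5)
Your proposal pursues a genuinely different route from the paper (which implements Ratner's shearing argument directly, using the sub-polynomial divergence Lemma~\ref{lem:subpolynomial divergence}), but it contains a fatal gap: it would establish $\SL$-invariance rather than $A$-invariance, and $\SL$-invariance is false in general. For instance, Theorem~\ref{thm:convergence of flag distribution proximal} shows that in the proximal irreducible case the limiting measure has fiber conditionals given by Dirac masses on the top Lyapunov direction; such a measure is $P=AU$-invariant but cannot be $U^-$-invariant unless the top Oseledets line were $\SL$-equivariant, which fails whenever the cocycle has distinct exponents (e.g., the KZ cocycle on a typical Veech curve).

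The specific error is the passage from ``all Lyapunov exponents of the $U$-cocycle vanish'' to an Avila--Viana--Ledrappier invariance principle. The cocycle bound~\eqref{eq:bounded cocycle} gives sub-polynomial growth along $U$-orbits, hence zero Lyapunov exponents for the $U$-flow. But the invariance principle you have in mind requires the \emph{fiber} (center) exponents over the partially hyperbolic base map $g_1$ to vanish, and these are entirely unrelated to the $U$-exponents; the linear cocycle over $g_t$ generically has nonzero exponents $\l_i$, and the projective cocycle on the fiber $\RP^d$ has center exponents $\l_i - \l_j$ which are nonzero whenever the $\l_i$ are distinct. Your workaround of averaging over $A$ to produce a $g_1$-invariant $\bar\mu$ does not help: $\bar\mu$ still sees the same nonzero $g_t$-exponents. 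The correct use of the sub-polynomial estimate is not to derive holonomy invariance, but to control the rate of divergence of nearby points so that the dominant direction of shearing under $u(s)$ lies in the base $\Vcal$ rather than the fiber; this is exactly the input needed to run Ratner's $H$-principle argument and extract only $A$-invariance, which is the correct (and sharp) conclusion.

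As a minor note, the first two reductions (the ``in particular'' clause via rewriting $g_t u(s)$-averages as expanding arcs, and the reduction to $U$-ergodic measures) are fine and agree with the paper, although the paper's Lemma that a.e.\ ergodic component projects to $\mu_\Vcal$ uses only $U$-ergodicity of $\mu_\Vcal$ and does not require Dani's classification.
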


\begin{rem}\label{remark on A-inv thm}
\begin{enumerate}

    \item An important feature of Theorem~\ref{thm:A-invariance of flag distribution} is that it holds without any restrictions on irreducibility or the Lyapunov spectrum of the cocycle.
    Moreover, it is likely that the boundedness hypothesis~\eqref{eq:bounded cocycle} can be weakened to allow slow growth in the cusps when $\Gamma$ is non-cocompact.

    \item It follows by Theorem~\ref{thm:A-invariance of flag distribution} that every limit measure of the family in~\eqref{eq:exp horocycles skewprods} as $t\r\infty$ is $P:= AU$-invariant. 
     Under the following additional hypotheses on the representation, we show in Appendix~\ref{sec:appendix} that the $P$-action is \textit{uniquely ergodic}, i.e., it admits a unique invariant measure. 
     In particular, this implies that the measures in~\eqref{eq:exp horocycles skewprods} have a unique accumulation point.
     Note that such finer results are not needed for our proof of Theorem~\ref{thm:density of tori}.
    \begin{enumerate}
        \item \textbf{Representations with bounded image.} In this case, we show that the unique $P$-invariant measure is in fact $\SL$-invariant, and is roughly given locally by the product of $\mu_\Vcal$ with the image of the Haar measure on an orbit of the compact group  $\overline{\rho(\G)}$; cf. Theorem~\ref{thm:convergence of flag distribution compact} for a precise statement.
        \item \textbf{Proximal and irreducible representations.} In this case, it follows from the results of \cite{BonattiEskinWilkinson} that the unique $P$-invariant measure projects to $\mu_\Vcal$ with conditional measures along each fiber given by a Dirac mass on a suitable top Lyapunov space. 
        We provide a short proof of this special case in our setting in Theorem~\ref{thm:convergence of flag distribution proximal}.
    \end{enumerate}

\end{enumerate}
    
\end{rem}

\subsection{Organization of the article and proof ideas}
\label{sec:outline}
In Section~\ref{sec:background}, we recall necessary background and introduce notation to be used throughout the article. We also recall important recurrence and non-uniform hyperbolicity results needed for the proof.
Section~\ref{sec:examples} provides infinite families of examples satisfying Theorems~\ref{thm:density of tori} and~\ref{thm:full support}.
In Section~\ref{sec:full Banach density}, we prove Theorem~\ref{thm:uniform Banach density one} using the work of Eskin, Mirzakhani, and Mohammadi~\cite{EMM}.

In Section~\ref{sec:reduce to matching}, we state the key technical statement in the proof of Theorem~\ref{thm:density of tori}, which we refer to as the key matching proposition, Proposition~\ref{prop:key matching}.
Roughly, Proposition~\ref{prop:key matching} asserts that a small neighborhood of the expanded torus $g_t\cdot\T(\w)$ contains a large piece of the $P$-orbit of points in $\T(\w)$ that lie near our Veech curve $\Vcal=\SL\cdot(M,\w)$.
Here, $P=AU\subset\SL$ is the subgroup of upper triangular matrices.
Proposition~\ref{prop:key matching} does not require the $\Mcal$-primitivity hypothesis.
The rest of Section~\ref{sec:reduce to matching} is dedicated to the deduction Theorem~\ref{thm:density of tori} from this matching proposition with the aid of the uniform convergence result in Theorem~\ref{thm:uniform Banach density one}, 
which precisely concerns equidistribution of such large pieces of $P$-orbits.

Proposition~\ref{prop:key matching} is proved in Section~\ref{sec:proof of key matching}, with the key ingredient being Theorem~\ref{thm:A-invariance of flag distribution} in the case of projective vector bundles over Veech curves with fiber action given by the Kontsevich-Zorich cocycle on the invariant bundle $\cylspace^0(-)$.
Roughly, Theorem~\ref{thm:A-invariance of flag distribution} is applied to $g_t$-pushes of the horocycle arc through our horizontally periodic Veech surface $\w$, together with a tangent vector $\b$ to the twist torus $\T(\w)$, to show $A$-invariance of all possible weak limits as $t\r\infty$.
As noted in Remark~\ref{rem:generalizations}, since small pieces of $g_t\cdot \T(\w)$ near $\Vcal$ are well-approximated by their linearizations, this linear $A$-invariance implies that such pieces are close together at different times $t$.
This quickly implies Proposition~\ref{prop:key matching}.

Section~\ref{sec:A-invariance} is dedicated to the proof of Theorem~\ref{thm:A-invariance of flag distribution}.
As noted above, we in fact prove in Proposition~\ref{prop:meas class on suspension} that all $U$-invariant measures on the suspension space $\P\hV$ which project to Haar measure on $\Vcal$ must also be $A$-invariant.
The key idea behind the latter result is to show that the horocycle flow orbits of points that only differ in the fiber direction experience \textit{sub-polynomial divergence}, Lemma~\ref{lem:subpolynomial divergence}.
This lemma implies that the dominant direction of divergence of two general points in the suspension space $\P\hV$ under the $U$-action is parallel to the base $\Vcal$.
This ``fiber-bunching" property enables us to implement ideas from Ratner's proof of measure rigidity in the classical setup of homogeneous dynamics~\cite{Ratner-RagConjSL2}.

Appendix~\ref{sec:appendix} is dedicated to the proofs of consequences of Theorem~\ref{thm:A-invariance of flag distribution} stated in Remark~\ref{remark on A-inv thm}.
In particular, in the case the representation has bounded image, we show that the limiting measure is in fact $\SL$-invariant using the entropy ideas appearing in the proof of Ratner's theorems given in the work of Margulis and Tomanov; cf.~Proposition~\ref{prop:SL-invariance}.

Finally, Section~\ref{sec:trivial monodromy} is dedicated to the proof of Theorem~\ref{thm:full support}.
The strategy is similar to the proof of the density theorem, Theorem~\ref{thm:density of tori}, with the key matching proposition replaced with the much stronger measure-theoretic matching statement in Proposition~\ref{prop:match tori trivial monodromy}. 
Proposition~\ref{prop:match tori trivial monodromy} roughly shows that the average $g_t$-translates of shrinking pieces of the twist torus, over a moving long window of time of the form $[T, T+N]$, remain close to the single $g_T$-translate of a certain absolutely continuous measure $\l$ on $\T(\w)$ as $T\r\infty$.
This essentially amounts to saying that every weak-$\ast$ limit of $g_T\l$ are almost $A$-invariant.

The key step in the proof of Proposition~\ref{prop:match tori trivial monodromy} is Proposition~\ref{prop:match on Vcal trivial monodromy}, which plays the role of Theorem~\ref{thm:A-invariance of flag distribution}, but produces a stronger conclusion more directly using our hypothesis on monodromy.
In particular, the latter hypothesis is used to ensure Proposition~\ref{prop:match on Vcal trivial monodromy}~\eqref{item:equal matrices} on equality of cocycle matrices at matched points, rather than mere closeness of projective images of the tremor.

The key idea in the proof of Proposition~\ref{prop:match on Vcal trivial monodromy} can be summarized as follows.
Suppose we are given two nearby points $x,y\in\Vcal$ which differ only in the \textit{stable} horocycle direction.
Suppose further that both points are horizontally periodic, and that they share a horizontal twist cohomology class, i.e., after identifying the relative cohomology groups of the surfaces corresponding to $x$ and $y$ by parallel transport, the linear span of the (classes dual to) the horizontal cylinders of $x$ intersects that of $y$ non-trivially.
Let $\b$ be one such class in that intersection.
In our proof, $x$ and $y$ will belong to the expanded horocycle arc through $(M,\w)$ at two different times $t_1$ and $t_2=t_1 + \text{ period of pseudo-Anosov}$. The existence of such class $\b$ will be ensured using our monodromy hypothesis.

Let $\t\mapsto x(\t)$ be the path defined by $x_0=x$ and $\dot{x}(\t) \equiv \b$, and define $y(\t)$ similarly. Note that $x(\t)$ and $y(\t)$ are contained in the twist tori of $x$ and $y$ respectively.
The crucial observation is that the property of $x$ and $y$ differing only in vertical periods survives for the surfaces $x(\t)$ and $y(\t)$ for sufficiently small $\t$.
This means that the respective pieces of the twist tori at $x$ and $y$ remain close under the action of $g_t$ for \textit{all} $t\geq 0$.
Our monodromy hypothesis in fact allows us to produce many such classes $\b$ to account for open subsets of the corresponding twist tori.
Passing to the limit along any sequence $t_n\r\infty$, this produces almost $A$-invariance on positive measure sets in the sense of Proposition~\ref{prop:match tori trivial monodromy}.
Theorem~\ref{thm:full support} then follows from the latter result by an application of Theorem~\ref{thm:uniform Banach density one}.

\subsection{Further open questions}

    We end the introduction with several open problems in the study of horocycle flows on moduli spaces that overlap with the questions studied in this article.
 
 \begin{ques}
 Find an explicit surface whose $U$-orbit is Birkhoff generic for the Masur-Veech measure. Note that the topological version of this problem was considered in \cite[Proposition 1.7]{HooperWeiss}, which identified explicit points whose $U$-orbit is dense within their $\SL$-orbit closures.
 \end{ques}

 \begin{ques}
 Generalizing a construction of Calta~\cite{Ca},
 Smillie and Weiss constructed horocycle ergodic measures that are \textit{not} $\SL$-invariant, and give measure $0$ to the set of surfaces with horizontal saddle connections \footnote{The general construction did not appear in print but was described in \href{https://icerm.brown.edu/video_archive/101}{this video lecture} by Weiss.}.
 The constructions start with an $\SL$-ergodic measure, and then `push' it by an element of the horizontal subspace, such as real REL deformations. In a similar vein to Problems~\ref{prob:Mirzakhani} and~\ref{prob:horocycles}, it is natural to ask whether these measures converge when pushed by $g_t$ and, more generally, to ask for the possible weak-$\ast$ limits as $t\r\infty$.
 \end{ques}

 \begin{ques}
 Is the horocycle flow topologically recurrent? That is, for every translation surface $(M,\w)$ and $\epsilon>0$, is it true that the set $R(\w,\e)\stackrel{\mrm{def}}{=}\{s\in \R :d(u(s)\omega,\omega)<\epsilon\}$ is unbounded? Note,  there exist translation surfaces $(M,\omega)$ so that, for a fixed $\epsilon>0$, $R(\w,\e)$ has upper density $0$ \cite[Theorem 1.2]{CSW}.
 \end{ques}

\begin{acknowledgement}
    
    The authors thank Paul Apisa, Alex Eskin, Carlos Matheus, Barak Weiss, and Alex Wright for helpful discussions regarding this project.
    {J.C. is partially supported by NSF grants DMS-2055354, 2350393 and a Warnock chair.}
    O.K. acknowledges NSF support under grants  DMS-2337911 and DMS-2247713.
    
\end{acknowledgement}

\section{Preliminaries and Notation}
\label{sec:background}

In this section, we recall necessary background and refer the reader to the surveys~\cite{AthreyaMasur-book,ForMat,Yoccoz-ClaySurvey,Zorich-survey} for more background on the subject.
We also introduce standing notation to be used throughout the article; see in particular \textsection\ref{sec:torsion free} and \ref{sec:notation}. 
Finally, we recall several recurrence and hyperbolicity results for the Teichm\"uller geodesic flow that will be important for our arguments.

\subsection{Strata and the mapping class group}
\label{sec:basics}

Let $\HHm$ denote a stratum of marked translation surfaces. 
A marked translation surface is given by a map $\phi:(S,\Sigma)\to (M,\Sigma')$ where $S$ is a model surface, $\Sigma$ is a finite subset of $S$ and $\Sigma'$ is the set of cone points in $M$. 
Let $\mathrm{Mod}(S,\Sigma)$ denote the mapping class group of $(S,\Sigma)$, that is the group of isotopy classes of homeomorphisms of $S$ that fix $\Sigma$.
The quotient of $\HHm$ by the right action of $\mathrm{Mod}(S,\Sigma)$, denoted by $\HHu$, is the corresponding stratum of unmarked translation surfaces.

\subsection{Period coordinates and the $\SL$ action}
\label{sec:period coords}
Let $q\in \HHm$ be a point representing a marked flat surface $M_q$. We denote the marking map by $\phi:(S,\Sigma)\to (M_{q},\Sigma(q))$.
Then, $q$ determines a holonomy homomorphism on relative integral homology, $\hol_{q}:H_1(M_q,\Sigma(q);\Z)\r C$. 
In particular, $\hol_q$ can be viewed as an element of $H^1(M_q,\Sigma(q);\C)$. 
We recall the following identifications
\begin{align}\label{eq:tangent space of marked stratum}
    T_q\HHm\cong H^1(M_q,\Sigma(q);\C) \cong H^1(M_q,\Sigma(q);\R) \oplus H^1(M_q,\Sigma(q);\mathbf{i}\R),
\end{align}
where $\mathbf{i}=\sqrt{-1}$, $T_q\HHm$ is the tangent space at $q$, and the second identification is given by postcomposing a $\C$-valued class with coordinate projections.
We refer to elements of $H^1(-;\R)$ and $H^1(-;\mathbf{i}\R)$ as \textit{horizontal} and \textit{vertical} classes respectively.

\begin{rem}
    In what follows, to simplify notation, we use the notation $H^1_\C, H^1_\R$ and $H^1_{\mathbf{i}\R}$ to denote the groups $H^1(M_q,\Sigma(q);k), k=\C,\R,\mathbf{i}\R$ respectively when the surface $q$ is understood from context.
\end{rem}

 The map $q\mapsto \hol_q$ is referred to as \textit{holonomy period coordinates}.
We denote the real and imaginary components of $\hol_q$ by $\hol_{q}^{(x)}$ and $\hol_{ q}^{(y)}$ respectively.
The cohomology class $\hol_{q}^{(x)}$ is represented by the $1$-form $dx_{q}$, viewed as the real part of the holomorphic $1$-form determined by $q$. As a map on homology, it is given by $\hol_{ q}^{(x)}[\gamma]=\int_\gamma dx_{q}$; cf.~\cite[Section 2.1]{CSW} for more information.
We define the \textit{tautological subspace} of $H^1_\R$ at $q$, denoted $\mrm{Taut}_q$, by
\begin{align*}
    \mrm{Taut}_q \stackrel{\mrm{def}}{=}\mrm{Span}\set{\hol_q^{(x)}, \hol_q^{(y)}}.
\end{align*}

Viewing elements of $H^1_\C$ as linear maps on homology with values in $\C$, we note that $\SL$ acts on $H^1_\C$ by post-composition through its linear action on $\C$.
In particular, for $g\in \SL$ and $q\in \HHm$, we have the relation
\begin{align}\label{eq:SL action on hol}
    \hol_{gq} = g\circ \hol_q.
\end{align}
More explicitly, given $\t\in\R$, we have
\begin{align}\label{eq:g_t action on hol}
    \hol^{(x)}_{g_\t q} = e^\t\hol^{(x)}_q,
    \qquad 
    \hol^{(y)}_{g_\t q} = e^{-\t}\hol^{(y)}_q.
\end{align}
For $u^-(\s)=\left(\begin{smallmatrix}
    1 & 0 \\ \s & 1
\end{smallmatrix}\right)$, we have
\begin{align}\label{eq:u-minus action on hol}
    \hol^{(x)}_{u^-(\s)q}= \hol^{(x)}_q,
    \qquad 
    \hol^{(y)}_{u^-(\s)q}=\s \hol^{(x)}_q+ \hol^{(y)}_q.
\end{align}

Finally, for $q=(M_q,\w_q)\in\HHu$, we refer to the stabilizer of $q$ in $\SL$ as the \textit{Veech group} of $q$, and denote it by $\mrm{SL}(M_q,\w_q)$.
By taking derivatives of maps in affine group $\mrm{Aff}^+(M_q,\w_q)$ defined above Theorem~\ref{thm:full support}, we obtain a surjective homomorphism onto the Veech group with kernel the (finite) \textit{automorphism group} of $(M_q,\w_q)$, giving the following exact sequence
\begin{equation}\label{eq:Aff vs Veech gp}
    \begin{tikzcd}
        1 \arrow[r] 
        &\mrm{Aut}(M_q,\w_q) \arrow[r] 
        &\mrm{Aff}^+(M_q,\w_q) \arrow[r, "D"] 
        & \mrm{SL}(M_q,\w_q) \arrow [r]
        &1 .
    \end{tikzcd}
\end{equation}

\subsection{Torsion-free finite cover}
\label{sec:torsion free}
Our arguments require analysis of the Kontsevich-Zorich cocycle, recalled in \textsection\ref{sec:KZ cocycle} below, over the Veech curve in question.
To this end, it will be convenient to identify the cocycle as (a factor of) the derivative of the $\SL$ action in a suitable sense.
For such identification to be well-defined, we pass to a finite cover of $\HHu$ to bypass orbifold issues caused by the finite automorphism groups $\mrm{Aut}(M,\w)$.

Let $\mrm{Mod}_0$ be a torsion-free finite index subgroup of $\mrm{Mod}(S,\Sigma)$; cf. \cite[Theorem 6.9]{FarbMargalit} for a concrete choice of such subgroup.

\begin{lem}\label{lem:finite cover}
    Let $\widetilde{\Hcal}_\mrm{u}=\HHm/\mrm{Mod}_0$.
    Then, it suffices to prove the analogous statements of Theorems \ref{thm:density of tori} and \ref{thm:full support} in the finite cover $\widetilde{\Hcal}_\mrm{u}$ of the stratum $\HHu$.
\end{lem}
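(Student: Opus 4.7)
My plan is to transport the hypotheses and conclusions of Theorems~\ref{thm:density of tori} and~\ref{thm:full support} across the $\SL$-equivariant finite covering $\pi:\tHHu\to\HHu$. Given a horizontally periodic Veech surface $(M,\w)\in\HHu$, I would fix any lift $\widetilde{(M,\w)}\in\tHHu$, let $\widetilde{\T}(\widetilde{\w})$ denote its twist torus upstairs, and set $\widetilde{\Mcal}:=\overline{\SL\cdot\widetilde{\T}(\widetilde{\w})}$. The twist-torus construction is intrinsic to the flat surface and $\pi$ is $\SL$-equivariant, so $\pi(\widetilde{\T}(\widetilde{\w}))=\T(\w)$ and consequently $\pi(\widetilde{\Mcal})=\Mcal$. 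The conclusions then descend easily: since $\pi$ is continuous and proper, it sends dense subsets of $\widetilde{\Mcal}$ to dense subsets of $\Mcal$, commutes with weak-$\ast$ convergence of probability measures (pullbacks of $C_c$-functions remain $C_c$), and carries fully supported measures to fully supported measures, as $\pi^{-1}$ of any non-empty open set is non-empty and open.

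Next, I would verify that the elementary hypotheses lift automatically. The horizontally periodic and Veech properties depend only on the underlying flat surface. For Theorem~\ref{thm:full support}, the affine group $\mrm{Aff}^+(M,\w)$, the subspace $\cylspace^0(\w)$, and the monodromy condition~\eqref{eq:pA=id hypothesis} are all intrinsic to $(M,\w)$ and independent of the marking, so they transfer to the lift without change.

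The main obstacle, and the only step requiring real argument, is showing that $\Mcal$-primitivity of $(M,\w)$ implies $\widetilde{\Mcal}$-primitivity of $\widetilde{(M,\w)}$. Suppose $\widetilde{\Vcal}\subseteq\widetilde{\Ncal}\subseteq\widetilde{\Mcal}$ is an intermediate $\SL$-orbit closure upstairs, where $\widetilde{\Vcal}:=\SL\cdot\widetilde{(M,\w)}$. Then $\Ncal:=\pi(\widetilde{\Ncal})$ is an $\SL$-orbit closure in $\HHu$ (as the image of a connected orbit closure under a continuous, proper, $\SL$-equivariant map) satisfying $\Vcal\subseteq\Ncal\subseteq\Mcal$, so by $\Mcal$-primitivity either $\Ncal=\Vcal$ or $\Ncal=\Mcal$. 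If $\Ncal=\Vcal$, then $\widetilde{\Ncal}\subseteq\pi^{-1}(\Vcal)$, which is a finite disjoint union of closed $\SL$-orbits of dimension $\dim\widetilde{\Vcal}$; since $\widetilde{\Ncal}$ is the closure of a single $\SL$-orbit and therefore connected, and since it contains the component $\widetilde{\Vcal}$, we conclude $\widetilde{\Ncal}=\widetilde{\Vcal}$. If $\Ncal=\Mcal$, then $\pi$ being a local diffeomorphism gives $\dim\widetilde{\Ncal}=\dim\Ncal=\dim\Mcal=\dim\widetilde{\Mcal}$, and the connected affine invariant submanifold $\widetilde{\Ncal}$ inside the connected affine invariant submanifold $\widetilde{\Mcal}$ of the same dimension must coincide with it. Hence $\widetilde{(M,\w)}$ is $\widetilde{\Mcal}$-primitive, and applying the upstairs versions of Theorems~\ref{thm:density of tori} and~\ref{thm:full support} followed by pushing the conclusions down via $\pi$ completes the reduction.
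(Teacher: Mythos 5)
Your proposal is correct and follows essentially the same route as the paper: lift the data intrinsically along the $\SL$-equivariant finite covering $\pi$, check that the hypotheses persist upstairs, and then push the density and full-support conclusions back down. Your handling of $\Mcal$-primitivity (splitting into the cases $\Ncal=\Vcal$ and $\Ncal=\Mcal$) is a slightly more spelled-out version of the paper's dimension-count argument and is fine.

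One small point deserves more care. You write that $\mrm{Aff}^+(M,\w)$, $\cylspace^0(\w)$, and the monodromy condition~\eqref{eq:pA=id hypothesis} ``transfer to the lift without change.'' But the entire purpose of passing to $\tHHu=\HHm/\mrm{Mod}_0$ is to subsequently work with the \emph{finite-index} subgroup $\mrm{Aff}^+(M,\w)\cap\mrm{Mod}_0$ (Convention~\ref{torsion free convention}), so one must check that hypothesis~\eqref{eq:pA=id hypothesis} still holds for that smaller group. The given pseudo-Anosov $\g$ need not itself lie in $\mrm{Mod}_0\cap\mrm{Aff}^+$, but some power $\g^k$ does, and since pseudo-Anosov elements have infinite order this power is a nontrivial pseudo-Anosov still acting as the identity on $\cylspace^0(\w)$. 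Adding this one observation closes the gap and matches the paper's argument.
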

\begin{proof}
    
    Let $q=(M,\w)\in \HHu$ be a horizontally periodic translation surface, and retain the notation of Theorem \ref{thm:density of tori}.
    Let $\pi: \tHHu\r \HHu$ be the natural projection, and fix a lift $\tilde{q}\in \tHHu$ of $q$. 
    Let $\widetilde{\T}$ be the connected component of $\pi^{-1}(\T(\w))$ containing $\tilde{q}$.
    Denote by $\widetilde{\Vcal}$ and $\widetilde{\Mcal}$ the closed orbit $\SL\cdot \tilde{q}$ and the orbit closure $\overline{\SL\cdot \widetilde{\T}}$ respectively.
    In particular, $\widetilde{\Vcal}$ and $\widetilde{\Mcal}$ project to $\Vcal$ and $\Mcal$ respectively.

    With this notation in hand, note that, if $q$ is $\Mcal$-primitive, then $\widetilde{\Vcal}$ is $\widetilde{\Mcal}$-primitive.
    Indeed, a hypothetical intermediate orbit closure $\widetilde{\Vcal}\subsetneq \widetilde{\Ncal} \subsetneq\widetilde{\Mcal}$ must have an intermediate dimension between those of $\widetilde{\Vcal}$ and $\widetilde{\Mcal}$. 
    Since $\tHHu$ is a finite cover of $\HHu$, dimensions are preserved under projections and, hence,
    the projection $\pi(\widetilde{\Ncal})$ would violate $\Mcal$-primitivity of $q$.
    Moreover, density of $g_t\cdot \tilde{\T}$ in $\widetilde{\Mcal}$ as $t\r\infty$ implies the analogous statement in $\Mcal$.

    For Theorem \ref{thm:full support}, let $\mu_{\tilde{\T}}$ be a Lebesgue probability measure on $\tilde{\T}$ projecting to $\mu_\T$ under $\pi$.
    Since pseudo-Anosov elements have infinite order, $\mrm{Mod}_0\cap \mrm{Aff}^+$ must still satisfy the hypothesis \eqref{eq:pA=id hypothesis}.
    Moreover, note that $(g_t)_\ast\mu_\T(V) = (g_t)_\ast\mu_{\tilde{\T}}(\pi^{-1}(V))$ for any Borel set $V$, and for all $t\geq 0$. Hence, if the conclusion of Theorem \ref{thm:full support} holds in for $(g_t)_\ast\mu_{\tilde{\T}}$, then it must also hold for $(g_t)_\ast\mu_\T$.
\end{proof}

The advantage of passing to the cover $\tHHu$ is that, for every translation surface $(M,\w)$, $\mrm{Mod}_0$ has trivial intersection with its finite automorphism group $\mrm{Aut}(M,\w)$, and hence, $\mrm{Mod}_0\cap \mrm{Aff}^+(M,\w)$ injects into the Veech group $\mrm{SL}(M,\w)$.
Moreover, the tangent space $T_\bullet\tHHu$ is well-defined at every point in $\tHHu$, and can be identified with a relative cohomology group as in \eqref{eq:tangent space of marked stratum}.
We shall thus adopt the following convention in the rest of the article.

\begin{convention}\label{torsion free convention}
    In light of Lemma \ref{lem:finite cover}, we shall assume and without loss of generality that $\mrm{Aut}(M,\w)$ is trivial.
    To avoid cluttering notation, 
    we shall continue to use the notation $\mrm{Aff}^+(M,\w)$ and $\mrm{SL}(M,\w)$ to denote their respective finite index subgroups $\mrm{Aff}^+(M,\w)\cap \mrm{Mod}_0$ and its image $D(\mrm{Aff}^+(M,\w)\cap \mrm{Mod}_0)$ under the derivative homomorphism $D$ in \eqref{eq:Aff vs Veech gp}.
    In particular, in this notation, we have
    \begin{align}
        \mrm{Aff}^+(M,\w) \cong \mrm{SL}(M,\w).
    \end{align}
    By a further slight abuse of notation, we shall also use $\HHu$ and $\mrm{Mod}(S,\Sigma)$ to denote $\tHHu$ and $\mrm{Mod}_0$ respectively.
    
\end{convention}

\subsection{Cylinder twists}\label{sec:twists}

Suppose $q\in \HHm$ is such that $M_q$ contains a horizontal cylinder $C$. 
Then, $C$ determines a cohomology class $\b_C\in H^1_\R$ defined as follows: $\b_C(\g)=0$ for all homology classes $\g$ in $H_1(M_q,\Sigma(q);\Z)$ represented by either the core curve of $C$ or a curve that is disjoint from $C$, and $\b_C(\g)$ equal to the height of $C$ for any curve $\g$ joining a zero in $\Sigma(q)$ on the bottom edge of $C$ to a zero on its top edge. 
As such curves span $H_1(M_q,\Sigma(q);\Z)$, this definition determines $\b$; cf.~\cite[Section 2]{Wright-Cylinder} for further properties of $\b_C$.

For $\t\in \R$, let $q_\t \stackrel{\mrm{def}}{=} \mrm{Trem}(q,\t\b_C)\in \HHm$ denote the translation surface obtained from $M_q$ by applying the horocycle flow $u(\t)$ to $C$ and the identity map to $M_q\setminus C$.
The surfaces $q$ and $q_\t$ are related in period coordinates by the following formula (cf.~\cite[Lemma 2.3]{Wright-Cylinder}): 
\begin{align}\label{eq:tremor in coordinates}
    \hol^{(x)}_{q_\t} = \hol^{(x)}_q + \t\b, 
    \qquad
    \hol^{(y)}_{q_\t} = \hol^{(y)}_q.
\end{align}
Note that equations\eqref{eq:u-minus action on hol}-~\eqref{eq:tremor in coordinates} remain valid for $q$ in the unmarked stratum $\HHu$ and for all $g\in\SL$ sufficiently close to identity so that $q$ and $gq$ both belong to a small ball on which period coordinates are injective.

We denote by $\twist(q)\subset H^1_\R$ the linear span of the cohomology classes defined above, i.e.,
\begin{align*}
    \twist(q) \stackrel{\mrm{def}}{=} \mrm{Span}\set{\b_C: C \text{ is a horizontal cylinder on } q}.
\end{align*}
Given $\t_1,\t_2\in \R$ and two classes $\b_1,\b_2\in \twist(q)\subset H^1_\R$, corresponding to two horizontal cylinders $C_1,C_2 \subset M_q$, we can define $\Trem(q, \t_1\b_1+\t_2\b_2)$  to be the surface obtained from $M_q$ by first applying $u_{\t_1}$ to $C_1$, followed by applying $u_{\t_2}$ to $C_2$.
Since horizontal cylinder twists commute with one another, the resulting surface is well-defined.
Finally, we note that the horocycle flow itself is a special example of cylinder twists. In particular, we have
\begin{align*}
    \b=\hol_q^{(y)} \Longrightarrow \Trem(q,\t\b) =u_\t q.
\end{align*}

The notation $\mrm{Trem}$ refers to \textit{tremor deformations} introduced and studied in~\cite{CSW}, of which (horizontal) cylinder twists form the simplest examples.
We refer the reader to~\cite{Wright-Cylinder} for the role of cylinder twists in the study of $\SL$-orbit closures.

\subsection{Balanced spaces}
\label{sec:balanced spaces}

Let $H^1_{\R,\mrm{abs}}$ denote the absolute cohomology group of $M_q$ and let $p:H^1_\R\r H^1_{\R,\mrm{abs}}$ denote the forgetful map.
We extend the intersection product from $H^1_{\R,\mrm{abs}}$ to relative cohomology $H^1_\R$ by composing it with the forgetful projection $p$.
Let $L_q$ denote the linear functional on $H^1_\R$ given by taking the intersection product with $\b$. More explicitly, we set $L_q(\b) = \int_{M_q} dx_q \wedge p(\b)$, where the notation indicates evaluating the cup product of $p(\hol_q^{(x)})$ with $p(\b)$ on the fundamental class of $M_q$. 
We set
\begin{align*}
    \twist^0(q) \stackrel{\mrm{def}}{=} \twist(q) \cap \mrm{Ker}(L_q ).    
\end{align*}
Note that $L_q(\hol^{(y)}_q)$ is the nonzero (signed) area of $M_q$. It follows that
\begin{align}
    \label{eq:twist=horocycle plus balanced}
    \twist(q)=\twist^0(q)\oplus \R\cdot\hol_q^{(y)}.
\end{align}

Moreover, since $L_q(\hol^{(y)}_q)\neq 0$, its restriction to the tautological plane $\mrm{Taut}_q$ is non-degenerate.
We denote by $\mrm{Taut}_q^0$ the orthogonal complement to $\mrm{Taut}_q$ with respect to this intersection form.
Following~\cite{CSW}, we say that a cohomology class is \textit{balanced} if it belongs to $\mrm{Taut}_q^0$.

The action of the affine group $\mrm{Aff}^+(q)$ on $M_q$ induces an action on $H^1_\R$, which preserves  the tautological plane $\mrm{Taut}_q$ and its complement $\mrm{Taut}_q^0$.
Moreover, since every element of $\twist^0(q)$ has $0$ intersection pairing with $\mrm{Taut}_q$, we have
\begin{align*}
    \twist^0(q) \subseteq \mrm{Taut}^0_q.
\end{align*}
We let $\cylspace^0(q)$ denote the smallest $\mrm{Aff}^+(q)$-invariant subspace of $\mrm{Taut}^0_q$ containing $\twist^0(q)$.
We set
\begin{align*}
    \cylspace(q) \stackrel{\mrm{def}}{=} \cylspace^0(q)\oplus \mrm{Taut}_q.
\end{align*}
In particular, the above splitting is $\mrm{Aff}^+(q)$-invariant, and
\begin{align*}
    \twist(q) \subseteq \cylspace(q).
\end{align*}

\subsection{The AGY norm}

    Given $q$ in the marked stratum $\HHm$, the AGY norm on $H^1_\C$, denoted $\norm{\cdot}_q$ is defined for every $v\in H^1_\C$ by
    \begin{align*}
        \norm{v}_q \stackrel{\mrm{def}}{=} \sup_{\g\in \L_q} \frac{|v(\g)|}{|\hol_q(\g)|},
    \end{align*}
    where $\L_q\subseteq H_1(S, \Sigma;\Z) $ denotes the set of saddle connections of $q$.
    This norm induces a (Finsler) metric denoted $\dAGY$ on $\HHm$ given by the infimum of lengths of $C^1$-paths joining points.
    Since these norms, and hence the metric, are invariant by the mapping class group, they descend to $\HHu$.
    The following Lipschitz estimate on norms of parallel transported vectors will be useful for our analysis.

\begin{prop}
    [{\cite[Proposition 5.5]{AG}}]
    \label{prop:norm of parallel transport}
    Let $\k:[0,1]\r \HHm$ be a $C^1$-path and $v\in H^1_\C$. Then,
    \begin{align*}
        e^{-\mrm{length}(\k)} 
        \leq \frac{\norm{v}_{\k(1)}}{\norm{v}_{\k(0)}}
        \leq e^{\mrm{length}(\k)},
    \end{align*}
    where $\mrm{length}(\k)=\int_0^1 \norm{\dot(\k)(t)}_{\k(t)}\;dt$.
    Moreover, for $q=\k(0)$, and for all $0\leq t< 1/\norm{v}_q$, we have
    \begin{align*}
        \int_0^t \norm{v}_{\k(s)}\;ds \leq -\log(1-t \norm{v}_q).
    \end{align*}
\end{prop}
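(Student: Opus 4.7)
The plan is to establish the Lipschitz bound (first estimate) by tracking the holonomies of individual saddle connections along $\k$ via a Gr\"onwall-type differential inequality, and then to deduce the moreover statement by a further ODE argument applied to the length integral.

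For the first bound, fix $v \in H^1_\C$ and a saddle connection $\g \in \L_{\k(0)}$. Using parallel transport in period coordinates to identify relative cohomology groups along $\k$, the pairing $v(\g)$ is constant in $t$, while $\hol_{\k(t)}(\g)$ varies $C^1$-smoothly with derivative $\dot\k(t)(\g)$ at all $t$ for which $\g$ persists as a saddle connection of $\k(t)$. The very definition of the AGY norm yields the pointwise bound $|\dot\k(t)(\g)| \leq \norm{\dot\k(t)}_{\k(t)} \cdot |\hol_{\k(t)}(\g)|$, so $\bigl|\tfrac{d}{dt}\log|\hol_{\k(t)}(\g)|\bigr| \leq \norm{\dot\k(t)}_{\k(t)}$. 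Integrating and exponentiating produces $e^{-\mrm{length}(\k|_{[s,t]})} \leq |\hol_{\k(t)}(\g)|/|\hol_{\k(s)}(\g)| \leq e^{\mrm{length}(\k|_{[s,t]})}$ on any sub-interval on which $\g$ remains a saddle connection.

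The subtle step is to assemble these pointwise bounds into a global control on $\norm{v}_{\k(t)}$, given that the set $\L_{\k(t)}$ of saddle connections may change along the path. To this end, I would restrict the sup defining $\norm{v}_{\k(t)}$ to saddle connections of uniformly bounded $\hol$-length -- this is harmless because the shortest saddle connections dominate the sup through their small denominators -- and use that the resulting finite collection is locally constant in period coordinates. Subdividing $\k$ into finitely many sub-paths over which this finite collection is constant and each of which lies inside a single period-coordinate chart, applying the pointwise bound on each piece, and chaining the multiplicative estimates, yields $\norm{v}_{\k(1)} \leq e^{\mrm{length}(\k)} \norm{v}_{\k(0)}$. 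The reverse inequality follows symmetrically by reversing the path.

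For the moreover statement, interpret $\k$ as the straight-line path in period coordinates with constant velocity $v$, so that $\dot\k(s) = v$ and $\norm{\dot\k(s)}_{\k(s)} = \norm{v}_{\k(s)}$, and set $f(t) = \int_0^t \norm{v}_{\k(s)}\;ds$. Applying the first estimate on $\k|_{[0,s]}$ gives $f'(s) = \norm{v}_{\k(s)} \leq \norm{v}_q \cdot e^{f(s)}$, which rearranges to $-\tfrac{d}{ds}\,e^{-f(s)} \leq \norm{v}_q$. Integrating from $0$ to $t$ yields $1 - e^{-f(t)} \leq t\,\norm{v}_q$, whence $f(t) \leq -\log(1 - t\norm{v}_q)$ for $t < 1/\norm{v}_q$. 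The main obstacle is the middle paragraph: making rigorous that one may truncate the sup to saddle connections of bounded length without loss, and verifying local constancy of the truncated collection, both rely on a careful continuity argument relating the AGY norm to the systole of the underlying flat surface, as carried out in detail in \cite{AG}.
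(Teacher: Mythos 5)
The paper does not supply its own proof of this statement: it cites it verbatim as \cite[Proposition 5.5]{AG} and the only proof-related comment in the paper is the following remark, that the second (``moreover'') assertion ``follows from the proof given in the cited reference.'' So there is no in-paper argument to compare against; what can be assessed is whether your reconstruction matches the Avila--Gou\"ezel mechanism, and it essentially does. Your first paragraph correctly isolates the key differential inequality: with $v$ parallel-transported (so $v(\g)$ is literally constant in period coordinates) and $\hol_{\k(t)}(\g)$ varying with derivative $\dot\k(t)(\g)$, the very definition $|\dot\k(t)(\g)|\leq\norm{\dot\k(t)}_{\k(t)}|\hol_{\k(t)}(\g)|$ gives $|\tfrac{d}{dt}\log|\hol_{\k(t)}(\g)||\leq\norm{\dot\k(t)}_{\k(t)}$, and your Gr\"onwall step for the moreover assertion (set $f(t)=\int_0^t\norm{v}_{\k(s)}ds$, use $f'(s)\leq\norm{v}_q e^{f(s)}$, integrate $-\tfrac{d}{ds}e^{-f(s)}\leq\norm{v}_q$) is exactly right; this is indeed how the second estimate is extracted from the first.

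The one place where your proposal is imprecise, rather than merely incomplete, is the heuristic offered in the second paragraph for why one may truncate the sup to short saddle connections: the claim that ``the shortest saddle connections dominate the sup through their small denominators'' is not correct as stated. The quantity $|v(\g)|/|\hol_q(\g)|$ is a ratio, and the numerator $|v(\g)|$ grows along with the length of $\g$, so long saddle connections need not contribute negligibly (e.g.\ for $v=\hol_q$ every $\g$ contributes exactly $1$). The genuine issue -- that the index set $\L_{\k(t)}$ over which the sup is taken changes along the path -- is real, and handling it requires comparing the contribution of a saddle connection $\g$ that appears or disappears to the contributions of the saddle connections into which it splits or from which it assembles, not simply truncating by length. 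You do flag this as the delicate step and defer to \cite{AG}, which is fair, but the specific reason you give for why the truncation is harmless should be replaced; as written it would not survive scrutiny.
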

\begin{rem}
    The second assertion of Proposition~\ref{prop:norm of parallel transport} follows from the proof given in the cited reference.
\end{rem}

We also need the following basic norm estimates of the derivative of the geodesic flow with respect to AGY norms.

\begin{lem}[{\cite[Lemma 5.2]{AG}}]
\label{lem:nonexpansion of stable}
    For all $q\in \HHu$, all $v\in H^1_{\mathbf{i}\R}$, and all $t\geq 0$, we have
    \begin{align*}
        \norm{Dg_t(q)v}_{g_tq} \leq \norm{v}_q,
    \end{align*}
    where $Dg_t(q):T_q\HHu\r T_{g_tq}\HHu$ is the derivative of the geodesic flow.
    Moreover, for all $v\in H^1_\C$,
    \begin{align*}
        e^{-2|t|} \norm{v}_q\leq \norm{Dg_t(q)v}_{g_tq} \leq e^{2|t|} \norm{v}_q
    \end{align*}
\end{lem}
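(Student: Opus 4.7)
The plan is to unwind the definitions so that the statement reduces to an elementary computation on $\mathbb{C}$. The key input is that period coordinates identify $T_q\HHu \cong H^1_\C$ and that the $\SL$-action is linear in these coordinates (equation \eqref{eq:SL action on hol}). Thus $Dg_t(q)$, viewed through period coordinates, is precisely post-composition of a $\C$-valued cocycle with the matrix $g_t$ acting on $\C$. A second input is that the geodesic flow preserves saddle connections (as topological curves joining zeros), so the index set $\Lambda_q$ over which the AGY sup is taken can be identified with $\Lambda_{g_t q}$ via parallel transport.

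With this in place, I would first write out both sides of the inequality saddle connection by saddle connection. Given $\gamma\in\Lambda_q$ with $\hol_q(\gamma) = a + \mathbf{i}b$, equation \eqref{eq:g_t action on hol} gives
\begin{equation*}
|\hol_{g_tq}(\gamma)|^2 \;=\; e^{2t}a^2 + e^{-2t}b^2.
\end{equation*}
For $t\geq 0$ one has the trivial bound $|\hol_{g_tq}(\gamma)| \geq e^{-t}|\hol_q(\gamma)|$, while for a vertical class $v\in H^1_{\mathbf{i}\R}$ the scalar $v(\gamma)$ is purely imaginary, so $(Dg_t(q)v)(\gamma) = e^{-t}v(\gamma)$ and hence $|(Dg_t(q)v)(\gamma)| = e^{-t}|v(\gamma)|$. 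Dividing, the $e^{-t}$ factors cancel and one obtains
\begin{equation*}
\frac{|(Dg_t(q)v)(\gamma)|}{|\hol_{g_tq}(\gamma)|} \;\leq\; \frac{|v(\gamma)|}{|\hol_q(\gamma)|}.
\end{equation*}
Taking the supremum over $\gamma$ yields the first claim $\norm{Dg_t(q)v}_{g_tq}\leq \norm{v}_q$.

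For the second assertion I would decompose an arbitrary $v\in H^1_\C$ into its real and imaginary components and apply the same coordinatewise analysis: writing $v(\gamma) = x + \mathbf{i}y$, one finds $|(Dg_t(q)v)(\gamma)|^2 = e^{2t}x^2 + e^{-2t}y^2$, which is squeezed between $e^{-2|t|}|v(\gamma)|^2$ and $e^{2|t|}|v(\gamma)|^2$. The same sandwich applies to $|\hol_{g_tq}(\gamma)|$. Combining the two sandwiches gives
\begin{equation*}
e^{-2|t|}\,\frac{|v(\gamma)|}{|\hol_q(\gamma)|} \;\leq\; \frac{|(Dg_t(q)v)(\gamma)|}{|\hol_{g_tq}(\gamma)|} \;\leq\; e^{2|t|}\,\frac{|v(\gamma)|}{|\hol_q(\gamma)|},
\end{equation*}
and taking suprema finishes the proof. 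There is no genuine obstacle here; the only subtle point is justifying that the AGY sup can be taken over the same index set on both sides, which is handled by observing that saddle connections are intrinsic to the flat structure and that $g_t$ is an $\R$-linear diffeomorphism on polygonal presentations, hence sends saddle connections of $q$ bijectively to those of $g_tq$.
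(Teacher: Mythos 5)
Your proof is correct and is the same direct computation that underlies the cited result of Avila--Gouëzel (the paper itself defers to \cite[Lemma~5.2]{AG} rather than reproducing the argument). You have correctly used that in holonomy period coordinates $Dg_t(q)$ is post-composition by $g_t$ acting on $\C$, that $\Lambda_q = \Lambda_{g_tq}$ because $g_t$ preserves the class of saddle connections, and the elementary sandwich $e^{-2|t|}(a^2+b^2)\leq e^{2t}a^2+e^{-2t}b^2\leq e^{2|t|}(a^2+b^2)$ applied separately to the numerator and denominator of the AGY ratio.
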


\begin{proof}
    The first estimate was shown in~\cite[Lemma 5.2]{AG} for the action on the marked stratum $\HHm$, which implies the corresponding estimates in $\HHu$ by invariance of the AGY norms under the mapping class group. 
    As noted in the discussion following Lemma 5.2 in \cite{AG}, the second inequality follows by the same proof of the first bound.
\end{proof}

\subsection{Local (un)stable manifolds}
\label{sec:(un)stable}

We recall the parametrization of local strong stable/unstable manifolds. 
Define $E^s(q)$ (resp.~$E^u(q)$) as the subspace of $H^1_{\mathbf{i}\R}$ (resp.$H^1_\R$) with $0$ intersection product with $\hol_q^{(x)}$ (resp.~$\hol_q^{(y)}$), where the intersection product is extended to relative cohomology by composing it with the projection to absolute cohomology as in \textsection\ref{sec:balanced spaces}.
Let $v \in E^s(q)$ be such that there is a path $\k: [0,1]\to \HHm$ with $\k(0)=q$ and $\dot{\k}(t)=v$ for all $t\in [0,1]$. Then, we define $\Psi^s_q(v)=\k(1)$. 
    In coordinates, if $\Psi_q^s(v)$ is defined, then 
    \begin{align}\label{eq:stable exp in coords}
     \hol^{(y)}_{\Psi_q^s(v)}   = \hol^{(y)}_q +v,
     \qquad 
     \hol^{(x)}_{\Psi_q^s(v)} = \hol^{(x)}_q.
    \end{align}
    The map $\Psi^u_q$ is defined analogously on $\R$-valued cohomology classes.
    The maps $\Psi^s_\bullet$ and $\Psi^u_\bullet$ play the role of exponential maps parametrizing strong stable/unstable leaves using their respective tangent spaces.
    The following key properties for this map will be important for us.
\begin{prop}
    [{\cite[Proposition 5.3]{AG}}]
    \label{prop:stable exp lipschitz}
    For all $q\in \HHu$, the map $v\mapsto\Psi^s_q(v)$ is well-defined for $v\in E^s(q)$ with $\norm{v}_q<1/2$.
    Moreover,  we have the bi-Lipschitz estimates
    \begin{align*}
        \dAGY(q,\Psi^s_q(v)) \leq 2 \norm{v}_q,
   \qquad
    \text{and}
    \qquad
        1/2 \leq \frac{\norm{v}_{\Psi^s_q(v)}}{\norm{v}_q}\leq 2.
    \end{align*}
    The analogous estimates also hold for $\Psi^u_q$.
\end{prop}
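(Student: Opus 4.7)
The plan is to realize $\Psi^s_q(v)$ as the endpoint of the straight-line path $\kappa \colon [0,1] \to \HHm$ in period coordinates defined by $\hol^{(x)}_{\kappa(t)} = \hol^{(x)}_q$ and $\hol^{(y)}_{\kappa(t)} = \hol^{(y)}_q + tv$, and to derive all three assertions from Proposition~\ref{prop:norm of parallel transport} applied to this path. Note that the path has constant velocity $\dot\kappa(t) \equiv v$, which is the structural feature that makes the argument tractable.

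For well-definedness on $[0,1]$, the key is to rule out that any saddle connection collapses along $\kappa$. Given a saddle connection $\gamma$ of $q$, the definition of the AGY norm gives $|v(\gamma)| \leq \norm{v}_q \cdot |\hol_q(\gamma)|$. Since $v \in H^1_{\mathbf{i}\R}$ modifies only the imaginary part of $\hol_q(\gamma)$, while the real part $\hol_q^{(x)}(\gamma)$ is preserved, a direct computation yields $|\hol_{\kappa(t)}(\gamma)|^2 \geq (1 - 2t\norm{v}_q) \cdot |\hol_q(\gamma)|^2$, which is uniformly positive for $\norm{v}_q < 1/2$ and $t \in [0,1]$. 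This systolic lower bound keeps $\kappa(t)$ inside a fixed compactum of $\HHm$; combined with the local injectivity of period coordinates and a standard continuation argument (the set of $t\in[0,1]$ for which $\kappa$ is defined is open by local injectivity and closed by the compactness just established), the straight-line path extends to all of $[0,1]$ and stays in $\HHm$, so $\Psi^s_q(v) := \kappa(1)$ is well-defined.

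With $\kappa$ in hand, both estimates follow by applying Proposition~\ref{prop:norm of parallel transport} to the vector $v$ along $\kappa$. Since $\dot\kappa(t) \equiv v$, we have $\mrm{length}(\kappa) = \int_0^1 \norm{v}_{\kappa(t)}\;dt$, and the second assertion of Proposition~\ref{prop:norm of parallel transport} (with $t = 1 < 1/\norm{v}_q$) gives $\mrm{length}(\kappa) \leq -\log(1 - \norm{v}_q)$. Combining with the elementary inequality $-\log(1-x) \leq 2x$ on $[0,1/2]$ yields $\dAGY(q, \Psi^s_q(v)) \leq \mrm{length}(\kappa) \leq 2\norm{v}_q$. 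Applying the first assertion of Proposition~\ref{prop:norm of parallel transport} to $v$ gives $e^{-\mrm{length}(\kappa)} \leq \norm{v}_{\Psi^s_q(v)}/\norm{v}_q \leq e^{\mrm{length}(\kappa)}$; since $\mrm{length}(\kappa) \leq -\log(1 - \norm{v}_q) \leq \log 2$ for $\norm{v}_q \leq 1/2$, this yields the desired bi-Lipschitz ratio of $2$.

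The analogous estimates for $\Psi^u_q$ follow from an identical argument after swapping the roles of horizontal and vertical period coordinates. The main obstacle throughout is the well-definedness step: one must genuinely rule out that the straight-line deformation hits the boundary of the stratum, which requires the systolic estimate on all saddle connections. Once that is in place, the remaining bi-Lipschitz estimates are essentially a direct application of Proposition~\ref{prop:norm of parallel transport}.
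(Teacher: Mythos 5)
Your approach matches the one that Avila--Gou\"ezel themselves take for this proposition: set up the constant-velocity straight-line path $\kappa$ in period coordinates and deduce everything from Proposition~\ref{prop:norm of parallel transport}. The two numerical estimates are handled correctly. Since $\dot\kappa(t)\equiv v$, the second assertion of Proposition~\ref{prop:norm of parallel transport} bounds $\mrm{length}(\kappa)$ by $-\log(1-\norm{v}_q)$, and the elementary inequalities $-\log(1-x)\leq 2x$ and $-\log(1-x)\leq \log 2$ on $[0,1/2]$ then give both the distance bound and the bi-Lipschitz ratio of $2$ exactly as you argue. The reduction of $\Psi^u_q$ to $\Psi^s_q$ by exchanging real and imaginary parts is also fine.

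The gap is in the well-definedness step. Your inequality $|\hol_{\kappa(t)}(\gamma)|^2 \geq (1-2t\norm{v}_q)|\hol_q(\gamma)|^2$ is verified only for $\gamma\in\L_q$, i.e.\ saddle connections of the initial surface $q$. But the systole of $\kappa(t)$ is an infimum over $\L_{\kappa(t)}$, and as the flat structure deforms new saddle connections can appear whose classes are not in $\L_q$; your estimate says nothing about these, so the ``systolic lower bound'' and the ensuing compactness claim are not actually established. (A further minor point: compactness also requires the area to stay bounded, which here is automatic because $v\in E^s(q)$ has vanishing intersection pairing with $\hol_q^{(x)}$, so the area is constant along $\kappa$---but this deserves a sentence.) The standard way to close the gap, and the one the cited source uses, is not a systole computation but the observation that the length estimate $\int_0^T\norm{v}_{\kappa(s)}\,ds\leq -\log(1-T\norm{v}_q)$ holds a priori on any interval of definition, so $\kappa$ has finite AGY length on its maximal interval; one then invokes completeness of the AGY metric (a separate result in Avila--Gou\"ezel) to extend the path to the closed interval, and the local inverse-function theorem for period coordinates to extend further, concluding the maximal interval is all of $[0,1]$ when $\norm{v}_q<1/2$. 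If you prefer to argue via the systole, you would need a differential inequality on $\mrm{sys}(\kappa(t))$ itself that tracks the changing set $\L_{\kappa(t)}$, which is a real argument you haven't supplied.
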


Lemma~\ref{lem:nonexpansion of stable} implies the following natural equivariance property of the $\Psi^s_\bullet$.
\begin{cor}\label{cor:equivariance of stable exp}
For all $t\geq 0$, $q\in\HHu$, $v\in E^s(q)$ with $\norm{v}_q<1/2$, we have
   \begin{align*}
       g_t \Psi_q^s(v) = \Psi^s_{g_tq}(Dg_t(q)v).
   \end{align*}
\end{cor}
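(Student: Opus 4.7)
The plan is to verify that both sides of the claimed equality are well-defined and then show that they agree in period coordinates, appealing to injectivity of those coordinates on sufficiently small balls.

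First I would check that the right-hand side makes sense. The subspace $E^s(g_tq)$ is characterized by vanishing intersection with $\hol^{(x)}_{g_tq}$, and by \eqref{eq:g_t action on hol} we have $\hol^{(x)}_{g_tq} = e^t \hol^{(x)}_q$; hence $E^s(g_tq) = E^s(q)$ once we identify vertical cohomology classes along the $g_t$-orbit via the derivative $Dg_t(q)$ (which preserves $H^1_{\mathbf{i}\R}$). In particular, $Dg_t(q)v \in E^s(g_t q)$. Next, Lemma \ref{lem:nonexpansion of stable} gives $\norm{Dg_t(q)v}_{g_tq} \leq \norm{v}_q < 1/2$, so Proposition \ref{prop:stable exp lipschitz} ensures that $\Psi^s_{g_tq}(Dg_t(q)v)$ is well-defined.

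Next I would compute both sides in holonomy period coordinates. On the one hand, by \eqref{eq:stable exp in coords} applied at $q$, the surface $\Psi^s_q(v)$ satisfies $\hol^{(x)}_{\Psi_q^s(v)} = \hol^{(x)}_q$ and $\hol^{(y)}_{\Psi_q^s(v)} = \hol^{(y)}_q + v$. Applying $g_t$ and using \eqref{eq:g_t action on hol} yields
\begin{align*}
    \hol^{(x)}_{g_t\Psi^s_q(v)} = e^t \hol^{(x)}_q,
    \qquad
    \hol^{(y)}_{g_t\Psi^s_q(v)} = e^{-t}\hol^{(y)}_q + e^{-t} v.
\end{align*}
On the other hand, \eqref{eq:stable exp in coords} applied at $g_tq$ with the vector $Dg_t(q)v$ gives
\begin{align*}
    \hol^{(x)}_{\Psi^s_{g_tq}(Dg_t(q)v)} = \hol^{(x)}_{g_tq} = e^t \hol^{(x)}_q,
    \qquad
    \hol^{(y)}_{\Psi^s_{g_tq}(Dg_t(q)v)} = \hol^{(y)}_{g_tq} + Dg_t(q)v = e^{-t}\hol^{(y)}_q + Dg_t(q)v.
\end{align*}
Since $g_t$ acts on $H^1_{\mathbf{i}\R}$ by multiplication by $e^{-t}$ (again by \eqref{eq:g_t action on hol}), we have $Dg_t(q)v = e^{-t} v$, so the two coordinate descriptions coincide.

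Finally, I would invoke injectivity of holonomy period coordinates on a sufficiently small neighborhood containing both points: both $g_t\Psi^s_q(v)$ and $\Psi^s_{g_tq}(Dg_t(q)v)$ lie in a ball around $g_tq$ of AGY radius at most $2\norm{Dg_t(q)v}_{g_tq} < 1$, via Proposition \ref{prop:stable exp lipschitz}, so agreement of the period coordinates forces equality of the surfaces. There is no real obstacle here; the only subtle point is the identification $Dg_t(q)|_{H^1_{\mathbf{i}\R}} = e^{-t}\mathrm{id}$, which is immediate from the $g_t$-action formula on cohomology.
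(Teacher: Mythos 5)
Your computation in period coordinates is correct, and the route you take is genuinely different from the paper's. However, the last step has a gap: you close the argument by invoking ``injectivity of holonomy period coordinates'' on an AGY ball of radius $<1$ around $g_tq$, but nothing in the paper (or in the cited material) establishes a uniform lower bound on the injectivity radius of period coordinates with respect to the AGY metric, and this is not a trivial fact. The paper sidesteps the issue entirely: let $\kappa:[0,1]\to\HHm$ be the defining path with $\kappa(0)=q$, $\kappa(1)=\Psi^s_q(v)$, and $\dot\kappa\equiv v$; then $r\mapsto g_t\kappa(r)$ is a $C^1$ path from $g_tq$ to $g_t\Psi^s_q(v)$ with constant derivative $Dg_t(q)v$ (this is immediate in $\HHm$, where $g_t$ is linear in period coordinates), and so $g_t\Psi^s_q(v)=\Psi^s_{g_tq}(Dg_t(q)v)$ directly by the definition of $\Psi^s_{g_tq}$. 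This avoids any appeal to injectivity of period coordinates. You also need the well-definedness check (Lemma~\ref{lem:nonexpansion of stable} plus Proposition~\ref{prop:stable exp lipschitz}), which you do include.

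A secondary inaccuracy: you assert both points lie within $2\norm{Dg_t(q)v}_{g_tq}$ of $g_tq$ by Proposition~\ref{prop:stable exp lipschitz}. That bound is correct for $\Psi^s_{g_tq}(Dg_t(q)v)$; for $g_t\Psi^s_q(v)$, one gets only the weaker $2\norm{v}_q$ bound by pushing forward the defining path and applying nonexpansion of stable vectors, since $\norm{Dg_t(q)v}_{g_tq}$ may be substantially smaller. If you insist on the coordinate-based approach, the cleanest fix is to note that the pushed path $g_t\kappa$ exhibits $g_t\Psi^s_q(v)$ as lying on the local strong stable leaf through $g_tq$, which is parametrized injectively by $\Psi^s_{g_tq}$; but at that point you are already running the paper's argument.
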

\begin{proof}
    First, we note that it suffices to prove the corollary in the marked stratum $\HHm$.
    By Lemma~\ref{lem:nonexpansion of stable}, we have $\norm{Dg_t(q)v}_{g_tq}<1/2$, and hence $\norm{Dg_t(q)v}_{g_tq}$ is well-defined by Proposition~\ref{prop:stable exp lipschitz}.
    Let $\k:[0,1]\r \HHm$ be a path such that $\k(0)=1, \k(1)=\Psi^s_q(v)$, and $\dot\k(r)=v$ for all $r\in [0,1]$.
    Then, $r\mapsto g_t\k(r)$ is a path joining $g_tq$ to $g_t \Psi_q^s(v)$, with constant derivative equal $Dg_t(q)v$.
    The corollary follows by definition of $\Psi^s_{g_tq}$.
    \qedhere
\end{proof}

The following substantial strengthening of Lemma~\ref{lem:nonexpansion of stable} follows from non-uniform hyperbolicity of the Teichm\"uller geodesic flow proved by Forni in~\cite[Lemma 2.1']{Forni-Annals}.
\begin{prop}
    [{\cite[Proposition 4.3]{AG}}]
    \label{prop:nonuniform hyp}
    Given a compact subset $L\subset\HHu$ and $\d>0$, there is $T=T(L,\d)>0$ such that for all $q\in L$, $v\in E^s(q)$, and $t\geq 0$ such that $g_tq\in L$ and the set of $r\in [0,t]$ with $g_rq\in L$ has measure $\geq T$, 
    we have $\norm{Dg_t(q)v}_{g_tq}\leq \d \norm{v}_q$.
\end{prop}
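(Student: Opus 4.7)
The plan is to combine the non-expansion estimate of Lemma~\ref{lem:nonexpansion of stable} with Forni's non-uniform hyperbolicity theorem \cite[Lemma 2.1']{Forni-Annals}, which asserts that the top Lyapunov exponent of the cocycle $Dg_t$ restricted to the stable bundle $E^s$ is strictly negative with respect to every $g_t$-invariant ergodic probability measure on $\HHu$. The overall strategy is to upgrade this measure-theoretic input to the asserted uniform quantitative bound via a compactness and contradiction argument, so the bulk of the work is to control how the contraction along an orbit depends only on its sojourn time in $L$.

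First I would recast the statement in cocycle language. Setting $\phi(q,t):=\log\norm{Dg_t(q)|_{E^s(q)}}_{\mrm{op}}$, Lemma~\ref{lem:nonexpansion of stable} gives $\phi(q,t)\leq 0$ for all $t\geq 0$, while the chain rule combined with submultiplicativity of the operator norm yields the subadditive relation $\phi(q,t+s)\leq\phi(q,t)+\phi(g_tq,s)$. Thus contraction accumulated over any sub-interval of the orbit persists under extension, and the conclusion of the proposition becomes the assertion that $\phi(q,t)\leq\log\d$ whenever $q,g_tq\in L$ and the sojourn time $S_L(q,t):=|\set{r\in[0,t]:g_rq\in L}|\geq T$.

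I would then argue by contradiction. Assuming the conclusion fails, one obtains $\d>0$ and sequences $q_n\in L$, $t_n\geq 0$ with $g_{t_n}q_n\in L$, $S_L(q_n,t_n)\to\infty$, and $\phi(q_n,t_n)>\log\d$. Form the normalized occupation measures on visits to $L$, namely $\nu_n:=S_L(q_n,t_n)^{-1}\int_0^{t_n}\mathbbm{1}_L(g_rq_n)\d_{g_rq_n}\,dr$, which are probability measures supported on $L$, and pass to a weak-$\ast$ subsequential limit $\mu$. Using a Poincar\'e return / induced-flow construction to $L$, realize $\mu$ as the trace on $L$ of a $g_t$-invariant Borel probability measure $\nu$ supported in a fixed compact neighborhood $L'$ of $L$. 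By Forni's theorem, the top $E^s$-Lyapunov exponent $\l(\nu)$ is strictly negative. Combining Kingman's sub-additive ergodic theorem with the subadditivity of $\phi$ to localize the contribution to $\phi(q_n,t_n)$ coming from visit windows in $L$, and using continuity and uniform boundedness of $\phi(-,T)$ on $L'$ for fixed large $T$, one then concludes $\phi(q_n,t_n)\leq -|\l(\nu)|\cdot S_L(q_n,t_n)/2\to-\infty$, contradicting $\phi(q_n,t_n)>\log\d$.

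The main obstacle is constructing the invariant limit $\nu$ and making the Kingman-to-quantitative passage uniform. When $t_n/S_L(q_n,t_n)\to\infty$, naive time-averages may concentrate mass in the complement of $L$ or escape into the cusps of $\HHu$, so one cannot directly pass to the limit of the raw occupation measures; instead one must work with the induced flow on $L$ and verify that its invariant measures inherit the strict negativity of Forni's theorem in a uniform manner. A secondary issue is that Lyapunov exponents of sub-additive cocycles are not continuous in the invariant measure in general; here the continuity of $\phi(-,T)$ on $L'$ for each fixed $T$ and the bound $-2T\leq\phi(q,T)\leq 0$ from Lemma~\ref{lem:nonexpansion of stable} control the passage to the limit and close the argument.
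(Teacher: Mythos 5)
The paper does not give a proof of this statement; it cites \cite[Proposition~4.3]{AG} and merely remarks that the $\delta=1/2$ case proved there implies the general case by iteration, using the cocycle property and non-expansion (Lemma~\ref{lem:nonexpansion of stable}) to concatenate several windows of sojourn time $T_{1/2}$. So there is no in-paper proof to compare against, and I must evaluate your sketch on its own merits. Your overall strategy — reduce to negative top Lyapunov exponent on $E^s$ via Forni \cite[Lemma 2.1']{Forni-Annals}, then upgrade to a uniform quantitative bound by a compactness/contradiction argument — is indeed the only reasonable route, and you correctly isolate the two hard points. Unfortunately, both of them contain genuine gaps rather than routine omitted details.

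First, the construction of the invariant measure $\nu$ is not sound as described. Your weak-$\ast$ limit $\mu$ of the conditioned occupation measures $\nu_n$ is a probability measure on $L$, but it is not invariant under any return map — the orbit segments of $q_n$ do not close up, and there is no a priori reason the limit should be stationary for the first-hit dynamics on $L$. More seriously, even if one had a return-map-invariant measure $\mu$ on $L$, the induced (suspension) measure $\nu$ is supported on the union of return excursions, and since return times to a compact set $L\subset\HHu$ are unbounded, $\nu$ is \emph{not} supported in any fixed compact neighborhood $L'$ of $L$; it need not even be a finite measure unless the return time is $\mu$-integrable (Kac), which you have not verified. Your subsequent appeal to "continuity and uniform boundedness of $\phi(-,T)$ on $L'$" therefore rests on a false premise. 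Second, the final step invoking Kingman cannot close the argument: Kingman gives $\nu$-a.e.\ convergence $\phi(q,t)/t\to\lambda(\nu)$, while the specific exceptional points $q_n$ you chose are precisely the ones you have no reason to believe are $\nu$-generic. The standard way to close such arguments avoids Kingman and invariant-measure suspensions altogether: one forms empirical measures on the projective bundle $\P E^s$, normalized by $t_n$, against the continuous fiberwise growth functional $\psi(q,[v])=\tfrac{d}{dr}\big|_{r=0}\log\norm{Dg_r(q)v}_{g_rq}$, notes that $\tfrac{1}{t_n}\log\norm{Dg_{t_n}(q_n)v_n}=\tfrac{1}{t_n}\int_0^{t_n}\psi(g_rq_n,[Dg_r(q_n)v_n])\,dr$, and argues that any weak-$\ast$ limit $\hat\mu$ is $g_t$-invariant and satisfies $\int\psi\,d\hat\mu<0$ by Forni, contradicting the lower bound $\geq\log\delta/t_n\to 0$. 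Even then one must rule out total escape of mass of $\hat\mu_n$ in the case $S_L(q_n,t_n)/t_n\to 0$; this is where the non-expansion Lemma~\ref{lem:nonexpansion of stable} must be used to pass to a sub-segment of the orbit of bounded "dilution," a step that is genuinely nontrivial and that your sketch does not address.
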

\begin{rem}
    Proposition~\ref{prop:nonuniform hyp} is stated in~\cite{AG} for $\d=1/2$, however the same argument works for any $\d>0$.
\end{rem}

\subsection{The Kontsevich-Zorich cocycle}
\label{sec:KZ cocycle}
The standard reference for the discussion in this section is~\cite{ForMat}.
Recall the notation set in Convention \ref{torsion free convention}.
The Universal Coefficient Theorem provides a splitting
\begin{align*}
    H^1(M_q,\Sigma(q);\C) \cong H^1(M_q,\Sigma(q);\R)\otimes \C.
\end{align*}
Recalling that the left hand-side is identified with the tangent space to $\HHu$ at $q$, we also have that this splitting is invariant by the derivative $Dg(q):T_q\HHu\r T_{gq}\HHu$ of $g\in \SL$ ; cf.~\cite[Section 2.2]{ChaikaKhalilSmillie}.
Moreover, there is a linear dynamical cocycle, known as the \textit{Kontsevich-Zorich cocycle} (KZ for short), and denoted\footnote{Note that the KZ cocycle is usually defined on absolute cohomology elsewhere in the literature.} $\mrm{KZ}(g,q): H^1(M_q,\Sigma(q);\R) \r H^1(M_{gq},\Sigma(gq);\R)$, so that the derivative can be written as
\begin{align*}
    Dg(q) = \mrm{KZ}(g,q)\otimes g,
\end{align*}
where $g$ acts on $\C$ via its standard linear action on the plane.
In particular, the chain rule implies the cocycle property
\begin{align}\label{eq:cocycle property}
    \mrm{KZ}(gh,q) = \mrm{KZ}(g,hq)\mrm{KZ}(h,q).
\end{align}
Alternatively, this cocycle can be defined as the quotient by $\mrm{Mod}(S,\Sigma)$ of the trivial cocycle over $\SL$ action on $\HHm\times H^1(S,\Sigma;\R)$, given by $g\cdot (\tilde{q},v) = (g\tilde{q},v)$.
This quotient gives a well-defined linear cocycle in light of our Convention \ref{torsion free convention}.

We record the following immediate corollary of Lemma~\ref{lem:nonexpansion of stable} on norm bounds of the cocycle.
Fix a matrix norm on $\SL$, denoted $\norm{\cdot}$.

\begin{cor}
    \label{cor:KZ norm}
    For all $g\in\SL$ and $q\in \HHu$, we have
    \begin{align*}
        \norm{\mrm{KZ}(g,q)}_{q\r gq} \ll \norm{g}^{O(1)},
    \end{align*}
    where $\norm{\mrm{KZ}(g,q)}_{q\r gq}$ denotes the operator norm of the cocycle with respect to the AGY norms at $q$ and $gq$ respectively.
\end{cor}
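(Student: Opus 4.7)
The plan is to realize $\mrm{KZ}(g,q)$ as parallel transport along a path in the $\SL$-orbit of $q$, and to bound its operator norm via Proposition~\ref{prop:norm of parallel transport} after controlling the AGY length of this path by $O(\log\norm{g})$. Fix any smooth path $g(\cdot):[0,1]\to\SL$ from the identity to $g$, and set $\kappa(s)=g(s)q$. Lifting $\kappa$ to the marked stratum, where the Hodge bundle is canonically trivialized, shows that the mapping class group holonomy around the lifted arc is precisely $\mrm{KZ}(g,q)$, since $\mrm{KZ}$ arises as the quotient of the trivial cocycle on $\HHm$ by $\mrm{Mod}(S,\Sigma)$ as recalled in \textsection\ref{sec:KZ cocycle}. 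Proposition~\ref{prop:norm of parallel transport} then yields
\[
    \norm{\mrm{KZ}(g,q)v}_{gq}\leq e^{\mrm{length}(\kappa)}\norm{v}_q
    \qquad\text{for all } v\in H^1_\R,
\]
so it suffices to bound $\mrm{length}(\kappa)$ by $O(\log\norm{g})$.

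To this end, I write $\dot{\kappa}(s)=\xi(s)\cdot\kappa(s)$ with $\xi(s)=\dot{g}(s)g(s)^{-1}\in\mf{sl}_2(\R)$. In holonomy period coordinates, the tangent vector $\xi(s)\cdot\kappa(s)$ corresponds to the cohomology class $\xi(s)\circ\hol_{\kappa(s)}\in H^1_\C$, so the definition of the AGY norm immediately yields the pointwise estimate
\[
    \norm{\dot{\kappa}(s)}_{\kappa(s)} = \sup_{\g\in\L_{\kappa(s)}}\frac{|\xi(s)\hol_{\kappa(s)}(\g)|}{|\hol_{\kappa(s)}(\g)|}\leq \norm{\xi(s)}.
\]
Using the Cartan decomposition $g=k_1 g_t k_2$ with $k_1,k_2\in\mrm{SO}(2)$ and $e^{|t|}\asymp\norm{g}$, I choose $g(\cdot)$ to be the concatenation of a bounded-length path in $\mrm{SO}(2)$ reaching $k_2$, the geodesic segment $s\mapsto g_s k_2$ for $s\in[0,|t|]$, and a bounded-length path of the form $k(s)\, g_t k_2$ terminating at $g$. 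A short computation gives $\dot{g}(s)g(s)^{-1}=\dot{k}(s)k(s)^{-1}$ on the first and last segments, so $\xi(s)$ there lies in the bounded subset $\mf{so}(2)\cap\{\norm{\cdot}\leq C\}$, contributing $O(1)$ each, while on the geodesic segment $\xi(s)$ is the generator of $g_t$ of unit norm, contributing $|t|=O(\log\norm{g})$. Exponentiating the resulting total length bound yields the claim.

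I expect the only delicate point to be the identification of parallel transport on $\HHm$ with the KZ cocycle on $\HHu$ in the first paragraph; this is essentially the definition of $\mrm{KZ}$ as a quotient cocycle, but deserves a sentence. An alternative that avoids this point altogether is to handle the geodesic factor directly from Lemma~\ref{lem:nonexpansion of stable}: for $v\in H^1_\R$ viewed as $v\otimes 1\in H^1_\C$, the decomposition $Dg_t=\mrm{KZ}(g_t,\cdot)\otimes g_t$ gives $Dg_t(v\otimes 1)=e^t\,\mrm{KZ}(g_t,q)v$, so Lemma~\ref{lem:nonexpansion of stable} yields $\norm{\mrm{KZ}(g_t,q)}_{q\to g_t q}\leq e^{|t|}$ directly; combined with the cocycle property~\eqref{eq:cocycle property} and the $O(1)$ bound on the compact factors via Proposition~\ref{prop:norm of parallel transport}, this gives an equivalent proof.
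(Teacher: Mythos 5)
Your main argument is correct and takes a genuinely different route from the paper's. The paper's proof proceeds through the derivative factorization $Dg = \mrm{KZ}(g,\cdot)\otimes g$: it uses the polar decomposition $g = k_1 g_t k_2$, the cocycle property, $\mrm{SO}(2)$-invariance of the AGY norms (so the rotation factors contribute nothing), and Lemma~\ref{lem:nonexpansion of stable} for the diagonal factor --- this is essentially your ``alternative.'' Your main route instead realizes $\norm{\mrm{KZ}(g,q)}_{q\to gq}$ directly as a ratio of AGY norms at the endpoints of a lifted path $\kappa$ in $\HHm$, where the cocycle trivializes, and bounds this ratio via Proposition~\ref{prop:norm of parallel transport} after controlling $\mrm{length}(\kappa)$ by $O(\log\norm{g})$; the pointwise bound $\norm{\dot\kappa(s)}_{\kappa(s)}\leq\norm{\xi(s)}$ is exactly right, since $\hol_{g(s)q}=g(s)\circ\hol_q$ forces $\dot\kappa(s)=\xi(s)\circ\hol_{\kappa(s)}$ in period coordinates and the AGY norm of the latter is bounded by the Euclidean operator norm of $\xi(s)$. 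What your route buys is that it bypasses the $Dg=\mrm{KZ}\otimes g$ bookkeeping and Lemma~\ref{lem:nonexpansion of stable} entirely, reducing the corollary to a path-length estimate in $\SL$ plus the single bi-Lipschitz bound in Proposition~\ref{prop:norm of parallel transport}. One minor wrinkle in your alternative: pairing with $v\otimes 1$ and using the upper bound $e^{2|t|}$ from Lemma~\ref{lem:nonexpansion of stable} gives $\norm{\mrm{KZ}(g_t,q)}_{\mrm{op}}\leq e^{3|t|}$ when $t<0$, not $e^{|t|}$; to get $e^{|t|}$ uniformly you should pair with $v\otimes\mathbf{i}$ when $t<0$, or simply arrange $t\geq 0$ in the polar decomposition. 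This has no effect on the corollary.
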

\begin{proof}
    The corollary follows by the polar decomposition for $\SL$, the cocycle property, $\mrm{SO}(2)$-invariance of AGY-norms, and Lemma~\ref{lem:nonexpansion of stable}.
\end{proof}

\subsection{Standing notation}
\label{sec:notation}
We introduce convenient notation to be used throughout the article.
Let $\w$ be a horizontally periodic Veech surface.
We use $\Vcal$ to denote the closed $\SL$-orbit of $\w$.
For convenience, we always assume that our Veech surface is $1$-periodic for $u(s)$, i.e. $u(1)\w=\w$.

Given $\b\in \twist(\w)$, and $t,s,r>0$, we let
    \begin{align}\label{eq:notation}
    \T(\w) &= \set{u(s)\cdot \Trem(\w,\b): \b\in \twist(\w),s\in\R},
        \qquad 
        &\w(t,s) &= g_t u(s)\w \in\Vcal,
    \nonumber\\ 
        \T(\w,\b) &=\set{u(s)\cdot\Trem(\w,r\b):r,s\in\R},
        \qquad 
        &\b(t,s) &= e^t\cdot \KZ{g_t}{u(s)\w}\cdot\b ,
        \nonumber\\
        \mrm{Trem}_\b(t,s,r) &= \mrm{Trem}(\w(t,s),r\cdot \b(t,s)) \in \HHu,
        \qquad
        &N_\b(t,s) &= \norm{\b(t,s)}_{\w(t,s)}.
    \end{align}

\begin{rem}
    When the vector $\b$ is fixed, we write $\Trem(t,s,r)$ and $N(t,s)$ for $\Trem_\b(t,s,r)$ and $N_\b(t,s)$ respectively to simplify notation.
\end{rem}

With the above notation, we recall the following equivariance property of horizontal cylinder twists under the action of  $g_t$.
\begin{lem}[{\cite[Lemma 2.4]{ChaikaKhalilSmillie} and \cite[Proposition 6.5]{CSW}}]
\label{lem:renorm}
Let $q\in \HHu$ and $\b\in\twist(q)$.
Then, for all $t\in\R$,
\begin{align*}
    g_t\cdot \mrm{Trem}(q,\b) 
    &= \mrm{Trem}(g_tq, e^t\cdot \mrm{KZ}(g_t,q)\cdot\b).
\end{align*}
\end{lem}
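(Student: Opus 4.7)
The plan is to verify the stated identity in period coordinates by lifting to the marked stratum $\HHm$, where the Kontsevich--Zorich cocycle becomes trivial. Concretely, choose a lift $\tilde q\in \HHm$ of $q$ with marking $\phi:(S,\Sigma)\to (M_{\tilde q},\Sigma(\tilde q))$, and use this marking to identify every relative cohomology group that appears with the fixed model space $H^1(S,\Sigma;\R)$. Under this identification, the cocycle over the $\SL$-action on $\HHm$ is the identity (this is precisely the description of $\KZ$ given in \textsection\ref{sec:KZ cocycle} as the quotient of a trivial cocycle by $\mrm{Mod}(S,\Sigma)$), so the target identity on $\HHu$ will descend from the cleaner identity $g_t\cdot\Trem(\tilde q,\tilde\b)=\Trem(g_t\tilde q, e^t\tilde\b)$ on the marked cover, where $\tilde\b\in H^1(S,\Sigma;\R)$ is the model-space representative of $\b$.

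To verify this identity, I would compute both sides via~\eqref{eq:g_t action on hol} and~\eqref{eq:tremor in coordinates}. The left-hand side first modifies only the horizontal period by $\tilde\b$ and then scales, giving
\[
\hol^{(x)}_{g_t\Trem(\tilde q,\tilde\b)}=e^t\bigl(\hol^{(x)}_{\tilde q}+\tilde\b\bigr),\qquad \hol^{(y)}_{g_t\Trem(\tilde q,\tilde\b)}=e^{-t}\hol^{(y)}_{\tilde q}.
\]
The right-hand side first scales and then adds the horizontal twist $e^t\tilde\b$, yielding exactly the same horizontal and vertical periods. Since period coordinates are injective on $\HHm$, the two surfaces coincide. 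Projecting down to $\HHu=\HHm/\mrm{Mod}(S,\Sigma)$, the trivial identification of horizontal cohomology used above is replaced by $\KZ(g_t,q):H^1(M_q,\Sigma(q);\R)\to H^1(M_{g_tq},\Sigma(g_tq);\R)$, converting the class $\tilde\b$ into $\KZ(g_t,q)\b$ and producing the claimed factor $e^t\KZ(g_t,q)\b$.

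There is no serious obstacle here beyond bookkeeping, and the identity is essentially a consequence of the definitions once the marked/unmarked translation is made carefully. The only minor subtlety I would flag is that the period coordinate formula~\eqref{eq:tremor in coordinates} for $\mrm{Trem}$ was stated for a single cylinder class $\tilde\b=\t\b_C$, while the target identity allows any $\b\in\twist(q)$; this is handled by the additivity and commutativity of simultaneous horizontal cylinder twists noted in \textsection\ref{sec:twists}, which ensures that for $\b=\sum_i t_i\b_{C_i}$ one still has $\hol^{(x)}_{\Trem(q,\b)}=\hol^{(x)}_q+\b$ and $\hol^{(y)}_{\Trem(q,\b)}=\hol^{(y)}_q$. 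Once this extension is in place, the coordinate computation above applies verbatim and the lemma follows.
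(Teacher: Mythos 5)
The paper does not actually prove this lemma; it is cited from \cite{ChaikaKhalilSmillie} and \cite{CSW}, so there is no in-paper argument to compare against. Your approach — lift to $\HHm$, use the trivialization of $\KZ$ there, verify the identity via \eqref{eq:g_t action on hol} and \eqref{eq:tremor in coordinates}, and descend — is the standard one, and the coordinate bookkeeping, including the reduction to single-cylinder classes via additivity of commuting cylinder twists, is correct.

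The one point to tighten is the sentence ``Since period coordinates are injective on $\HHm$, the two surfaces coincide.'' Period coordinates give only a local homeomorphism on $\HHm$, not a global embedding, so having the same holonomy vector does not by itself force the two surfaces to coincide when $t$ or $\b$ is large. The routine fix is to argue path-by-path: both $t\mapsto g_t\,\Trem(\tilde q,\tilde\b)$ and $t\mapsto \Trem(g_t\tilde q, e^t\tilde\b)$ are continuous curves in $\HHm$ agreeing at $t=0$, and your computation shows their period-coordinate derivatives agree at every $t$; by local injectivity of period coordinates and connectedness of $[0,t]$, the curves coincide for all $t$, and the same applies in the $\b$-parameter. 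With this adjustment your proof is complete.
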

We will also need the following simple lemma.
\begin{lem}\label{lem:equivariance of notation}
    For all $t,\ell,s,r\in \R $ and $\b\in \twist(\w)$, we have
    \begin{enumerate}
        \item $N_\b(t+\ell,s)\leq e^{2|\ell|} N_\b(t,s) $.
        \item $ g_\ell \cdot \Trem_\b(t,s, r)= \Trem_\b(t+\ell,s,r)$.
    \end{enumerate}
\end{lem}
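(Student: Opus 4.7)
\smallskip

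My plan is that both assertions of Lemma~\ref{lem:equivariance of notation} reduce to direct unwinding of the definitions in \eqref{eq:notation}, combined with the cocycle property \eqref{eq:cocycle property} of the Kontsevich--Zorich cocycle and the basic estimates recalled earlier. Neither part requires a genuinely new idea, so I do not anticipate a main obstacle beyond bookkeeping.

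The first step is to derive the renormalization identity
\[
\b(t+\ell,s) \;=\; e^{\ell}\cdot \KZ{g_\ell}{\w(t,s)}\cdot \b(t,s),
\qquad
\w(t+\ell,s) \;=\; g_\ell\cdot \w(t,s),
\]
by combining the definition of $\b(\cdot,\cdot)$ in \eqref{eq:notation} with the cocycle identity $\KZ{g_{t+\ell}}{u(s)\w}=\KZ{g_\ell}{\w(t,s)}\,\KZ{g_t}{u(s)\w}$.

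For part (1), I would then interpret the right hand side of the displayed identity as $Dg_\ell(\w(t,s))\cdot \b(t,s)$. Indeed, viewing $\b(t,s)\in H^1_\R$ as a horizontal cohomology class, the factor $e^\ell$ coming from \eqref{eq:g_t action on hol} is precisely the horizontal scaling of $g_\ell$ on $\C$, so under the decomposition $Dg_\ell=\KZ{g_\ell}{\cdot}\otimes g_\ell$ recalled in \textsection\ref{sec:KZ cocycle} we have
\[
Dg_\ell(\w(t,s))\cdot\b(t,s)\;=\; e^{\ell}\cdot \KZ{g_\ell}{\w(t,s)}\cdot \b(t,s).
\]
Applying the second estimate of Lemma~\ref{lem:nonexpansion of stable} to this vector (with base point $\w(t,s)$ and time $\ell$) yields
\[
N_\b(t+\ell,s)
\;=\;\bigl\lVert Dg_\ell(\w(t,s))\cdot\b(t,s)\bigr\rVert_{g_\ell\w(t,s)}
\;\leq\; e^{2|\ell|}\,\lVert \b(t,s)\rVert_{\w(t,s)}
\;=\; e^{2|\ell|}\,N_\b(t,s),
\]
as claimed.

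For part (2), I would apply the cylinder-twist equivariance of Lemma~\ref{lem:renorm} with $q=\w(t,s)$ and twist class $r\cdot\b(t,s)$ to obtain
\[
g_\ell\cdot\Trem_\b(t,s,r)
\;=\;\Trem\bigl(g_\ell\cdot\w(t,s),\; r\cdot e^{\ell}\KZ{g_\ell}{\w(t,s)}\cdot \b(t,s)\bigr).
\]
Substituting the two identities from the first step identifies the right hand side with $\Trem(\w(t+\ell,s),\,r\cdot \b(t+\ell,s))=\Trem_\b(t+\ell,s,r)$, completing the proof. The only care needed is to ensure that $r\cdot\b(t,s)$ still lies in $\twist(\w(t,s))$ so that Lemma~\ref{lem:renorm} applies, but this follows because $\twist$ is a linear subspace, $\b\in\twist(\w)$, and the cocycle $\KZ{g_t}{u(s)\w}$ maps $\twist(\w)$ into $\twist(\w(t,s))$ (horizontal cylinders are sent to horizontal cylinders by $g_tu(s)$, since $U$ and $A$ preserve the horizontal foliation).
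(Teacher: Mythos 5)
Your proof is correct and takes essentially the same route as the paper, whose proof simply invokes the cocycle property, Corollary~\ref{cor:KZ norm}, and Lemma~\ref{lem:renorm}. One small refinement in your write-up: by applying the second estimate of Lemma~\ref{lem:nonexpansion of stable} directly via the identity $Dg_\ell = \KZ{g_\ell}{\cdot}\otimes g_\ell$ you obtain the sharp constant $e^{2|\ell|}$ in part~(1), whereas the paper's citation of Corollary~\ref{cor:KZ norm}, read literally, would only give an implicit-constant bound $\ll \norm{g_\ell}^{O(1)}$; since that corollary is itself deduced from Lemma~\ref{lem:nonexpansion of stable}, the two arguments are not materially different.
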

\begin{proof}
     The first assertion follows by the cocycle property and Corollary~\ref{cor:KZ norm}.
    The second assertion is a restatement of Lemma~\ref{lem:renorm}. 
\end{proof}

\subsection{Exponential recurrence and contraction of vertical classes}
We recall the following result asserting that except for a set of exceptionally decaying measure, geodesic flow orbits of points on the torus $\T(\w)$ spend a definite proportions of their time inside large compact sets.

\begin{prop}\label{prop:exp recurrence}
    Let $(M,\w)$ be a horizontally periodic surface and let $\mu$ be a $U$-invariant probability measure on its twist torus $\T(\w)$.
    Then, there is a compact set $L\subset\HHu$, and $\e_1,\e_2\in (0,1)$ such that for all large enough $T>0$, the set of $x\in \T(\w)$ with 
    \begin{align*}
        \int_0^T \mathbf{1}_L(g_tx)\;dt \leq (1-\e_1) T
    \end{align*}
    has $\mu$-measure at most $e^{-\e_2 T}$.
\end{prop}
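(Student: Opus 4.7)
The plan is to combine an Eskin--Masur style Margulis function with a Chernoff bound, exploiting the $U$-invariance of $\mu$ to propagate the drift inequality across integer times. First, I would recall the existence of a proper continuous function $\alpha: \HHu \to [1, \infty)$ (such as the Eskin--Masur height function, or the reciprocal of the systole) whose sublevel sets $L_R := \{\alpha \le R\}$ are compact and which satisfies an integral drift inequality of the form
\[ \int_0^1 \alpha(g_1 u(s) q)\,ds \le \theta\, \alpha(q) + b \qquad (\forall\, q \in \HHu), \]
with constants $\theta \in (0,1)$ and $b > 0$. Since $\mu$ is $U$-invariant and supported on the compact set $\T(\w)$ (on which $\alpha$ is bounded), integrating this inequality against $\mu$ and applying Fubini yields $\int \alpha(g_1 x)\,d\mu \le \theta \int \alpha\,d\mu + b$, and iterating gives the uniform polynomial moment bound $\sup_{n \ge 0} \int \alpha(g_n x)\,d\mu(x) \le C$.

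Next, I would upgrade this to a uniform exponential moment bound of the form $\sup_{n \ge 0} \int \exp(\eta\, \alpha(g_n x)^\delta)\,d\mu(x) \le C'$ for small $\eta, \delta > 0$. This proceeds by first replacing $\alpha$ with a small concave power $\tilde\alpha := \alpha^\delta$, which still satisfies a linear drift inequality by Jensen. A Taylor expansion argument of Eskin--Mirzakhani--Mohammadi type then promotes this to an \emph{exponential} drift inequality
\[ \int_0^1 \exp(\eta\, \tilde\alpha(g_1 u(s) q))\,ds \le \theta'\, \exp(\eta\, \tilde\alpha(q)) + b', \]
with $\theta' \in (0,1)$, provided $\eta$ is chosen small relative to the polynomial growth rate of $\alpha$ along horocycle arcs. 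Iterating this bound against $\mu$ (again via $U$-invariance and Fubini) produces the claimed uniform exponential moment bound.

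Finally, I would apply a Chernoff argument. Set $L := L_R$ with $R$ large. Writing $F_T(x) := \int_0^T \mathbf{1}_{L^c}(g_t x)\,dt$, Markov's inequality gives
\[ \mu\bigl(\{x : F_T(x) > \e_1 T\}\bigr) \le e^{-\lambda \e_1 T} \int e^{\lambda F_T}\,d\mu(x). \]
Discretizing into unit time intervals, using the bound $\mathbf{1}_{L^c} \le \exp(\eta(\alpha^\delta - R^\delta))$ valid on $L^c$, together with the exponential moment bound and H\"older's inequality across time, dominates the right-hand side by $e^{-\e_2 T}$ for $R$ sufficiently large and $\e_2 > 0$ depending on $\eta, \delta, \e_1$, and $R$. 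Passage from integer times to real $t \ge 0$ is handled by the non-expansion estimates of Lemma \ref{lem:nonexpansion of stable}, which control the growth of $\alpha$ on bounded time windows.

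The main obstacle is the exponential drift inequality of the second step. The linear drift for $\tilde\alpha$ follows immediately from Jensen, but the exponentiated version requires a careful Taylor expansion relying on the at-most-polynomial growth of $\alpha$ along horocycle arcs (i.e., $\alpha(u(s) q) \le P(|s|)\, \alpha(q)$ for a polynomial $P$), which is a standard property of the Eskin--Masur function. Once secured, the remainder of the argument is routine Foster--Lyapunov/Chernoff bookkeeping; the $U$-invariance of $\mu$ is what allows the drift to be iterated cleanly and a uniform exponential moment bound to be obtained.
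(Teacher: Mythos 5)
Your proposal attempts to reconstruct the underlying exponential-recurrence machinery from scratch, whereas the paper takes a much shorter route: it cites the exponential recurrence result for expanding horocycle arcs, Proposition~3.9 of~\cite{AAEKMU} (with uniform constants over compact families of basepoints), and then observes that since $\mu$ is $U$-invariant and supported on the compact torus $\T(\w)$, it disintegrates as a convex combination of normalized Lebesgue measures on periodic $U$-orbits, each of which is a finite union of unit-length arcs of the form $\set{u(s)q: s\in[-1,1]}$. Your second step --- using $U$-invariance, Fubini, and the identity $g_1 u(s) g_{n-1} = g_n u(e^{-2(n-1)}s)$ to iterate the Margulis drift against $\mu$ --- is in the same spirit as the paper's disintegration, and would indeed give a uniform polynomial moment bound $\sup_n \int \alpha(g_n x)\,d\mu \leq C$.

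The gap is in your final step. The H\"older-across-time maneuver does not close. After discretizing, you want to bound $\int \prod_{n=0}^{T-1} Y_n\,d\mu$ where $Y_n = \exp(\lambda\mathbf{1}_{L^c}(g_n x)) \le 1 + (e^\lambda - 1)e^{\eta(\alpha(g_nx)^\delta - R^\delta)}$. Applying H\"older with exponent $T$ to the product requires controlling $\int Y_n^T\,d\mu$, hence $\int \exp(T\eta\,\alpha(g_nx)^\delta)\,d\mu$; but any exponential moment bound you establish has a \emph{fixed} small parameter $\eta$, while here the effective parameter is $T\eta \to\infty$. Expanding the product and applying H\"older term-by-term over subsets $S\subseteq\set{0,\dots,T-1}$ runs into the same problem with parameter $|S|\eta$. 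This is not a bookkeeping issue --- it is the failure of independence across time, which is exactly why~\cite{KKLM} and~\cite{AAEKMU} replace the naive Chernoff bound by a recursive covering/branching argument along the horocycle arc (equivalently, a supermartingale that couples the Margulis function with the running occupation count), propagating the drift inequality \emph{conditionally} rather than marginally. Relatedly, your proposed exponential drift inequality for $\tilde\alpha=\alpha^\delta$ is problematic: the polynomial-growth estimate $\alpha(g_1 u(s)q)\le C_1\alpha(q)$ for $s\in[0,1]$ yields a \emph{multiplicative} bound $\tilde\alpha(g_1u(s)q)\le C_1^\delta\tilde\alpha(q)$, not the \emph{additive} perturbation $\tilde\alpha(q)+C_0$ that a Taylor expansion of $\exp(\eta\tilde\alpha)$ would require to produce a contraction constant $\theta'<1$. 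Were you to swap $\alpha^\delta$ for $\log\alpha$ the additive bound is recovered, but then the resulting exponential drift is just the original Margulis inequality and gives no new information toward the Chernoff bound.
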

\begin{proof}
    This result was shown in stronger form in~\cite[Proposition 3.9]{AAEKMU} for the Lebesgue probability measure on a horocycle arc of the form $\set{u(s)q:s\in [-1,1]}$, $q\in\HHu$, following ideas of~\cite{EskinMasur,Athreya,KKLM}, with uniform estimates as $q$ varied in fixed compact sets in $\HHu$.
    The claimed estimate now follows for $\mu$ since $\T(\w)$ is compact and since $\mu$ disintegrates as a convex combination of Lebesgue measures on horocycle arcs as above by $U$-invariance.
\end{proof}

Combined with Proposition~\ref{prop:nonuniform hyp}, the above recurrence result yields the following contraction estimate for the action of $g_t$ on the strong stable foliation.
\begin{cor}\label{cor:contraction of vertical classes}
    Let the notation be as in Proposition~\ref{prop:exp recurrence}.
    Then, for $\mu$-almost every $q\in \T(\w)$, we have $\sup \norm{Dg_t(q)\cdot v}_{g_tq} \xrightarrow{t\r \infty} 0$, where the supremum is over all vertical cohomology classes $v\in E^s(q)$ tangent to the strong stable leaf through $q$
    with $\norm{v}_q\leq 1$.
\end{cor}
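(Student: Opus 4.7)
The plan is to combine the three ingredients just assembled: the exponential recurrence of Proposition~\ref{prop:exp recurrence}, the non-uniform contraction of Proposition~\ref{prop:nonuniform hyp}, and the non-expansion of Lemma~\ref{lem:nonexpansion of stable}. Since $Dg_t(q)$ is linear in $v$, uniformity over $\{v\in E^s(q):\norm{v}_q\leq 1\}$ is automatic once I produce an operator-norm bound on the restriction $Dg_t(q)|_{E^s(q)}$ that tends to $0$.

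First I would upgrade Proposition~\ref{prop:exp recurrence} to an almost-sure statement. The exceptional sets $A_n=\{q\in\T(\w):\int_0^n\mathbf{1}_L(g_sq)\,ds\leq(1-\e_1)n\}$ satisfy $\sum_n\mu(A_n)\leq\sum_n e^{-\e_2 n}<\infty$, so Borel--Cantelli gives that, for $\mu$-a.e.\ $q$, there exists $N_0(q)$ with
\begin{equation*}
\mathrm{Leb}\bigl(\{r\in[0,n]:g_rq\in L\}\bigr)>(1-\e_1)n\qquad\text{for all }n\geq N_0(q).
\end{equation*}

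Next, fix any target rate $\d\in(0,1)$ and let $T_0=T(L,\d)$ be the waiting time from Proposition~\ref{prop:nonuniform hyp}. Using the linear lower bound on $L$-occupation, I would extract an increasing sequence $t_0<t_1<\cdots$ with $t_n\to\infty$, $g_{t_n}q\in L$, and $\mathrm{Leb}\bigl(\{r\in[t_n,t_{n+1}]:g_rq\in L\}\bigr)\geq T_0$. Applying Proposition~\ref{prop:nonuniform hyp} to the segment from $g_{t_n}q$ to $g_{t_{n+1}}q$ with vector $Dg_{t_n}(q)v\in E^s(g_{t_n}q)$, and iterating via the chain rule, yields
\begin{equation*}
\norm{Dg_{t_n}(q)v}_{g_{t_n}q}\leq\d^n\norm{v}_q.
\end{equation*}
For arbitrary $t\in[t_n,t_{n+1}]$, Lemma~\ref{lem:nonexpansion of stable} applied between $g_{t_n}q$ and $g_tq$ (to the vertical class $Dg_{t_n}(q)v$) preserves this bound, so $\norm{Dg_t(q)v}_{g_tq}\leq\d^n\norm{v}_q\to 0$ as $t\to\infty$, proving the claim.

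The one point requiring care is the construction of the return times $t_n$ so that both endpoints lie in $L$ and $T_0$ of $L$-occupation is collected in between; this is a routine interval-packing argument from the linear growth $(1-\e_1)n$ of the $L$-occupation, and I expect no conceptual obstacle beyond the ingredients already stated in \textsection\ref{sec:background}.
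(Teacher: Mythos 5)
Your proof is correct and assembles the same three ingredients as the paper: Borel--Cantelli applied to the exponential recurrence estimate of Proposition~\ref{prop:exp recurrence}, the non-uniform contraction of Proposition~\ref{prop:nonuniform hyp}, and non-expansion of stable classes from Lemma~\ref{lem:nonexpansion of stable}. The only difference is stylistic: you iterate the contraction along an infinite sequence of return times to extract $\delta^n$ decay, whereas the paper applies the contraction once to get a single time $t_0$ with $\sup_{\norm{v}_q\leq 1}\norm{Dg_{t_0}(q)v}_{g_{t_0}q}\leq\delta$, notes that the stable norm is non-increasing under $g_t$ so that $\limsup_{t\to\infty}\sup_{\norm{v}_q\leq 1}\norm{Dg_t(q)v}_{g_tq}\leq\delta$, and then lets the arbitrary $\delta$ tend to $0$ --- thereby sidestepping the interval-packing construction of the sequence $\{t_n\}$ altogether.
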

\begin{proof}
    Fix an arbitrary $\d>0$ and let $L\subset \HHu$ be the compact set provided by Proposition~\ref{prop:exp recurrence}.
    Let $T=T(L,\d)>0$ be the parameter provided by non-uniform hyperbolicity in Proposition~\ref{prop:nonuniform hyp}.
    By Proposition~\ref{prop:exp recurrence} and the Borel-Cantelli lemma, for $\mu$-almost every $q\in\T(\w)$, we can find $t>T$ such that $g_tq\in L$ and
    \begin{align*}
        \mrm{Leb}(r\in [0,t]: g_r q\in L)>T.
    \end{align*}
    Hence, for each such $q$ and $t$, Proposition~\ref{prop:nonuniform hyp} gives $\sup \norm{Dg_t(q)\cdot v}_{g_tq}\leq \d $ for all unit norm classes $v\in E^s(q)$.
    The non-expansion estimate of Lemma~\ref{lem:nonexpansion of stable} then implies that $$\lim_{t\to\infty} \sup \set{\norm{Dg_t(q)\cdot v}_{g_tq}: v\in E^s(q), \norm{v}_q\leq 1 }\leq \d.$$ 
    The corollary now follows as $\d$ was arbitrary.
    \qedhere
\end{proof}

%-----------------

\section{Examples}
\label{sec:examples}
In this section, we provide infinite families of examples satisfying Theorems~\ref{thm:density of tori} and~\ref{thm:full support}.
These examples are meant to be illustrative rather than exhaustive.

\subsection{Examples for Theorem~\ref{thm:density of tori}}
\label{sec:eg density}

In what follows, for $n\geq 5$, we let $(M_n,\w_n)$ be the (horizontally periodic) Veech surface obtained from gluing parallel sides of the regular $2n$-gon with a horizontal edge by translations.
These surfaces were discovered by Veech~\cite{Veech-BilliardInRegularPolygon,Veech-TeichEisenstein} and have provided a rich source of examples in flat geometry since.
In Proposition~\ref{prop:dense examples}~\eqref{item:M-primitive examples}, we show that these surfaces satisfy the $\Mcal$-primitivity hypothesis of Theorem~\ref{thm:density of tori}.
Part~\eqref{item:2n-gon finite} of that proposition~\ref{prop:dense examples} shows that the conclusion of this theorem holds non-trivially for those examples since their twist tori meet at most finitely many closed $\SL$-orbits.

\begin{prop}\label{prop:dense examples}
\begin{enumerate}
    \item\label{item:2n-gon finite} {\cite{McM,Ca,McMullen-ClassificationGenusTwo,EskinFilipWright}}. For all $n\geq 5$, the twist torus $\T(\w_n)$ intersects at most finitely many closed $\SL$-orbits.
        Moreover, this property holds for any horizontally periodic Veech surface with trace field of degree $\geq 3$ over $\Q$.

    \item\label{item:M-primitive examples} {\cite{McM,Ca,McMullen-ClassificationGenusTwo,Apisa-HighRankInHyp,Apisa-Rk1InHyp}}.
    Let $\Mcal_n=\overline{\SL\cdot\T(\w_n)}$.
    Then, for $n>5$, $(M_n,\w_n)$ is $\Mcal_n$-primitive.
\end{enumerate}    
\end{prop}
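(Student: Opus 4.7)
The plan is to derive both parts by combining the cited classification and finiteness results; no fundamentally new argument is needed, the work is in matching the right theorem to each regime.

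For part~\eqref{item:2n-gon finite}, I would separate into cases according to the degree of the trace field $k_n=\mathbb{Q}(\cos(\pi/n))$ of $\mrm{SL}(M_n,\w_n)$. If $[k_n:\mathbb{Q}]\geq 3$, the claim follows directly from the Eskin--Filip--Wright finiteness theorem: any affine invariant submanifold contains only finitely many Teichm\"uller curves whose trace field has degree $\geq 3$ over $\mathbb{Q}$. Applying this to $\Mcal_n$ and using $\T(\w_n)\subset\Mcal_n$ yields the desired finiteness, since the trace field of any closed sub-orbit of $\Mcal_n$ must contain (a conjugate of) $k_n$ and therefore automatically has degree $\geq 3$. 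The residual low-$n$ cases (where $[k_n:\mathbb{Q}]=2$) fall into the genus-two setting; there one instead appeals to the McMullen/Calta classification of $\SL$-orbit closures in $\Hcal(2)$ and $\Hcal(1,1)$, which explicitly enumerates all Teichm\"uller curves and in particular bounds the number meeting any fixed twist torus. The ``moreover'' clause is precisely the degree-$\geq 3$ branch of the argument extracted in isolation, so it requires no additional input beyond Eskin--Filip--Wright.

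For part~\eqref{item:M-primitive examples}, I would use that the $2n$-gon surfaces lie in hyperelliptic components of their strata. Apisa's theorems on higher-rank and rank-one orbit closures in hyperelliptic components, combined with the McMullen/Calta genus-two classification, assert that every proper affine invariant submanifold $\Ncal$ of such a component falls into a short, explicit list: a Teichm\"uller curve, a locus of branched covers of a lower-genus hyperelliptic differential, or one of a few exceptional families inherited from genus two. For $n>5$, one then verifies that for any candidate intermediate closure $\Vcal\subsetneq\Ncal\subsetneq\Mcal_n$ from this list, either the trace field of $\Ncal$ would be strictly smaller than $k_n$ (contradicting $\Vcal\subset\Ncal$), or $\dim\Ncal$ would meet $\dim\Mcal_n$ (forcing $\Ncal=\Mcal_n$). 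The inequalities involved are explicit numerical checks entry-by-entry against Apisa's list.

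The principal obstacle is the case analysis in part~\eqref{item:M-primitive examples}: each exceptional family in the hyperelliptic classification must be ruled out as an intermediate closure, and this requires computing trace-field degrees and dimensions in each case. The cutoff $n>5$ is essential because, as noted in Remark~\ref{rem:generalizations}, the decagon $(M_5,\w_5)$ in fact fails $\Mcal_5$-primitivity due to an intermediate locus, so any argument must genuinely distinguish $n=5$ from $n\geq 6$ via one of these numerical inequalities rather than by a uniform structural reason.
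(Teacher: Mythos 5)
Your outline follows the same general template as the paper for part (1), but it contains a concrete gap. You justify that Veech curves meeting $\T(\w_n)$ have trace field of degree $\geq 3$ by asserting that ``the trace field of any closed sub-orbit of $\Mcal_n$ must contain (a conjugate of) $k_n$.'' This is not true: $\Mcal_n$ is an entire affine invariant submanifold (for $n$ large it is the full stratum component), and it certainly contains arithmetic Veech curves with trace field $\Q$. What is actually needed is the much more targeted statement that all Veech surfaces \emph{in the twist torus} $\T(\w_n)$ share the same trace field. This is the content of Lemma~\ref{lem:constant trace field} in the paper, and it is not free: it relies on Wright's cylinder deformation theorem to identify the affine field of definition as $\Q[c_2/c_1,\dots,c_n/c_1]$ (where $c_i$ are cylinder circumferences, which are invariant along the twist torus), combined with results of Wright and Kenyon--Smillie identifying this with the holonomy/trace field. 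Without this lemma, the appeal to Eskin--Filip--Wright is unsupported. Your treatment of the low-degree cases via McMullen/Calta genus-two classification is broadly consistent with the paper's handling of $n=5$.

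For part (2), your proposed case analysis (comparing trace-field degrees and dimensions against each entry of Apisa's list) is more elaborate than necessary, and the specific mechanism you sketch---that an intermediate $\Ncal$ would have trace field strictly smaller than $k_n$---is not the operative obstruction. The paper's route is cleaner and avoids entry-by-entry numerics: Apisa's classification says every proper affine invariant submanifold of a hyperelliptic component in genus $>2$ is a Teichm\"uller curve or a locus of branched covers; an intermediate $\Ncal$ strictly containing $\Vcal$ has dimension $>3$ so it cannot be a curve, hence it would be a locus of covers, hence $(M_n,\w_n)$ would be a branched cover; but since $\G_n$ is non-arithmetic (its trace field is irrational by Lemma~\ref{lem:2n-gons degree and hyperelliptic}), a theorem of Wright for Bouw--M\"oller surfaces shows $(M_n,\w_n)$ is geometrically primitive, a contradiction. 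You do correctly identify that the $n>5$ cutoff must be essential, but the structural reason is that $n>5$ forces genus $\geq 3$ (so Apisa's dichotomy applies), whereas $n=5$ lands in genus $2$ where McMullen's classification exhibits the eigenform locus $\Ecal_5$ as a genuine intermediate closure.
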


We begin with the following useful lemma which allows us to control the trace field of Veech surfaces belonging to the same twist torus.
This lemma in fact proves a stronger property than what we need for the concrete examples $(M_n,\w_n)$ discussed in this section.

    \begin{lem}[{\hspace{.1pt}\cite{KenyonSmillie,Wright-Cylinder,Wright-FieldofDef}}]
    \label{lem:constant trace field}
        Let $(M,\w)$ be a horizontally periodic Veech surface, and suppose that $(M',\w')$ is another Veech surface in $\T(\w)$. Then, the trace fields of the Veech groups of both surfaces coincide.
    \end{lem}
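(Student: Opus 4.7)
The strategy has two ingredients: Kenyon--Smillie's identification of the trace field with the holonomy field for Veech surfaces, and the elementary observation that the horizontal cylinder geometry is preserved by the twist torus.

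My first step is to record that $\T(\w)$ preserves the horizontal cylinder data. By the very construction recalled in the introduction, any $(M',\w')\in \T(\w)$ is obtained from $(M,\w)$ by applying independent horocycle shears to the parallelograms representing each horizontal cylinder $C_i$ and regluing along the horizontal boundary using the original pattern. Consequently, $(M',\w')$ has the same number of horizontal cylinders as $(M,\w)$, with identical heights $h_i$ and widths $w_i$; only the twist offsets $t_i$ of the cross-curves can change. In particular the moduli $m_i=h_i/w_i$ of the horizontal cylinders coincide for $(M,\w)$ and $(M',\w')$.

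Second, I would invoke the theorem of Kenyon--Smillie \cite{KenyonSmillie}: for any Veech surface $(N,\eta)$, the trace field of $\mrm{SL}(N,\eta)$ equals the holonomy field of $(N,\eta)$, namely the smallest subfield $k\subset \R$ such that the absolute periods $\mathrm{hol}(N,\eta)\subset \C$ lie in a two-dimensional $k$-subspace. Because both $(M,\w)$ and $(M',\w')$ are Veech and horizontally periodic, each admits a horizontal multi-twist parabolic in its Veech group; Kenyon--Smillie's analysis of the action of such parabolics on absolute periods identifies the trace field with the field $\Q(m_i/m_j)$ generated by the ratios of the horizontal moduli. Since both surfaces share the same moduli by Step 1, both trace fields equal the common field $\Q(m_i/m_j)$ and the conclusion follows.

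The main delicate point is the second step, specifically the identification of the trace field with the field generated by moduli ratios, rather than something larger involving the twist offsets $t_i$. The inclusion $\Q(m_i/m_j)\subseteq k$ is immediate from Kenyon--Smillie; the reverse inclusion is the essential content and requires the Veech property. A conceptual way to frame it is via Wright's work \cite{Wright-FieldofDef,Wright-Cylinder}: the field of definition of the closed orbit $\Vcal=\SL\cdot(M,\w)$ equals the trace field of the Veech group, and the cylinder deformation framework of \cite{Wright-Cylinder} shows that this field is determined by the cylinder configuration. Any alternative approach would need to deal directly with the fact that, a priori, the twist coordinates $t_i$ enter the holonomy field; the Veech hypothesis is what rules out this possibility and makes the trace field an invariant of the cylinder moduli alone.
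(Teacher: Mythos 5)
Your Step 1 is fine, and your identification of Kenyon--Smillie's trace field $=$ holonomy field theorem as the right starting point matches the paper. The gap is in Step 2, where you assert that the trace field equals $\Q(m_i/m_j)$, the field generated by ratios of \emph{moduli} $m_i = h_i/w_i$. This is false, and in fact trivially so: for a horizontally periodic Veech surface, the Veech dichotomy provides a horizontal multi-twist parabolic in $\mrm{SL}(M,\w)$, which forces the horizontal moduli to be rationally commensurable. Hence $\Q(m_i/m_j)=\Q$ always, whereas the trace fields in the examples the paper cares about (e.g., the regular $2n$-gon surfaces $(M_n,\w_n)$, $n\geq 5$) have degree $\geq 2$ over $\Q$, as computed in Lemma~\ref{lem:2n-gons degree and hyperelliptic}. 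So the invariant you extracted from the cylinder data gives you nothing.

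The paper instead uses ratios of \emph{circumferences} $c_i/c_j$ (which in your notation are the widths $w_i/w_j$). The chain is: trace field $=$ holonomy field \cite[Theorem~28]{KenyonSmillie} $=$ affine field of definition of $\Vcal$ \cite[Theorem~1.1]{Wright-FieldofDef} $= \Q[c_2/c_1,\dots,c_n/c_1]$, the last equality being Wright's cylinder deformation theorem \cite[Theorem~7.1]{Wright-Cylinder} applied to the $\Vcal$-parallel family of horizontal cylinders. Since the twist torus preserves circumferences (which you in fact observed, since you noted the widths $w_i$ are preserved), the conclusion follows. So your proof has the right scaffolding and even the right raw data in Step 1, but you discarded the circumferences in favor of the moduli, which is exactly the piece of cylinder geometry that carries no information here. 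Replacing $\Q(m_i/m_j)$ by $\Q(c_i/c_j)$ and citing \cite[Theorem~7.1]{Wright-Cylinder} and \cite[Theorem~1.1]{Wright-FieldofDef} for the identification would repair the argument and make it coincide with the paper's.
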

    \begin{proof}
        Let $\Vcal =\SL \cdot (M,\w)$.
        Let $\set{C_i:1\leq i\leq n}$ be the full set of  horizontal cylinders of $M$, and without loss of generality assume that $n>1$.
        Let $c_i$ denote the circumference of the cylinder $C_i$.
        Then, these horizontal cylinders of $(M,\w)$ are $\Vcal$-parallel in the language of~\cite[Definition 4.6]{Wright-Cylinder}.
        It follows by~\cite[Theorem 7.1]{Wright-Cylinder} that the affine field of definition of $\Vcal$ is $\Q[c_2/c_1, \dots, c_n/c_1]$. 
        On the other hand, by~\cite[Theorem 1.1]{Wright-FieldofDef}, the affine field of definition is the same as the holonomy field of $(M,\w)$ (cf.~\cite[Appendix]{KenyonSmillie} for a definition of the holonomy field).
        By~\cite[Theorem 28]{KenyonSmillie}, since $(M,\w)$ is a Veech surface, and hence its Veech group contains at least one pseudo-Anosov element, its holonomy field coincides with the trace field of its Veech group.

        Now, $(M',\w')$ admits a horizontal cylinder decomposition of the form $M' = \cup_{i=1}^n u(s_i) C_i$, for some $s_i\in \R$.
        In particular, these cylinders have the same set of circumferences $\set{c_i:1\leq i\leq n}$. Thus, its trace field coincides with that of $(M,\w)$.
    \end{proof}

    In what follows, we let $\G_n$ be the Veech group of $(M_n,\G_n)$, i.e.,
    \begin{align*}
        \G_n = \mrm{SL}(M_n,\w_n).
    \end{align*}
     We let $k(\G_n)$ be its trace field.
     Given an integer $g\geq 2$, 
    we denote by $\Hcal^{hyp}(2g-2)$, respectively $\Hcal^{hyp}(g-1,g-1)$,  the hyperelliptic components of strata of translation surfaces of genus $g$  having either one zero of order $2g-2$, respectively  two zeros of order $g-1$ each and which are interchanged by a hyperelliptic involution of the underlying Riemann surface; cf.~\cite[Def. 2]{KontsevichZorich-ConnComps} for the precise definition.
     We will need the following elementary lemma.
    \begin{lem}
    \label{lem:2n-gons degree and hyperelliptic}
    For all $n >5$, the degree of the trace field satisfies $[k(\G_n):\Q]\geq 3$, and for $n=5$, we have $[k(\G_5):\Q]=2$.
    Moreover, for all $n\geq 5$, the surfaces $(M_n,\w_n)$ belongs to $ \mathcal{H}^{hyp}(n-2)$ when $n$ is even, and to $\mathcal{H}^{hyp}(\frac{n-3}2,\frac{n-3}2)$ when $n$ is odd.
    
    \end{lem}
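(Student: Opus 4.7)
The plan is to combine Veech's explicit computation of $\G_n$ with routine cyclotomic and combinatorial arguments.

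For the trace field, I would first invoke Veech's classical identification of $\G_n$ as (a finite-index subgroup commensurable with) the Hecke triangle group $H(n)=\Delta(2,n,\infty)\subset \SL$, generated by a rotation $R_n$ by angle $\pi/n$ (of order $n$ in $\mrm{PSL}_2(\R)$) together with a parabolic element coming from the horizontal cylinder decomposition of the $2n$-gon. Since trace fields are commensurability invariants, this gives $k(\G_n)=\Q(2\cos(\pi/n))=\Q(\z_{2n}+\z_{2n}^{-1})$, the maximal totally real subfield of the cyclotomic field $\Q(\z_{2n})$, and standard cyclotomic theory then yields $[k(\G_n):\Q]=\vp(2n)/2$ for $n\geq 3$, where $\vp$ is Euler's totient. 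A direct evaluation gives $\vp(10)/2=2$, establishing the $n=5$ case; for $n$ in the remaining range, a case-by-case check together with the standard lower bounds on $\vp$ at composite arguments shows $\vp(2n)\geq 6$, yielding the claimed lower bound of $3$ on the degree.

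To identify the stratum, I would count the vertex equivalence classes of the $2n$-gon under the opposite-edge identification. Combined with Euler's formula $V-E+F=2-2g$ applied with one face and $n$ edges, together with the total cone angle identity $(2n-2)\pi$, the parity cases separate cleanly: for $n$ even, all $2n$ vertices collapse to a single cone point of order $n-2$, with genus $g=n/2$; for $n$ odd, the vertices split into two equivalence classes of $n$ vertices each, producing two zeros of order $(n-3)/2$, with genus $g=(n-1)/2$.

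Hyperellipticity follows by exhibiting the rotation by $\pi$ around the center of the $2n$-gon (corresponding to $-I\in\SL$) as an involution $\iota$ of $M_n$ satisfying $\iota^{\ast}\w_n=-\w_n$. A Riemann--Hurwitz computation shows that the quotient $M_n/\iota$ is a sphere, so $\iota$ is the hyperelliptic involution. For $n$ even, $\iota$ fixes the single cone point, placing $(M_n,\w_n)\in \mc{H}^{hyp}(n-2)$; for $n$ odd, $\iota$ interchanges the two cone points, placing $(M_n,\w_n)\in \mc{H}^{hyp}((n-3)/2,(n-3)/2)$. The main technical input is Veech's classical identification of $\G_n$ with $H(n)$; the remaining steps are elementary verifications, the only mild delicacy being the case-by-case check of the totient inequality for the smallest values of $n$.
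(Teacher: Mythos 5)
Your trace-field argument takes a different route from the paper's: you invoke Veech's identification of $\G_n$ with a triangle group and then appeal to commensurability invariance of the trace field, whereas the paper exhibits two explicit horizontal saddle connections with holonomies $1$ and $\a_n = 1 + 2\cos(\pi/n)$ and applies Kenyon--Smillie to place $\a_n$ in the trace field. Both routes arrive at the same field $\Q(\cos(\pi/n))$ of degree $\phi(2n)/2$. Your version is slightly imprecise in two places, neither fatal here: for the regular $2n$-gon the Veech group is the $(n,\infty,\infty)$ triangle group rather than $\Delta(2,n,\infty)$ (the latter is the Veech group of the \emph{doubled} $n$-gon), and the trace field is not a commensurability invariant in general (the invariant trace field is); for Veech groups these coincide by Kenyon--Smillie, but the paper's direct saddle-connection argument bypasses both points.

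There is, however, a genuine arithmetical gap, and it is one you share with the paper's own proof: the asserted inequality $\phi(2n)/2\geq 3$ fails at $n=6$, since $\phi(12)=4$. The regular $12$-gon has all saddle-connection holonomy vectors in $\Q(\sqrt{3})^2$ (all edge directions are multiples of $\pi/6$), so by Kenyon--Smillie its trace field is exactly $\Q(\sqrt{3})$, of degree two. Thus the first assertion of the lemma is false (not merely unproved) at $n=6$, and no case-by-case check of the totient will rescue the claimed bound. You should not assert ``$\phi(2n)\geq 6$'' for all $n>5$; the statement itself needs to exclude $n=6$, and the proof should note the exception.

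Your stratum and hyperellipticity computation is essentially the paper's argument: both use the rotation by $\pi$. You verify hyperellipticity via Riemann--Hurwitz on the quotient, while the paper counts the $2g+2$ fixed points directly (center, edge midpoints, and the cone point(s)) and cites Farb--Margalit; these verifications are equivalent.
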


    The proof of this lemma is standard and is included for completeness.
    
    \begin{proof}
    Without loss of generality, we assume the $2n$-gon generating $(M_n,\w_n)$ has unit length edges.
    By considering the top horizontal edge and the closest parallel chord to it respectively, we obtain two horizontal saddle connections with holonomy $(1,0)$ and $(\a_n,0)$, where $\a_n=1+2\cos(2\pi/2n)$.
    Thus, by~\cite[Theorem 28]{KenyonSmillie}, $\a_n$ belongs to the trace field $k(\G_n)$ of $\G_n$.
    Moreover, for $\zeta_n=\exp(2\pi i/2n)$,
    we have that $\Q(\a_n) = \Q(\zeta_n + \zeta_n^{-1})$ is the fixed subfield under the complex conjugation automorphism of $\Q(\zeta_n)$.
    Hence, since $\Q(\zeta_n)$ has degree $\phi(2n)$ over $\Q$, where $\phi$ is Euler's totient function, it follows that
    $[\Q(\a_n):\Q] =\phi(2n)/2$. 
    This implies the first part of the lemma.

    For the last claim, observe that when $n$ is even, all vertices of the regular $2n$-gon get identified, and hence $M_{n}$ has one cone point of cone angle $(2n-2)\pi$. which is access angle $(2n-2)\pi$ and the genus of the surface is $\frac{n}2$. Similarly, when $n$ is odd there are two cone points of equal angle $(n-1)\pi$, which is access angle $2\pi(\frac{n-1}2-1)$ and this is genus $\frac{n-1}2$. We now see that these are hyperelliptic. Observe that rotation by $\pi$ is a symmetry of the surface that has order 2. When $n$ is even it fixes $n+2$ which is twice genus plus 2 points. These points are the cone point, the center of the polygon and the midpoint of each side. By~\cite[Section 7.4]{FarbMargalit}, this is a hyperelliptic involution. Similarly, when $n$ is odd, rotation by $\pi$ is a symmetry of the surface that fixes $n+1$ points, which is twice genus +2. These points are the center of the polygon and the midpoint of each side. It exchanges two cone points. Hence, these surfaces belong to  the claimed strata.
    \qedhere
    \end{proof}

We are now ready for the proof of Proposition~\ref{prop:dense examples}.

    \begin{proof}[Proof of Proposition~\ref{prop:dense examples}]
     For Part~\eqref{item:2n-gon finite}, fix a natural number $n\geq 5$. Let $\G_n$ denote the Veech group of $(M_n,\w_n)$, and let $k(\G_n)$ be its trace field.
    By Lemma~\ref{lem:constant trace field}, every Veech surface $(M',\w')\in \T(\w_n)$ has the same trace field $k(\G_n)$.
    Hence, by  Lemma~\ref{lem:2n-gons degree and hyperelliptic}, the common trace field of Veech surfaces in $\T(\w_n)$ has degree $\geq 3$ over $\Q$.
    Moreover, by~\cite[Corollary 1.6]{EskinFilipWright}, each stratum contains at most finitely many Veech curves with trace field of degree $\geq 3$.
    Thus, $\T(\w_n)$ can only meet finitely many Veech curves in this case.
    The same argument holds for any Veech surface $(M,\w)$ with trace field of degree $\geq 3$.

    By Lemma~\ref{lem:2n-gons degree and hyperelliptic}, for $n=5$, $k(\G_5)$ has degree $2$. Moreover, every Veech surface with trace field of degree $\geq 2$ in the  stratum $\Hcal(1,1)$ is contained in $\SL\cdot (M_5,\w_5)$ \cite{McM,Ca}.
    Thus, the $\SL \cdot (M_5,\w_5)$ is the unique Veech curve intersecting $\T(\w_5)$ in this case as well.

    For Part~\eqref{item:M-primitive examples}, by Lemma~\ref{lem:2n-gons degree and hyperelliptic}, for $n>5$, $(M_n,\w_n)$ is contained in a hyperelliptic component of a stratum of translation surfaces in genus $>2$.
    By work of Apisa,~\cite{Apisa-HighRankInHyp,Apisa-Rk1InHyp}, the only proper $\SL$-orbit closures in such components are either Veech curves, or loci of branched coverings. 
    Since the trace field of $\G_n$ is irrational by Lemma~\ref{lem:2n-gons degree and hyperelliptic}, $\G_n$ is non-arithmetic, i.e., $\G_n$ is not commensurable with any conjugate of any finite index subgroup of $\mrm{SL}_2(\Z)$. 
    Hence, by~\cite[Corollary 1.5]{Wright-VeechWardBouwMollerExamples}, this implies\footnote{The cited result concerns the so-called Bouw-M\"oller family of Veech surfaces, which includes the surfaces $(M_n,\w_n)$ considered here; cf.~\cite{Ward-BilliardsRationalTriangles,BouwMoller,Hooper-BouwMoller,Wright-VeechWardBouwMollerExamples} for more on this larger family of examples.} that $(M_n,\w_n)$ is geometrically primitive, i.e., it cannot arise as a branched cover of a lower genus surface. It follows that $(M_n,\w_n)$ cannot be contained in a locus of covers, and in particular, that $(M_n,\w_n)$ is $\Mcal_n$-primitive in this case.
    \qedhere
    \end{proof}

\subsection{Examples for Theorem~\ref{thm:full support}}
\label{sec:eg full support}

In~\cite[Section 3]{MatYoc}, Matheus and Yoccoz constructed an infinite family of square-tiled surfaces parametrized by odd integers $m\geq 3$.
In this section, we recall the computations in \textit{loc.~cit.} to show:
\begin{prop}[{\hspace{.1pt}\cite{MatYoc}}]
\label{prop:Matheus-Yoccoz examples}
    The Matheus-Yoccoz family of square-tiled surfaces, parametrized by the odd integers, satisfy the hypothesis of Theorem~\ref{thm:full support}.
    More precisely, for each surface in this family, the affine group admits a pseudo-Anosov element acting trivially on the entire symplectic complement of the tautological plane.
    Moreover, for $m\geq 5$, the (infinite) subgroup generated by such elements has infinite index in the affine group  of the corresponding surface.
\end{prop}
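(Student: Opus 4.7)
The plan is to extract the needed facts essentially verbatim from \cite{MatYoc}, where the relevant computations of the action of the affine group on cohomology were carried out in detail. First, we recall the Matheus-Yoccoz construction: for each odd $m \geq 3$, the surface $(M_m,\w_m)$ is a square-tiled cover of the torus, realized via an explicit origami whose pair of permutations $(h_m, v_m)$ generate a group with a distinguished normal subgroup. The key structural point, established in Section 3 of \cite{MatYoc}, is that $H^1_{\R,\mrm{abs}}(M_m)$ admits an $\mrm{Aff}^+(M_m,\w_m)$-invariant direct sum decomposition whose summands other than the tautological plane $\mrm{Taut}_{\w_m}$ are identified with representations of an associated finite group. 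Since $\cylspace^0(\w_m)$ is contained in the symplectic complement of $\mrm{Taut}_{\w_m}$ in $H^1_{\R,\mrm{abs}}$ (by construction, cf.~\textsection\ref{sec:balanced spaces}), it suffices to exhibit a pseudo-Anosov element acting trivially on this entire symplectic complement.

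Next, I would cite the explicit calculation of the KZ cocycle in \cite{MatYoc}: for each $m$, there is an affine pseudo-Anosov diffeomorphism $\phi_m \in \mrm{Aff}^+(M_m,\w_m)$ whose derivative lies in the Veech group (so it is genuinely pseudo-Anosov, i.e., hyperbolic in $\SL$), and whose induced action on cohomology preserves the decomposition above and acts as the identity on every non-tautological summand. This is precisely the content of the computations in \cite{MatYoc} that identify the Lyapunov spectrum of the KZ cocycle, including the conclusion that a finite-index subgroup (in the cases $m=3$ of the Eierlegende-Wollmilchsau and $m=5$ of the Ornithorynque) or a proper subgroup (in the remaining cases) of $\mrm{Aff}^+$ acts trivially on the complement of $\mrm{Taut}$. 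In particular, \eqref{eq:pA=id hypothesis} is verified with any such $\phi_m$.

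For the index statement, I would argue as follows. Let $H_m \leqslant \mrm{Aff}^+(M_m,\w_m)$ denote the subgroup of elements acting trivially on the symplectic complement of $\mrm{Taut}_{\w_m}$. Matheus and Yoccoz show that for $m \geq 7$ the Lyapunov exponents of the KZ cocycle on the complement are non-zero, which by the Oseledets theorem forces the representation $\mrm{Aff}^+(M_m,\w_m) \to \mrm{Sp}(\mrm{Taut}^0_{\w_m})$ to have image of positive entropy and, a fortiori, infinite image. Hence $H_m$, being the kernel of this representation restricted to the affine group (equivalently, the intersection with the kernel on absolute cohomology), has infinite index. For $m=5$ the same conclusion follows from their explicit computation showing that the quotient $\mrm{Aff}^+/H_5$ is infinite, while the case $m=3$ is excluded from the statement since there $H_3$ has finite index. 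The main obstacle, such as it is, is simply locating and translating the correct statements from \cite{MatYoc}: the proposition contains no new dynamics, only the bookkeeping identifying $\cylspace^0(\w_m)$ with a subspace on which a pseudo-Anosov acts trivially, together with the index computation provided by the nontriviality of the Lyapunov spectrum.
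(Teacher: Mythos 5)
Your proposal follows the same basic strategy as the paper (cite \cite{MatYoc} and do bookkeeping), and you are right that there is no new dynamics involved. However, there are factual errors that would need to be fixed before this could stand.

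First, you misidentify the surfaces. In the parametrization of \cite{MatYoc} by odd $m\geq 3$, the case $m=3$ is the \emph{Ornithorynque}, not the Eierlegende-Wollmilchsau; the Eierlegende-Wollmilchsau is a separate example and not the $m=3$ member of this family. Relatedly, you first assert that for $m=5$ (which you call the Ornithorynque) a \emph{finite-index} subgroup acts trivially on the complement of $\mrm{Taut}$, and then two sentences later assert that $\mrm{Aff}^+/H_5$ is \emph{infinite}. Only the second claim is correct: for every odd $m\geq 5$ the trivially-acting subgroup has infinite index, and only $m=3$ (the Ornithorynque) gives a finite-index subgroup. This is not just a labeling slip, since the infinite-index claim for $m=5$ is part of the proposition you are trying to prove, and your first paragraph contradicts it.

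Second, the actual content of the proof is the construction of the pseudo-Anosov, and your proposal simply asserts that \cite{MatYoc} supplies one. What the cited computations actually give is a $\G_m$-invariant splitting $H_1(M_m;\Q) = H_1^{st}\oplus H_\t\oplus\Breve{H}$ (plus a relative piece $H_{rel}$), the fact that the action on $H_\t\oplus H_{rel}$ factors through a finite group, and explicit $2\times 2$ matrices for the lifts $\widetilde T^2,\widetilde S^2$ on each summand $\Breve H(\rho)$. The observation one then has to make — which is the crux of the paper's argument — is that $A_m=(\widetilde T^2)^m$ and $B_m=(\widetilde S^2)^m$ act trivially on all of $\Breve H$ (because $\rho^m=1$ and $1+\rho+\cdots+\rho^{m-1}=0$ for $\rho\neq 1$), that the subgroup $\langle A_m,B_m\rangle$ projects onto the Zariski-dense group $\langle (T^2)^m,(S^2)^m\rangle$ in $\SL$ and hence contains pseudo-Anosovs, and that a suitable power also kills the finite-group action on $H_\t\oplus H_{rel}$. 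That chain of observations is what you need to write down; it is not an off-the-shelf theorem in \cite{MatYoc}.

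Finally, your infinite-index argument via nonvanishing Lyapunov exponents is valid in spirit, but the case split between $m=5$ and $m\geq 7$ is unnecessary: \cite[Remark 3.2]{MatYoc} gives a single hyperbolic element of infinite order in the monodromy on $\Breve H(\exp(2\pi i/m))$ uniformly for all odd $m\geq 5$, which is both simpler and exactly the input the paper uses, with the positive Lyapunov exponent (via Möller) appearing only as a corollary rather than the engine of the argument. Also a small imprecision: $\cylspace^0(\w_m)$ sits in relative cohomology $H^1_\R$, not in $H^1_{\R,\mrm{abs}}$; the relative part $H_{rel}$ does need to be accounted for, which the paper handles by noting the action there also factors through a finite group.
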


This proposition follows directly from the computations in~\cite[Section 3]{MatYoc}. We briefly recall their results for the reader's convenience.
For $m=3$, the resulting surface is the \textit{Ornithorynque} studied in~\cite{ForniMatheus-Orni}, for which the hypothesis of Theorem~\ref{thm:full support} is known to hold for a finite index subgroup of the affine group.
We thus restrict our attention to the case $m\geq 5$.
In what follows, we fix an odd integer $m\geq 5$, and let $(M_m,\w_m)$ be the corresponding (horizontally periodic) Veech surface defined in~\cite[Section 3.1]{MatYoc}. 
Let $\G_m$ denote the affine group of $(M_m,\w_m)$.

In~\cite[Section 3.6]{MatYoc}, it is shown that the homology group of $M_m$ admits the following $\G_m$-invariant decomposition:
\begin{align*}
    H_1(M_m, \Sigma (\w_m); \Q) &= H_1(M_m, \Q) \oplus H_{rel}
    \nonumber\\
    H_1(M_m, \Q) &= H_1^{st} \oplus H_\t \oplus \Breve{H},
\end{align*}
for certain subspaces $H_{rel}, H_\t$ and $\Breve{H}$, and where $H_1^{st}$ is the two-dimensional space that is dual to the tautological plane. 
More precisely, $H_1^{st}$ is the annihilator of the symplectic orthogonal complement of the tautological plane in cohomology.
In particular, since the space $\cylspace^0(\w_m)$ is symplectic orthogonal to the tautological plane in $H^1_\R$, it suffices to exhibit non-trivial pseudo-Anosov elements of $\G_m$ acting trivially on $H_1^{(0)} \stackrel{\mrm{def}}{=} H_\t \oplus \Breve{H}\oplus H_{rel}$.

It is shown in~\cite[Sections 3.4 and 3.5]{MatYoc} that the action of $\G_m$ on $H_\t \oplus H_{rel}$ factors through a finite group.
Moreover, in~\cite[Section 3.6]{MatYoc}, it is shown that the complexified space $\Breve{H}\otimes \C$ splits as a $\G_m$-invariant direct sum $\oplus_\rho \Breve{H}(\rho)$ of two-dimensional spaces, parametrized by non-trivial $m$-roots of unity.
In~\cite[Section 3.1]{MatYoc}, it is shown that the elements
\begin{align*}
    T^2 = \begin{pmatrix}
        1 & 2 \\ 0 & 1
    \end{pmatrix},
    \qquad S^2 = \begin{pmatrix}
        1 & 0 \\ 2 & 1
    \end{pmatrix}
\end{align*}
belong to the Veech group $\mrm{SL}(M_m,\w_m)$.
Let $\widetilde{S}^2$ and $\widetilde{T}^2$ denote elements $\G_m$ with derivative given by $S^2$ and $T^2$ respectively.
On $\Breve{H}(\rho)$, these affine homeomorphisms act with the following matrices
\begin{align*}
    \widetilde{T}^2 \mapsto \begin{pmatrix}
        \rho & 1+\rho \\ 0 & 1
    \end{pmatrix},
    \qquad 
    \widetilde{S}^2 \mapsto \begin{pmatrix}
        1 & 0 \\ 1+\rho^{-1} & \rho^{-1}
    \end{pmatrix}
\end{align*}
in a certain distinguished basis. A straightforward computation thus shows that $A_m :=(\widetilde{T}^2)^m$ and $B_m:=(\widetilde{S}^2)^m$ act trivially on all of $\Breve{H}$.

On the other hand, $A_m$ and $B_m$ are themselves non-trivial (since they map to the non-trivial unipotent  matrices $(T^2)^m$ and $(S^2)^m$ in $\SL$). Moreover, by examining the trace of the image of $A_mB_m$ in the Veech group, one obtains a pseudo-Anosov element acting trivially on $\Breve{H}$ (in fact, $(T^2)^m$ and $(S^2)^m$ generate a Zariski-dense subgroup of $\SL$, and thus contains many such pseudo-Anosovs).
Since the action on $H_\t\oplus H_{rel}$ factors through a finite group, one obtains the desired pseudo-Anosov element by taking a suitable power of $A_mB_m$.

Finally, as noted in~\cite[Remark 3.2]{MatYoc}, the element $\widetilde{S}^2\widetilde{T}^2$ acts on $\Breve{H}(\exp(2\pi i /m))$ via a hyperbolic matrix infinite order for all odd $m\geq 5$, and hence the full action of $\G_m$ on $\Breve{H}$ cannot factor through a finite group $m>3$. In particular, by a result of M\"oller~\cite{Moller-ShimuraAndTeichCurves}, the complement of the tautological plane admits at least one positive exponent in these cases.

%-------------------

\section{Uniform Convergence along Full Banach Density of Times}
\label{sec:full Banach density}

The goal of this section is to prove Theorem~\ref{thm:uniform Banach density one} on uniform convergence of expanding horocycle arcs along a full Banach density of times.
It is the key tool in the deduction of Theorem~\ref{thm:density of tori} from the key matching proposition in \textsection\ref{sec:reduce to matching}.
We retain the notation of Theorem~\ref{thm:uniform Banach density one} throughout this section.

\subsection{Results from Eskin-Mirzakhani-Mohammadi}
In this section we recall some results of Eskin-Mirzakhani-Mohammadi \cite{EMM}.
The first result follows from \cite[Theorem 2.7]{EMM} and the fact that $g_t$ pushes of $r_\theta$ arcs fellow travel with $g_t$ pushes of horocycle arcs.

\begin{thm}[{\cite[Theorem 2.7]{EMM}}]\label{thm:EMM 2.7} Let $\phi\in C_c(\Mcal)$, and 
$\epsilon>0$. There exists a finite set of invariant manifolds, $\Ncal_1,\dots,\Ncal_n \subset \Mcal$ so that for any compact set  
$\mathcal{C} \subset \Mcal \setminus \cup_{i=1}^n\Ncal_i$, there exists $T_1>0$ so that for all $\omega \in \mathcal{C}$  and  for all $T\geq T_1$, 
$$\frac 1 {T} \int_0^T\int_{0}^1\phi(g_tu(s)\omega)dsdt>\int_{\Mcal} \phi d\mu_{\Mcal}-\epsilon.$$
\end{thm}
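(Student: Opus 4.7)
The statement follows from EMM Theorem 2.7 (the corresponding equidistribution for $SO(2)$-arcs $\{g_t r_\theta \omega\}$) by a standard fellow-traveling argument. The key identity is the Bruhat decomposition
\[
r_\theta \;=\; u(-\tan\theta)\,g_{-\log\cos\theta}\,u^{-}(\tan\theta),\qquad\theta\in(-\pi/2,\pi/2),
\]
where $u^{-}(\sigma):=\left(\begin{smallmatrix}1 & 0\\ \sigma & 1\end{smallmatrix}\right)$; combined with the commutations $g_tu(s)=u(e^{2t}s)g_t$ and $g_tu^{-}(\sigma)=u^{-}(e^{-2t}\sigma)g_t$, a short calculation yields
\[
g_t\,r_\theta\,\omega \;=\; g_{t-\log\cos\theta}\,u\!\bigl(-\tfrac{1}{2}\sin 2\theta\bigr)\,u^{-}(\tan\theta)\,\omega.
\]
This identifies each $g_t$-push of a circle point with a $g_{t'}$-push of a horocycle point, perturbed in the strong-stable direction by $u^{-}(\tan\theta)$.

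The plan is first to decompose $S^1$ into four arcs centered at $k\pi/2$, $k\in\{0,1,2,3\}$, so that on each arc the identity applies with $\theta$ replaced by $\theta-k\pi/2\in(-\pi/4,\pi/4)$ and $\omega$ replaced by $\omega_k:=r_{k\pi/2}\omega$; these remain in a common generic compact set since the exceptional $\SL$-invariant manifolds $\Ncal_i$ produced by EMM are rotation-invariant. On the representative sub-arc $I_0:=[-\pi/4+\delta,\pi/4-\delta]$, the change of variables $(t,\theta)\mapsto(t',s)$ with $t'=t-\log\cos\theta$ and $s=-\tfrac{1}{2}\sin 2\theta$ has Jacobian $|\cos 2\theta|$ bounded above and below. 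This converts $\int_0^T\int_{I_0}\phi(g_tr_\theta\omega)\,d\theta\,dt$ into a weighted horocycle integral $\iint_{S_0}\phi\bigl(g_{t'}u(s)u^{-}(\tan\theta(s))\omega\bigr)J(s)\,ds\,dt'$ over $[O(1),T-O(1)]\times S_0$, with a bounded sub-interval $S_0\subset(-1/2,1/2)$, a smooth bounded weight $J$, and $T$-independent boundary errors. The stable perturbation $u^{-}(\tan\theta(s))$ is then absorbed via $g_{t'}u^{-}(x)=u^{-}(xe^{-2t'})g_{t'}$ together with uniform continuity of $\phi$: the integrand is within $\epsilon$ of $\phi(g_{t'}u(s)\omega)$ once $t'\geq T_0$, and the bounded-$t'$ contribution vanishes in the $1/T$-average.

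EMM's proof in fact yields joint equidistribution of $\{(g_tr_\theta\omega,\theta)\}$ in $\Mcal\times S^1$, so applying it to $\phi\otimes\mathbf{1}_{I_0}$ (after standard smoothing) gives the sub-arc lower bound
\[
\frac{1}{T}\int_0^T\int_{S_0}\phi(g_{t'}u(s)\omega)J(s)\,ds\,dt'>|I_0|\int_\Mcal\phi\,d\mu_\Mcal-\epsilon'
\]
for $T$ large and $\omega$ generic, uniformly on the compact set $\mathcal{C}$. Finally, to convert the weighted sub-interval bound into the unweighted $[0,1]$ bound, use that $\SL$-invariance of the $\Ncal_i$ implies the same weighted bound holds simultaneously at every translate $u(c)\omega$, producing shifted bounds over $S_0+c$ with weight $J(s-c)$. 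Approximating $\mathbf{1}_{[0,1]}$ in $L^1$ by a non-negative finite combination $\sum_j w_j J(s-c_j)\mathbf{1}_{S_0+c_j}(s)$ of total integral $1$ (possible since $J$ is positive on $S_0$) and summing yields the desired lower bound. The main technical obstacle is this last translation-and-approximation step; it is handled by compactness of $\mathcal{C}$, uniform continuity of $\phi$, and the uniformity already built into EMM's conclusion.
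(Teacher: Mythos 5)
The paper itself gives essentially no proof here: the sentence preceding the theorem simply cites [EMM, Theorem~2.7] and invokes ``fellow travel'' of $g_t$-pushed circles and horocycles. Your proposal is a sensible elaboration of that one-line remark, and the Bruhat computation $g_tr_\theta = g_{t-\log\cos\theta}\,u\!\bigl(-\tfrac12\sin 2\theta\bigr)\,u^-(\tan\theta)$, the bounded-Jacobian change of variables on $I_0$, and the absorption of $u^-(\tan\theta)$ after another $g_{t'}$-conjugation are all correct.

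The genuine gap is the assertion that ``EMM's proof in fact yields joint equidistribution of $\{(g_tr_\theta\omega,\theta)\}$ in $\Mcal\times S^1$.'' What [EMM, Theorem~2.7] states is an equidistribution result for the average over the \emph{entire} circle $\theta\in[0,2\pi)$; equidistribution of a sub-arc does not follow formally from it, and your own change of variables exhibits the difficulty. When you split $S^1$ into four quarter-arcs, they convert into weighted horocycle arcs based at the four \emph{different} points $r_{k\pi/2}\omega$, $k=0,1,2,3$, so the full-circle average controls only the \emph{sum} of four horocycle averages at four base points --- not any single one. Arc (equivalently, joint) equidistribution is true, but it requires an argument, and ``EMM's proof yields it'' is an assertion, not a proof; this is the one missing idea in your proposal. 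A route the paper makes available, which sidesteps arcs of circles entirely, is to use Lemma~\ref{lem:2.13 for hor} (quoting [AAEKMU, Lemma~3.5]): it provides a Margulis-function drift inequality for the horocycle-arc averaging operator $\psi\mapsto\int_0^1\psi(g_tu(s)\,\cdot\,)\,ds$ directly, after which the EMM machinery (avoidance of the exceptional $\Ncal_i$, the Eskin--Mirzakhani measure classification) runs for horocycle arcs exactly as for circles. Finally, a smaller point: ``$J$ is positive on $S_0$'' does not by itself justify $L^1$-approximation of $\mathbf{1}_{[0,1]}$ by \emph{non-negative} combinations $\sum_j w_jJ(\cdot-c_j)\mathbf{1}_{S_0+c_j}$; what one actually needs is that $S_0$ can be taken arbitrarily small, so that $(J\mathbf{1}_{S_0})/\|J\mathbf{1}_{S_0}\|_{L^1}$ is an approximate identity and finely-spaced translates form a Riemann sum for a convolution that is $L^1$-close to $\mathbf{1}_{[0,1]}$.
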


\begin{prop}[{\cite[Proposition 2.13]{EMM}}] \label{prop:EMM 2.13} Let $\Ncal\subset \HHu$ be an affine invariant submanifold. (In this proposition $\Ncal=\emptyset$ is allowed.) Then there exists an $\mrm{SO}(2)$-invariant function 
$f_{\Ncal}: \HHu \to [1,\infty]$  with the following properties:
 \begin{enumerate}
 \item $f_{\Ncal}(\omega)=\infty$ if and only if $\omega\in \Ncal$, and $f_{\Ncal}$ is bounded on compact subsets of $\HHu\setminus \Ncal$. For any $\rho>0$, the set 
 $\overline{\set{\omega : f_{\Ncal}(\omega)\leq \rho}}$ is a compact subset of 
 $\HHu\setminus \Ncal$.
\item There exists $b > 0$ (depending on $\Ncal$) and for every $0 < c < 1$ there exists $t_0 >0$ (depending on $\Ncal$ and $c$) such that for all $\omega\in \HHu\setminus \Ncal$ 
and all $t>t_0$, 
$$\frac 1 {2\pi} \int_0^{2\pi} f_{\Ncal}(g_tr_{\theta}\omega)d\theta \leq cf_{\Ncal}(\omega) + b.$$
\item  There exists $\sigma > 1$  and $V\subset \SL$ a neighborhood of the identity so that 
for all $g\in V$ and all $\omega \in \HHu$,
$$\sigma^{-1}f_{\Ncal}(\omega) \leq  f_{\Ncal}(g\omega) \leq \sigma f_{\Ncal}(\omega).$$
\end{enumerate}
\end{prop}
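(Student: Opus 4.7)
My plan is to build $f_\Ncal$ by combining a Margulis-type function that blows up on $\Ncal$ with the Eskin-Masur $\alpha$-function that controls excursions to the stratum boundary. When $\Ncal = \emptyset$, this reduces to $f_\emptyset = \alpha$ itself, for which the circle-averaged contraction under $g_t$ is the classical Eskin-Masur-Athreya theorem, so the real work is to engineer the $\Ncal$-targeting piece and patch it into $\alpha$.

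Concretely, fix a tubular neighborhood $U$ of $\Ncal$ on which period coordinates trivialize a symplectic complement of $T\Ncal$. Along $\Ncal$, the tangent bundle $T\HHu|_\Ncal$ admits a $\mrm{KZ}$-equivariant splitting $T\Ncal \oplus N$. For $\omega \in U$, let $d(\omega)$ denote the AGY norm of the component of $\omega$ in $N$, measured relative to the nearest point of $\Ncal$. I will then set
$$
f_\Ncal(\omega) = \chi(\omega)\, d(\omega)^{-\delta} + M \alpha(\omega),
$$
where $\chi$ is a smooth cutoff supported in $U$, $\delta > 0$ is chosen small relative to the smallest positive Lyapunov exponent of the KZ cocycle on $N$, and $M>0$ is a large weight. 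Property~(1) is immediate since $\alpha$ is proper on $\HHu$ and $d^{-1}\to\infty$ exactly on $\Ncal$; property~(3) follows from the Lipschitz estimate of Proposition~\ref{prop:norm of parallel transport} for AGY norms combined with the analogous locally Lipschitz property of $\alpha$ under small elements of $\SL$.

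For the averaged contraction~(2), I split the integral over $\theta\in[0,2\pi]$ into the set where $g_t r_\theta \omega \in U$ and its complement. On the former, the normal component is transported by the KZ cocycle restricted to $N$; combining $\mrm{SO}(2)$-averaging with Forni's non-uniform hyperbolicity (in the spirit of Proposition~\ref{prop:nonuniform hyp}) and the recurrence of $g_t r_\theta \omega$ to the thick part of $\Ncal$ yields an estimate
$$
\frac{1}{2\pi}\int d(g_t r_\theta\omega)^{-\delta}\,d\theta \leq c_1\, d(\omega)^{-\delta} + b_1,
$$
with $c_1 < 1$ once $t$ is large and $\delta$ is small. On the complement, the $\alpha$-term dominates and contracts by the Eskin-Masur bound. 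The two estimates are then combined and reweighted by $M$ so that the leading constant can be made smaller than any prescribed $c<1$ at the cost of a large additive constant $b$. The main obstacle is the quantitative circle-averaged contraction on the normal bundle $N$, uniform over compact sub-level sets of $\alpha|_{\Ncal}$: one needs to convert pointwise Oseledets information into a genuine uniform contraction of $d^{-\delta}$ after averaging over $\theta$, which is delicate because the KZ exponents on $N$ may be small and because $g_t r_\theta\omega$ must be shown to stay in $U$ for a large proportion of $\theta$'s when $\omega$ is close to $\Ncal$. This last point requires a large-deviation-style bound for radial excursions from $\Ncal$ analogous to Athreya's recurrence estimate but localized to the tubular neighborhood.
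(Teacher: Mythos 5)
This proposition is not proved in the paper: it is a black-box import of Proposition 2.13 of Eskin--Mirzakhani--Mohammadi, and the paper simply cites it. So there is no proof to compare against, and the real content of your proposal is an attempt to reprove the EMM Margulis-function construction, which is one of the central and most delicate parts of that paper.

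Your sketch captures the right general idea -- a Margulis-type function combining an Eskin--Masur $\alpha$-term with a term that blows up on $\Ncal$ -- but it has a genuine gap at the key step, and I think you have slightly misdiagnosed where the difficulty lies. You write that the obstacle is that ``the KZ exponents on $N$ may be small,'' but the deeper issue is that the full derivative cocycle $Dg_t = \mathrm{KZ}\otimes g_t$ restricted to the normal bundle $N$ has \emph{both expanding and contracting} Lyapunov directions (the exponents are of the form $\pm 1 \pm \lambda_i$ with $\lambda_i \in [0,1)$ once the tautological plane is absorbed into $T\Ncal$), so the AGY distance $d(g_t\omega)$ to $\Ncal$ can shrink, not grow, under the flow, and the naive functional $d(\omega)^{-\delta}$ can \emph{increase} along orbits. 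The circle average is what rescues this, and to exploit it you need two separate inputs: (i) the classical Eskin--Margulis--Mozes / Eskin--Masur estimate $\frac{1}{2\pi}\int_0^{2\pi}\|g_t r_\theta w\|^{-\delta}\,d\theta \ll e^{-\delta t}\|w\|^{-\delta}$ for the standard $\SL$-representation (coming from the factor $\otimes\, g$), and (ii) a spectral gap for the KZ cocycle on $N$ -- that its top Lyapunov exponent is strictly less than $1$, which relies on the Forni/Veech spectral gap and the structure theory of affine invariant submanifolds, not on ``non-uniform hyperbolicity'' in the sense of Proposition~\ref{prop:nonuniform hyp}. Even granting those, the monomial $d^{-\delta}$ is too coarse: $d$ does not track an $\SL$-equivariant coordinate on the normal bundle, since the nearest-point projection and the tubular neighborhood are not $Dg$-equivariant, so the naive estimate does not close. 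EMM resolve this by building $f_\Ncal$ inductively over the Lyapunov filtration from carefully weighted products of norms adapted to the cocycle structure, which is substantially more elaborate than your $\chi\, d^{-\delta} + M\alpha$. Your acknowledged ``main obstacle'' is therefore not a loose end one can tidy up but is the entire technical content of the proposition, and the shape of the functional you propose would need to change to close it.
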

The next result is a straightforward modification of  \cite[Lemma 3.5]{AAEKMU}, where we average from $0$ to $1$ instead of $-1$ to $1$.
\begin{lem}\label{lem:2.13 for hor}\cite[Lemma 3.5]{AAEKMU} Let $f_{\Ncal}$ be as in Proposition \ref{prop:EMM 2.13}. Then there exists a constant $b'>0$ so that for all $0<a<1$ there exists $\bar{t}_0= \bar{t}_0(a)$ such that for all $t> \bar{t}_0$ and for all $\omega \in \HHu\setminus \Ncal$ we have 
$$\int_{0}^1 f_{\Ncal}(g_tu(s) \omega)ds <  a f_{\Ncal}(\omega)+  b'\,.$$
\end{lem}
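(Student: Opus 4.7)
The plan is to reduce the horocycle average to the circle average (property (2) of Proposition \ref{prop:EMM 2.13}) via an explicit Bruhat-type decomposition of $r_\theta$ in $\SL$, and then absorb all leftover factors using the quasi-invariance in property (3). Concretely, a direct computation shows that for every $\theta \in (-\pi/2, \pi/2)$,
\begin{equation*}
r_\theta \;=\; u^-(\tan\theta)\, g_{\log\cos\theta}\, u(-\tan\theta),
\end{equation*}
where $u^-(r) = \left(\begin{smallmatrix} 1 & 0 \\ r & 1\end{smallmatrix}\right)$. Conjugating by $g_t$ and solving for a horocycle element yields, for $s := -\tan\theta$ and $\tau := t - \tfrac12\log(1+s^2)$,
\begin{equation*}
g_\tau u(s) \;=\; u^-\!\bigl(e^{-2\tau} s/(1+s^2)\bigr)\, g_{\frac12\log(1+s^2)}\, g_\tau\, r_{-\arctan s}.
\end{equation*}

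The next step is to apply property (3) of Proposition \ref{prop:EMM 2.13} to both leftover factors on the right. For $s \in [0,1]$ and $\tau$ large, the stable-unipotent element $u^-(e^{-2\tau}s/(1+s^2))$ lies in the fixed neighborhood $V$ of the identity, so it contributes a factor $\sigma$. The geodesic element $g_{\frac12 \log(1+s^2)}$ ranges in a fixed bounded set of $\SL$ as $s$ varies in $[0,1]$; by splitting it into a bounded number of pieces in $V$, it contributes a further multiplicative constant $\sigma^{n_0}$ independent of $s$ and $\tau$. Setting $C_0 := \sigma^{n_0+1}$, we obtain
\begin{equation*}
f_{\Ncal}\bigl(g_\tau u(s)\omega\bigr) \;\leq\; C_0\, f_{\Ncal}\bigl(g_\tau r_{-\arctan s}\omega\bigr), \qquad s\in[0,1],\; \tau \geq \tau_*,
\end{equation*}
where $\tau_*$ is an absolute constant depending only on $V$ and $\Ncal$.

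Integrating over $s \in [0,1]$ and changing variables to $\theta = -\arctan s \in [-\pi/4, 0]$ with $ds = \sec^2\theta\,d\theta$, and using $\sec^2\theta \leq 2$ on this interval together with positivity of $f_{\Ncal}$, gives
\begin{equation*}
\int_0^1 f_{\Ncal}(g_\tau u(s)\omega)\,ds
\;\leq\; 2C_0 \int_{-\pi/4}^{0} f_{\Ncal}(g_\tau r_\theta \omega)\,d\theta
\;\leq\; 2C_0 \int_0^{2\pi} f_{\Ncal}(g_\tau r_\theta \omega)\,d\theta.
\end{equation*}
Now the key point is that in property (2) of Proposition \ref{prop:EMM 2.13}, the additive constant $b$ is independent of $c$; only the time threshold $t_0$ depends on $c$. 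Thus, given $0 < a < 1$, I choose $c = a/(4\pi C_0)$ (assuming without loss of generality $4\pi C_0 > 1$) and apply property (2) at $\tau$ to get
\begin{equation*}
\int_0^1 f_{\Ncal}(g_\tau u(s)\omega)\,ds \;\leq\; 4\pi C_0\bigl(c f_{\Ncal}(\omega) + b\bigr) \;=\; a\, f_{\Ncal}(\omega) + b',
\end{equation*}
with $b' := 4\pi C_0 b$ independent of $a$, and with $\bar t_0(a) := \max\{\tau_*, t_0(c)\}$.

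The main subtle point will be verifying that the bounded geodesic element $g_{\frac12\log(1+s^2)}$ can indeed be absorbed uniformly in $s$ via a bounded number of applications of the $V$-quasi-invariance; this is what keeps $C_0$ an absolute constant (depending only on $\Ncal$) rather than one that blows up with $a$, which is essential to ensure that $b'$ does not depend on $a$.
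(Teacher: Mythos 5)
Your proof is correct. The paper does not actually supply its own argument for this lemma — it cites [AAEKMU, Lemma~3.5] and describes the present statement as a ``straightforward modification'' (averaging over $[0,1]$ rather than $[-1,1]$) — so there is no written paper-proof to compare against; but your argument is the natural one and, as far as I can tell, is the same strategy the cited reference uses: decompose the pushed horocycle element into a stable-unipotent part near identity, a bounded geodesic part, and a $g_\tau r_\theta$ part, absorb the first two with property~(3), and reduce to the circle average in property~(2). The crucial structural point you single out — that the additive constant $b$ in property~(2) is independent of $c$, so that after multiplying through by $4\pi C_0$ the additive error $b' = 4\pi C_0 b$ does not depend on $a$ — is exactly the right thing, and is what makes the lemma nontrivial. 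I verified the Bruhat identity $r_\theta = u^-(\tan\theta)\,g_{\log\cos\theta}\,u(-\tan\theta)$ and the rearranged form $g_\tau u(s) = u^-\!\bigl(e^{-2\tau}s/(1+s^2)\bigr)\,g_{\frac12\log(1+s^2)}\,g_\tau\, r_{-\arctan s}$, and both are correct.

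Two very minor bookkeeping points you may wish to tidy. First, the sign in $ds = -\sec^2\theta\,d\theta$ is swallowed by the reversal of the limits of integration; you wrote $ds=\sec^2\theta\,d\theta$, which gives the right final formula but is not literally the substitution formula. Second, the inequality $\int_{-\pi/4}^0 f_{\Ncal}(g_\tau r_\theta\omega)\,d\theta \leq \int_0^{2\pi} f_{\Ncal}(g_\tau r_\theta\omega)\,d\theta$ uses $2\pi$-periodicity of $\theta\mapsto r_\theta$ in addition to positivity of $f_{\Ncal}$ (shift $[-\pi/4,0]$ to $[2\pi-\pi/4,2\pi]\subset[0,2\pi]$), not positivity alone. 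Neither affects correctness.
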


\begin{lem}[{\cite[Proposition~3.6]{EMM}}]
\label{lem:marg big} Let $f_{\Ncal}$ be a function as in Proposition \ref{prop:EMM 2.13}. If $\epsilon>0$ there exists $N$ so that for all $x \notin \Ncal$ there exists $S_0$ so that for all $t>S_0$ we have 
$$\left|\set{s\in [0,1]: f_{\Ncal}(g_tu(s)\omega)>N}\right|<\epsilon.$$ 
Moreover, $S_0$ can be chosen to depend only on $f_{\Ncal}(\omega)$.
\end{lem}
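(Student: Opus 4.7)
The plan is to reduce the statement to Markov's inequality after bootstrapping the average $\int_0^1 f_{\Ncal}(g_t u(s) \omega)\,ds$ to a bound independent of $f_\Ncal(\omega)$ by iterating Lemma~\ref{lem:2.13 for hor}.

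First, I would fix $a \in (0,1)$ sufficiently small (how small will be dictated by the bounded distortion constant $\sigma$ below), let $t_0 = \bar{t}_0(a)$, and abbreviate $G(t) := \int_0^1 f_\Ncal(g_t u(s)\omega)\,ds$, so that Lemma~\ref{lem:2.13 for hor} reads $G(t) < a f_\Ncal(\omega) + b'$ for $t \geq t_0$. The goal is to derive a one-step recursion
\[
G(t_1 + t_2) \leq A\, G(t_1) + B, \qquad t_1, t_2 \geq t_0,
\]
with $A < 1$ and $B > 0$ both independent of $\omega$. To do so I would use the commutation relation $g_{t_1+t_2} u(s) = g_{t_2} u(e^{2t_1} s) g_{t_1}$, partition the resulting horocycle arc $[0, e^{2t_1}]$ into unit intervals indexed by integers $k$, and apply Lemma~\ref{lem:2.13 for hor} on each piece with base point $u(k) g_{t_1}\omega$. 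The discrete sum $e^{-2t_1}\sum_k f_\Ncal(u(k) g_{t_1}\omega)$ that emerges is then compared to the continuous average $G(t_1)$ via Proposition~\ref{prop:EMM 2.13}(3); because $u(1)$ typically does not lie in $V$, this comparison requires decomposing $u(1)$ into a uniformly bounded product of short unipotents inside $V$, yielding a universal distortion constant $\sigma' = \sigma^{1/\delta}$. Taking $a < 1/\sigma'$ then gives the recursion with $A = a\sigma' < 1$.

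Iterating $n$ times yields $G(nt_0) \leq A^n f_\Ncal(\omega) + B/(1-A)$, and choosing $n$ with $n t_0 \geq \log f_\Ncal(\omega)/\log(1/A)$ forces $A^n f_\Ncal(\omega) \leq 1$, so that $G(t) \leq C := 1 + B/(1-A)$ for all $t$ beyond the threshold $S_0$ (which depends only on $f_\Ncal(\omega)$). Markov's inequality then gives
\[
\bigl|\{s \in [0,1] : f_\Ncal(g_t u(s)\omega) > N\}\bigr| \leq \frac{G(t)}{N} \leq \frac{C}{N},
\]
which drops below $\epsilon$ once $N > C/\epsilon$. Since $C$ depends only on universal constants arising from Proposition~\ref{prop:EMM 2.13} and Lemma~\ref{lem:2.13 for hor}, the bound $N = N(\epsilon)$ is independent of $\omega$, while $S_0 = S_0(f_\Ncal(\omega))$ depends only on $f_\Ncal(\omega)$, as required.

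The main technical hurdle is controlling the boundary effects in the recursion step: partitioning the horocycle arc $[0, e^{2t_1}]$ into unit pieces does not align perfectly with the dilation introduced by the change of variables, and the bi-Lipschitz comparison in Proposition~\ref{prop:EMM 2.13}(3) must traverse unipotent steps of size $1$ rather than steps inside the small neighborhood $V$. Absorbing both of these into a universal multiplicative constant is the place where the three properties of the Margulis-type function $f_\Ncal$ of Proposition~\ref{prop:EMM 2.13} enter essentially; once this is done, the iteration argument is mechanical and requires no further compactness or geometric input.
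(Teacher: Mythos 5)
Your argument reaches the correct conclusion, and the overall strategy --- bound the average $G(t)=\int_0^1 f_\Ncal(g_t u(s)\omega)\,ds$ for $t$ large and then apply Markov's inequality --- is the right one. However, the bootstrap recursion you set up is unnecessary, and it introduces precisely the technical difficulties (misaligned unit partitions, $\sigma$-distortion across unipotent steps of size~$1$) that you flag as hurdles. The observation you are not exploiting fully is that the constant $b'$ in Lemma~\ref{lem:2.13 for hor} does \emph{not} depend on the contraction factor $a$, while the threshold $S_0$ in the statement is \emph{permitted} to depend on $f_\Ncal(\omega)$. So there is no need to iterate away the $f_\Ncal(\omega)$-dependence in the upper bound for $G(t)$; one simply chooses $a$ small depending on $\omega$. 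Concretely, fix $N > (1+b')/\epsilon$, which depends only on $\epsilon$. Given $\omega\notin\Ncal$ (so $1\le f_\Ncal(\omega)<\infty$), take $a = 1/(2 f_\Ncal(\omega)) \in (0,1)$ and set $S_0 = \bar t_0(a)$, which depends only on $f_\Ncal(\omega)$. For $t>S_0$, Lemma~\ref{lem:2.13 for hor} gives $G(t) < a f_\Ncal(\omega) + b' = \tfrac12 + b'$, and Markov's inequality yields
\[
\bigl|\{s\in[0,1] : f_\Ncal(g_t u(s)\omega) > N\}\bigr| \le \frac{G(t)}{N} < \frac{1+b'}{N} < \epsilon.
\]
This one-shot application of Lemma~\ref{lem:2.13 for hor} followed by Markov is all that the cited [EMM, Proposition~3.6] amounts to, and the paper itself offers no separate proof, treating the lemma as a direct quotation. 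Your iteration is not wrong --- after handling the boundary intervals and the distortion constant carefully it does close --- but it uses considerably more machinery than the statement requires, essentially re-deriving a self-improving bound that is already delivered in a single step by allowing the contraction parameter $a$ to shrink with $\omega$.
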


\subsection{Consequences}

In this section we develop some consequences of the previous results via fairly straightforward arguments.
Let $d_\ast(\mu,\nu)$ be any metric giving the weak-$\ast$ topology on Borel measures so that $\mu(\Mcal)\leq 1$.

\begin{prop}\label{prop:func unif}For every $\epsilon>0$ there exists $T\geq 0$,  
$0\leq \phi\leq 1$ with $\phi \in C_c(\Mcal)$ so that $\int_{\Mcal}\phi d\mu>1-\epsilon$ and for any $x \in \Mcal$ so that $\phi(x)>0$ we have $d_\ast(T^{-1}\int_0^T\delta_{u(s)x}ds,\mu_{\Mcal})<\epsilon$. 
\end{prop}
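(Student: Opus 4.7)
The plan is to combine horocycle ergodicity of $\mu_{\Mcal}$ with an Egorov-type argument and a Urysohn construction. First, I would observe that since $\mu_{\Mcal}$ is $\SL$-invariant and ergodic (Lemma~\ref{lem:Mcal is ergodic}), the Mautner phenomenon for $\SL$ implies that $\mu_{\Mcal}$ is also $U$-ergodic. Hence, Birkhoff's pointwise ergodic theorem yields, for each $f\in C_c(\Mcal)$, that the horocycle averages $T^{-1}\int_0^T f(u(s)x)\,ds$ converge to $\int f\,d\mu_{\Mcal}$ for $\mu_{\Mcal}$-almost every $x\in\Mcal$.

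Next, I would exploit metrizability of the weak-$\ast$ topology to select finitely many $f_1,\dots,f_N\in C_c(\Mcal)$ with $0\le f_i\le 1$ and a $\delta=\delta(\epsilon)>0$ such that, for any Borel probability measure $\nu$ on $\Mcal$, the condition $\max_{i\le N}\left|\int f_i\,d\nu-\int f_i\,d\mu_{\Mcal}\right|<\delta$ implies $d_\ast(\nu,\mu_{\Mcal})<\epsilon$. Applying the previous step to each $f_i$ produces a $\mu_{\Mcal}$-conull set of $x\in\Mcal$ on which all $N$ horocycle averages converge simultaneously.

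For each $T>0$, define
\begin{equation*}
F_T=\set{x\in\Mcal:\ \max_{i\le N}\left|\tfrac{1}{T}\int_0^T f_i(u(s)x)\,ds-\int f_i\,d\mu_{\Mcal}\right|<\delta},
\end{equation*}
which is open by continuity in $x$ of the horocycle integral at fixed time. The conull set from the previous paragraph is contained in $\bigcup_{T_0>0}\bigcap_{T\ge T_0}F_T$, so monotone convergence gives $T_0$ large with $\mu_{\Mcal}\bigl(\bigcap_{T\ge T_0}F_T\bigr)>1-\epsilon/2$, and in particular $\mu_{\Mcal}(F_{T_0})>1-\epsilon/2$. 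Inner regularity of $\mu_{\Mcal}$ produces a compact $K\subset F_{T_0}$ with $\mu_{\Mcal}(K)>1-\epsilon$, and Urysohn's lemma in the locally compact Hausdorff space $\Mcal$ yields $\phi\in C_c(\Mcal)$ with $0\le\phi\le 1$, $\phi\equiv 1$ on $K$, and $\supp(\phi)\subset F_{T_0}$. Taking $T=T_0$, we obtain $\int\phi\,d\mu_{\Mcal}\ge\mu_{\Mcal}(K)>1-\epsilon$, while any $x$ with $\phi(x)>0$ lies in $F_{T_0}$, giving the claimed $d_\ast$-bound by the choice of $N$ and $\delta$.

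The argument is essentially soft measure theory---Birkhoff plus Egorov plus Urysohn---and I do not anticipate a serious obstacle. The only non-elementary input is the $U$-ergodicity of $\mu_{\Mcal}$, which follows immediately from the $\SL$-ergodicity recorded in Lemma~\ref{lem:Mcal is ergodic} via Mautner.
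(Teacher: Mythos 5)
Your proof is correct and follows essentially the same route as the paper: reduce $d_\ast$-closeness to finitely many compactly supported test functions, apply the ergodic theorem for the horocycle flow to obtain an open set of measure close to $1$ on which the time-$T$ averages are $\delta$-close for a fixed $T$, and then use inner regularity together with Urysohn's lemma to produce $\phi$. The only addition you make is to justify $U$-ergodicity of $\mu_{\Mcal}$ via Mautner, a step the paper leaves implicit; this is a correct clarification rather than a different argument.
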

\begin{proof}
\textit{Step 1:} Finding a compact set $\mathcal{K}$, an open set $U \supset \mathcal{K}$ and $T>0$ so that $\mu_{\Mcal}(\mathcal{K})>1-\epsilon$ and $$
d_\ast(T^{-1}\int_0^T\delta_{u(s)x}ds,\mu_\Mcal)<\frac{\epsilon}2
$$ for all $x \in U$.

There exists a finite set of continuous compactly supported function $F=\set{f_i}_{i=1}^n$ and $\delta>0$ so that if $\nu$ is a Borel probability measure  
$$\left|\int f_i d\mu_{\Mcal}-\int f_i d\nu\right|<\delta  \text{ for all }i$$ then $d_\ast(\mu,\nu)<\frac{\epsilon}2$. 

Applying the Birkhoff (or Von Neumann) Ergodic Theorem 
$n$ times we obtain $T$ and a measurable set $G$ with $\mu_{\Mcal}(G)>1-\frac \epsilon 4 $ and 
$$ \left|\int f_i(u(s)z)-\int f_i d\mu_{\Mcal}\right|<\delta$$ for all $z \in G$ and $f_i \in F$. By the uniform continuity of the $f_i$ and $u(s)$, we may assume $G$ is open. By inner regularity of measures, there exists $\mathcal{K}\subset U$ with $\mu_{\Mcal}(\mathcal{K})>1-\frac \epsilon 2$.

\textit{Step 2:} Completion. Recall that in any locally compact Hausdorff space, for any $\mathcal{K} \subset U$ with $\mathcal{K}$ compact and $U$ open, there exists $\phi \in C_c$ so that $\phi|_{\mathcal{K}}=1$, $\phi|_{U^c}=0$ and $0\leq \phi(x)\leq 1$ for all $x$. Our $\phi$ is such a $\phi$ for $\mathcal{K}$ and $U$ as in the previous step. Indeed, $\int \phi d\mu_{\Mcal}\geq \mu_{\Mcal}(\mathcal{K})>1-\epsilon.$

\end{proof}

\begin{cor}\label{cor:weak close}

Let 
$\epsilon>0$.
There exist $T_0>0$ and proper $\SL$-orbit closures $\Ncal_1, \dots, \Ncal_n$ in $\Mcal$ such that for any compact set $F\subset \Mcal\setminus \cup_{i=1}^k \Ncal_i$, we can find $S_0\geq 0$, so that for all $T\geq T_0$,  $S\geq S_0$ and $x\in F$, we have
\begin{equation}
\label{eq:banach density}
\bigg|\set{\ell \in [S,S+T]: d_\ast\big( \int_{0}^1 \delta_{g_\ell u(s)x}ds,\mu_{\Mcal}\big)<\epsilon }\bigg|>(1-\epsilon)T.
\end{equation}
\end{cor}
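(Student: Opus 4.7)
The plan is to combine Proposition~\ref{prop:func unif} with a Chebyshev-type argument, using Theorem~\ref{thm:EMM 2.7} to control Cesaro averages and the Margulis-style function $f_\Ncal$ from Proposition~\ref{prop:EMM 2.13}, together with Lemma~\ref{lem:marg big}, to transfer control from windows of the form $[0,T]$ to arbitrary forward windows $[S,S+T]$.

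First I would invoke Proposition~\ref{prop:func unif} with a small parameter $\delta_0=\delta_0(\epsilon)$ to obtain $\phi\in C_c(\Mcal)$ with $0\leq\phi\leq 1$ and $\int\phi\,d\mu_\Mcal>1-\delta_0$, together with a length $T^*>0$ such that $\phi(y)>0$ implies $d_\ast((T^*)^{-1}\int_0^{T^*}\delta_{u(s)y}\,ds,\mu_\Mcal)<\delta_0$. Then I would apply Theorem~\ref{thm:EMM 2.7} with this $\phi$ and $\delta_0$ to produce the proper orbit closures $\mathcal{N}_1,\dots,\mathcal{N}_n$ that will be the ones named in the corollary. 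Let $f_\Ncal$ denote the Margulis function of Proposition~\ref{prop:EMM 2.13} for $\Ncal=\cup_i \mathcal{N}_i$, and, given a compact $F\subset\Mcal\setminus\cup_i\mathcal{N}_i$, set $M_0:=\sup_F f_\Ncal<\infty$.

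The central step is to establish, for $S$ sufficiently large in terms of $(M_0,\delta_0)$ and $T\geq T_1:=T_1(\{f_\Ncal\leq N\})$, where $N$ is supplied by Lemma~\ref{lem:marg big} for the parameter $\delta_0$, the uniform shifted Cesaro bound
\[
\frac{1}{T}\int_S^{S+T}\int_0^1 \phi(g_\ell u(s)x)\,ds\,d\ell \;\geq\; 1-3\delta_0 \qquad \text{for every } x\in F.
\]
To prove this, I would apply Theorem~\ref{thm:EMM 2.7} not to $x$ directly but to the points $\omega=g_S u(s')x$ for $s'$ in the subset $G'\subset[0,1]$ of measure at least $1-\delta_0$ where $f_\Ncal(g_S u(s')x)\leq N$, as furnished by Lemma~\ref{lem:marg big}. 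The identity $g_\ell u(r) g_S = g_{\ell+S}u(e^{-2S}r)$ converts the EMM conclusion at each such $\omega$ into an estimate for $\phi$ integrated against a tiny piece of the $u$-orbit of $x$ near $s'$; Fubini and integration over $s'\in G'$ then upgrade these local estimates to the displayed shifted Cesaro bound up to a negligible boundary term of order $e^{-2S}$. A one-sided Chebyshev inequality applied to $A_x(\ell):=\int_0^1\phi(g_\ell u(s)x)\,ds$ shows that for at least $(1-3\sqrt{\delta_0})T$ values of $\ell\in[S,S+T]$ one has $A_x(\ell)>1-\sqrt{\delta_0}$.

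Finally, I would translate each pointwise bound $A_x(\ell)>1-\sqrt{\delta_0}$ into the desired weak-$\ast$ conclusion. After the change of variable $\sigma=e^{2\ell}s$, this inequality says that $\phi(u(\sigma)g_\ell x)>1/2$ on a subset of $[0,e^{2\ell}]$ of relative density at least $1-2\sqrt{\delta_0}$. Writing the target measure
\[
\int_0^1\delta_{g_\ell u(s)x}\,ds = e^{-2\ell}\int_0^{e^{2\ell}}\delta_{u(\sigma)g_\ell x}\,d\sigma
\]
as the average, over starting points $\sigma_0\in[0,e^{2\ell}]$, of the short-arc measures $(T^*)^{-1}\int_0^{T^*}\delta_{u(\tau)u(\sigma_0)g_\ell x}\,d\tau$ (a Fubini calculation, valid up to a boundary error of order $T^*/e^{2\ell}$), the contribution from the good set of $\sigma_0$ is $\delta_0$-close to $\mu_\Mcal$ by Proposition~\ref{prop:func unif}, while the small complement is controlled using that $d_\ast$ is bounded on probability measures. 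Choosing $\delta_0$ small enough in terms of $\epsilon$, and $S_0$ large enough that also $T^*/e^{2\ell}<\epsilon/2$ for $\ell\geq S_0$, yields $d_\ast(\int_0^1\delta_{g_\ell u(s)x}\,ds,\mu_\Mcal)<\epsilon$ for all such $\ell$. The main technical obstacle is precisely this shift from $[0,T]$ to $[S,S+T]$: Theorem~\ref{thm:EMM 2.7} controls Cesaro averages only for orbits starting in a prescribed compact set, yet for arbitrarily large $S$ the point $g_Sx$ may lie deep in the cusp, and the role of Lemma~\ref{lem:marg big} is exactly to let us pivot the application of EMM to the generic neighbors $g_Su(s')x$ on the horocycle through $g_Sx$.
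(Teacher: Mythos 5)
Your proposal reproduces the paper's own argument in all essential respects: the function $\phi$ and scale $T^*$ from Proposition~\ref{prop:func unif}, the orbit closures from Theorem~\ref{thm:EMM 2.7}, the pivot to the horocycle neighbors $g_Su(s')x$ via Lemma~\ref{lem:marg big} using the identity $g_\ell u(r)g_S=g_{\ell+S}u(e^{-2S}r)$, a Markov/Chebyshev step to pass from a shifted Cesàro bound to a large set of good $\ell$, and the final translation from $\int_0^1\phi(g_\ell u(s)x)\,ds$ being close to $1$ into $d_\ast$-closeness of the horocycle-arc measure to $\mu_\Mcal$. The only imprecision worth flagging is your invocation of Proposition~\ref{prop:EMM 2.13} with $\Ncal=\cup_i\Ncal_i$: that proposition is stated for a single affine invariant submanifold, and the union is not in general one. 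The paper instead takes individual functions $f_{\Ncal_i}$ and works with $\hat N=\max_i N_i$ and $\Kcal=\cap_i f_{\Ncal_i}^{-1}([0,\hat N])$; you could equivalently set $f:=\max_i f_{\Ncal_i}$ or apply Lemma~\ref{lem:marg big} to each $f_{\Ncal_i}$ with parameter $\delta_0/n$, as the paper does. This is a small bookkeeping fix and does not change the substance of the argument.
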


\begin{proof}
Let $\delta>0$ so that whenever $t \mapsto \nu_t$ is a measurable assignment of measures, with $$|\set{t\in [0,1]: d_\ast(\nu_t,\mu_{\Mcal})>\delta}|<\delta,$$
we have $$d_\ast(\int_{[0,1]}\nu_t,\mu_{\Mcal})<\epsilon.$$ 
Let $\phi$ be as in Proposition \ref{prop:func unif} applied with $\epsilon \delta/12$ in place of $\epsilon$. 
Applying Theorem \ref{thm:EMM 2.7} with  $\epsilon \delta/12$ in place of $\epsilon$, we obtain a finite number of closed $\SL$-invariant manifolds $\Ncal_1,...,\Ncal_n$ satisfying the conclusion of the theorem.
For each of these, we obtain $f_i\stackrel{\mrm{def}}{=}f_{\Ncal_i}$ as in Proposition \ref{prop:EMM 2.13}. We now apply Lemma \ref{lem:marg big} to the $f_i$ with $\epsilon \delta/12n$ in place of $\epsilon$, a compact set $F\subset \Mcal\setminus \bigcup_{i=1}^n \Ncal_i$, and obtain $N_i, S_i$ for each $f_i$. The conclusion of the lemma holds for all $t>S_i$ and $x\in F$ by the last assertion of the lemma.

Let $\hat{N}=\max\set{N_i:i}$ and $S_1=\max\set{S_i:i}$. 
Let $\mathcal{K} =\cap f_i^{-1}([0,\hat{N}])$. Now, let $T_1$ be as in Theorem \ref{thm:EMM 2.7} applied with $\phi$, $\mathcal{K}$ and $\epsilon \delta/12$.  
Then, if $T\geq T_1$, $S\geq S_1$ and $x \in F$, 

$$\int_S^{S+T}\int_0^1\phi\big(g_\ell u(s)x\big)dsd\ell\geq \int_0^1\bigg(\int_S^{S+T}  \int_0^1\phi(   g_\ell u(r) u(s)g_Sx) drd\ell\bigg) ds  -2e^{-2S}.$$
 Now if $u(s)g_Sx \in \mathcal{K}$ by Theorem \ref{thm:EMM 2.7} we have 
$$\int_S^{S+T}\int_0^1\phi\big(g_\ell u(r)u(s)g_Sx\big)drd\ell>T(\int \phi-\frac{\epsilon \delta}{12})>T-2 \cdot \frac{\epsilon \delta }{12}.$$ Also, by Lemma \ref{lem:marg big} 
$$|\set{s \in [0,e^{2S}]:u(s)g_S x\notin \mathcal{K}}|<n \frac{\epsilon \delta}{12 n}. $$ So, if $S_0\geq S_1$ is big enough then for all $S\geq S_0$ and $T\geq T_1$,
 
$$\int_S^{S+T}\int_0^1\phi\big(g_\ell u(s)x\big)dsd\ell>T-\frac{\epsilon \delta }2.
$$ 
Since $0\leq \phi\leq 1$ we have that $$\bigg|\set{\ell\in [S,S+T]: \int_{0}^1\phi\big(g_\ell u(s)x\big)ds>1-\frac{\delta}2}\bigg|>1-{\epsilon}.$$ 
For each $\ell$ so that $|\set{s\in [0,1]:\phi(s)>0}|>1-\frac{\delta}2$ we have that if $L$ is the $T$ in Proposition \ref{prop:func unif} $$\bigg|\set{s \in [0,e^{2\ell}]: d_\ast(L^{-1}\int_0^L\delta_{u(r)u(s)g_\ell x}dr,\mu_{\Mcal})<\delta}\bigg|<\frac{\delta}2e^{2\ell}.$$ 
Thus if $\ell$ is large enough, $$\bigg|\set{s \in [0,e^{2\ell}-L]: d_\ast(L^{-1}\int_0^L\delta_{u(r)u(s)g_\ell x}dr,\mu_{\Mcal})<\delta}\bigg|<\delta e^{2\ell}.$$ Thus by the choice of $\delta$ at the start of the proof, 
$$d_\ast(\int_0^{1}\delta_{g_\ell u(s)x},\mu_{\Mcal})<\epsilon.$$
The proof is completed by observing that because  $\phi(x)\leq 1$ for all $x$, 
$$ \int_0^1\phi(g_\ell u(s)xdx>1-\frac \delta 2 \,\, \Longrightarrow \, \, \bigg|\set{s\in [0,e^{2\ell}]: \phi(s)>0}\bigg|>(1-\epsilon)e^{2\ell}.$$
\end{proof}

\subsection{Proof of Theorem~\ref{thm:uniform Banach density one}}
In this section, we use the above results to complete the proof of Theorem~\ref{thm:uniform Banach density one}.
First, given $\epsilon$, by rescaling $d_\ast$, we may assume that 
\begin{equation}\label{eq:meas close func close}
d_\ast(\mu,\nu)<\epsilon \quad \Longrightarrow \quad \left|\int f d\mu-\int f d \nu\right|<\epsilon. 
\end{equation}
Next, because $\mu_{\Mcal}$ is $g_t$ invariant and $(g_t)_\ast$ acts continuously with respect to the weak-$\ast$ topology, which when restricted to measures of total variation at most 1 is compact, there exists $\delta>0$ so that if $d_\ast(\nu,\mu_{\Mcal})<\delta$ we have 
\begin{equation}\label{eq:push close}\max_{s\in [-1,1]}\, d_\ast(g_{s}\nu,\mu_{\Mcal})<\epsilon.
\end{equation} 
Now, applying Corollary \ref{cor:weak close} with $\epsilon=\frac \delta 4$ we have $T_0$ and 
closed $\SL$-invariant sets $\Ncal_1,...,\Ncal_n$ so that for all compact $F \subset \Mcal\setminus \cup_{i=1}^n\Ncal_i$, there exists $S_0$ so that for all $x \in F$, $S\geq S_0$ and $T\geq T_0$ we have 
$$\left|\set{\ell \in [S,S+T]: d_\ast( \int_{0}^1 \delta_{g_\ell u(s)x}ds,\mu_{\Mcal})<\delta}\right|>(1-\epsilon)T.$$
In particular, the 1-neighborhood of this set contains at least $(1-\epsilon)T$ integers giving, 
$$\#\set{\ell \in [S,S+T] \cap \mathbb{N}: d_\ast(\int_{0}^1 \delta_{g_\ell u(s)x}ds,\mu_{\Mcal})<\epsilon}>(1-\epsilon)T.$$
Letting $L_0=T_0$, this establishes Theorem \ref{thm:uniform Banach density one}.

\section{The Key Matching Proposition and Proof of Theorem~\ref{thm:density of tori}}
\label{sec:reduce to matching}

The goal of this section is to reduce Theorem~\ref{thm:density of tori} to Proposition~\ref{prop:key matching} below.
Roughly, this result asserts that, when $t$ is large, the pushed twist torus $g_t \cdot \T(\w)$ will be close to a whole family of pushes of pieces of the twist torus at all times between $t-L_0$ and $t$, for any given $L_0>0$. 
The reduction relies on a refinement of the equidistribution theorems of Eskin, Mirzakhani, and Mohammadi, Theorem~\ref{thm:uniform Banach density one}, proved in \textsection\ref{sec:full Banach density}.
The proof of Proposition~\ref{prop:key matching} is given in \textsection\ref{sec:proof of key matching}.

Recall the notation in \textsection\ref{sec:notation}.

\begin{prop}[Key Matching Proposition]
    \label{prop:key matching}
        Let $(M,\w)$ be a horizontally periodic Veech surface, and let $\Vcal = \SL\cdot (M,\w)$ be its $\SL$-orbit.
        Let $0\neq \b\in\twist^0(\w)$.
        For every $\e>0$, there exist a compact subset $\Kcal\subset \Vcal$ and $\d>0$, so that the following hold for every $ L_0\geq 0, T\geq 1$, and for all large enough $t>0$.
        For every $0\leq \ell\leq L_0$, there is a set $S_\ell\subseteq [0,1]$ of measure at least $1-\e$ such that for all $s\in S_\ell$, we have
        \begin{enumerate}
            \item\label{item:key match in cpt}  $\w(t-T-L_0+\ell,s)\in \Kcal$,
            and
            \item\label{item:key match closeness} for all $0\leq r<\d/N_\beta(t-T-L_0+\ell,s)$, we have
            \begin{align*}
            g_{T+\ell} \cdot \mrm{Trem}_\b(t-T-L_0,s,r)
            \in  B\left(g_t \cdot \T(\w,\b), \e \right).
        \end{align*}
        \end{enumerate}
        Here, for a subset $E\subseteq \HHu$, $B(E,\e)$ denotes its open $\e$-neighborhood in the AGY metric.
\end{prop}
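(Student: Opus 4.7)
I will deduce the proposition from Theorem~\ref{thm:A-invariance of flag distribution} applied to the projective bundle $\P\hV$ over the Veech curve $\Vcal \cong \SL/\mrm{SL}(M,\w)$ whose fiber action is the monodromy of the Kontsevich--Zorich cocycle on the $\mrm{Aff}^+(M,\w)$-invariant subspace $\cylspace^0(\w)$. The growth bound~\eqref{eq:bounded cocycle} is verified using Corollary~\ref{cor:KZ norm} applied to both $\KZ{g}{x}$ and $\KZ{g}{x}^{-1}=\KZ{g^{-1}}{gx}$. Introduce the empirical measures
\[
\mu_t := \int_0^1 \d_{(\w(t,s),\,[\b(t,s)])}\,ds
\]
on $\P\hV$, where $[\b(t,s)]$ is the projective class of $\b(t,s)\in\cylspace^0(\w(t,s))$. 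By Theorem~\ref{thm:A-invariance of flag distribution}, every weak-$\ast$ accumulation point of $\{\mu_t\}_{t\ge 0}$ is $A$-invariant. Since $\mu_t=(g_\tau)_\ast\mu_{t-\tau}$ identically for every $\tau$, a routine subsequential compactness argument combined with $A$-invariance of limits yields: for every $\e_0>0$ and $L_0\geq 0$, there exists $T_0$ such that $d_\ast(\mu_t,\mu_{t-\tau})<\e_0$ for all $t\geq T_0$ and $\tau\in[0,L_0]$.

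Next, using non-divergence of horocycles on the finite-volume orbit $\Vcal$, I choose a compact $\Kcal\subset\Vcal$ ensuring $\w(t-T-L_0+\ell,s)\in \Kcal$ for $s$ in a subset of $[0,1]$ of measure at least $1-\e/2$, uniformly in large $t$ and $\ell\in[0,L_0]$. Compactness of the projective fibers makes the preimage $\hat\Kcal\subset\P\hV$ compact. I cover $\hat\Kcal$ by finitely many open balls $B_1,\dots,B_N$ of AGY-radius $\eta$, with $\eta$ to be chosen small depending on $\e,\d,T$. Taking $\e_0$ sufficiently small relative to $\eta/N$ and applying approximate invariance to test functions approximating indicators of the $B_i$, a standard pigeonhole argument shows: outside a set of $s$ of measure at most $\e$, for every remaining $s\in[0,1]$ one can find $s'\in[0,1]$ such that the pairs $(\w(t-\tau,s),[\b(t-\tau,s)])$ and $(\w(t,s'),[\b(t,s')])$ lie in a common $B_i$. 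Setting $\tau=L_0-\ell$ produces matching for the time shifts required by the proposition.

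The remaining step is to convert this projective matching into the claimed tremor estimate. Trivializing the cohomology bundle over $\Kcal$ and identifying $\b(t-\tau,s)$ and $\b(t,s')$ as vectors in a common finite-dimensional space via parallel transport along a short path in $\Kcal$, projective closeness in $\P\hV$ gives
\[
\b(t-\tau,s) = c\cdot\b(t,s') + O\big(\eta\cdot N_\b(t-\tau,s)\big), \qquad c := \pm\, N_\b(t-\tau,s)/N_\b(t,s').
\]
By Lemma~\ref{lem:equivariance of notation}(2), $g_{T+\ell}\cdot\Trem_\b(t-T-L_0,s,r)=\Trem_\b(t-\tau,s,r)$, and setting $r':=rc$ ensures $\Trem_\b(t,s',r')\in g_t\cdot\T(\w,\b)$ by definition. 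In period coordinates,
\[
\Trem_\b(t-\tau,s,r) - \Trem_\b(t,s',r') = O\big(\dAGY(\w(t-\tau,s),\w(t,s'))\big) + O\big(r\cdot N_\b(t-\tau,s)\cdot\eta\big).
\]
Using $r<\d/N_\b(t-T-L_0+\ell,s)$ together with Lemma~\ref{lem:equivariance of notation}(1) to bound $N_\b(t-\tau,s)/N_\b(t-T-L_0+\ell,s)\leq e^{2T}$, the second error is at most $O(\d e^{2T}\eta)$. Choosing $\eta<\e\, e^{-2T}/\d$ (permissible since $T$ is held fixed while $t\to\infty$) makes both errors less than $\e$ in AGY distance, using that period coordinates are bi-Lipschitz to $\dAGY$ on the compact $\Kcal$.

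\textbf{Main obstacle.} The principal technical difficulty is upgrading the weak-$\ast$ closeness of $\mu_t$ and $\mu_{t-\tau}$ to a genuine pointwise matching covering all but an $\e$-fraction of $s\in[0,1]$: this requires combining Theorem~\ref{thm:A-invariance of flag distribution} with compactness of the projective fibers and uniform non-divergence of horocycles on $\Vcal$, together with a careful pigeonhole on a uniform finite cover. Tracking the matching error against the potentially large tremor norm $r\cdot N_\b(t-L_0+\ell,s)$ (which can be as big as $\d e^{2T}$) imposes the stringent choice $\eta\ll\e\, e^{-2T}/\d$, feasible only because $T$ is held fixed while $t\to\infty$.
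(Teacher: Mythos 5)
Your overall strategy mirrors the paper's: applying Theorem~\ref{thm:A-invariance of flag distribution} to the projective bundle $\P\hV$ over $\Vcal$ with fiber $\cylspace^0(\w)$, deducing that the empirical measures $\mu_t$ and $\mu_{t-\tau}$ become weak-$\ast$ close uniformly in $\tau\in[0,L_0]$ (this is the paper's Lemma~\ref{lem:conseq of A-invar}), extracting a pointwise matching, and converting that matching into an AGY tremor estimate. Your finite-cover pigeonhole is a plausible (if somewhat less clean) alternative to the paper's use of Kantorovich--Rubinstein duality and a coupling measure in the proof of Proposition~\ref{prop:intermediate matching}.

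However, there is a genuine gap in the final conversion step. You match the pairs $(\w(t-\tau,s),[\b(t-\tau,s)])$ and $(\w(t,s'),[\b(t,s')])$ at the \emph{late} times $t-\tau=t-L_0+\ell$ and $t$, and then try to bound $\dAGY(\Trem_\b(t-\tau,s,r),\Trem_\b(t,s',r'))$ directly from a period-coordinate estimate, invoking that "period coordinates are bi-Lipschitz to $\dAGY$ on the compact $\Kcal$." The problem is that at time $t-\tau$ the tremor size $rN_\b(t-\tau,s)$ is bounded only by $\d e^{2T}$ (via Lemma~\ref{lem:equivariance of notation}), which exceeds $1$ once $T$ is moderately large. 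When this happens, $\Trem_\b(t-\tau,s,r)$ may not lie in any fixed neighborhood of $\Kcal$ (indeed the tremor path from $\w(t-\tau,s)$ can have arbitrarily large AGY length by Proposition~\ref{prop:norm of parallel transport}), so the bi-Lipschitz relation between period coordinates and $\dAGY$ is simply unavailable at those points: the comparison constants depend on where the tremored surface sits, not just on $\Kcal$. Since the proposition requires $\d$ to be chosen before $T$, you cannot shrink $\d$ to keep $\d e^{2T}<1$.

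The paper sidesteps this by performing the AGY estimate at the \emph{early} times $t_1=t-T-L_0+\ell$ and $t_2=t-T$, where the tremor size is $rN_\b(t_1,s)<\d$---a small constant depending only on $\Kcal$---so that all four surfaces stay in a controlled neighborhood of $\Kcal$ and the careful Lipschitz estimate of Lemma~\ref{lem:tremors are Lipschitz} applies. Only \emph{then} is $g_T$ applied, amplifying the distance by at most $e^{2T}$ (Lemma~\ref{lem:nonexpansion of stable}); this is why the matching threshold is taken as $\th=\min\{\e e^{-2T},\d\}/C$. Your argument encounters the same $e^{2T}$ factor, but places it as an amplification of the tremor norm rather than as the Lipschitz constant of $g_T$ acting on a small AGY distance, and this rearrangement does not yield a justified estimate. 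The fix is to match at $t_1,t_2$, apply Lemma~\ref{lem:tremors are Lipschitz} there, and then push by $g_T$.
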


\begin{rem}
\begin{enumerate}
    \item Proposition~\ref{prop:key matching} holds in general for all horizontally periodic Veech surfaces $(M,\w)$, and does not require the $\Mcal$-primitivity hypothesis.

    \item In our proof of Theorem~\ref{thm:density of tori}, we use the full strength of item~\eqref{item:key match closeness}, but we only apply item~\eqref{item:key match in cpt} for $\ell=0$.
\end{enumerate}

\end{rem}

In the rest of this section, we deduce Theorem~\ref{thm:density of tori} from Proposition~\ref{prop:key matching}.
Our goal is to build a suitable compact set $F$ of almost generic points in the sense of Theorem~\ref{thm:uniform Banach density one}.
We will apply Proposition~\ref{prop:key matching} to tremors that land in $F$ to show, roughly speaking, that expanding horocycle arcs starting from these points remain close to the expanded torus $g_t\cdot \T(\w)$ for a long interval of time.
This will imply that a neighborhood of $g_t\cdot\T(\w)$ contains a large piece of the $P$-orbit of an almost generic point, where $P=AU$ is the subgroup of upper triangular matrices.
The set $F$ will be defined in equations~\eqref{eq:Rcal_delta0} and~\eqref{eq:F}. Its construction requires preparation that occupies the next two subsections.

\subsection{The role of convergence along full Banach density set of times}
\label{sec:applying Banach density}

Let 
$$\Ucal\subseteq \Mcal\stackrel{\mrm{def}}{=}\overline{\SL\cdot \T(\w) }$$
be an arbitrary non-empty open ball of radius $r$.
Assume that $(M,\w)$ is $\Mcal$-primitive.
We will show that there is $\b\in \twist^0(\w)$ so that for all $t$ large enough,
\begin{align}\label{eq:intersect 2Ucal}
    g_t\cdot\T(\w,\b) \cap 2\Ucal \neq\emptyset,
\end{align}
where $2\Ucal$ is the ball with the same center and twice the radius as $\Ucal$.
Since $\Ucal$ is arbitrary, this will conclude the proof of Theorem~\ref{thm:density of tori}.

The following lemma enables us to apply Theorem~\ref{thm:uniform Banach density one} by showing that $\Mcal$ is the orbit closure of a single point.
\begin{lem}\label{lem:Mcal is ergodic}
    There exists $x\in \T(\w)$ such that $\Mcal = \overline{\SL\cdot x}$. In particular, $\Mcal$ is the support of a unique $\SL$-invariant and ergodic probability measure.
\end{lem}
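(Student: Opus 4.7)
The plan is to combine the Eskin-Mirzakhani-Mohammadi structure theorem with a Baire category argument on $\T(\w)$, and then invoke Wright's cylinder deformation theorem to rule out the exceptional case. Once a point $x\in\T(\w)$ with $\overline{\SL\cdot x}=\Mcal$ is produced, the ``In particular'' assertion is immediate from EMM: orbit closures are affine invariant submanifolds, and each such submanifold carries a unique $\SL$-invariant ergodic probability measure.

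First, I would enumerate the at most countably many affine invariant submanifolds $\Ncal_1,\Ncal_2,\dots$ of the ambient stratum that meet $\T(\w)$; countability follows from the finiteness results of Eskin-Filip-Wright. By EMM, for every $y\in\T(\w)$ the orbit closure $\overline{\SL\cdot y}$ is one of these $\Ncal_i$, and it is contained in $\Mcal$ by definition of $\Mcal$.

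Next, suppose for contradiction that no $y\in\T(\w)$ satisfies $\overline{\SL\cdot y}=\Mcal$. Then $\T(\w)=\bigcup_i (\T(\w)\cap\Ncal_i)$ is a countable union of closed subsets, with each $\Ncal_i\subsetneq\Mcal$. Since $\T(\w)$ is a compact torus and hence a Baire space, the Baire category theorem produces some $i$ for which $V:=\T(\w)\cap\Ncal_i$ has nonempty interior in $\T(\w)$. Pick any $y$ in this interior. Since cylinder twists preserve the horizontal cylinder structure, $\twist(y)=\twist(\w)$ and the tangent space to $\T(\w)$ at $y$ is exactly $\twist(\w)$. Because $V\subseteq\Ncal_i$ is open in $\T(\w)$ and $\Ncal_i$ is locally an affine subspace in period coordinates, one concludes $\twist(\w)\subseteq T_y\Ncal_i$. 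Wright's cylinder deformation theorem then guarantees that for every $\b\in\twist(y)\cap T_y\Ncal_i=\twist(\w)$, the entire tremor trajectory $\t\mapsto\Trem(y,\t\b)$ lies in $\Ncal_i$ for all $\t\in\R$. Since this tremor action of $\twist(\w)$ through $y$ already exhausts the full twist torus $\T(\w)$, it follows that $\T(\w)\subseteq\Ncal_i$, and hence $\Mcal=\overline{\SL\cdot\T(\w)}\subseteq\Ncal_i$, contradicting properness of $\Ncal_i$.

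The main obstacle is the passage from the infinitesimal inclusion $\twist(\w)\subseteq T_y\Ncal_i$ to the global statement $\T(\w)\subseteq\Ncal_i$, for which Wright's cylinder deformation theorem is essential: without it, one could only conclude $\Ncal_i$ contains a local piece of $\T(\w)$, which is insufficient to reach a contradiction against a possibly lower-dimensional $\Ncal_i$. A minor technical point to verify is the identification $T_y\T(\w)=\twist(\w)$ globally along $\T(\w)$, which follows from the explicit parametrization of the twist torus by a flat torus in $\R^n$ discussed in the introduction.
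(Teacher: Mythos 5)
Your proof is correct and reaches the conclusion by a genuinely different route. Where the paper decomposes $\T(\w)$ into the closed sets $\Ncal\cap\T(\w)$ as $\Ncal$ ranges over the maximal orbit closures $\Ccal^\wedge$ and then invokes Sierpi\'nski's theorem (a compact connected Hausdorff space cannot be written as a countable disjoint union of two or more nonempty closed sets), you use Baire category to find a single proper orbit closure $\Ncal_i$ with $\T(\w)\cap\Ncal_i$ of nonempty interior in $\T(\w)$, and then a local-to-global argument to force $\T(\w)\subseteq\Ncal_i$, hence $\Mcal\subseteq\Ncal_i$, a contradiction. Both routes rely on countability of the collection of orbit closures meeting $\T(\w)$; the paper sources this from Wright's field-of-definition result together with~\cite[Prop.~2.16]{EMM}, while you cite Eskin--Filip--Wright, and either works. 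The local-to-global step is the extra work in your version relative to the paper. Two comments there: (i) your appeal to Wright's cylinder deformation theorem is slightly off target, since the literal statement of that theorem concerns standard shears/stretches of $\Ncal$-parallel equivalence classes rather than arbitrary $\b\in\twist(y)\cap T_y\Ncal_i$; what you actually need --- that a tremor path in a direction $\b\in T_y\Ncal_i$ stays in $\Ncal_i$ for all $\t$ --- follows more directly from $\Ncal_i$ being closed and locally affine in period coordinates, which keeps $\b$ tangent along the whole path (or, equivalently, one can argue that $\T(\w)\cap\Ncal_i$ has clopen interior and invoke connectedness of $\T(\w)$); (ii) your approach has the modest advantage of sidestepping the disjointness bookkeeping for the decomposition by maximal orbit closures, which the Sierpi\'nski argument requires.
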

\begin{proof}
    
    For each $y\in \T(\w)$, let $\Ncal_y = \overline{\SL\cdot y}$ denote the $\SL$-orbit closure of $y$. Let $\Ccal = \set{\Ncal_y: y\in \T(\w)}$, and denote by $\Ccal^\wedge$ the subset of maximal elements of $\Ccal$ with respect to inclusion. 
    By~\cite{Wright-FieldofDef} and~\cite[Proposition 2.16]{EMM}, $\Ccal$ is a countable set, and hence so is $\Ccal^\wedge$.
    In particular, $\T(\w) = \sqcup_{\Ncal\in \Ccal^\wedge} \Ncal \cap \T(\w)$ is a decomposition of $\T(\w)$ as a countable disjoint union of closed sets.
By a result of Sierpinski~\cite{sierpinski1918theoreme} (cf.\cite[Theorem 6.1.27]{Engelking-GeneralTop}),  since $\T(\w)$ is compact and connected, $\Ccal^\wedge$ must consist of a single element, completing the proof of the first assertion. The second assertion follows from the first by~\cite{EMM}.
\end{proof}

Denote by $\mu$ the $\SL$-invariant ergodic probability measure on $\Mcal$; the existence of which follows by~\cite{EMM} and Lemma~\ref{lem:Mcal is ergodic}.
Let $f$ be a compactly supported continuous function such that $0\leq f\leq \mathbf{1}_{\Ucal}$ and $\eta\stackrel{\mrm{def}}{=}\int f\;d\mu >0$.

We apply Proposition~\ref{prop:key matching} with
\begin{equation}
    \label{eq:eps for key matching}
    \e=\min\set{10^{-5}\eta^2,\mrm{radius}(\Ucal)/2}
\end{equation}
to get a compact set $\Kcal \subset \Vcal$ and $\d>0$ satisfying its conclusion.
By Theorem~\ref{thm:uniform Banach density one}, applied with $f$ as above, there are finitely many $\SL$-orbit closures $\Ncal_1,\dots,\Ncal_k$ inside $\Mcal$ and $L_0>0$, such that given any compact set $F\subset \Mcal \setminus \cup_{i=1}^k\Ncal_i$, we can find $T_0>0$ so that 
\begin{align}\label{eq:applying Banach density one}
    \# \set{\ell \in [T,T+L]\cap\N : \int_0^1 f(g_\ell u(s) x)\;ds  >9\eta/10 }  >(1-\eta/10)L,
    \qquad \forall x\in F, T\geq T_0, L\geq L_0.
\end{align}

Let $x\in \T(\w)$ be as in Lemma~\ref{lem:Mcal is ergodic} and let $\b\in \twist(\w)$ be a unit norm cylinder twist such that 
\begin{align}\label{eq:choice of x}
    x=\Trem(\w,r\b), \quad \text{for some } r\in \R.    
\end{align}
Since $\twist(\w)$ is spanned by the direction tangent to the $U$-orbit of $\w$ together with the subspace $\twist^0(\w)$, cf.~\eqref{eq:twist=horocycle plus balanced}, after replacing $x$ with $u(s)x$ for a suitable $s\in\R$, we shall assume that
\begin{equation}\label{eq:choice of beta}
    \b\in \twist^0(\w).
\end{equation}

\subsection{\texorpdfstring{$\Mcal$-primitivity}{Primitivity} and avoidance of exceptional orbit closures\texorpdfstring{: the set $\Rcal_\d$}{}}
\label{sec:R_delta}
This subsection is the only place in the proof of Theorem~\ref{thm:density of tori} where our $\Mcal$-primitivity hypothesis is used.
In Appendix~\ref{sec:decagon}, we show how to carry out this part of the argument in the case of the decagon, where this hypothesis fails to hold.

Given $\d_0>0$, let
\begin{align}\label{eq:Rcal_delta0}
    \Rcal_{\d_0} \stackrel{\mrm{def}}{=} \set{ \Trem_\b(t,s,r) : t\geq 0, s\in [0,1], \w(t,s) \in \Kcal, \d_0/2 <rN(t,s) < \d_0}.
\end{align}

\begin{lem}\label{lem:dist to exceptional orbit closures}
    For all sufficiently small $\d_0>0$, we have that the infimum distance between $\Rcal_{\d_0}$ and the exceptional orbit closures is positive, i.e.,
    \begin{align*}
    \inf \set{ \dAGY(y, \Ncal_i) : y\in \Rcal_{\d_0}, 1\leq i\leq k } >0.
\end{align*}    
\end{lem}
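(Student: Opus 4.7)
The plan is to handle each of the finitely many $\Ncal_i$ separately via a case analysis depending on whether it contains the Veech curve $\Vcal$. Since $\Vcal$ is itself a closed $\SL$-orbit and each $\Ncal_i$ is $\SL$-invariant and closed, one of two alternatives must hold: either $\Vcal \cap \Ncal_i = \emptyset$, or $\Vcal \subseteq \Ncal_i$. In the latter case, the $\Mcal$-primitivity hypothesis combined with properness of $\Ncal_i$ in $\Mcal$ forces $\Ncal_i = \Vcal$.

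The disjoint case is straightforward: compactness of $\Kcal \subset \Vcal$ together with closedness of $\Ncal_i$ gives $d_\ast := \dAGY(\Kcal, \Ncal_i) > 0$, while Proposition~\ref{prop:stable exp lipschitz} applied to $\Psi^u$ bounds the AGY displacement of any $y = \Trem_\b(t,s,r) \in \Rcal_{\d_0}$ from its foot $\w(t,s) \in \Kcal$ by $2\d_0$. This yields the required separation for $\d_0 < d_\ast/2$.

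The main obstacle is the remaining case $\Ncal_i = \Vcal$, since the points of $\Rcal_{\d_0}$ themselves lie within $O(\d_0)$ of $\Vcal$. The plan is to show that the tremor direction is linearly transverse to $\Vcal$ with an angle bounded below uniformly over $\Kcal$. On the one hand, $\b \in \twist^0(\w) \subseteq \mrm{Taut}^0_\w$, and since the balanced subspace $\mrm{Taut}^0$ is $\mrm{Aff}^+$-invariant and hence preserved by the Kontsevich-Zorich cocycle (see \textsection\ref{sec:balanced spaces} and \textsection\ref{sec:KZ cocycle}), the renormalized twist class $\b(t,s) = e^t\,\KZ{g_t}{u(s)\w}\cdot\b$ remains in $\mrm{Taut}^0_{\w(t,s)}$. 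On the other hand, $T_{\w(t,s)}\Vcal$ is the image of $\mathfrak{sl}_2(\R)$ acting on $\hol_{\w(t,s)}$, whose horizontal projection is precisely the tautological plane $\mrm{Taut}_{\w(t,s)}$. Non-degeneracy of the intersection form on $\mrm{Taut}_{\w(t,s)}$ forces $\mrm{Taut}_{\w(t,s)} \cap \mrm{Taut}^0_{\w(t,s)} = \{0\}$, so the horizontal line $\R\cdot\b(t,s) \subseteq \mrm{Taut}^0_{\w(t,s)}$ is linearly transverse to $T_{\w(t,s)}\Vcal$ inside $T_{\w(t,s)}\HHu$. Continuity of the transversality angle on the compact set $\Kcal$ yields a uniform lower bound $c(\Kcal) > 0$, which, combined with the smooth submanifold structure of $\Vcal$ in period coordinates and the bi-Lipschitz estimate of Proposition~\ref{prop:stable exp lipschitz}, converts into a bound of the form $\dAGY(y, \Vcal) \gtrsim c(\Kcal)\,r N_\b(t,s) \gtrsim c(\Kcal)\,\d_0$ for $\d_0$ sufficiently small. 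Taking the minimum over the finitely many $\Ncal_i$ concludes the argument.
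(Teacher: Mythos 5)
Your proof is correct and follows essentially the same strategy as the paper's: split the $\Ncal_i$ according to whether they contain $\Vcal$ (using that $\Vcal$ is a closed $\SL$-orbit to get the dichotomy), dispose of the disjoint case by compactness of $\Kcal$ and smallness of $\d_0$, invoke $\Mcal$-primitivity to reduce the remaining case to $\Ncal_i=\Vcal$, and then use transversality of the tremor direction to $T\Vcal$ together with compactness/continuity. The only cosmetic difference is that you argue via the balanced bundle $\mrm{Taut}^0_\bullet$, whereas the paper phrases it via $\cylspace^0(\cdot)\subseteq\mrm{Taut}^0_\bullet$; since both are $\mrm{Aff}^+$-invariant and both intersect $\mrm{Taut}_\bullet\otimes\C$ trivially, either choice works, and yours is in fact the slightly more elementary one.
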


\begin{proof}
Let $    C(\d_0) = \sup \set{\dAGY(y,\Kcal): y\in \Rcal_{\d_0} }$ and define $\d_1$ as follows: 
\begin{align*}
    \d_1 \stackrel{\mrm{def}}{=}\min \set{\dAGY(\Kcal,\Ncal_i): 1\leq i\leq k, \Vcal \not\subseteq \Ncal_i} >0.
\end{align*}
Taking $\d_0$ small enough, we can ensure that $C(\d_0)<\d_1/2$\footnote{Indeed, using~\cite[Lemma 3.3]{ChaikaKhalilSmillie} which relates distances locally to norms on the tangent space, one can show that $\dAGY(x,\Kcal)\ll \d$.}.
This ensures positivity of the distance to all $\Ncal_i$ not containing $\Vcal$.

Now, let $\Ncal_i$ be such that $\Vcal\subseteq \Ncal_i$.
Since $(M,\w)$ is $\Mcal$-primitive, this implies that $\Ncal_i=\Vcal$.
Recall the notation set in Convention \ref{torsion free convention}.
Let $\Gamma=\mrm{SL}(M,\w)$ be the Veech group of $\w$.
For every $g\in\SL$, let $\cylspace^0(g\w)$ be the image of $\cylspace^0(\w)$ in the tangent space of $\Mcal$ at $g\w$ under the derivative $D_\w g:T_\w\Mcal\r T_{g\w}\Mcal$ of left multiplication by $g$. The spaces $\cylspace^0(g\w)$ are well-defined and depend only on the point $g\w$ in $\Vcal$, and not on the choice of $g$ in view of $\mrm{Aff}^+(M,\w)$-invariance of $\cylspace^0(\w)$.
This defines a vector bundle over $\Vcal$ with fibers the spaces $\cylspace^0(-)$.
By definition, we have that $\b(t,s)$ belongs to $\cylspace^0(\w(t,s))$.

Recall that the tangent space $T_\w\Vcal$ is the complexification $\mrm{Taut}_\w\otimes \C$ of the tautological plane at $\w$.
Hence, since $\cylspace^0(\w)$ is a subspace of the balanced space at $\w$ (cf.~\textsection\ref{sec:balanced spaces} and \ref{sec:twists}), $T_\w\Vcal$ has trivial intersection with $\cylspace^0(\w)$.
Thus, $\SL$-invariance of the bundles $T\Vcal$ and $\cylspace^0(-)$ implies that their respective fibers have trivial intersection at every point in $ \Vcal$.
Since fibers of these two bundles vary continuously and $\Kcal$ is a compact subset of $\Vcal$, this provides positivity of the infimum over $q\in\Kcal$ of the AGY distance between the unit norm spheres in $T_q\Vcal$ and $\cylspace^0(q)$.
Hence, taking $\d_0$ sufficiently small, this implies that $\dAGY(q,\Vcal)$ is bounded away from $0$ over all $q\in\Rcal_{\d_0}$.
\qedhere
\end{proof}

Recall the sets $S_\ell$ and the parameter $\d>0$ provided by Proposition~\ref{prop:key matching}.
The following lemma elaborates several useful consequences of that proposition.
\begin{lem}\label{lem:producing sequence of approximating horocycles}
    Assume that $\d_0$ is chosen sufficiently smaller than $\d e^{-2L_0}$.
    Then, for all large $t>0$ and all $1\leq T\leq t-L_0 $, there exist $r>0$ and a subinterval $I\subseteq [0,1]$ of length $1/ \lceil e^{2(t-T-L_0)}\rceil$  such that the following hold:
    \begin{enumerate}
        \item\label{item:good indices} For every $\a\in (0,1)$, $\# \set{ \ell \in [0,L_0]\cap\N: |I\cap S_\ell| <(1-\a)|I| } < \sqrt{\e} L_0/\a$.
        \item\label{item:s_0 in Rcal_delta0}  There exists $s_0\in I$ such that $\Trem(t-T-L_0,s_0,r)\in \Rcal_{\d_0}$.

        \item\label{item:landing near torus} For every $0\leq \ell\leq L_0$ and $s\in I\cap S_\ell$, we have that
        \begin{align*}
            g_{T+\ell} \cdot \Trem(t-T-L_0, s,r) \in B(g_t \cdot \T(\w,\b), \e).
        \end{align*}
    \end{enumerate}
    
\end{lem}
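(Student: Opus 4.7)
The approach combines a double-counting/Markov averaging argument over a partition of $[0,1]$ with the bounded variation of $N_\b(t-T-L_0,\cdot)$ across short horocycle arcs. First, apply Proposition~\ref{prop:key matching} with the parameters $t$, $T$, $L_0$, and $\e$ to obtain sets $S_\ell\subseteq[0,1]$ with $|S_\ell^c|\le\e$ for each $0\le\ell\le L_0$. Partition $[0,1]$ into $K:=\lceil e^{2(t-T-L_0)}\rceil$ equal intervals $J_1,\dots,J_K$, so that $|J_j|=1/K$ matches the required length. For each integer $\ell\in[0,L_0]$ and each $j$, set $p_\ell(J_j):=|J_j\cap S_\ell^c|/|J_j|$. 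Since $\sum_j |J_j|p_\ell(J_j)=|S_\ell^c|\le\e$, we obtain $\sum_j p_\ell(J_j)\le\e K$, and summing over the integer values of $\ell\in[0,L_0]$ yields $\sum_j\sum_\ell p_\ell(J_j)\lesssim L_0\e K$.

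By Markov's inequality applied to the intervals, at least $(1-\sqrt\e)K$ of the $J_j$'s satisfy $\sum_\ell p_\ell(J_j)\le\sqrt\e L_0$; for any such $J_j$, a further Markov step over $\ell$ yields item~\eqref{item:good indices}. Moreover, since $|S_0^c|\le\e$, at most $\e K$ of the intervals are entirely contained in $S_0^c$. For $\e$ small, we may therefore choose an interval $I$ among the $J_j$ satisfying item~\eqref{item:good indices} that additionally meets $S_0$. Pick any $s_0\in I\cap S_0$ and set $r:=\tfrac{3\d_0}{4\, N_\b(t-T-L_0,s_0)}$; then $rN_\b(t-T-L_0,s_0)=3\d_0/4\in(\d_0/2,\d_0)$ and $\w(t-T-L_0,s_0)\in\Kcal$ by definition of $S_0$. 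Hence $\Trem_\b(t-T-L_0,s_0,r)\in\Rcal_{\d_0}$, giving item~\eqref{item:s_0 in Rcal_delta0}.

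For item~\eqref{item:landing near torus}, it suffices to verify the bound $r<\d/N_\b(t-T-L_0+\ell,s)$ for all $s\in I\cap S_\ell$ and all $0\le\ell\le L_0$, since Proposition~\ref{prop:key matching}\eqref{item:key match closeness} will then apply directly. By Lemma~\ref{lem:equivariance of notation}(1), $N_\b(t-T-L_0+\ell,s)\le e^{2L_0}N_\b(t-T-L_0,s)$. To control the $s$-variation on $I$, observe that for any $s,s_0\in I$ the surfaces $\w(t-T-L_0,s)$ and $\w(t-T-L_0,s_0)$ are related by the $\SL$-element $u((s-s_0)e^{2(t-T-L_0)})$, which is bounded since $|s-s_0|\le|I|\le e^{-2(t-T-L_0)}$. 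The cocycle identity~\eqref{eq:cocycle property} and Corollary~\ref{cor:KZ norm} then yield $N_\b(t-T-L_0,s)\le C\cdot N_\b(t-T-L_0,s_0)$ for a constant $C$ depending only on $\w$. Combining these estimates,
\begin{equation*}
r\cdot N_\b(t-T-L_0+\ell,s)\le C e^{2L_0}\cdot rN_\b(t-T-L_0,s_0)=\tfrac{3}{4}Ce^{2L_0}\d_0<\d,
\end{equation*}
provided $\d_0<\tfrac{4\d}{3Ce^{2L_0}}$, which is the precise quantification of the hypothesis that $\d_0$ be sufficiently smaller than $\d e^{-2L_0}$.

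No step presents a serious obstacle; the only mildly technical point is the uniform comparability $N_\b(t-T-L_0,s)\asymp N_\b(t-T-L_0,s_0)$ on $I$, which is precisely ensured by the choice $|I|\asymp e^{-2(t-T-L_0)}$ together with the polynomial cocycle bound of Corollary~\ref{cor:KZ norm}.
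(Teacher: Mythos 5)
Your proof is correct and follows essentially the same route as the paper's: the same partition of $[0,1]$ into $K=\lceil e^{2(t-T-L_0)}\rceil$ equal intervals, the same double-counting/Markov argument to locate a good interval $I$ that both meets $S_0$ and satisfies item~\eqref{item:good indices}, the same choice of $r$ placing $rN_\beta(t-T-L_0,s_0)$ in $(\delta_0/2,\delta_0)$, and the same two-step norm estimate (comparability of $N_\beta(t-T-L_0,\cdot)$ over $I$ via the bounded element $u((s-s_0)e^{2(t-T-L_0)})$ together with Corollary~\ref{cor:KZ norm}, plus the $e^{2L_0}$ growth bound from Lemma~\ref{lem:equivariance of notation}) to deduce item~\eqref{item:landing near torus} from Proposition~\ref{prop:key matching}\eqref{item:key match closeness}. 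The only differences are cosmetic: you phrase the pigeonhole step with counts $p_\ell(J_j)$ rather than with the total length $\sum_{I\in B}|I|$, and you make the base-point comparison explicit via the element $u((s-s_0)e^{2(t-T-L_0)})$ where the paper says the $g_\tau$-orbits stay $O(1)$ apart, which is the same fact.
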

\begin{proof}
    Decompose $[0,1] = \sqcup I$ into a disjoint union of intervals, each of length $1/\lceil  e^{2(t-T-L_0)}\rceil $.
    Since each $S_\ell$ has measure at least $1-\e$, we find that $\sum_I \sum_\ell |I\cap S_\ell|=\int_0^1 \sum_\ell \mathbf{1}_{S_\ell}(s)\;ds \geq (1-\e)L_0$.
    Letting $B$ denote the subset of intervals $I$ with $\sum_\ell |I\cap S_\ell| < (1-\sqrt{\e})L_0|I|$, we see that
    \begin{align*}
        (1-\e)L_0
        &\leq (1-\sqrt{\e})L_0 \sum_{I\in B}|I| 
        +L_0 \sum_{I\notin B}  |I|.
    \end{align*}
    Hence, we find that $\sum_{I\in B}|I| <\sqrt{\e}$.
    Since $|S_0|\geq 1-\e$, it follows that we can find an interval $I$ such that $I\cap S_0 \neq \emptyset$ and
     $\sum_\ell |I\cap S_\ell| \geq (1-\sqrt{\e})L_0 |I|$.
    Let $\a \in (0,1)$ and let $m$ denote the cardinality of the set of indices $\ell$ such that $|I\cap S_\ell| <(1-\a) |I|$.
    It follows that $(1-\sqrt{\e})L_0 <  (1-\a) m + L_0-m$, from which we conclude that $m< \sqrt{\e} L_0/\a$.

    For the second assertion, fix some arbitrary $s_0\in I\cap S_0$ and let $r$ be such that $rN(t-T-L_0,s_0)$ belongs to the interval $(\d_0/2,\d_0)$.
    Then, $\w(t-T-L_0,s_0)\in \Kcal$ by definition of $S_0$. In particular, the second assertion follows by definition of $\Rcal_{\d_0}$.

    For the last assertion, by Proposition~\ref{prop:key matching}, it suffices to check for each $\ell$ that $r<\d/N(t-T-L_0+\ell,s)$ for all $s\in I\cap S_\ell$.
    To this end,
    note that the orbits $\set{\w(\t,s):0\leq \t\leq t-T-L_0}$ all remain within distance $O(1)$ in $\Vcal$ from one another as $s$ varies in $I$.
    Hence, using the bound $\norm{\KZ{g}{\cdot}}\ll \norm{g}^{O(1)}$ from Corollary~\ref{cor:KZ norm}, we get that 
    $N(t-T-L_0,s_1) \asymp N(t-T-L_0,s_2)$
    for all $s_1,s_2\in I$.
    Moreover, by Lemma~\ref{lem:equivariance of notation}, we also have that $ N(t-T-L_0+\ell,s)\leq e^{2L_0} N(t-T-L_0,s)$ for all $0\leq \ell \leq L_0$.

    It follows by our choice of $r$ that for every $\ell$, we have
    \begin{align*}
        r N(t-T-L_0+\ell,s) \ll e^{2L_0}\d_0.
    \end{align*}
    Taking $\d_0$ sufficiently smaller than $\d e^{-2L_0}$, this ensures that $r$ is $<\d/N(t-T-L_0+\ell,s)$ for all $s\in I$ and all $0\leq \ell\leq L_0$.
    This implies the last assertion of the lemma in view of Proposition~\ref{prop:key matching}.
\qedhere
\end{proof}

\subsection{Conclusion of the proof of Theorem~\ref{thm:density of tori} assuming Proposition~\ref{prop:key matching}}
\label{sec:end of density proof}

Recall we are fixing an open ball $\Ucal$ and a bump function $f$ with $\supp(f)\subset\Ucal$ and $\int f\;d\mu_\Mcal=\eta$.
Moreover, we have a parameter $\d>0$ provided by Proposition~\ref{prop:key matching} when applied with $\e$ as in~\eqref{eq:eps for key matching}.
Let $0<\d_0<\d e^{-2L_0}$ be a parameter satisfying the conclusion of Lemmas~\ref{lem:dist to exceptional orbit closures} and~\ref{lem:producing sequence of approximating horocycles}. Set
\begin{align}\label{eq:F}
    F = \bigcup_{\t,s\in [-1,1]} g_\t u(s)\cdot \overline{\Rcal_{\d_0}}.
\end{align}
Then, by Lemma~\ref{lem:dist to exceptional orbit closures} and $\SL$-invariance of $\cup_i\Ncal_i$, we have that $F\subset \Mcal \setminus \cup_i \Ncal_i$.

Fix some $T$ such that~\eqref{eq:applying Banach density one} holds for $L=L_0$ and for this $F$.
Let $I$, $s_0$, and $r$ be as provided by Lemma~\ref{lem:producing sequence of approximating horocycles} and let $x_0=\Trem(t-T-L_0,s_0,r)\in \Rcal_{\d_0}$.

Roughly, we wish to apply~\eqref{eq:applying Banach density one} to the horocycle arc $\set{g_{t-T-L_0}\cdot \Trem(u(s)\w,r\b):s\in I}$.
However, as stated,~\eqref{eq:applying Banach density one} technically holds for arcs of length one with left endpoint in $F$. 
This is remedied by adjusting $x_0$ by a suitably small upper triangular matrix. 
First, we replace $s_0$ with the left endpoint $s_1$ of the interval $I$, so we let
\begin{align*}
    x_1 =\Trem(t-T-L_0,s_1,r)
    =u(e^{2(t-T-L_0)} (s_1-s_0))x .
\end{align*}
We also need to find a slight adjustment to the geodesic flow time $t-T-L_0$ so that a suitable horocycle arc parametrized by $I$ becomes length $1$ after flowing. To this end, let $\t\geq 0$ be such that 
\begin{align*}
    I_1 = e^{2(t-T-L_0+\t)}(I-s_1)  =[0,1].
\end{align*}
Note that
\begin{align}\label{eq:tau goes to 0}
    \t\r0 \text{ as } t\r \infty    
\end{align}
since $|I|=1/\lceil e^{2(t-T-L_0)}\rceil$.
Let $x_2=g_{\t} x_1$.
Then, since $|I|\leq e^{-2(t-T-L_0)}$ and $x_0\in\Rcal_{\d_0}$, we have that $x_1\in \cup_{\s\in [-1,1]}u(\s)\cdot\Rcal_{\d_0}$.
It follows by definition of $F$ that $x_2 =g_\t x_1\in F$.

Thus, we can finally apply~\eqref{eq:applying Banach density one} with $x_2$ in place of $x$ to get
\begin{align}\label{eq:Banach holds}
    \int_{I_1} \mathbf{1}_\Ucal(g_{T+\ell} u(s) x_2)\;ds
    \geq 
    \int_{I_1} f(g_{T+\ell} u(s) x_2)\;ds  >9\eta/10
\end{align}
for a set of indices $\ell\in [0,L_0]$ of cardinality $>(1-\eta/10)L_0$.
Next, applying Lemma~\ref{lem:producing sequence of approximating horocycles}\eqref{item:good indices} with $\a=\eta/20$, we have that
\begin{align}\label{eq:intersect S_ell}
    |I\cap S_\ell|\geq (1-\eta/20)|I|
\end{align}
for a set of indices $\ell\in [0,L_0]$ of cardinality at $\geq 1-\sqrt{\e}/\a$.
Hence, our choices of $\e$ and $\a$ ensure that we can find $\ell$ so that~\eqref{eq:Banach holds} and~\eqref{eq:intersect S_ell} hold simultaneously.
In particular, we can find $s'\in I\cap S_\ell$ such that for $s=e^{2(t-T-L_0+\t)}(s'-s_1)$, we have
\begin{align*}
    g_{T+\ell} u(s) x_2\in \Ucal,
    \qquad 
    \text{and}
    \qquad 
    g_{T+\ell} \cdot \Trem(t-T-L_0,s',r)\in B(g_t\cdot \T(\w,\b), \e).
\end{align*}
Moreover, it follows from the definitions that $g_{T+\ell}u(s)x_2=g_{\t+T+\ell}\cdot \Trem(t-T-L_0,s',r)$.
In particular, we obtain
\begin{align*}
    B(g_t\cdot \T(\w,\b), \e) \cap g_{-\t}\cdot \Ucal \neq \emptyset.
\end{align*}

Thus, in view of~\eqref{eq:tau goes to 0} and our choice of $\e$ in~\eqref{eq:eps for key matching}, it follows that $g_t\cdot \T(\w,\b)$ intersects the ball $2\Ucal$ with twice the radius and same center as $\Ucal$. This verifies~\eqref{eq:intersect 2Ucal} and concludes the proof of Theorem~\ref{thm:density of tori}.

\section{A-invariance of Limiting Distributions of Output Directions}
\label{sec:A-invariance}

The goal of this section is prove Theorem~\ref{thm:A-invariance of flag distribution}.
We keep the same notation of the theorem throughout this section.

\subsection{Proof of Theorem~\ref{thm:A-invariance of flag distribution}}

First, we quickly reduce the second assertion of the theorem to the first.
Fix $z\in \P\hV$ and let $\hat{\nu}$ be an arbitrary weak-$\ast$ limit measure of the measures $\int_0^1\d_{g_tu(s)\cdot z}\;ds$ along a sequence of $t_n\to\infty$.
First, we note that, using the identity $u(r)g_t = g_t u(e^{-2t}r)$, it is easy to see that $\hat{\nu}$ is $U$-invariant, where $U=\set{u(r):r\in\R}$.
Moreover, by equidistribution of expanding horocycle arcs on $\Vcal$, we have that $\hat{\nu}$ projects to $\mu_\Vcal$ on $\Vcal$.
In particular, $\hat{\nu}$ is a probability measure.

\begin{lem}
    Almost every $U$-ergodic component of $\hat\nu$ projects to $\mu_\Vcal$ on $\Vcal$.
\end{lem}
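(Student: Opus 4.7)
The plan is to use the $U$-equivariance of the natural projection $\pi: \P\hV \to \Vcal$, together with the classical $U$-ergodicity of the Haar measure $\mu_\Vcal$ on $\Vcal$, to transfer the ergodic decomposition of $\hat\nu$ down to $\Vcal$ and conclude that almost every component must project to $\mu_\Vcal$ itself.

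First I would record that $\pi$ is $U$-equivariant: $U$ acts on $\P\hV$ by left multiplication on the $G$-factor in the quotient $(G \times \RP)/\G$, and this descends under $\pi$ to the standard left $U$-action on $\Vcal = G/\G$. The key input at the level of $\Vcal$ is Hedlund's classical theorem (alternatively, a consequence of Moore's theorem) asserting that the left $U$-action on $G/\G$ is ergodic with respect to the Haar measure $\mu_\Vcal$ for any lattice $\G$ in $G = \SL$. Without this ergodicity statement the lemma would have no content, so it is worth emphasizing that this is the one non-trivial external input.

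Next, let $\hat\nu = \int \hat\nu_x \, d\hat\nu(x)$ denote the $U$-ergodic decomposition of the probability measure $\hat\nu$. For any $U$-invariant Borel set $A \subseteq \Vcal$, the preimage $\pi^{-1}(A)$ is a $U$-invariant Borel subset of $\P\hV$, so $\hat\nu_x(\pi^{-1}(A)) \in \{0,1\}$ for $\hat\nu$-almost every $x$. Hence $\pi_\ast \hat\nu_x$ is a $U$-ergodic probability measure on $\Vcal$ for $\hat\nu$-a.e.\ $x$. Pushing the ergodic decomposition forward along $\pi$ and using that $\pi_\ast \hat\nu = \mu_\Vcal$ yields
\[
\mu_\Vcal \;=\; \int \pi_\ast \hat\nu_x \, d\hat\nu(x),
\]
which exhibits $\mu_\Vcal$ as a convex integral of $U$-ergodic probability measures on $\Vcal$.

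Finally, by uniqueness of the ergodic decomposition and the fact that $\mu_\Vcal$ is itself $U$-ergodic (so its own ergodic decomposition is the trivial $\delta$-mass at $\mu_\Vcal$), this forces $\pi_\ast \hat\nu_x = \mu_\Vcal$ for $\hat\nu$-almost every $x$, which is exactly the lemma. There is no real obstacle in this argument beyond invoking Hedlund's ergodicity theorem; the rest is a soft measure-theoretic manipulation made possible by the $U$-equivariance of $\pi$.
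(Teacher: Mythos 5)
Your argument is correct and follows the same route as the paper: take the $U$-ergodic decomposition, observe that each ergodic component pushes forward under the $U$-equivariant projection $\pi$ to a $U$-ergodic measure on $\Vcal$, and then use $U$-ergodicity of $\mu_\Vcal$ to force almost every pushforward to equal $\mu_\Vcal$. The only nit is a quantifier order in your justification that $\pi_\ast\hat\nu_x$ is ergodic: one should first fix $x$ in the full-measure set where $\hat\nu_x$ is $U$-ergodic, and then deduce $\hat\nu_x(\pi^{-1}(A))\in\{0,1\}$ for \emph{all} $U$-invariant Borel $A$ (not ``for a.e.\ $x$ depending on $A$''), but this is a wording issue rather than a gap in the argument.
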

\begin{proof}
    Let $\hat\nu =\int \hat\nu_x\;d\l(x)$ be an ergodic decomposition of $\hat\nu$ and let $\pi:\widehat\Vcal\r \Vcal$ denote the standard projection.
    Then, $\nu_x\stackrel{\mrm{def}}{=} \pi_\ast\hat\nu_x$ is a $U$-ergodic measure.
    Since $\pi_\ast\hat\nu=\mu_\Vcal$, it follows that $\mu_\Vcal=\int \nu_x\;d\l(x)$.
    By ergodicity of $\mu_\Vcal$, we have $\nu_x=\mu_\Vcal$ for almost every $x$.
\end{proof}

In light of this lemma, it suffices to prove $A$-invariance of $\hat\nu$ under the additional hypothesis that it is $U$-ergodic.
In particular, the second assertion of Theorem~\ref{thm:A-invariance of flag distribution} is an immediate consequence of the following measure classification statement, which proves its first assertion.
\begin{prop}\label{prop:meas class on suspension}
    Every $U$-ergodic probability measure $\hat\nu$ on $\P\hV$, which projects to Haar measure $\mu_\Vcal$ on $\Vcal$, is $A$-invariant.
\end{prop}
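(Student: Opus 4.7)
The plan is to adapt Ratner's $H$-principle for measure rigidity of $U$-invariant measures on $\SL/\Gamma$ to the skew-product setting of $\P\hV$, using the cocycle bound~\eqref{eq:bounded cocycle} to control the fiber perturbation. After passing to the ergodic decomposition of $\hat\nu$ and using that $\mu_\Vcal$ is $U$-ergodic on $\Vcal$, I may assume $\hat\nu$ is itself $U$-ergodic.

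The first ingredient I would establish is the \emph{sub-polynomial fiber divergence lemma}, Lemma~\ref{lem:subpolynomial divergence}. For two points $z_1, z_2 \in \P\hV$ lying over the same base point $x \in \Vcal$, hypothesis~\eqref{eq:bounded cocycle} yields
\[ \log \norm{B(u(s), x)}_{\mrm{op}} \leq C \log \norm{u(s)} = O(\log|s|), \]
so the cocycle grows at most polynomially along the $U$-orbit; in particular, the top Lyapunov exponent along every $U$-orbit vanishes, and the projective fiber distance $d(u(s) z_1, u(s) z_2)$ grows sub-exponentially in $s$. Combined with recurrence of the $U$-flow on $\Vcal$, this says that the dominant direction of divergence between two generic points in $\P\hV$ is parallel to the base, and the fiber interaction is negligible on the polynomial time scales on which Ratner's method operates.

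Next I would execute the \emph{Ratner matching}. Assume for contradiction that $\hat\nu$ is not $A$-invariant. Using Birkhoff's theorem applied to a countable dense family in $C_c(\P\hV)$, fix a full-$\hat\nu$-measure set $\Omega$ of $U$-generic points, and select two companions $z_1, z_2 \in \Omega$ arbitrarily close whose base projections differ by $u^-(\eta)$ with $\eta \neq 0$ small (any $P$-component of the displacement would already produce $A$-quasi-invariance by the usual time reparametrization, so it can be absorbed). The identity
\[ u(s)\, u^-(\eta) = u^-\!\bigl(\tfrac{\eta}{1+s\eta}\bigr)\, g_{\log(1+s\eta)}\, u\!\bigl(\tfrac{s}{1+s\eta}\bigr) \]
shows that for the matching time $s = \kappa/\eta$ with a fixed positive $\kappa$, the point $u(s) z_2$ lies at macroscopic $A$-displacement $g_{\log(1+\kappa)}$ from $u(s/(1+\kappa)) z_1$, up to a base error $u^-\bigl(\eta/(1+\kappa)\bigr)$ that vanishes as $\eta \to 0$, and up to a fiber error controlled by Step~1. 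Invoking Birkhoff genericity of both $z_1$ and $z_2$ at the respective matched times, I conclude that $(g_{\log(1+\kappa)})_* \hat\nu = \hat\nu$. Varying $\kappa$ over a neighborhood of $0$ and invoking connectedness of $A$ then upgrades this to full $A$-invariance, the desired contradiction.

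The hard part will be the fiber matching in the shearing step: I must verify that, at the matching time $s \asymp 1/|\eta|$, the accumulated fiber error is dominated by the macroscopic $g_{\log(1+\kappa)}$ displacement on the base. Hypothesis~\eqref{eq:bounded cocycle} is exactly the input that places the fiber growth in the sub-polynomial regime relative to the polynomial shearing on the base, so the two scales remain compatible throughout the matching window. Additional care will be needed when $\Gamma$ is non-cocompact, where uniform recurrence of the $U$-flow and uniform integrability of $\log$ of the cocycle norm enter to ensure the genericity of $z_1, z_2$ survives along the required time scale.
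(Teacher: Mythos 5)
Your overall strategy coincides with the paper's: establish sub-polynomial projective divergence in the fibers along $U$-orbits, then run Ratner's shearing. The gap lies in how you derive the fiber divergence bound. You deduce from hypothesis~\eqref{eq:bounded cocycle} that $\norm{B(u(s),x)}_{\mrm{op}}$ is polynomially bounded, hence that the Lyapunov exponents over the $U$-flow vanish, hence that the projective fiber distance grows sub-exponentially. That final conclusion is both trivially true and far too weak. The projective distance is controlled by the ratio $\norm{\wedge^2 B(u(s),x)}_{\mrm{op}}\,\norm{v\wedge w}_x\,/\,\big(\norm{B(u(s),x)v}\,\norm{B(u(s),x)w}\big)$, and~\eqref{eq:bounded cocycle} gives an upper bound on the numerator of order $|s|^{2C}$ but only a lower bound of order $|s|^{-2C}$ on the denominator. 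Since $C\geq 1$, the ratio can a priori grow like $|s|^{4C}$. At the shearing time $s\asymp 1/\d$ (with $\d$ the separation of the two companion points) this gives a fiber error of order $\d^{1-4C}$, which blows up as $\d\to 0$ instead of vanishing. What is actually needed is the bound $(1+|s|)^\e$ for every $\e>0$, so that the fiber error at the matching time is $\asymp\d^{1-\e}\to 0$, and this does not follow from~\eqref{eq:bounded cocycle} alone.

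The missing content is a non-concentration statement: one must show that, $\hat\nu$-generically, the fiber vector $v$ grows under $B(g_{-t},\cdot)$ at nearly the maximal rate $\l_1^-$, so that the denominator grows at rate $2\l_1^- - \e$ and cancels against the numerator's rate $\l_1^-+\l_2^-+\e \leq 2\l_1^-+\e$. Equivalently, $\hat\nu$ must give zero mass to the proper measurable sub-bundle of vectors with slower-than-maximal backward growth. Establishing this requires Oseledets' theorem together with $U$-ergodicity of $\hat\nu$ and passage to the minimal $P_\Q$-invariant sub-bundle carrying $\hat\nu$ — this is the technical heart of Lemma~\ref{lem:subpolynomial divergence}, and it is absent from your argument. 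The role of~\eqref{eq:bounded cocycle} is merely to guarantee log-integrability (so Oseledets applies) and to pass from $u(s)$ to the Cartan parameter $g_{-t_s}$ with $t_s\asymp\log|s|$; it does not by itself place the projective divergence in the sub-polynomial regime, contrary to what your last paragraph asserts.
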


The remainder of this section is dedicated to the proof of Proposition~\ref{prop:meas class on suspension}.
Let $\hat{\nu}$ be as in the statement.
The proof of this proposition proceeds by adaptation of Ratner's shearing arguments in her work on measure classification of unipotent invariant measures on quotients of $\SL$~\cite[Section 4]{Ratner-RagConjSL2}.
The key observation that enables implementing these arguments in our skew-product setup is that the action $u(s)$ on the fiber has much slower expansion than the base; cf.~Lemma~\ref{lem:subpolynomial divergence} below.

Using the norm on the fibers of $\hV$, we define a metric on the fiber $\RP_x$ of $\P\hV$ over $x\in\Vcal$ as follows: given $x\in\Vcal$ and $\bar v ,\bar w \in \RP_x$, let\footnote{Here, we use the same notation for an induced norm on $\wedge^2 V_x$ from $\norm{\cdot}_x$; cf.~\cite[Appendix A4]{QuasEtal-ExteriorNorms} for an explicit construction of induced norms.} 
\begin{align}\label{eq:proj dist}
    \dist(\bar v ,\bar w ) = \frac{\norm{v\wedge w}_x}{\norm{v}_x \norm{w}_x},
\end{align}
where $v$ and $w$ are representatives in $V_x$ of $\bar v$ and $\bar w$ respectively. The following is the key estimate on subpolynomial divergence of distances in the fiber under the cocycle that underlies our proof of Theorem~\ref{thm:A-invariance of flag distribution}.
The proof of the lemma is postponed to the next subsection.

\begin{lem}[subpolynomial divergence in the fibers]
\label{lem:subpolynomial divergence}
    For every $\e>0$, there is a set $F$ with $\hat\nu(F)>1-\e$ so that for all $s\in\R$, if $(x,v)$ and $(x,w)$ belong to $F$, then 
    \begin{align*}
        \dist(u(s)\cdot(x,v), u(s)\cdot(x,w)) \ll (1+|s|)^\e\dist(v,w).
    \end{align*}
\end{lem}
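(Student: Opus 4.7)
The plan is to reduce the estimate to a cocycle bound and then upgrade the deterministic control provided by hypothesis~\eqref{eq:bounded cocycle} to a subpolynomial estimate on a large-measure set via ergodic-theoretic arguments.

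Setting $\phi_s(x, v) := \log(\|B(u(s), x) v\|_{u(s) x}/\|v\|_x)$ and letting $\phi^{\wedge}_s$ denote the analogous quantity for the induced cocycle on $\wedge^2 V_x$, a direct calculation from~\eqref{eq:proj dist} yields
\begin{align*}
\log \frac{\dist(u(s) \cdot (x, \bar v),\, u(s) \cdot (x, \bar w))}{\dist(\bar v, \bar w)} = \phi^{\wedge}_s(x, v \wedge w) - \phi_s(x, v) - \phi_s(x, w).
\end{align*}
Hypothesis~\eqref{eq:bounded cocycle} immediately bounds each of these terms by $O(\log(1+|s|))$ uniformly in $x, v, w$, so the right-hand side is bounded by a constant multiple of $\log(1+|s|)$. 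This yields polynomial divergence with a fixed exponent depending only on $C$, and the task reduces to improving this fixed exponent to an arbitrarily small $\e > 0$ on a set $F$ of $\hat\nu$-measure $> 1 - \e$.

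For the upgrade, I would exploit that $\phi_s$ is an additive cocycle over the $U$-action on $(\P\hV, \hat\nu)$, together with the $U$-invariance of $\hat\nu$ and the fact that the a priori bound $|\phi_n| \leq C\log(1+n)$ forces $\phi_n/n \to 0$ deterministically. The Birkhoff ergodic theorem applied to $\phi_1$ on each $U$-ergodic component of $\hat\nu$ then forces the corresponding Birkhoff limit $\int \phi_1 \, d\hat\nu$ to vanish, and the same for $\phi^{\wedge}_1$. A Luzin-type construction applied to the bounded measurable functions $(x, \bar v) \mapsto \limsup_{|s|\to \infty} |\phi_s(x, v)|/\log(1+|s|)$ and $(x, \bar v, \bar w) \mapsto \limsup_{|s|\to\infty}|\phi^{\wedge}_s(x, v \wedge w)|/\log(1+|s|)$ then produces, for each $\e > 0$, a compact set $F \subseteq \P\hV$ of $\hat\nu$-measure $> 1 - \e$ on which both of these normalized quantities are uniformly bounded by $\e/10$ for all $|s|$ outside a bounded range. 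The contribution from the bounded range of $s$ is absorbed into an $\e$-dependent multiplicative constant, completing the claim after exponentiating.

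The main technical obstacle is this upgrade step: the deterministic logarithmic bound in~\eqref{eq:bounded cocycle} cannot be improved pointwise, yet the conclusion requires reducing its constant to an arbitrary $\e$ on a large-measure set. What makes this possible is that the $U$-ergodicity of $\hat\nu$, combined with the a priori bound forcing the top Lyapunov exponent of the cocycle $B(u(\cdot), -)$ to vanish, constrains the normalized cocycle $\phi_s/\log(1+|s|)$ to oscillate around zero on generic orbits rather than to concentrate near its worst-case value $\pm C$. Verifying that this oscillation is uniform over a large-measure compact set $F$, as opposed to merely pointwise almost-everywhere, is the delicate point where one must combine Lusin's theorem with a careful analysis of the normalized cocycle as $|s| \to \infty$, using the fact that $\hat\nu$ projects to Haar measure on $\Vcal$ so that the horocycle equidistribution governs the cocycle products.
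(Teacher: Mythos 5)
Your reduction of the estimate to the additive cocycles $\phi_s$ and $\phi_s^\wedge$ is correct (it is the identity version of the paper's equation~\eqref{eq:bound proj dist by norms}), but the proposed upgrade step has a genuine gap, and the intuition guiding it is wrong. You claim that a Lusin/Egorov-type argument produces a set $F$ of $\hat\nu$-measure $>1-\e$ on which $\limsup_{|s|\to\infty}|\phi_s(x,v)|/\log(1+|s|) \le \e/10$. This is false in general. Writing $u(s)$ in its polar decomposition (as the paper does) one has $u(s)\in \mrm{SO}(2)\,g_{t_s}\,k_s$ with $e^{t_s}\asymp |s|$ and $k_s$ of bounded norm, so $\phi_s(x,v)/\log(1+|s|)$ converges, for $\hat\nu$-a.e.\ $(x,v)$, to the Lyapunov exponent of $v$ under the $g_t$-cocycle $B(g_{-t},\cdot)$ --- and not to $0$. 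Since the theorem explicitly makes no hypothesis on the Lyapunov spectrum (cf.~Remark~\ref{remark on A-inv thm}), that limit can be bounded away from zero, and no large-measure set can make $|\phi_s|/\log|s|$ uniformly small. The inference you draw from $\phi_n/n\to 0$ (hence $\int\phi_1\,d\hat\nu=0$) is a non sequitur: vanishing of $\phi_n/n$ is far weaker than vanishing of $\phi_n/\log n$ and says nothing about the latter.

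The actual mechanism has to be \emph{cancellation} between $\phi_s^\wedge(v\wedge w)$ and $\phi_s(v)+\phi_s(w)$, not smallness of each term separately. Concretely, after the polar decomposition one applies Oseledets' theorem to the $g_{-t}$-cocycle: the operator norm $\|\wedge^2 B(g_{-n},x)\|_{\mrm{op}}$ grows at rate $\l_1^-+\l_2^-\le 2\l_1^-$ (controlling the numerator uniformly in $v,w$), while the denominator requires the lower bound $\|B(g_{-n},x)v\|\ge e^{(\l_1^--\e/2)n}\|v\|_x$, i.e.\ that $\hat\nu$-a.e.\ vector $v$ achieves the \emph{top} Lyapunov growth rate $\l_1^-$. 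Establishing this --- that the slower-growing Oseledets sub-bundle $\widehat{\Wcal}^{<\l_1^-}$ has $\hat\nu$-measure zero --- is the heart of the argument, and it uses the $P_\Q$-invariance of the sub-bundle together with $U$-ergodicity of $\hat\nu$ and the minimality of the $P_\Q$-invariant sub-bundle $\widehat{\Wcal}$ supporting $\hat\nu$. Your proposal does not address this point at all, and without it one cannot rule out a positive-measure set of slow-growing vectors, which would make the ratio in~\eqref{eq:bound proj dist by norms} grow polynomially with a fixed positive exponent rather than being $O(|s|^\e)$.
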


Armed with Lemma~\ref{lem:subpolynomial divergence}, the rest of the argument is now very similar to Ratner's original proof as we now describe.
Let $\L(\hat{\nu})\subseteq \SL$ denote the subgroup of elements of $\SL$ preserving $\hat\nu$ and suppose that $A \not\subset \L(\hat\nu)$.
Using the fact that $A$ preserves the space of $U$-ergodic measures (cf.~\cite[Proofs of Lemma 4.1 and Theorem 4.1]{Ratner-RagConjSL2}), we can find a full measure set $\hat{\mathcal{E}}$ of $U$-generic points and $\th>0$ so that for all $0<|t|\leq \th$, $g_t  \hat{\mathcal{E}}\cap \hat{\mathcal{E}} =\emptyset$.
In particular, there is a compact set $\hat{\mathcal{L}}\subseteq \hat{\mathcal{E}}$ with $\hat{\nu}(\hat{\mathcal{L}})$ arbitrarily close to $1$ so that 
\begin{align}\label{eq:move generic points off themselves A-inv}
    \e\stackrel{\mrm{def}}{=}\inf\set{\dist(g_t\hat{\mathcal{L}}, \hat{\mathcal{L}}): \th/2\leq |t| \leq \th }>0.
\end{align}

Let $\d>0$ be sufficiently small, to be chosen depending on $\th,\e,$ and $\hat{\Lcal}$.
Since $\hat{\nu}$ projects to the Haar measure on $\Vcal$, we can find two generic points $(y,v_1), p\cdot (y,v_2)\in \hat{\mathcal{L}}$, $i=1,2$, where $p\in \SL$ is a non-trivial lower triangular matrix which is $\d$-close to identity, and moreover $v_1$ is $\d$-close to $v_2$.

For $r\in\R$, denote by $u^-(r) =\left(\begin{smallmatrix}
    1 & 0 \\ r & 1
\end{smallmatrix}\right)$.
Let $r,\t \in (-\d,\d)$ be such that $p= u^-(r)g_\t$.
For $s\in \R$ with $|sr|<1$, let $s_p = \t^{-2} s/(1+sr)$, $r_p=r/(1+sr)$, and $t_p=\t+\log (1+rs)$.
Then, we have
$$u(s)p = u^-(r_p) g_{t_p} u(s_p).$$

Let $I\subset\R$ be the interval of parameters $s$ such that $\th/2\leq |t_p| \leq \th$. Since $|s_p|\asymp |s|$, it follows by Lemma~\ref{lem:subpolynomial divergence} that for all $s\in I$, the distance between the points $u(s_p)\cdot (y,v_i)$, $i=1,2$, is $O(\d)$. 
Moreover, we have that $|r_p| =O(\d)$.
In particular, the distance between $g_{t_p}u(s_p)\cdot (y,v_1)$ and the generic point $u(s)p\cdot (y,v_2)$ satisfies
\begin{align}\label{eq:generic points differ along g_t A-inv}
    \dist(g_{t_p}u(s_p)\cdot (y,v_1), u(s)p\cdot (y,v_2)) =O(\d).
\end{align}
Following Ratner, with the aid of Birkhoff's ergodic theorem, we can find $s\in I$ so that the two points $u(s_p)\cdot (y, v_1)$ and $u(s)p\cdot (y,v_2)$ also belong to the compact set $\hat{\mathcal{L}}$.
This contradicts~\eqref{eq:move generic points off themselves A-inv} if $\d$ is sufficiently small depending on $\e$.

\subsection{Proof of Lemma~\ref{lem:subpolynomial divergence}}

Note that for all $x$ and any two unit norm vectors $v,w$, we have
\begin{align}\label{eq:bound proj dist by norms}
    \dist(B(u(s),x)v,B(u(s),x)w) \leq \frac{\norm{\wedge^2 B(u(s),x)}_{\mrm{op}} \norm{v\wedge w}_x}{\norm{B(u(s),x)v)}_{u(s)x}\norm{B(u(s),x)w}_{u(s)x}},
\end{align}
where $B(u(s),x):\mathcal{V}_x \to \mathcal{V}_{u(s)x}$ is the cocycle defined in Section~\ref{sec:A intro}.
Our goal is to show that the rate of growth of $B(u(s),x)v$ is close to that of the norm of $B(u(s),x)$ for almost every $(x,v)$.
To this end, we first relate these rates of growth to growth along suitable orbits of $g_t$ instead of $u(s)$.
 By a direct calculation of the singular values and vectors of $u(s)$, we see that $u(s)\in \mrm{SO}(2) g_{t_s} k_s$, where $e^{t_s}\asymp |s|$, and $k_s$ is a matrix that is $O(1/|s|)$-close to $\left(\begin{smallmatrix}
        0& -1 \\ 1 & 0
    \end{smallmatrix}\right)$ and with top-right entry $-1+O(1/|s|^2)$
    as $|s|\r \infty$.
    Hence, by the cocycle property and our boundedness hypothesis~\eqref{eq:bounded cocycle}, we have $B(u(s),x) = M_sB(g_{-t_s},x)$, where $M_s$ is a matrix of size $O(1)$. Thus,
    \begin{align}\label{eq:bound proj dist using KAK}
        \dist(B(u(s),x)v,B(u(s),x)w) \ll \frac{\norm{\wedge^2 B(g_{-t_s},x)}_{\mrm{op}} \norm{v \wedge w}_x}{\norm{B(g_{-t_s},x)v}_{g_{-t_s}x} \norm{B(g_{-t_s},x)w}_{g_{-t_s}x}}
    \end{align}

 The projection $\P\hV\r \Vcal$ provides a disintegration of $\hat{\nu}$ along the fibers.
We denote by $\hat\nu_x$ the corresponding conditional measure of $\hat\nu$ on the fiber over $x$.
Let $N_x$ denote the smallest projective subspace of $\RP_x$ containing the support of $\hat\nu_x$.
We also use $N_x$ to denote the corresponding linear subspace of $V_x$.
Let $X\subseteq \Vcal$ denote the $\mu_\Vcal$-full measure set so that $\hat{\nu}_x$ is defined for every $x\in X$.

To proceed, we need the following definitions.
\begin{definition}
A measurable sub-bundle of $\hV$ (resp. $\P\hV$) is a measurable assignment of a vector subspace (resp. projective subspace) to each point in a full measure subset of $\Vcal$. 
Given a subgroup $H\subset \SL$ and a sub-bundle $\widehat{\Wcal}=\set{(x,q): x\in X' w\in W_x}$, where $X'\subseteq \Vcal$ is a subset of full measure, we say that $\widehat{\Wcal}$ is $H$-invariant if $X'$ is $H$-invariant and for every $x\in X'$ and $h\in H$, we have $W_{hx}= B(h,x)W_x$.    

\end{definition}

Let $P=AU\subset \SL$. We introduce certain measurable invariant sub-bundles of full measure. Since the measure $\hat\nu$ is apriori not $P$-invariant, it will be convenient to restrict our attention to a countable subgroup.

Recall that $\hat\nu$ is $U$-ergodic.
In particular, it is ergodic for the action of one element in $U$.
Without loss of generality, we shall assume in what follows for concreteness that $\hat\nu$ is ergodic for the action of a matrix in $U$ with rational entries.

In what follows, given a ring $R\in \set{\R,\Q,\Z}$ and a subgroup $H\subseteq \SL$, we let $H_R \stackrel{\mrm{def}}{=}H\cap \mrm{SL}_2(R)$.
 Since the measure $\mu_\Vcal$ is $\SL$-invariant, we have that the set
\begin{align*}
    X' = \bigcap_{p\in P_\Q} pX
\end{align*}
also has full measure. Moreover, $X'$ is $P_\Q$-invariant by construction.
For $x\in X'$, define 
$$W_x \stackrel{\mrm{def}}{=} \mrm{Span}\set{B(p,p^{-1}x)N_{p^{-1}x}:  p\in P_\Q}.$$
Then, $\widehat\Wcal\stackrel{\mrm{def}}{=}\set{(x,w): x\in X', w\in W_x}$ is a measurable $P_\Q$-invariant sub-bundle satisfying $\hat\nu(\widehat\Wcal)=1$.
Moreover, $\widehat{W}$ is minimal among $P_\Q$-invariant sub-bundles with this property in the sense of having fibers with smallest dimension. 
More precisely, any measurable $P_\Q$-invariant sub-bundle $\widehat{\Wcal}'$ of full $\hat\nu$-measure with fibers $W'_x$, has the property that $W'_x\supseteq W_x$ for $x$ in the common $P_\Q$-invariant full $\mu_\Vcal$-measure set on which both bundles are defined.

Since the cocycle is bounded in the sense of~\eqref{eq:bounded cocycle}, it is in particular log-integrable so that Oseledets' theorem applies showing that the limit
\begin{align}\label{eq:lambda_1 minus}
    \l_1^- \stackrel{\mrm{def}}{=} \lim_{\substack{n\r \infty\\ n\in\N}} \log\norm{B(g_{-n},x)|_{W_x}}^{1/n}_{\mrm{op}}
\end{align}
exists and is constant for $\mu_\Vcal$-almost every $x$.
Moreover, there is an almost everywhere defined measurable map $x\mapsto W_x^{<\l_1^-}\subset W_x$, where $W_x^{<\l_1^-}$ is the Oseledets subspace consisting of all $w\in W_x$ with $\limsup_{n\to \infty} (1/n)\log\norm{ B(g_{-n},x)w}_{g_{-n}x} < \l_1^-$.

Let $X''\subseteq X'$ be a full measure set of $x$ where~\eqref{eq:lambda_1 minus} exists and $W_x^{\l_1^-}$ is defined.
Up to replacing $X''$ with $\cap_{p\in P_\Q} pX''$, we may and will assume that $X''$ is a $P_\Q$-invariant subset of $\Vcal$.
Let $\widehat{\Wcal}^{<\l_1^-}$ denote the measurable sub-bundle $\set{(x,w): x\in X'', w\in W_x^{<\l_1^-}}$.
We claim that
\begin{align}\label{eq:measure 0 lower growth}
    \hat{\nu}\left(\widehat{\Wcal}^{<\l_1^-}\right) =0.
\end{align}

Indeed, we first show that $\widehat{\Wcal}^{<\l_1^-}$ is $P_\Q$-invariant.
To see this, note that $\widehat{\Wcal}^{<\l_1^-}$ is $A_\Z$-invariant by definition.
From the cocycle property $B(gh,x)=B(g,hx)B(h,x)$ and our boundedness hypothesis~\eqref{eq:bounded cocycle}, it follows that it is $A_\Q$-invariant.
Similarly, since $g_{-n}ug_n $ tends to identity as $n\r+\infty$, it follows that $\widehat{\Wcal}^{<\l_1^-}$ is $U_\Q$-invariant.

Hence, recalling that $\hat\nu$ is $U_\Q$-ergodic, $\widehat{\Wcal}^{<\l_1^-}$ has measure $0$ or $1$. 
The claim~\eqref{eq:measure 0 lower growth} now follows since $\widehat\Wcal$ is the minimal $P_\Q$-invariant sub-bundle of full measure and $\widehat\Wcal^{<\l_1^-}$ is a proper sub-bundle.

The last ingredient in the proof is another application of Oseledets' theorem to the second exterior power bundle yielding almost sure existence of the limit
\begin{align*}
    \l_1^-+\l_2^- = \lim_{n\to\infty, n\in\N} \log\norm{\wedge^2 B(g_{-n},x)}^{1/n}_{\mrm{op}},
\end{align*}
with value independent of $x$. Here, $\l_2^-\leq \l_1^-$ is the second Lyapunov exponent for the cocycle $B(g_{-t},-)$.
Now, given $\e>0$, we can find $n_\e>0$ so that the sets $F_1$ and $F_2$ defined by
\begin{align*}
    F_1 &= \set{(x,v): \norm{\wedge^2 B(g_{-n},x)}_{\mrm{op} }\leq e^{(\l^-_1+\l^-_2+\e/2)n} \text{ for all } n>n_\e, n\in \N},
    \nonumber \\
    F_2 &= \set{(x,v): \norm{B(g_{-n},x)v}_{g_{-n}x} \geq e^{(\l^-_1-\e/2) n}\norm{v}_x \text{ for all } n>n_\e, n\in \N },
\end{align*}
each has measure $\geq 1-\e/2$.
As above, note that up to replacing the bounds in the definition of $F_1$ and $F_2$ by a suitable uniform constant multiple, these bounds continue to hold for all $t\in\R$ with $t>n_\e$.
The conclusion of the lemma now follows for $F=F_1\cap F_2$ in light of~\eqref{eq:bound proj dist by norms} and~\eqref{eq:bound proj dist using KAK}.
Indeed, using that $\l^-_2\leq \l^-_1$, we have $$\dist(B(u(s),x)v,B(u(s),x)w)\ll \frac{e^{(2\l^-_1+\e/2)t_s}}{e^{(2^-_1-\e/2)t_s}} \leq e^{\e t_s}\ll |s|^\e,$$ for all large enough $s$, where the last inequality follows by definition of $t_s$ above~\eqref{eq:bound proj dist using KAK}.

%--------------------------------
\section{Proof of the Key Matching Proposition}
\label{sec:proof of key matching}
The goal of this section is to prove Proposition~\ref{prop:key matching}.
The key ingredient is Theorem~\ref{thm:A-invariance of flag distribution}, asserting that any weak-$\ast$ limit of the distributions of output vectors of the cocycle on projective space along expanding horocycle arcs on $\Vcal$ is invariant by the geodesic flow.

\subsection{The bundle with fibers the balanced cylinder space}
We begin by setting up notation for applying Theorem~\ref{thm:A-invariance of flag distribution}.
Fix a horizontally periodic Veech surface $(M,\w)$.
The action of the affine group 
$$\G\stackrel{\mrm{def}}{=} \mrm{Aff}^+(M,\w)$$
by affine maps on $M$ induces a linear action of $\G$ on $H^1_\C$, preserving the subspace $\cylspace^0(\w)$; cf.~\textsection\ref{sec:balanced spaces}.
Moreover, taking derivatives in translation charts gives a surjective homomorphism $D:\G\r \mrm{SL}(M,\w)$ onto the Veech group with a finite kernel; cf.~\textsection\ref{sec:period coords}. 
Recall that we passed to a finite torsion-free cover of the stratum; cf. Convention \ref{torsion free convention}. In particular, using the notation set therein, we view $\G$ as a lattice of $\SL$ through this derivative homomorphism, with quotient the Veech curve $\Vcal = \SL/\mrm{SL}(M,\w)$.

Hence, following the discussion preceding Theorem~\ref{thm:A-invariance of flag distribution}, we can form the vector bundle $\hV=\SL \times \cylspace^0(\w)/\Gamma$, and the associated
projective fiber bundle $\P\hV = \SL \times \mathbb{P}(\cylspace^0(\w))/\Gamma$, where $\mathbb{P}(\cylspace^0(\w))$ denotes the space of lines in $\cylspace^0(\w)$.
In particular, the fiber $\hV_x$ of $\hV$ over $x=g\w\in\Vcal$ is given by $\mrm{KZ}(g,\w)\cdot\cylspace^0(\w)$, which depends only on $x$, and not the choice of $g$ by invariance of $\cylspace^0(\w)$ under $\G$.

We let the norm on the fibers be the restriction of the AGY norm.
The boundedness hypothesis~\eqref{eq:bounded cocycle} follows by Corollary~\ref{cor:KZ norm}.
We let $\pi:\P\hV\r \Vcal$ denote the canonical projection map associated with this fiber bundle.

\subsection{Matching of initial points and directions on the Veech curve}
The key step in the proof of Proposition~\ref{prop:key matching} involves matching points on the expanded horocycle in $\Vcal$ at different times along with matching the images of $\b$ at the corresponding points under the cocycle, so that the matched pairs are close in distance.  
This is done in Proposition~\ref{prop:intermediate matching} below. To state this result, we need some setup.

Fix some $\e\in (0,1)$ and let $\Kcal\subset \Vcal$ be a compact set with boundary having $\mu_\Vcal$ measure $0$ and  such that
\begin{align}\label{eq:matching proof cpt set}
    \int_0^1 \mathbf{1}_\Kcal(\w(t,s))\;ds \geq 1-\e/2, \qquad \text{for all } t\geq 0.
\end{align}
For instance, we may take $\Kcal$ to consist of all points $x\in\Vcal$ with injectivity radius suitably bounded below in terms of $\e$; cf.~\cite[Proposition~5.3]{ChaikaKhalilSmillie}.
Let $\widehat{\Kcal} \stackrel{\mrm{def}}{=} \pi^{-1}(\Kcal)\subset \P\hV$. 

For each $t\geq 0$, let $E_t = \set{(\w(t,s), \b(t,s)/N(t,s)): s\in [0,1], \w(t,s)\in \Kcal}$, where we recall that $N(t,s):=N_{\beta}(t,s)$ was defined in~\eqref{eq:notation}.
We can view $E_t$ as a subset of $\widehat{\Kcal}$.
Elements of $E_t$ are parametrized by a subset of $s\in [0,1]$ for which $\w(t,s)$ lands in $\Kcal$.
We let $\l_t$ be the pushforward to $E_t$ of the Lebesgue measure under this parametrization map, normalized to be a probability measure.

We now define a metric on $\widehat{\mathcal{K}}$ built from the AGY-norms. This will be convenient to relate an estimate on Tremor distance  using the conventional AGY metric and norms (Lemma \ref{lem:tremors are Lipschitz}) and $\P\hV$-distance. 
Let $r_0>0$ be chosen so that the $r_0$-neighborhood of every point in $\mathcal{K}$ is simply connected. 
We extend the AGY metric $\dAGY$ to $\widehat{\mathcal{K}}\subset \mathbb{P}\hat{\mathcal{V}}$ as follows. Let $\bar{x}_i=(x_i,v_i)\in\widehat{\Kcal}, i=1,2$, and set
\begin{align*}
    \dAGY(\bar{x}_1,\bar{x}_2) 
    \stackrel{\mrm{def}}{=} 
    \min\set{\dAGY(x_1,x_2) + \frac{1}{2} \sum_{i=1}^2 \dist_{x_i}(v_1,v_2), r_0 }
\end{align*}
where $\dist_{x}$ is a distance on the projective space fiber over derived from the AGY norm $\norm{-}_x$; cf.~\eqref{eq:proj dist} for a definition. Here, if $\dAGY(x_1,x_2)<r_0$, then we view $v_i, i=1,2$ as elements of the same fiber using parallel transport so that $\dist_{x_i}(v_1,v_2)$ is well-defined for $i=1,2$ by our choice of $r_0$. Otherwise, the distance $\dAGY(\bar{x}_1,\bar{x}_2)$ is set to be equal to $r_0$. 

\begin{prop}\label{prop:intermediate matching}

        Let $\th>0$ and $L_0>0$ be given.
    Then, the following holds for all sufficiently large $t>0$, depending on $\th,\Kcal,$ and $L_0$.
    Let $t_1=t$ and $t_2\geq t_1$ be such that $t_2-t_1\leq L_0$.
    Then, there is a subset $F_{t_1}\subseteq E_{t_1}$ with $\l_{t_1}(F_{t_1})\geq 1-\th$ and a measurable map $\phi: F_{t_1} \r E_{t_2}$ such that for all $\bar{x}=(x,v)\in F_{t_1}$, we have $\dAGY(\bar{x},\phi(\bar{x})) < \th$.

\end{prop}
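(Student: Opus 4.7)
The plan is to deduce the matching directly from Theorem~\ref{thm:A-invariance of flag distribution}, applied to the projective bundle $\P\hV$ whose fiber action is given by the Kontsevich--Zorich cocycle on $\cylspace^0(\w)$. Define the measure $\hat{\lambda}_t$ on $\P\hV$ as the pushforward of Lebesgue measure on $[0,1]$ under $s\mapsto (\w(t,s),[\b(t,s)])$; equivalently, $\hat{\lambda}_t = \int_0^1 \delta_{g_t u(s)\cdot(\w,[\b])}\,ds$. Using $\w(t+\ell,s)=g_\ell \w(t,s)$ together with the cocycle identity~\eqref{eq:cocycle property}, and observing that the scalar $e^t$ appearing in the definition~\eqref{eq:notation} of $\b(t,s)$ is invisible in projective coordinates, one obtains the equivariance
\begin{equation*}
g_\ell \hat{\lambda}_t = \hat{\lambda}_{t+\ell}, \qquad t,\ell\in\R.
\end{equation*}
Boundedness of the cocycle (Corollary~\ref{cor:KZ norm}) verifies hypothesis~\eqref{eq:bounded cocycle}, so Theorem~\ref{thm:A-invariance of flag distribution} guarantees that every weak-$\ast$ subsequential limit of $\{\hat{\lambda}_t\}_{t\to\infty}$ is $A$-invariant.

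From this, a standard extraction argument yields uniform closeness
\begin{equation*}
\sup_{\ell\in[0,L_0]} d_P(\hat{\lambda}_t,\hat{\lambda}_{t+\ell}) \xrightarrow[t\to\infty]{} 0,
\end{equation*}
where $d_P$ is the L\'evy--Prokhorov metric on Borel probability measures on $\P\hV$, computed with respect to $\dAGY$ on $\widehat{\Kcal}$ and any fixed background metric outside. Indeed, if uniformity failed, one could extract $t_n\to\infty$ and $\ell_n\to\ell^\ast\in[0,L_0]$ with $\hat{\lambda}_{t_n}\to\hat{\nu}$; then by joint continuity of the $g_\ell$-action on measures, $\hat{\lambda}_{t_n+\ell_n}=g_{\ell_n}\hat{\lambda}_{t_n}\to g_{\ell^\ast}\hat{\nu}=\hat{\nu}$, the last equality by $A$-invariance, contradicting the assumed separation. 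Tightness of $\{\hat{\lambda}_t\}$ follows from equidistribution of expanding horocycle arcs on $\Vcal$.

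Given $\theta>0$, fix $t_1$ large enough that the supremum above is below some $\delta\ll\theta$. The defining inequality of $d_P$, applied to the closed set $\hat{E}_{t_2}:=\{(\w(t_2,s),[\b(t_2,s)]):s\in[0,1]\}$, produces $\hat{\lambda}_{t_1}(B(\hat{E}_{t_2},\delta))\geq 1-\delta$. To arrange that the matched point in fact lies in $E_{t_2}=\hat{E}_{t_2}\cap\widehat{\Kcal}$ rather than merely in $\hat{E}_{t_2}$, replace $\Kcal$ by its $\eta$-interior $\Kcal_\eta:=\{x\in\Kcal:\dAGY(x,\partial\Kcal)>\eta\}$ for some $\eta>\delta$; since $\mu_\Vcal(\partial\Kcal)=0$, one may arrange $\mu_\Vcal(\Kcal_\eta)\geq 1-\e$ by choosing the original $\Kcal$ slightly larger. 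Intersecting with $\{s:\w(t_1,s)\in\Kcal_\eta\}$ (whose Lebesgue mass is $\geq 1-\e$ by~\eqref{eq:matching proof cpt set}), one obtains a subset of $[0,1]$ of measure $\geq 1-\theta$ on which the corresponding point of $E_{t_1}$ lies within $\dAGY$-distance $\delta<\theta$ of some point of $E_{t_2}$. Let $F_{t_1}$ be the resulting subset of $E_{t_1}$; the map $\phi$ is then produced by Kuratowski--Ryll-Nardzewski measurable selection of a nearest point in $E_{t_2}$, or, more concretely, by choosing the smallest $s'\in[0,1]$ realising the match in the explicit parametrization.

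The main technical obstacle will be establishing uniform-in-$\ell$ weak-$\ast$ convergence and carefully handling the restriction to $\widehat{\Kcal}$, i.e.\ ensuring that the point of $\hat{E}_{t_2}$ matched to a generic element of $E_{t_1}$ does not land outside $E_{t_2}$. The conceptual core, namely that every weak-$\ast$ limit of $\hat{\lambda}_t$ is $A$-invariant, is supplied by Theorem~\ref{thm:A-invariance of flag distribution}, after which the remainder is routine measure-theoretic manipulation.
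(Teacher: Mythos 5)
Your proposal is correct and essentially reproduces the paper's argument: both reduce to Theorem~\ref{thm:A-invariance of flag distribution} to get $A$-invariance of weak-$\ast$ limits, establish uniform-in-$\ell$ closeness by the same compactness/contradiction argument (cf.~Lemma~\ref{lem:conseq of A-invar}), and then convert weak-$\ast$ closeness into a measurable nearest-point matching. The only difference is cosmetic: you use the L\'evy--Prokhorov metric on the unrestricted measures $\hat\lambda_t$ (exploiting the clean equivariance $g_\ell\hat\lambda_t=\hat\lambda_{t+\ell}$) and shrink $\Kcal$ to its $\eta$-interior to handle the truncation to $\widehat{\Kcal}$, whereas the paper works directly with the restricted, normalized $\l_t$, invokes Kantorovich--Rubinstein duality to produce a $W_1$-small coupling, and applies Markov's inequality.
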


\subsection{Deduction of Proposition~\ref{prop:key matching} from Proposition~\ref{prop:intermediate matching}}
We begin with the following Lipschitz estimate on the distance between tremored surfaces.

\begin{lem}\label{lem:tremors are Lipschitz}
    There exists $\d>0$, depending only on $\Kcal$, so that the following holds for all $|r|<\d$.
    Let $t_1, t_2 \geq 0$ and $s_1, s_2\in [0,1]$. Let $v_i = \b(t_i,s_i)/N(t_i,s_i)$,
     $x_i=\w(t_i,s_i)$, and $y_i=\Trem(x_i,r v_i)$.
     If $\dist_{\mrm{AGY}}(x_1,x_2)<\d$, then
     $\dist_{\mrm{AGY}}(y_1,y_2) \leq 8(\dist_{\mrm{AGY}}(x_1,x_2)+ |r|\norm{v_1-v_2}_{x_1})$.
\end{lem}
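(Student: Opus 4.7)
The plan is to construct an explicit piecewise $\dAGY$-path from $y_1$ to $y_2$ of total length at most $2\dAGY(x_1,x_2)+8|r|\,\|v_1-v_2\|_{x_1}$, which is dominated by $8(\dAGY(x_1,x_2)+|r|\,\|v_1-v_2\|_{x_1})$. The path will decompose into two pieces: first, a lift of a $\dAGY$-geodesic from $x_1$ to $x_2$ to a path between the corresponding tremored surfaces via the class $v_1$; and second, a ``straight-line'' tremor path at the fixed base $x_2$ interpolating between $rv_1$ and $rv_2$ in horizontal period coordinates. I will choose $\delta>0$ smaller than the injectivity radius for period coordinates on $\Kcal$ (so cohomology classes admit canonical local trivializations in $\delta$-balls around any $x\in\Kcal$, justifying the identifications below) and small enough that all Lipschitz estimates from Propositions~\ref{prop:norm of parallel transport} and~\ref{prop:stable exp lipschitz} contribute only universal multiplicative factors. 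The hypothesis $\|v_i\|_{x_i}=1$, built into the definition of $v_i$, will be used here to keep all norms controlled.

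For the first segment, I will fix a unit-speed geodesic $\gamma:[0,\dAGY(x_1,x_2)]\to\HHu$ from $x_1$ to $x_2$ and define $\tilde\gamma(\sigma)=\Trem(\gamma(\sigma),rv_1)$, treating $v_1$ as a locally constant class along $\gamma$. The coordinate formula~\eqref{eq:tremor in coordinates} forces the tangents to agree: $\dot{\tilde\gamma}=\dot\gamma$ in period coordinates. Proposition~\ref{prop:stable exp lipschitz} then gives $\dAGY(\gamma(\sigma),\tilde\gamma(\sigma))=O(|r|)$, and Proposition~\ref{prop:norm of parallel transport} yields $\|\dot\gamma(\sigma)\|_{\tilde\gamma(\sigma)}\leq e^{O(|r|)}\|\dot\gamma(\sigma)\|_{\gamma(\sigma)}$, so that the length of $\tilde\gamma$ is at most $2\dAGY(x_1,x_2)$ once $\delta$ is sufficiently small.

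For the second segment, I will take $\rho(\tau)=\Trem(x_2,\,r((1-\tau)v_1+\tau v_2))$ for $\tau\in[0,1]$, where $v_1$ is identified with its parallel transport to $x_2$ via the local trivialization. This path joins the endpoint of $\tilde\gamma$ to $y_2$, and its period-coordinate tangent is the constant horizontal class $r(v_2-v_1)$. The second assertion of Proposition~\ref{prop:norm of parallel transport} bounds its length by a universal multiple of $|r|\,\|v_1-v_2\|_{\rho(0)}$, and two further applications of Propositions~\ref{prop:norm of parallel transport} and~\ref{prop:stable exp lipschitz} transfer this norm from $\rho(0)$ back to $x_1$ at the cost of another universal constant. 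Summing the two contributions yields the claim. The only delicate point is the bookkeeping of universal constants so they fit under the stated factor of $8$, and ensuring $\delta$ is small enough that every invocation of Proposition~\ref{prop:norm of parallel transport} is within its allowed range $\|v\|<1/2$; there is no substantive obstacle.
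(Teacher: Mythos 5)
Your argument is correct, and it takes a genuinely different route from the paper's. The paper lifts to the marked stratum $\HHm$, applies the two-sided comparison between the AGY distance and the AGY norm of the difference of period holonomies (cited from \cite[Lemma 3.3]{ChaikaKhalilSmillie}), and then reduces the estimate to the triangle inequality applied to the identity $\hol_{y_1}-\hol_{y_2} = (\hol_{x_1}-\hol_{x_2}) + r(v_1-v_2)$, converting the first summand back to a distance with the same comparison. You instead bound $\dAGY(y_1,y_2)$ by \emph{constructing an explicit concatenated path} whose two pieces correspond exactly to the two summands of that same linear decomposition, and estimating each piece's length directly via Proposition~\ref{prop:norm of parallel transport}. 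The trade-off: the paper's version is shorter and cleaner but imports a norm-versus-distance lemma proved elsewhere; yours is slightly longer but entirely self-contained given Propositions~\ref{prop:norm of parallel transport} and~\ref{prop:stable exp lipschitz}, and it makes the geometric picture (travel along the base, then interpolate the twist class) explicit.

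Two small points worth tightening if you write this out in full. First, like the paper, you should state clearly that you lift to a period-coordinate chart (equivalently, to $\HHm$) so that the whole concatenated path is contained in a single $\d$-ball where period coordinates are injective; otherwise the locally constant identification of $v_1$ along $\gamma$ and the bound on $\norm{v_1-v_2}_{\rho(0)}$ via parallel transport along $\rho(0)\to x_2 \to x_1$ are not justified. Second, the AGY ``geodesic'' from $x_1$ to $x_2$ need not exist as an actual minimizer since $\dAGY$ is a Finsler infimum; you should instead take a $C^1$ path of length $\leq (1+\epsilon)\dAGY(x_1,x_2)$ and let $\epsilon\to 0$, or simply absorb $(1+\epsilon)$ into the universal constant. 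Neither point is a genuine gap.
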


\begin{proof}
    For $i=1,2$, let $\tilde{x}_i\in \HHm$ be a lift of $x_i$ to the marked stratum.
    Then, $\tilde{y}_i = \Trem(\tilde{x}_i,rv_i)$ is a lift of $y_i$.
    Recall the map $\Psi^u_\bullet$ parametrizing local unstable manifolds; cf.~\textsection\ref{sec:(un)stable}.
    Note that since $\b$ belongs to the balanced space at $\w$, we get that $\b(t,s)\in E^u(\w(t,s))$ for all $t,s\in\R$.
    In particular, for $\d<1/2$, $\tilde{y}_i \stackrel{\mrm{def}}{=} \Psi^u_{\tilde{x}_i}(v_i)$ is well-defined for $i=1,2$.
    Moreover, by Proposition~\ref{prop:stable exp lipschitz}, we obtain
    \begin{align*}
        \dAGY(x_i,y_i) \leq \dAGY(\tilde{x}_i,\tilde{y}_i) \leq 2\d.
    \end{align*}
    Hence, if $\dAGY(x_1,x_2)<\d$, we get that all $4$ points $x_i,y_i, i=1,2$ belong to a ball of radius $10\d$ centered in $\Kcal$. 
    In what follows, we choose $\d$ small enough, depending on $\Kcal$, so that holonomy period coordinates (cf.~\textsection\ref{sec:period coords}) are injective on any such ball.
    In particular, it will be enough to prove the lemma in the marked stratum $\HHm$.

    By~\cite[Lemma 3.3]{ChaikaKhalilSmillie}, there exists $\e_0>0$ so that for all $q_1,q_2$ in the unit neighborhood of $\Kcal$, if $\dist_{\mrm{AGY}}(q_1,q_2) <\e_0$, then
    \begin{align}\label{eq:AGY vs period coords}
        2^{-1} \norm{\hol_{q_1}-\hol_{q_2}}_{q_1} 
        \leq  \dist_{\mrm{AGY}}(q_1,q_2) 
        \leq 2 \norm{\hol_{q_1}-\hol_{q_2}}_{q_1}.
    \end{align}
    Moreover, by the previous paragraph, choosing $\d<\e_0/20$ ensures that $\dist_{\mrm{AGY}}(y_1,y_2)
    <\e_0$ so that~\eqref{eq:AGY vs period coords} applies and yields for $w=\hol_{y_1}-\hol_{y_2}$ the bound
    \begin{align*}
        \dAGY(y_1,y_2) 
        \leq 2\norm{w}_{y_1}. 
    \end{align*}
    Next, we apply Proposition~\ref{prop:norm of parallel transport} with $\k$ being the (balanced) tremor path joining $x_1$ to $y_1$, with tangent vector $\dot{\k}(t)\equiv v_1$, to get
    \begin{align*}
       \norm{w}_{y_1}
        \leq \frac{\norm{w}_{x_1}}{1-|r|\norm{v_1}_{x_1}}
        \leq 2 \norm{w}_{x_1},
    \end{align*}
    where we used the bound $|r|<\d<1/2$ and that $\norm{v_1}_{x_1}=1$.
    Recall the relation between periods of tremored surfaces $x_1$ and $y_1$ in~\eqref{eq:tremor in coordinates}. 
    Thus, by the triangle inequality and~\eqref{eq:AGY vs period coords} applied with $q_i=x_i$, we obtain
    \begin{align*}
    \dAGY(y_1,y_2) 
        &\leq  4(\norm{\hol_{x_1}-\hol_{x_2}}_{x_1} + |r| \norm{v_1-v_2}_{x_1})
        \nonumber\\
    &\leq 4(2\dAGY(x_1,x_2) + |r|\norm{v_1-v_2}_{x_1})    .
    \end{align*}
    This concludes the proof.
    \qedhere
\end{proof}

\subsubsection{Conclusion of the proof of Proposition~\ref{prop:key matching}}
Let $\e>0$ be given. 
We show that the proposition holds with our choice of $\Kcal$ and with $\d$ the parameter provided by Lemma~\ref{lem:tremors are Lipschitz}.
Let $T>1$ and $L_0$ be given
and let $\th=\min\set{\e e^{-2T},\d}/C$, where $C\geq 1$ will be chosen to be a suitably large constant depending on $\Kcal$.
In what follows, $t\geq T$ will be large enough so that the conclusion of Proposition~\ref{prop:intermediate matching} holds for these choices of $\th$ and $L_0$.

Fix $\ell\in [0,L_0]$ and let $t_1=t-T-(L_0-\ell)$ and $t_2=t -T$.
Let $F_{t_1}\subseteq [0,1]$ be the set provided by Proposition~\ref{prop:intermediate matching}.
Set $S_\ell=\set{s\in [0,1]: (\w(t_1,s),\b(t_1,s))\in  F_{t_1}}$. Then, Proposition~\ref{prop:intermediate matching} and equation~\eqref{eq:matching proof cpt set} imply that $|S_\ell|\geq (1-\th)(1-\e/2)\geq 1-\e$.
Moreover, by definition of $F_{t_1}$, we have that $\w(t_1,s)\in\Kcal$.
This verifies item~\eqref{item:key match in cpt}.

To verify item~\eqref{item:key match closeness}, let $s\in S_\ell$ and $r\in [0,\d/N(t_1,s)]$.
We wish to show that
\begin{align*}
        g_T \cdot \Trem(t_1,s,r)\in B\left(g_T \cdot g_{t_2}\cdot \T(\w,\b),\e \right).
\end{align*}
Let $x_1 = \w(t_1,s)$, $v_1 = \b_1(t_1,s)/N(t_1,s)$, and $y_1 = \Trem(x_1, rv_1)$.
Let $(x_2,v_2) \in \phi(x_1,v_1)$, where $\phi$ is the map in Proposition~\ref{prop:intermediate matching}, and $(x_2,v_2)\in \hV$ is a closest point to $(x_1,v_1)$ in the equivalence class of the line $\phi(x_1,v_1)$.
Set $y_2 = \Trem(x_2,rv_2)$.
Then, $y_2\in g_{t_2}\cdot \T(\w,\b)$.
Moreover, we get by Proposition~\ref{prop:intermediate matching} that $\dAGY(x_1,x_2)<\th$ and $\norm{v_1-v_2}_{x_1}\ll \th$. 
Hence, since $\th\leq \d$, we get that $\dAGY(y_1,y_2)\ll \th$ by Lemma~\ref{lem:tremors are Lipschitz}.
By choosing $C$ in the definition of $\th$ to be large enough to overcome the implicit constant in this inequality, we obtain $\dAGY(y_1,y_2)\leq \e e^{-2T}$.
By Lemma~\ref{lem:nonexpansion of stable}, it follows that $\dAGY(g_T y_1, g_T y_2)\leq \e$.

\subsection{Proof of Proposition~\ref{prop:intermediate matching}}
Let $\th$ and $L_0$ be given. 
Recall the measures $\l_t$ defined above Proposition~\ref{prop:intermediate matching}.
Note that the family $\set{\l_t:t\gg_{\Kcal}1}$ consists of probability measures supported on the compact set $\widehat{\Kcal}$. 
We have the following immediate consequence of Theorem~\ref{thm:A-invariance of flag distribution}.
\begin{lem}\label{lem:conseq of A-invar}
    The weak-$\ast$ distance between $\l_t$ and $\l_{t+\ell}$ converges to $0$ as $t\r \infty$, uniformly over $\ell\in [0,L_0]$.
\end{lem}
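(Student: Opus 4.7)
The plan is to reformulate the statement in terms of the unrestricted horocycle averages on $\P\hV$, apply Theorem~\ref{thm:A-invariance of flag distribution} to identify all weak-$\ast$ limit points with $A$-invariant probability measures, and then argue by a standard compactness/contradiction scheme. Set $z=(\w,[\b])\in \P\hV$ and define
\begin{align*}
    \mu_t \;\stackrel{\mrm{def}}{=}\; \int_0^1 \d_{g_t u(s)\cdot z}\;ds.
\end{align*}
Unwinding the definitions of $\w(t,s)$ and $\b(t,s)$ in~\eqref{eq:notation}, and using that $\b(t,s)/N(t,s)$ is a unit norm representative of a line in $\cylspace^0(\w(t,s))$, the definition of $\l_t$ gives the identity
\begin{align*}
    \l_t \;=\; \frac{\mu_t|_{\widehat{\Kcal}}}{\mu_t(\widehat{\Kcal})}.
\end{align*}
Moreover, $\mu_t$ projects to the expanding horocycle measure $\int_0^1 \d_{\w(t,s)}\,ds$ on $\Vcal$, which is a probability measure equidistributing to $\mu_\Vcal$; in particular, by~\eqref{eq:matching proof cpt set}, $\mu_t(\widehat{\Kcal})\geq 1-\e/2$ uniformly in $t$.

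Next, I would record the key equivariance $\mu_{t+\ell}=(g_\ell)_\ast \mu_t$, which follows directly from $g_{t+\ell}u(s)z=g_\ell \cdot g_tu(s)z$. Suppose, for contradiction, that the lemma fails. Then there exist $\e_0>0$ and sequences $t_n\r\infty$ and $\ell_n\in [0,L_0]$ with $d_\ast(\l_{t_n},\l_{t_n+\ell_n})\geq \e_0$. Passing to subsequences, I may assume $\ell_n\r\ell_\ast\in[0,L_0]$, $\mu_{t_n}\r \mu$, and $\mu_{t_n+\ell_n}\r\mu'$ in the weak-$\ast$ topology on probability measures on $\P\hV$; tightness follows since the projections to $\Vcal$ are tight.

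By Theorem~\ref{thm:A-invariance of flag distribution}, $\mu$ is $A$-invariant. Using joint continuity of $(\ell,\nu)\mapsto (g_\ell)_\ast\nu$ in the weak-$\ast$ topology (test against $f\in C_c(\P\hV)$ and invoke uniform continuity of $f\circ g_\ell$ in $\ell$ on compact sets together with tightness), the equivariance $\mu_{t_n+\ell_n}=(g_{\ell_n})_\ast\mu_{t_n}$ yields
\begin{align*}
    \mu'\;=\;(g_{\ell_\ast})_\ast\mu\;=\;\mu,
\end{align*}
the last equality by $A$-invariance of $\mu$.

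Finally, since $\mu_{t_n}$ projects to $\mu_\Vcal$ and $\partial \Kcal$ has $\mu_\Vcal$-measure zero, the boundary $\partial \widehat{\Kcal}=\pi^{-1}(\partial \Kcal)$ has $\mu$-measure zero. The Portmanteau theorem then gives $\mu_{t_n}(\widehat{\Kcal})\r \mu(\widehat{\Kcal})\geq 1-\e/2>0$ and $\mu_{t_n}|_{\widehat{\Kcal}}\r \mu|_{\widehat{\Kcal}}$ weakly, and likewise for $\mu_{t_n+\ell_n}$. Consequently $\l_{t_n}$ and $\l_{t_n+\ell_n}$ both converge to $\mu|_{\widehat{\Kcal}}/\mu(\widehat{\Kcal})$, contradicting $d_\ast(\l_{t_n},\l_{t_n+\ell_n})\geq \e_0$. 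The main subtlety (and essentially the only nontrivial input) is ensuring that the $A$-invariance supplied by Theorem~\ref{thm:A-invariance of flag distribution} applies not only to limits of $\mu_t$ along $t\r\infty$ but produces the desired uniformity in the shift parameter $\ell$; this is exactly what the joint continuity of the $\SL$-action on probability measures and the compactness of $[0,L_0]$ provide.
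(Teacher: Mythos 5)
Your argument is correct and follows essentially the same route as the paper: a compactness/contradiction argument extracting a subsequential limit $\hat\nu$ of $\mu_{t_n}$, invoking Theorem~\ref{thm:A-invariance of flag distribution} to get $A$-invariance, using the equivariance $\mu_{t+\ell}=(g_\ell)_\ast\mu_t$, and passing to the normalized restrictions via the boundary-regularity of $\Kcal$. Your presentation is a bit more explicit about tightness and the use of Portmanteau, but the underlying ideas and key lemma are identical.
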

\begin{proof}
    Indeed, suppose not.
    Then, there is a continuous function $f$ on $\widehat{\Kcal}$, and sequences $\ell_n \in [0,L_0]$ and $t_n\to \infty$ so that $|\int f\;d\l_{t_n} -\int f\;d\l_{t_n+\ell_n}| \not \r 0$. After passing to a subsequence if necessary, we may assume the measures $\mu_{t_n} = \int_0^1 \d_{g_{t_n}u(s)\cdot (\w,\b)}ds$ converge to a measure $\hat{\nu}$ and $\ell_n\to \ell_\ast\in [0,L_0]$. 
    It follows that $\l_{t_n}(f)\r \int f\mathbf{1}_\Kcal \;d\hat\nu/\mu_\Vcal(\Kcal)$ and  $\l_{t_n+\ell_n}(f)\r \int f\circ g_{\ell_\ast} \mathbf{1}_\Kcal\circ g_{\ell_\ast} \;d\hat\nu/\mu_{\Vcal}(\Kcal)$, and those limits do not agree.
    We obtain a contradiction in light of Theorem~\ref{thm:A-invariance of flag distribution} which provides that $\hat\nu$ is $g_{\ell_\ast}$-invariant.
\end{proof}

Fix $\ell\in [0,L_0]$ and let $t_1=t$ and $t_2=t+\ell$.
    Lemma~\ref{lem:conseq of A-invar} and the Kantarovich-Rubenstein duality theorem (cf.~\cite[Remark 6.5]{Villani-OptimalTransport}) imply that, 
    if $t$ is large enough,
    the Wasserstein $W_1$-distance between $\l_{t_1}$ and $\l_{t_2}$ is at most $\th^2$.
    In other words, there exists a probability measure $\g_{t_1,t_2}$ on the product space $\P\hV^2$, which projects to $\l_{t_1}$ and $\l_{t_2}$ respectively under the two standard projections, and such that 
    \begin{align*}
        \int_{\P\hV^2} \dist(x,y) \;d \g_{t_1,t_2}(x,y) <\th^2,
    \end{align*}
    where $\dist$ denotes the metric on the product space given by the sum of distances of projections to individual factors.
    Let $\Delta_\th$ denote the $\th$-neighborhood of the diagonal in the product space $\P\hV^2$.
    Then, the above estimate and Markov's inequality imply that $\g_{t_1,t_2}(\Delta_\th)\geq 1-\th$.

    Note that $\g_{t_1,t_2}$ is supported on $E_{t_1}\times E_{t_2}$.
    Let $F_{t_1}$ denote the intersection of the projection of $\Delta_\th$ with the support $E_{t_1}$ of $\l_{t_1}$.
    Then, since $\g_{t_1,t_2}$ projects to $\l_{t_1}$, we see that $\l_{t_1}(F_{t_1})\geq 1-\th$.
    
    Given $x\in F_{t_1}$, let $d(x)=\inf\set{\dAGY(x,y): y\in E_{t_2} }$ and note that $d(x)<\th$ by definition. 
    We let $I(x)$ denote the set of points $y\in E_{t_2}$ so that $d(x) = \dist(x,y)$. Then, $I(x)$ is non-empty and closed by compactness of $E_{t_2}$.
    Recalling that the sets $E_t$ are parametrized by the subset of points $s\in [0,1]$ for which $\w(t,s)$ lands in $\Kcal$, we set $\phi(x)$ to be the point $y\in I(x)$ with the smallest corresponding parameter $s\in [0,1]$. In particular, $\phi$ is a measurable map satisfying the conclusion of the proposition.

\section{Transverse Monodromy, Full Support, and Proof of Theorem~\ref{thm:full support}}
\label{sec:trivial monodromy}

This section is dedicated to the proof of Theorem~\ref{thm:full support}.
The strategy is summarized in Section~\ref{sec:outline}.

\subsection{Weak-stable matching on the Veech curve}

Recall Convention \ref{torsion free convention} and the notation set therein.
Let $\g \in \G=\mrm{Aff}^+(M,\w)$ denote the pseudo-Anosov affine mapping class acting trivially on $\cylspace^0(\w)\subset H^1_\R$. 
As in Section~\ref{sec:proof of key matching}, we let $\hV = \SL \times \cylspace^0(\w)/\Gamma$ denote the vector bundle over $\Vcal$ with fiber $\hV_x$ over $x=g\w$ given by $\mrm{KZ}(g,\w)\cdot\cylspace^0(\w)$.

\begin{convention}\label{trivial monodromy convention}
In what follows, we identify the pseudo-Anosov element $\g$ with its image in the Veech group $\mrm{SL}(M,\w)\subset \SL$. 
To simplify notation, we use the notation $\mrm{KZ}(-)$ to denote the restriction of the cocycle to the $\SL$-invariant sub-bundle $\hV$.
Moreover, whenever $x,y\in\Vcal$ belong to a simply connected open set in $\Vcal$, we shall identify the fibers $\hV_x$ and $\hV_y$ via parallel transport.
In particular, given $g_1,g_2\in\SL$ and $x_1,x_2\in\Vcal$ such that each of the pairs of points $\set{g_ix_i: i=1,2}$ and $\set{x_i:i=1,2}$ belong to simply connected subsets of $\Vcal$, we write $\mrm{KZ}(g_1,x_1)=\mrm{KZ}(g_2,x_2)$ to indicate equality of these linear maps after suitably pre- and post-composing with parallel transport maps that identify their (co-)domains.
\end{convention}

Recall the notation introduced in~\eqref{eq:notation}.
\begin{prop}\label{prop:match on Vcal trivial monodromy}
    Let $\ell>0$ be any multiple of the primitive period of the periodic geodesic corresponding to $\g$.
    For every $\e>0$, there exists a compact set $\Kcal=\Kcal(\e)\subset \Vcal$, so that the following holds for all sufficiently small $\d=\d(\e,\ell)>0$, and all large enough $t=t(\d)>0$.
    There is a subset $G_t\subseteq [0,1]$ of measure at least $1-\e$ and a measurable, locally smooth, map $\vp_{t\to t-\ell} :G_t^{t-\ell}\r [0,1]$ such that for all $s\in G_t^{t-\ell}$, 
    \begin{enumerate}
        \item\label{item:base in compact} $\w(t,s) \in\Kcal$, 
        \item\label{item:lower triangular} there is a lower triangular matrix $p^-$ at distance at most $\d$ from identity in $\SL$ so that $\w(t,s)=p^- \w(t-\ell,\vp_{t\to t_\ell}(s))$, 
        \item\label{item:equal matrices} $\mrm{KZ}(g_t,u(s)\w) = \mrm{KZ}(g_{t-\ell},u(\vp_{t\to t-\ell}(s))\w)$,
        \item\label{item:Jacobian} The Jacobian of $\vp_{t\to t-\ell} $ is of size $ \asymp_\ell 1$ on its domain, where the implicit constant is uniform over all large $t$.
    \end{enumerate}
\end{prop}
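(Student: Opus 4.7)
The plan is to use the $\SL$-conjugacy between $\g^k$ and $g_\ell$ to reduce the matching to an explicit equation in $\SL$, solve it via a Bruhat-type computation, and extract the large-measure set $G_t^{t-\ell}$ using equidistribution and non-divergence estimates for expanding horocycles.

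To set up the matching, I lift the desired relation $\w(t,s)=p^-\w(t-\ell,s')$ in $\Vcal=\SL/\mrm{SL}(M,\w)$ to $\SL$, using $\g^k\in\mrm{SL}(M,\w)$ to change the $\SL$-lift of $\w(t-\ell,s')$. The matching becomes
\[
 p^- \;=\; g_t\,u(s)\,\g^{-k}\,u(-s')\,g_{-(t-\ell)} \qquad \text{in } \SL.
\]
Since $\g$ is pseudo-Anosov, $\g^k$ is $\SL$-conjugate to $g_\ell$, say $\g^k=h^{-1}g_\ell h$; substituting gives an explicit matrix identity. Requiring $p^-\in P^-$, i.e., vanishing of its upper-right entry, is a single polynomial equation in $s'$, solvable (away from the closed Bruhat cell) by a M\"obius-type rational formula, which I take as the definition of $\vp_{t\to t-\ell}(s)$. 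Matching the $U$-component of the tangent vectors of both sides of the matching equation under differentiation in $s$, and using that the infinitesimal $P^-$-directions are orthogonal to $U$, yields $\vp'(s)\asymp e^{2\ell}$ uniformly in $t$, proving item~(4). Since the horocycle $U\cdot\w$ is closed of period $1$, $\vp$ is understood modulo this period, so its image fits into $[0,1]$ while locally stretching by $\asymp e^{2\ell}$.

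Given $\vp$, a direct computation from the formula for $p^-$ shows that $\|p^- - I\|$ is small precisely when $\w(t,s)$ lies in a fixed $P^-$-tube around the closed horocycle $H_{t-\ell}:=\{\w(t-\ell,r):r\in[0,1]\}$ and away from the cusps of $\Vcal$. By equidistribution of expanding horocycles (Theorem~\ref{thm:EMM 2.7}), for any fixed $\delta>0$ this $P^-$-tube has $\mu_\Vcal$-measure tending to $1$ as $t\to\infty$; combined with the non-divergence bound of Lemma~\ref{lem:marg big}, which confines $\w(t,s)$ to a compact set $\Kcal=\Kcal(\e)$ for a set of $s$ of measure $\geq 1-\e$, this produces the set $G_t^{t-\ell}$ satisfying items~(1) and~(2), provided $\d=\d(\e,\ell)$ is small enough and $t=t(\d)$ large enough. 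For item~(3), tracking the $\SL$-lifts of the concatenated loop
\[
 \w \xrightarrow{u} u(s)\w \xrightarrow{g_\cdot} \w(t,s) \xrightarrow{(p^-)^{-1}} \w(t-\ell,\vp(s)) \xrightarrow{g_{-\cdot}} u(\vp(s))\w \xrightarrow{u^{-1}} \w
\]
shows, using the defining matching equation, that its holonomy in $\mrm{SL}(M,\w)$ is exactly $\g^k$. Since $\g$ acts trivially on $\cylspace^0(\w)$ by hypothesis, so does $\g^k$; under the identifications of Convention~\ref{trivial monodromy convention}, the cocycle evaluations along the two halves of the loop thus agree, giving item~(3).

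The main technical obstacle lies in the quantitative Bruhat-type computation that controls $\|p^- - I\|$ by the position of $\w(t,s)$ inside $\Vcal$: only via this explicit formula can smallness of $p^-$ be translated into the geometric condition that $\w(t,s)$ lie in a $P^-$-tube of $H_{t-\ell}$, which in turn is what allows equidistribution to upgrade a positive-measure statement into a measure-$(1-\e)$ statement. A secondary subtlety is that the $\SL$-equation admits countably many solution branches indexed by $\mrm{SL}(M,\w)$, and only the branch corresponding to $\g^{-k}$ yields the trivial holonomy required by item~(3); one must show this branch is consistently measurable and locally smooth on $G_t^{t-\ell}$, which amounts to identifying the correct fundamental domain for the $\mrm{SL}(M,\w)$-action.
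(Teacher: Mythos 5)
There is a genuine gap. Your matching equation uses a \emph{single fixed} element $\gamma^{-k}$ of $\mathrm{SL}(M,\omega)$, and the explicit Bruhat computation you propose does not produce a set of $s$ of measure $\geq 1-\e$ where $p^-$ is small. Write $\gamma^{-k}=\left(\begin{smallmatrix}\alpha&\beta\\\gamma_c&\delta\end{smallmatrix}\right)$ and $M(s)=g_tu(s)\gamma^{-k}=\left(\begin{smallmatrix}a&b\\c&d\end{smallmatrix}\right)$. Killing the upper-right entry gives $s'=b/a$ and $p^-=\left(\begin{smallmatrix}ae^{-(t-\ell)}&0\\ce^{-(t-\ell)}&e^{t-\ell}/a\end{smallmatrix}\right)$, so $\|p^--I\|$ small requires $ae^{-(t-\ell)}\approx 1$. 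But $a=e^t(\alpha+s\gamma_c)$, so the condition becomes $e^\ell(\alpha+s\gamma_c)\approx 1$, which pins $s$ to a single value (up to $O(\delta)$) whenever $\gamma_c\neq 0$, i.e., generically since $\gamma^k$ is hyperbolic. Shifting by the horocycle period ($u(1)\in\Gamma$) replaces $s$ by $s+m$, $m\in\Z$, and the constraint can be satisfied for a given $s\in[0,1]$ only if $|\gamma_c|\lesssim\delta e^{-\ell}$, which is a non-generic Diophantine condition on the fixed matrix $\gamma^k$, not something that can be achieved by shrinking $\delta$. Your Jacobian computation $\vp'(s)=(\alpha+s\gamma_c)^{-2}\approx e^{2\ell}$ is correct but beside the point, since it is only valid on this tiny set. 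The paper circumvents this precisely by letting the implied element of $\Gamma$ vary with $s$: it uses a first-hitting time $\sigma(s)\in[\ell,t_0]$ to a small flow box $B$ around a point $x$ on the closed geodesic of $\gamma$, so the group element picked up as $s$ ranges over $[0,1]$ is a different $\Gamma$-conjugate of $\gamma^k$ for different $s$. Equidistribution makes the set of $s$ whose trajectory enters $B$ during $[\ell,t_0]$ have measure $\geq 1-\e/2$ once $t_0$ is large, which is what produces the measure bound. (And note all conjugates of $\gamma^k$, not only $\gamma^{-k}$ itself, act trivially on $\cylspace^0(\omega)$ — your claim that ``only the branch $\gamma^{-k}$ yields trivial holonomy'' is incorrect and, if it were true, would kill the argument.)

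The second gap is in your proof of item~(3). The claim that the loop holonomy of a \emph{general} $P^-$-matching between $\omega(t,s)$ and $\omega(t-\ell,\vp(s))$ is automatically a conjugate of $\gamma^k$ is not correct: an arbitrary $P^-$-close pair of points on two expanding horocycle circles can have matching element in $\Gamma$ of any sort. The monodromy triviality in the paper is obtained by \emph{constructing} the match so that the two trajectories agree except for one $\ell$-long window near $x$, and then exploiting the ``deletion equation'' $\mrm{KZ}(g_{\sigma(s)},u(s)\omega)=\mrm{KZ}(g_{\sigma(s)-\ell},u(s)\omega)$ coming from $\mrm{KZ}(g_\ell,x)=\mrm{id}$ on $\cylspace^0$, together with local cocycle agreements (Claim~\ref{claim:apply same cs and su}). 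Your high-level picture of a loop with holonomy $\gamma^k$ is only correct for that specific match; without the hitting-time construction the two orbits $\{g_\rho u(s)\omega\}_{\rho\leq t}$ and $\{g_\rho u(\vp(s))\omega\}_{\rho\leq t-\ell}$ may accumulate entirely different cocycle products, and item~(3) fails.
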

\begin{proof}

Let $x\in\Vcal$ be a point on the periodic geodesic corresponding to the pseudo-Anosov element $\g$, i.e., $\g\in \mrm{Stab}_{\SL}(x)$ and $g_\ell x = x$.
Fix $\e>0$, and let $\Kcal\subset\Vcal$ be a compact set so that for all $t\geq0$, the set of $s\in [0,1]$ with $\w(t,s)\in \Kcal$ has measure $\geq 1-\e/2$.
By enlarging $\Kcal$, we may further assume that it contains the entire periodic orbit of $x$, as well as the periodic horocycle through $\w$.

Fix $\d\in (0,1)$ smaller than half the injectivity radius of the $1$-neighborhood of $\Kcal$.
Let $B$ be a flow box of radius $\d e^{-2\ell}$ around $x$.
By equidistribution of $g_t$-pushes of horocycle arcs on $\Vcal$, we can find $t_0>\ell$ so that
\begin{align}\label{eq:G_t}
    \tilde{G}_\ast \stackrel{\mrm{def}}{=} \set{s\in [0,1]: g_\rho u(s)\w \in B \text{ for some } \ell \leq \rho \leq t_0}
\end{align}
 has measure at least $1-\e/2$.
    For $t\geq t_0$, let $G^{t-\ell}_t \subseteq \tilde{G}_\ast$ be the set of points $s$ so that $\w(t,s)\in\Kcal$.
    Then, $G^{t-\ell}_t$ has measure $\geq 1-\e$, and satisfies Part~\eqref{item:base in compact} of the proposition.
 
 Fix $t\geq t_0$.
Define a first hitting time function $\s: G_t \to [\ell,t_0]$ as follows:
\begin{align*}
    \s(s) = \inf \set{\ell\leq \rho \leq t_0: g_\rho u(s)\w\in B }.
\end{align*}

Given $s\in G^{t-\ell}_t$, we note that the distance between $g_{\s(s)-\rho} u(s) \w$ and $g_{-\rho} x$ is at most $\d$ for all $0\leq \rho \leq \ell$.
Since $\d$ is smaller than the injectivity radius of the periodic orbit of $x$, and taking into account Convention~\ref{trivial monodromy convention} and the cocycle property, we obtain
\begin{align*}
    \mrm{KZ}(g_{\s(s)}, u(s)\w) 
    =   \mrm{KZ}(g_{\ell}, \w(\s(s)-\ell,s)) \cdot \mrm{KZ}(g_{\s(s)-\ell},u(s)\w)
    =  \mrm{KZ}(g_{\ell}, x) \cdot \mrm{KZ}(g_{\s(s)-\ell},u(s)\w).
\end{align*}
Now, since $ \mrm{KZ}(g_{\ell}, x)$ is the image of the pseudo-Anosov $\g$ in the monodromy representation, it follows by our assumption that $ \mrm{KZ}(g_{\ell}, x)$ is the identity matrix.
 Hence, we conclude that
 \begin{align}\label{eq:deletion equation}
    \mrm{KZ}(g_{\s(s)}, u(s)\w) 
    =\mrm{KZ}(g_{\s(s)-\ell},u(s)\w).
\end{align}

We define a matching function $\vp_{t\to t-\ell}:G^{t-\ell}_t \to [0,1]$ as follows.
Since the two points $g_{\s(s)} u(s) \w$ and $g_{\s(s)-\ell} u(s) \w$ belong to the flow box of radius $\d$ around $x$, there exists a unique point $\vp_{t\to t-\ell}(s)\in [0,1]$ so that the point $y(s) \stackrel{\mrm{def}}{=}  \w(\s(s)-\ell,\vp_{t\to t-\ell}(s)) $ satisfies the following two properties:
\begin{enumerate}[(a)]
    \item\label{item:same su} $y(s)$ belongs to the same local \textit{strong unstable} horocycle leaf of $g_{\s(s)-\ell} u(s) \w$.
    
    \item\label{item:same cs} $y(s)$ belongs to the same local \textit{weak stable} leaf of $g_{\s(s)} u(s) \w$.
    
\end{enumerate}
In particular, each $s\in G^{t-\ell}_t$ is contained in an interval of radius $\asymp \d e^{-2(t_0+\ell)}$ on which $\vp_{t\to t-\ell}$ is an injective smooth map onto an interval of length $\asymp e^{-2t_0}$, and has Jacobian of size $O(e^{2\ell})$.
This verifies Part~\eqref{item:Jacobian}.
We also note that Property~\eqref{item:same cs} of $y(s)$ is preserved under the forward geodesic flow, thus verifying Part~\eqref{item:lower triangular}.

Towards Part~\eqref{item:equal matrices}, note that for each $s\in G_t^{t-\ell}$, we have
\begin{align}\label{eq:apply deletion}
    \mrm{KZ}(g_t,u(s)\w) &\stackrel{\eqref{eq:cocycle property}}{=}
    \mrm{KZ}(g_{t-\s(s)},g_{\s(s)}u(s)\w)
    \cdot \mrm{KZ}(g_{\s(s)}, u(s)\w)
    \nonumber\\
    &\stackrel{\eqref{eq:deletion equation}}{=}
    \mrm{KZ}(g_{t-\s(s)},g_{\s(s)}u(s)\w)
    \cdot \mrm{KZ}(g_{\s(s)-\ell}, u(s)\w).
\end{align}
Part~\eqref{item:equal matrices} will follow at once from the following Claim, which is a straightforward consequence of the relative position of points in Properties~\eqref{item:same su} and~\eqref{item:same cs}, as well as triviality of the cocycle on small neighborhoods.
Recall Convention~\ref{trivial monodromy convention}.
\begin{claim}\label{claim:apply same cs and su}
    The following holds for each $s\in G_t^{t-\ell}$.
    Let $t_1=\s(\s)-\ell$, $t_2=t-\s(s)$, $\th=\vp_{t\to t-\ell}(s)$.
    Then,
    \begin{enumerate}
        \item\label{item:apply same su} $\mrm{KZ}(g_{t_1}, u(s)\w) = \mrm{KZ}(g_{t_1},u(\th)\w) $.
        \item\label{item:apply same cs} $\mrm{KZ}(g_{t_2}, g_{\s(s)}u(s)\w) = \mrm{KZ}(g_{t_2},y(s)) $.
    \end{enumerate}
\end{claim}
\begin{proof}
    From Property~\eqref{item:same su}, there is $r=O(\d)$  such that $\w(t_1,s) = u(r) y(s)$.
    In particular, we have $ s = \th + r e^{-2t_1}$.
    Hence, we compute using the cocycle property~\eqref{eq:cocycle property}:
    \begin{align*}
        \mrm{KZ}(g_{t_1} u( r e^{-2t_1}) , u(\th)\w) 
        = \mrm{KZ}(g_{t_1}, u(s)\w) 
        \cdot \mrm{KZ}( u( r e^{-2t_1}) , u(\th)\w) )
        = \mrm{KZ}(g_{t_1}, u(s)\w) ,
    \end{align*}
    where in the second equality, we used the fact that $u(\th)\w \in \Kcal$ and that $re^{-2t_1}$ is smaller than the injectivity radius of $\Kcal$.
    On the other hand, recalling that $y(s)=g_{t_1}u(\th)\w$, we obtain 
    \begin{align*}
        \mrm{KZ}(g_{t_1} u( r e^{-2t_1}) , u(\th)\w) 
        = \mrm{KZ}(u(r) g_{t_1} , u(\th)\w) 
        = \mrm{KZ}(u(r), y(s)) 
        \cdot \mrm{KZ}(g_{t_1} , u(\th)\w) 
        =\mrm{KZ}(g_{t_1} , u(\th)\w) ,
    \end{align*}
    where in the last equality, we used that $r=O(\d)$ is smaller than the injectivity radius at $y(s)$.
    The above two equations imply item~\eqref{item:apply same su}.
    Item~\eqref{item:apply same cs} follows from Property~\eqref{item:same cs} by a very similar computation.
\end{proof}
In view of~\eqref{eq:apply deletion} and Claim~\eqref{claim:apply same cs and su}, we have
\begin{align*}
    \mrm{KZ}(g_t,u(s)\w) =     \mrm{KZ}(g_{t-\s(s)},y(s))
    \cdot \mrm{KZ}(g_{\s(s)-\ell}, u(\vp_{t\to t-\ell}(s))\w)
    \stackrel{\eqref{eq:cocycle property}}{=} \mrm{KZ}(g_{t-\ell},u(\vp_{t\to t-\ell}(s))\w),
\end{align*}
thus verifying Part~\eqref{item:equal matrices}, and concluding the proof.

\qedhere
\end{proof}

%------------------------
\subsection{Weak-stable matching is preserved by tremors near the Veech curve}

Recall that $\g$ is the pseudo-Anosov element acting trivially on $\cylspace^0(\w)$, and let $\ell_0>0$ be its primitive period.
Given $\k>0$, let
\begin{align}\label{eq:T_kappa}
        \T_\k \stackrel{\mathrm{def}}{=} 
        \set{\Trem_\b(0,s,r): \b\in\twist^0(\w), \norm{\b}_\w=1, s\in [0,1], |r|<\k }.
    \end{align}
 
 Recall that $\mu_\T$ is a fully supported Lebesgue probability measure on $\T(\w)$.
 Up to replacing $\mu_\T$ with an equivalent measure in its class, we shall assume without loss of generality that $\mu_\T$ is $U$-invariant.
 We let $\mu_{\T_\k}$ be the restriction of $\mu_\T$ to $\T_\k$, normalized to be a probability measure.
 As in Section~\ref{sec:full Banach density}, we denote by $d_\ast(\cdot,\cdot)$ any metric inducing the weak-$\ast$ topology on Borel measures of total mass $\leq 1$.

\begin{prop}\label{prop:match tori trivial monodromy}
     For all $\e>0$ and $N \in \N$, there exist $\d=\d(\e,N)>0$, $t_0=t_0(\e,N,\d)>0$, and a Borel measure $\l=\l(\e,\d,N)$ on $\T(\w)$ so that the following hold.
    Let $\k = \d e^{-2(t_0+N\ell_0)}$.
    \begin{enumerate}
        \item $\l$ is absolutely continuous to $\mu_\T$ with Radon-Nikodym derivative satisfying $\frac{d\l}{d\mu_\T} \gg_{N,\d,t_0} 1$, and the total mass of $\l$ is $\asymp 1$.

        \item For all
    $t\geq t_0$, we have 
    \begin{align*}
        d_\ast \left((g_t)_\ast\l, \frac{1}{N} \sum_{k=1}^N (g_{t+k\ell_0})_\ast \mu_{\T_\k}\right )<\e.
    \end{align*}
    \end{enumerate}
\end{prop}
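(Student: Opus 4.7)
The plan is to construct $\l$ as a $1/N$-weighted average of auxiliary measures $\l_k$, $k = 1,\ldots, N$, each engineered so that $(g_t)_\ast\l_k$ is close, uniformly in $t\geq t_0$, to $(g_{t+k\ell_0})_\ast \mu_{\T_\k}$. The essential input is Proposition~\ref{prop:match on Vcal trivial monodromy}, which supplies the matching needed to reparametrize data at time $t+k\ell_0$ as data at time $t$.

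Given $\e$ and $N$, I would first fix an auxiliary tolerance, still called $\d$, much smaller than $\e/N$, and apply Proposition~\ref{prop:match on Vcal trivial monodromy} with $\ell = k\ell_0$ for each $k=1,\ldots, N$, using accuracy parameter $\e/(2N)$ and base-matching tolerance $\d$. For $t$ sufficiently large (depending on $\e, N, \d$), this yields measurable sets $G_t^{(k)} \subseteq [0,1]$ of Lebesgue measure at least $1-\e/(2N)$, smooth matching maps $\vp_k := \vp_{t+k\ell_0\to t}$ with Jacobian of size $\asymp 1$ (with implicit constant depending on $\ell_0$), base-point relations $\w(t+k\ell_0,s) = p^-_k\cdot \w(t,\vp_k(s))$ with $p^-_k$ within $\d$ of the identity in $\SL$, and the cocycle equality $\mrm{KZ}(g_{t+k\ell_0}, u(s)\w) = \mrm{KZ}(g_t, u(\vp_k(s))\w)$ under parallel transport (Convention~\ref{trivial monodromy convention}). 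Combined with the definition $\b(t,s)=e^t \mrm{KZ}(g_t, u(s)\w)\b$, this last identity yields $\b(t+k\ell_0, s) = e^{k\ell_0}\b(t,\vp_k(s))$ after the parallel-transport identification of nearby fibers.

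For each $k$, I would define a reparametrization map $\Psi_k$ on the slab $\T_\k\cap \{s\in G_t^{(k)}\}$ by
\[
\Psi_k\bigl(\Trem_\b(0,s,r)\bigr) := \Trem_\b\bigl(0,\, \vp_k(s),\, r e^{k\ell_0}\bigr),
\]
which takes values in $\T(\w)$ since $\k e^{N\ell_0} = \d e^{-2t_0-N\ell_0}$ is small. By Lemma~\ref{lem:renorm} applied to each side, the surfaces $g_{t+k\ell_0}\Trem_\b(0,s,r) = \Trem(\w(t+k\ell_0,s),\, r\b(t+k\ell_0,s))$ and $g_t\Psi_k(\Trem_\b(0,s,r)) = \Trem(\w(t,\vp_k(s)),\, re^{k\ell_0}\b(t,\vp_k(s)))$ are comparable in period coordinates: once we identify the two nearby relative cohomology groups by parallel transport, the two tremor vectors agree exactly thanks to the cocycle identity, so the only discrepancy in period coordinates is the difference $(p^-_k - I)\circ \hol_{\w(t,\vp_k(s))}$, of size $O(\d)$ uniformly in $t$ since $\w(t,\vp_k(s))\in\Kcal$ lies in a fixed compact set by item~\eqref{item:base in compact} of Proposition~\ref{prop:match on Vcal trivial monodromy}. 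A short argument in period coordinates in the spirit of Lemma~\ref{lem:tremors are Lipschitz} then upgrades this to $\dAGY\bigl(g_{t+k\ell_0}\Trem_\b(0,s,r),\ g_t\Psi_k(\Trem_\b(0,s,r))\bigr) = O(\d)$.

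Finally, I would set $\l_k := (\Psi_k)_\ast\bigl(\mu_{\T_\k}|_{\{s\in G_t^{(k)}\}}\bigr)$ and $\l := \tfrac{1}{N}\sum_{k=1}^N \l_k$. Since $\Psi_k$ has Jacobian of order $e^{k\ell_0}$ and covers a tubular neighborhood of $U\cdot\w$ of thickness $\k e^{k\ell_0}$ inside $\T(\w)$, each $\l_k$ is absolutely continuous with density $\gg_{N,\d,t_0} 1$ with respect to $\mu_\T$ and total mass at least $1-\e/(2N)$, so $\l$ satisfies the first conclusion. For the weak-$\ast$ estimate, the pointwise $O(\d)$-closeness above yields, for any bounded $1$-Lipschitz test function $f$, the bound $\bigl|\int f\,d(g_t)_\ast \l_k - \int f\,d(g_{t+k\ell_0})_\ast \mu_{\T_\k}\bigr| \leq C\d + \e/(2N)$; averaging in $k$ and choosing $\d < \e/(2CN)$ at the outset completes the second conclusion. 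The only subtlety --- that the tremor vectors $r\b(t+k\ell_0,s)$ themselves can grow in norm with $t$ because the cocycle can expand exponentially --- is harmless: both sides of the comparison use the same vector after parallel-transport identification, so only the $O(\d)$ base-point discrepancy enters the final error.
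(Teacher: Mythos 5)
There is a genuine gap, and it is located exactly where you wave it away as ``harmless.''

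First, a structural problem: the proposition asks for a single Borel measure $\l=\l(\e,\d,N)$ that works for all $t\geq t_0$. But in your construction the matching maps $\vp_k := \vp_{t+k\ell_0\to t}$ supplied by Proposition~\ref{prop:match on Vcal trivial monodromy} depend on $t$, so $\Psi_k$, $\l_k$, and ultimately $\l$ all depend on $t$. This is not what the statement promises, and it is not merely cosmetic, because the paper applies $\l$ as a fixed comparison measure in the final step of the proof of Theorem~\ref{thm:full support}. The paper avoids this by applying Proposition~\ref{prop:match on Vcal trivial monodromy} only at the \emph{fixed} reference times $t_k=t_0+k\ell_0$, so the matching maps $\vp_k=\vp_{t_k\to t_0}$ and the resulting $\Phi_k$ and $\l$ are genuinely independent of $t$.

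Second, and more seriously, the distance estimate you invoke does not hold uniformly in $t$. You correctly observe that $\hol_{q_1}-\hol_{q_2}=(p^-_k-I)\circ\hol_{\w(t,\vp_k(s))}$ and that this class has AGY norm $O(\d)$ \emph{at the base point} $\w(t,\vp_k(s))\in\Kcal$. But to conclude $\dAGY(q_1,q_2)=O(\d)$ you must estimate this class in the AGY norm at $q_1$ or $q_2$, not at the base. Lemma~\ref{lem:tremors are Lipschitz} (and the comparison~\eqref{eq:AGY vs period coords} underlying it) requires the points to lie in a fixed neighborhood of $\Kcal$, and equivalently requires the tremor to have AGY norm $<\d$; here the tremor is $r\b(t+k\ell_0,s)$ with AGY norm $rN_\b(t+k\ell_0,s)$, and $N_\b$ grows exponentially in $t$. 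For $r\sim\k$ and $t$ large this is $\gg 1$, the tremored surfaces leave every compact set, and Proposition~\ref{prop:norm of parallel transport} gives no uniform control on the ratio of norms at $x_2$ versus $q_2$. So the claimed $O(\d)$ bound simply fails for large $t$.

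The paper's resolution is dynamical, not a norm estimate at time $t$. At the fixed time $t_0$, where the tremors are small and everything sits in $\Kcal$, Lemma~\ref{lem:stable paths} packages the mismatch between $g_{t_k}q$ and $g_{t_0+\t_k(q)}\Phi_k(q)$ as a \emph{strong stable} vector $v^-$ of AGY norm $O(\d)$, via the identity $\Psi^s_{g_{t_k}q}(v^-(g_{t_k}q))=g_{t_0+\t_k(q)}\Phi_k(q)$; the hypothesis $\mrm{KZ}(g_{t_k},u(s)\w)=\mrm{KZ}(g_{t_0},u(\vp_k(s))\w)$ is what makes the unstable component cancel exactly. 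Then Corollary~\ref{cor:matched pts converge} flows forward and uses non-uniform hyperbolicity --- Corollary~\ref{cor:contraction of vertical classes}, which in turn relies on exponential recurrence (Proposition~\ref{prop:exp recurrence}) --- to show that this strong stable discrepancy actually \emph{contracts} to $0$ $\mu_\T$-almost surely as $t\to\infty$. You omit both the stable-leaf reformulation and the recurrence/contraction input, and without them the uniform-in-$t$ control is unavailable. To fix the proof you should: fix $t_0$; define $\Phi_k$ with the extra $g_{-\t_k(s)}$ correction, as in the paper's $e^{k\ell_0-\t_k(s)}$ factor, so that the residual discrepancy is a pure strong stable vector; and replace the period-coordinate Lipschitz argument by the stable-leaf contraction argument.
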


The remainder of this subsection is dedicated to the proof of this proposition.
Let $\e>0$ and $N\in \N$ be given parameters.
Let $\d =\d(\e,N)>0$ be a sufficiently small parameter to be specified over the course of the argument.
We apply Proposition~\ref{prop:match on Vcal trivial monodromy} with $\ell=k\ell_0$ for each $1\leq k\leq N$ to get $t_0 = t_0(\e,N,\d)>0$ and a compact set $\Kcal = \Kcal(\e,N) \subset \Vcal$, so that the conclusion holds for all $t\geq t_0$ and all $k\in \set{1,\dots, N}$.

 In particular, for each $k$, we get the following sets and maps
 \begin{align*}
     t_k \stackrel{\mrm{def}}{=} t_0 +k\ell_0,
     \qquad
     G_k \stackrel{\mrm{def}}{=} G_{t_k}^{t_0}\subseteq [0,1], \qquad
    \vp_k \stackrel{\mrm{def}}{=} \vp_{t_k \to t_0}:G_k \r [0,1],
 \end{align*}
 such that each $G_k$ has measure $\geq 1-\e$.
 From Conclusion~\eqref{item:lower triangular}, we also obtain maps $\t_k:G_k \r [-\d,\d]$ and $\s_k: G_k \r [-\d,\d]$ such that
\begin{align}\label{eq:weak-stable matching on Vcal}
    g_{\t_k(s)}\cdot \w(t_0,\vp_k(s))
    =  u^-(\s_k(s))\cdot  \w(t_k,s),    
\end{align}
where we recall that $u^-(\ast) = \left(\begin{smallmatrix}
    1 & 0 \\ \ast & 1
\end{smallmatrix} \right)$.
Let 
\begin{align*}
        \k =\d e^{-2(t_0+N\ell_0)}.
    \end{align*}

 \begin{definition}[The matching maps $\Phi_k$]
    For each $1\leq k\leq N$, let $\Phi_k: \T_\k \dashrightarrow \T(\w) $ denote a partially defined map from $\T_\k$ to $\T(\w)$, defined for each $s\in G_k$, $\b\in\twist^0(\w)$, and $|r|<\d e^{-2N\ell_0}$, by 
 \begin{align*}
    \Phi_k(  \Trem_\b(0,s,r) )
    &= \Trem_\b(0, \vp_k( s), e^{k\ell_0-\t_k(s)} r).    
 \end{align*}    
 We refer to the domain of $\Phi_k$ as \textbf{tremors arising from} $\mathbf{G_k}$.
 \end{definition}
 
In what follows, we extend the definition of $\t_k$ from $G_k$ to a partially defined map on $\T_\k$ by setting 
\begin{align}\label{eq:tau extension}
    \t_k(q)\stackrel{\mrm{def}}{=}\t_k(s), \qquad \text{ for } q=   \mrm{Trem}_\b(0,s,r)\in \T_\k, s\in G_k.
\end{align}

\begin{lem}\label{lem:Jacobian}
    For all $1\leq k\leq N$, the Jacobian $\frac{d(\Phi_k)_\ast\mu_{\T} }{d\mu_\T}$ of the map $\Phi_k$ is $\gg_N 1$ on its domain.
\end{lem}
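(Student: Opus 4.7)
The plan is to make $\Phi_k$ explicit in natural coordinates on $\T_\k$ and then directly bound the determinant of its Jacobian using the two quantitative inputs from Proposition~\ref{prop:match on Vcal trivial monodromy}: the estimate on $|\vp_k'|$ from \eqref{item:Jacobian}, and the bound $|\t_k|\leq \d$ from \eqref{item:lower triangular}. Nothing deep should be required beyond these two inputs; the main task is to set up coordinates in which the block-triangular structure of $\Phi_k$ becomes manifest.

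First, I would parametrize $\T_\k$ by pairs $(s,\xi)$, with $s\in[0,1]$ and $\xi\in\twist^0(\w)$ of AGY-norm less than $\k$, via the map $(s,\xi)\mapsto \Trem(u(s)\w,\xi)$; this identification rewrites any point $\Trem_\b(0,s,r)\in\T_\k$ as $(s,r\b)$. Under this parametrization, $\mu_\T|_{\T_\k}$ is a constant multiple of the product Lebesgue measure $ds\otimes d\xi$, which follows from the definition of $\mu_\T$ as the Haar probability on the flat torus $\T(\w)$ together with the splitting $\twist(\w)=\twist^0(\w)\oplus\R\cdot\hol^{(y)}_\w$ from~\eqref{eq:twist=horocycle plus balanced}, which identifies the horocycle and balanced directions as linear coordinate directions on the torus. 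In these coordinates, the definition of $\Phi_k$ becomes
\begin{align*}
    \Phi_k(s,\xi) \;=\; \bigl(\vp_k(s),\, c_k(s)\,\xi\bigr), \qquad c_k(s):=e^{k\ell_0-\t_k(s)},
\end{align*}
because the direction $\b$ is fixed by $\Phi_k$ while $r$ is scaled by $c_k(s)$, and the map $s\mapsto \vp_k(s)$ on the base coordinate is independent of $\xi$.

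Next, I would observe that the Jacobian of this map is block triangular with
\begin{align*}
\partial_\xi\Phi_k=\bigl(0,\,c_k(s)\,I_{\dim\twist^0(\w)}\bigr),\qquad
\partial_s\Phi_k=\bigl(\vp_k'(s),\,-\t_k'(s)\,c_k(s)\,\xi\bigr),
\end{align*}
so its determinant factors as $|\det D\Phi_k(s,\xi)|=|\vp_k'(s)|\cdot c_k(s)^{\dim\twist^0(\w)}$. By Proposition~\ref{prop:match on Vcal trivial monodromy}\eqref{item:Jacobian} applied with $\ell=k\ell_0\in\{\ell_0,\dots,N\ell_0\}$, we have $|\vp_k'(s)|\asymp_N 1$, uniformly over $k\in\{1,\dots,N\}$ and over $s$ in the domain; by Proposition~\ref{prop:match on Vcal trivial monodromy}\eqref{item:lower triangular}, $|\t_k(s)|\leq \d\leq 1$, so $c_k(s)\in[e^{k\ell_0-1},e^{k\ell_0+1}]\subseteq [e^{-1},e^{N\ell_0+1}]$, again with bounds depending only on $N$. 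Combining these two estimates yields $|\det D\Phi_k|\asymp_N 1$ throughout the domain of $\Phi_k$.

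Finally, since $\Phi_k$ is injective and smooth with nonvanishing Jacobian, it is a diffeomorphism onto its image, and the Radon-Nikodym derivative is simply the reciprocal Jacobian pulled back via $\Phi_k^{-1}$:
\begin{align*}
\frac{d(\Phi_k)_\ast\mu_\T}{d\mu_\T}\bigl(\Phi_k(x)\bigr)\;=\;\frac{1}{|\det D\Phi_k(x)|}\;\gg_N\;1.
\end{align*}
No substantive obstacle is expected in carrying this out; the entire content of the lemma is packaged inside Proposition~\ref{prop:match on Vcal trivial monodromy}, and the only real verification is that $\Phi_k$ decouples cleanly into its base and fiber components in the chosen coordinates, which is immediate from its definition.
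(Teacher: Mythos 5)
Your proof is correct and takes essentially the same approach as the paper, which states the result by deferring to ``the definition of $\Phi_k$ in those coordinates''; you have simply filled in the explicit Jacobian determinant computation that this refers to, using a Cartesian parametrization $(s,\xi)$ in place of the paper's polar parametrization $(\b,s,r)$ (the two are equivalent, and in either one the determinant comes out to $|\vp_k'(s)|\cdot c_k(s)^{\dim\twist^0(\w)}\asymp_N 1$). One minor inaccuracy: since $\mu_\T$ is only assumed equivalent to Lebesgue (not equal to Haar), the density in your coordinates need not be constant, but it is bounded above and below on the compact domain, so this only affects the implicit constant and does not damage the conclusion.
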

\begin{proof}
    Indeed, since $\mu_\T$ is equivalent to the pushforward of the product Lebesgue measures under the parametrization $(\b,s,r)\mapsto \Trem_\b(0,s,r)$, the lemma follows from Proposition~\ref{prop:match on Vcal trivial monodromy}\eqref{item:Jacobian} and the definition of $\Phi_k$ in those coordinates.
\end{proof}

Recall that $\Kcal\subset \Vcal$ is the compact set provided by Proposition~\ref{prop:match on Vcal trivial monodromy}.
Let $\rho>0$ be sufficiently small so that $\rho$ is smaller than the injectivity radius of $\Kcal$, and for every $x$ in the unit neighborhood of $\Kcal$, the holonomy period coordinates map $\hol: B(x,\rho) \r H^1$ is injective on the open $\rho$-ball around $x$. 
We shall assume that $\d>0$ is chosen sufficiently small so that for every $p=\Trem_\b(0,s,r) \in  \T_\k$, with $s\in G_k$, and $q=\Phi_k(p)\in \cdot \T$, we have that 
\begin{align}\label{eq:matched points are close}
    \dAGY( g_{t_k}\cdot p, \w(t_k,s)) &<\rho/10,
    \nonumber\\
    \dAGY( g_{t_0}\cdot q, \w(t_0,\vp_k(s))) &<\rho/10,
    \nonumber\\
    \dAGY(\w(t_k,s), \w(t_0,\vp_k(s)) &< \rho/10.
\end{align}
In particular, for all $p$ and $q$ as above, we have
\begin{align}\label{eq:matched points are close 2}
    \dAGY(g_{t_k}p, g_{t_0}\Phi_k(p)) <\rho/2.
\end{align}

Recall the maps $\Psi_q^s$ parametrizing the local strong stable leaf of $q$ defined in \textsection\ref{sec:(un)stable}.
The following lemma is the key to our proof, and is the crucial reason we are able to obtain a stronger conclusion in Theorem~\ref{thm:full support} compared to Theorem~\ref{thm:density of tori}.
It roughly says that the weak-stable relation~\eqref{eq:weak-stable matching on Vcal} between matched points in $\Vcal$ persists after tremoring by small amounts. 
The essential input is Proposition~\ref{prop:match on Vcal trivial monodromy}\eqref{item:equal matrices} which says that the image of twist classes under the cocycle at the matched points agree, allowing us to apply small amounts of cylinder twists without picking up any divergence along the unstable manifold.
This is a very useful lemma, since the property of being connected along the weak-stable manifold survives for all future geodesic flow times.
The proof of the lemma is a simple consequence of definitions along with an application of the following formula for tremor paths in period coordinates, cf.~\eqref{eq:tremor in coordinates},
\begin{align*}
    \hol^{(x)}_{\Trem_\b(t_0,s,r)} = \hol^{(x)}_{\w(t_0,s)} + r \b(t_0,s),
    \qquad 
    \hol^{(y)}_{\Trem_\b(t_0,s,r)} = \hol^{(y)}_{\w(t_0,s)} .
\end{align*}

\begin{lem}\label{lem:stable paths}
    Assume $\d$ is chosen sufficiently small, depending on the compact set $\Kcal$.
    Then, for every $1\leq k\leq N$, there is a partially defined map $v^-: g_{t_k}\cdot \T_\k \dashrightarrow E^s(-)$ on tremors arising from $G_k$ and satisfying the following where defined:
    \begin{enumerate}
        
        \item $\norm{v^-(g_{t_k}q)}_{g_{t_k}q}=O_\Kcal(\d)$, and
        \item $
        \Psi^s_{g_{t_k}q}(v^-(g_{t_k}q))=g_{t_0+\t_k(q)} \cdot \Phi_k(q)$.

    \end{enumerate}
     
\end{lem}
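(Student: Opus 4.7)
The plan is to construct $v^-$ explicitly in period coordinates, using Proposition~\ref{prop:match on Vcal trivial monodromy}\eqref{item:equal matrices} to force the horizontal periods of $g_{t_k}q$ and $g_{t_0+\t_k(q)}\Phi_k(q)$ to agree on the nose, leaving a purely vertical discrepancy that will serve as $v^-$.

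First, given $q=\Trem_\b(0,s,r)\in\T_\k$ with $s\in G_k$, I will use Lemma~\ref{lem:renorm} and Lemma~\ref{lem:equivariance of notation}(2) to rewrite the two surfaces being compared as
\begin{align*}
g_{t_k}q &=\Trem\bigl(\w(t_k,s),\,r\b(t_k,s)\bigr),\\
g_{t_0+\t_k(q)}\Phi_k(q) &=\Trem\bigl(\w(t_0+\t_k(s),\vp_k(s)),\,e^{k\ell_0-\t_k(s)}r\b(t_0+\t_k(s),\vp_k(s))\bigr).
\end{align*}
From~\eqref{eq:weak-stable matching on Vcal} the two base surfaces are related by $u^-(\s_k(s))$, so~\eqref{eq:u-minus action on hol} gives agreement of the base horizontal periods together with a vertical shift by the class $v^-:=\s_k(s)\hol^{(x)}_{\w(t_k,s)}$, which I will take as the candidate stable vector.

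The main step is to verify that the horizontal periods of the full tremored surfaces also coincide. By~\eqref{eq:tremor in coordinates}, this amounts to the identity
\[
r\b(t_k,s)\;=\;e^{k\ell_0-\t_k(s)}r\b(t_0+\t_k(s),\vp_k(s))
\]
after identifying the fibers at the two nearby base points via parallel transport in the simply connected ball provided by~\eqref{eq:matched points are close} and Convention~\ref{trivial monodromy convention}. Unpacking the definitions in~\eqref{eq:notation}, the prefactors combine as $e^{k\ell_0-\t_k(s)}\cdot e^{t_0+\t_k(s)}=e^{t_k}$, and the identity reduces to
\[
\KZ{g_{t_k}}{u(s)\w}\b\;=\;\KZ{g_{\t_k(s)}}{\w(t_0,\vp_k(s))}\,\KZ{g_{t_0}}{u(\vp_k(s))\w}\b.
\]
Proposition~\ref{prop:match on Vcal trivial monodromy}\eqref{item:equal matrices} identifies the inner cocycle on the right with the left-hand side under parallel transport, and the outer factor acts as the identity because $g_{\t_k(s)}$ with $|\t_k(s)|\le\d$ moves the base within a simply connected neighborhood. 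Choosing $\d$ small relative to the injectivity radius of the $\rho$-neighborhood of $\Kcal$ will make this legitimate. This is the main obstacle, and the only place where the monodromy hypothesis (propagated through Proposition~\ref{prop:match on Vcal trivial monodromy}\eqref{item:equal matrices}) is used: without it an unstable component would survive in the discrepancy and push $v^-$ out of $E^s$.

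Finally, I will verify the two properties asserted in the lemma. Membership $v^-\in E^s(g_{t_k}q)$ follows from the vanishing self-intersection of $\hol^{(x)}$, viewing $v^-$ as a vertical class under the standard $H^1_\R\cong H^1_{\mathbf i\R}$ identification. The AGY bound $\norm{v^-}_{\w(t_k,s)}\le|\s_k(s)|\le\d$ is immediate from $\norm{\hol^{(x)}_q}_q\le 1$, and Proposition~\ref{prop:norm of parallel transport} transfers this bound to the nearby point $g_{t_k}q$ at the cost of a constant depending only on $\Kcal$. The relation $\Psi^s_{g_{t_k}q}(v^-)=g_{t_0+\t_k(s)}\Phi_k(q)$ then follows from~\eqref{eq:stable exp in coords}: the horizontal periods of the two surfaces match by the previous paragraph, their vertical periods differ by $v^-$, and holonomy period coordinates are injective on the $\rho$-ball around $\Kcal$.
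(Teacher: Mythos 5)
Your proof essentially replicates the paper's: same candidate stable vector $v^-=\s_k(s)\hol^{(x)}_{\w(t_k,s)}$, same use of Proposition~\ref{prop:match on Vcal trivial monodromy}\eqref{item:equal matrices} to force the horizontal periods of $g_{t_k}q$ and $g_{t_0+\t_k(q)}\Phi_k(q)$ to agree, and the same closing moves via Proposition~\ref{prop:norm of parallel transport} and injectivity of period coordinates on a $\rho$-ball around $\Kcal$. One point is underspecified: verifying $v^-\in E^s(g_{t_k}q)$ requires $\int v^-\wedge\hol^{(x)}_{g_{t_k}q}=0$, and since $\hol^{(x)}_{g_{t_k}q}=\hol^{(x)}_{\w(t_k,s)}+r\b(t_k,s)$, the "vanishing self-intersection of $\hol^{(x)}$" you invoke handles only the first summand; you also need $\int\hol^{(x)}_{\w(t_k,s)}\wedge\b(t_k,s)=0$, which is exactly the balancedness $\b\in\twist^0(\w)$ and is the reason the lemma is stated over $\twist^0$ rather than all of $\twist$ — you should say so explicitly. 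Likewise, the transfer of the AGY bound to the tremored point via Proposition~\ref{prop:norm of parallel transport} needs the hypothesis $|r|<1/N_\b(t_k,s)$, which holds because $|r|<\d e^{-2t_N}$ and $N_\b(t_k,s)\ll e^{2t_N}$ by Lemma~\ref{lem:equivariance of notation}, but this check should appear rather than being absorbed into "a constant depending only on $\Kcal$."
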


\begin{proof}
Fix some $s\in G_k$, and let $q_1=\Trem_\b(t_k,s,r)\in g_{t_k}\cdot \T_\k$, for some $\b\in \twist^0(\w)$ of unit norm and $ |r|<\d e^{-2t_N}$.
Let
\begin{align*}
    q_2 
    = g_{t_0}\cdot \Phi_k(g_{-t_k}q_1).
\end{align*}
In what follows, in view of~\eqref{eq:matched points are close} and~\eqref{eq:matched points are close 2}, we use the fact that the points $q_1, q_2, \w(t_k,s),$ and $\w(t_0,\vp_k(s))$ all belong to a ball on which period coordinates $q\mapsto \hol_q$ are injective.

By Proposition~\ref{prop:match on Vcal trivial monodromy}\eqref{item:equal matrices}, we have the crucial identity: 
\begin{align*}
    \b(t_0,\vp_k(s))= e^{-k\ell_0} \b(t_k,s).    
\end{align*}
Here, we are identifying the cohomology groups of the two points $\w(t_0,\vp_k(s))$ and $\w(t_k,s)$, which are $O(\d)$-apart, so that we may regard the two cohomology classes on the two sides of the above equation as belonging to the same cohomology group.
It follows that 
\begin{align*}
    \hol^{(x)}_{q_2} 
    = \hol^{(x)}_{\w(t_0,\vp_k(s))} + r e^{k\ell_0-\t_k(s)} \b(t_0,\vp_k(s)),
    \qquad 
    \hol^{(y)}_{q_2} 
    = \hol^{(y)}_{\w(t_0,\vp_k(s))}.
\end{align*}

In view of~\eqref{eq:weak-stable matching on Vcal}, and the equivariance of $\hol$ under the action on of $\SL$, cf.~\eqref{eq:g_t action on hol} and~\eqref{eq:u-minus action on hol}, we obtain the following relations between the periods of $q_1$ and $q_2$:
\begin{align*}
    \hol^{(x)}_{q_2}
    &= e^{-\t_k(s)} \hol^{(x)}_{\w(t_k,s)} + r e^{-\t_k(s)} \b(t_k,s)
    = e^{-\t_k(s)} \hol^{(x)}_{q_1},
    \nonumber\\
    \hol^{(y)}_{q_2}
    &= e^{\t_k(s)} \left( \hol^{(y)}_{\w(t_k,s)} + \s_k(s) \hol^{(x)}_{\w(t_k,s)} \right)
    = e^{\t_k(s)} \left( \hol^{(y)}_{q_1} + \s_k(s) \hol^{(x)}_{\w(t_k,s)} \right) .
\end{align*}
Put together, we get
\begin{align}\label{eq:period relation between z1 and z2}
    \hol^{(x)}_{g_{\t_k(s)}q_2} =\hol^{(x)}_{q_1} ,
    \qquad \qquad 
    \hol^{(y)}_{g_{\t_k(s)}q_2} = \hol^{(y)}_{q_1}+\s_k(s) \hol^{(x)}_{\w(t_k,s)}.
\end{align}

Denote by $v^-(q_1)$ the image by parallel transport of the class $\s_k(s)\hol^{(x)}_{\w(t_k,s)}\in H^1_{\mathbf{i}\R}$ from $\w(t_k,s)$ to $q_1$ along the path $r'\mapsto \Trem_\b(t_k,s,r')$.
Then, since $\b(t_k,s)$ belongs to the balanced space at $\w(t_k,s)$, we get vanishing of the intersection product of $v^-(q_1)$ with $\hol^{(x)}_{q_1}$, i.e.,
$$\int_{M_{q_1}} v^-(q_1) \wedge \hol^{(x)}_{q_1} = 0.$$
In particular, $v^-(q_1)$ is tangent to the strong stable leaf through $q_1$.
Moreover, since $|\s_k(-)|=O(\d)$ and $\norm{\hol^{(x)}_\w}_\w =O_\Kcal(1)$ for all $\w$ in the compact set $\Kcal$, we have
\begin{align}\label{eq:norm of v-(q1)}
    \norm{v^-(q_1)}_{\w(t_k,s)}\ll_\Kcal \d,
\end{align}
which verifies the first assertion of the lemma.

Next, we wish to estimate $\norm{v^-(q_1)}_{q_1}$ using Proposition~\ref{prop:norm of parallel transport}. To that end, we need to check that the hypothesis of its last assertion is satisfied.
Recall that $|r|<\d e^{-2t_N}$, and hence, since  $N_\b(t_k,s) \leq e^{2t_k}N_\b(0,s)\ll e^{2t_N} $ by Lemma~\ref{lem:equivariance of notation}, we get that $|r|\ll \d / N_\b(t_k,s)$.
Thus, by Proposition~\ref{prop:norm of parallel transport} applied to the tremor path $\k$ joining $\w(t_k,s)$ to $q_1$ with $\dot{\k}\equiv \b(t_k,s)$, we obtain\footnote{Proposition~\ref{prop:norm of parallel transport} is stated for the marked stratum $\HHm$, however we note that it suffices to check the ensuing estimate for the lift of the tremor path to $\HHm$ since our tremor path is contained in an injective neighborhood of $q_1$.}
\begin{align}\label{eq:norm of vector field}
    \norm{v^-(q_1)}_{q_1}
    \leq \frac{\norm{v^-(q_1)}_{\w(t_k,s)}}{1-|r| \norm{v^-(q_1)}_{\w(t_k,s)}}
    \ll_\Kcal \d, 
\end{align}
where the second inequality follows by~\eqref{eq:norm of v-(q1)}. 
In particular, we may assume that $\d$ is sufficiently small so that $\norm{v^-(q_1)}_{q_1}<1/2$, and hence $\Psi^s_{q_1}(v^-(q_1))$ is well-defined.

Moreover, by Proposition~\ref{prop:stable exp lipschitz}, we have that
\begin{align*}
    \dAGY( q_1, \Psi^s_{q_1}(v^-(q_1)))
    \leq 2 \norm{v^-(q_1)}_{q_1}.
\end{align*}
In particular, by taking $\d$ small enough, depending only on $\Kcal$, and applying~\eqref{eq:norm of v-(q1)}, we get that $\Psi^s_{q_1}(v^-(q_1))$ belongs to the $\rho$-ball around $q_1$.
On the other hand, by~\eqref{eq:stable exp in coords} and~\eqref{eq:period relation between z1 and z2}, we have that the points $\Psi_{q_1}^s(v^-(q_1))$ and $g_{\t_k(s)} q_2$ have the same image in period coordinates.
By~\eqref{eq:matched points are close 2} and the fact that $|\t_k(s)|=O(\d)$, we also have that $g_{\t_k(s)} q_2$ is in the $\rho$-ball around $q_1$, whenever $\d$ is sufficiently small.
Hence, by injectivity of period coordinates on the $\rho$-ball around $q_1$, we obtain
\begin{align*}
   \Psi_{q_1}^s(v^-(q_1)) = g_{\t_k(s)} q_2,
\end{align*}
which concludes the proof.
\qedhere
\end{proof}

Combining Lemma~\ref{lem:stable paths} and non-uniform hyperbolicity of the geodesic flow, we obtain the following corollary.
\begin{cor}\label{cor:matched pts converge}
    For every $1\leq k\leq N$, and for $\mu_\T$-almost every tremor $q\in \T_\k$ arising from $G_k$, we have $\lim_{t\to\infty} \dAGY(g_{t+k\ell_0} q,g_{t+\t_k(q)}\Phi_k(q)) =0$, where $\t_k(\cdot)$ is as in~\eqref{eq:tau extension}.
\end{cor}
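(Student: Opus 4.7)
The plan is to chain together Lemma~\ref{lem:stable paths}, the equivariance of the strong stable exponential under $g_t$ (Corollary~\ref{cor:equivariance of stable exp}), the Lipschitz bound on $\Psi^s$ (Proposition~\ref{prop:stable exp lipschitz}), and the non-uniform contraction of vertical classes along the geodesic flow (Corollary~\ref{cor:contraction of vertical classes}). Write $t_k=t_0+k\ell_0$ and, for $q\in\T_\k$ arising from $G_k$, let $v^-:=v^-(g_{t_k}q)\in E^s(g_{t_k}q)$ be the vector produced by Lemma~\ref{lem:stable paths}, so that $\norm{v^-}_{g_{t_k}q}=O_\Kcal(\d)$ and $\Psi^s_{g_{t_k}q}(v^-)=g_{t_0+\t_k(q)}\Phi_k(q)$. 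Since the geodesic flow carries strong stable leaves to strong stable leaves, flowing both sides of this identity forward in time should preserve the weak-stable relation, reducing everything to the behavior of the tangent vector $v^-$ under the derivative $Dg$.

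First I would use Corollary~\ref{cor:equivariance of stable exp} (valid once $\d$ is small enough that $\norm{v^-}_{g_{t_k}q}<1/2$) to obtain, for every $t\geq t_0$,
\begin{equation*}
g_{t+\t_k(q)}\Phi_k(q)
= g_{t-t_0}\Psi^s_{g_{t_k}q}(v^-)
= \Psi^s_{g_{t+k\ell_0}q}\bigl(Dg_{t-t_0}(g_{t_k}q)\,v^-\bigr),
\end{equation*}
using that $(t-t_0)+t_k=t+k\ell_0$. Applying the Lipschitz bound of Proposition~\ref{prop:stable exp lipschitz} with basepoint $g_{t+k\ell_0}q$ then yields
\begin{equation*}
\dAGY\bigl(g_{t+k\ell_0}q,\, g_{t+\t_k(q)}\Phi_k(q)\bigr)
\leq 2\norm{Dg_{t-t_0}(g_{t_k}q)\,v^-}_{g_{t+k\ell_0}q},
\end{equation*}
so the task reduces to showing that the right-hand side vanishes as $t\to\infty$ for $\mu_\T$-almost every $q$.

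To connect this to Corollary~\ref{cor:contraction of vertical classes}, which speaks at the basepoint $q\in\T(\w)$, I would pull $v^-$ back along the derivative: since $Dg_{t_k}(q):E^s(q)\to E^s(g_{t_k}q)$ is a linear isomorphism, write $v^-=Dg_{t_k}(q)w$ with $w\in E^s(q)$, and use the chain rule to rewrite the flowed vector as $Dg_{t+k\ell_0}(q)w$. The second inequality of Lemma~\ref{lem:nonexpansion of stable} controls how much $w$ can inflate relative to $v^-$, giving $\norm{w}_q\leq e^{2t_k}\norm{v^-}_{g_{t_k}q}=O(e^{2t_k}\d)$, a bound that is uniform in $t$ since $t_k=t_0+k\ell_0$ depends only on $\e,N,\d$. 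Applying Corollary~\ref{cor:contraction of vertical classes} to the $U$-invariant probability measure $\mu_\T$ then gives, for $\mu_\T$-almost every $q$, that $\sup\{\norm{Dg_s(q)v}_{g_sq}:v\in E^s(q),\,\norm{v}_q\leq 1\}\to 0$ as $s\to\infty$; plugging in $v=w/\norm{w}_q$ (the case $w=0$ being trivial) at time $s=t+k\ell_0$ delivers the required decay.

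The one step requiring care is the uniformity of the norm bound on $w$: it is crucial that $t_k$ is a fixed time, not growing with $t$, so that the $e^{2t_k}$ inflation factor is harmless and the supremum appearing in Corollary~\ref{cor:contraction of vertical classes} transfers cleanly to our non-unit vector. Once this is unpacked, the remainder of the argument is routine bookkeeping in light of the preceding lemmas.
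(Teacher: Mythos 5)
Your argument matches the paper's proof step for step: you invoke Lemma~\ref{lem:stable paths} and Corollary~\ref{cor:equivariance of stable exp} to write $g_{t+\t_k(q)}\Phi_k(q)=\Psi^s_{g_{t+k\ell_0}q}(Dg_{t-t_0}(g_{t_k}q)v^-)$, apply Proposition~\ref{prop:stable exp lipschitz}, pull back to the basepoint via the chain rule (the paper phrases this with the operator norm $\norm{Dg_{t+k\ell_0}(q)}^s_{q\to g_{t+k\ell_0}q}$ where you phrase it with an explicit vector $w=Dg_{-t_k}(g_{t_k}q)v^-$, but these are the same estimate), and close with Corollary~\ref{cor:contraction of vertical classes}. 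Correct, and essentially the same approach.
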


\begin{proof}
    Recall that $t_k=t_0+k\ell_0$.
    Let the notation be as in Lemma~\ref{lem:stable paths}. 
    By this lemma and Corollary~\ref{cor:equivariance of stable exp}, for all $t\geq t_0$, 
    \begin{align*}
        g_{t+\t_k(q)}\Phi_k(q) 
        = g_{t-t_0}\Psi^s_{g_{t_k}q}(v^-(g_{t_k}q))
        = \Psi^s_{g_{t+k\ell_0}q}(Dg_{t-t_0}(g_{t_k}q)\cdot v^-(g_{t_k}q)),
    \end{align*}
    where $Dg_t$ denotes the derivative of $g_t$.
    Proposition~\ref{prop:stable exp lipschitz} then implies the bound
    \begin{align*}
       \dAGY(g_{t+k\ell_0} q,g_{t+\t_k(q)}\Phi_k(q)) 
        &\leq 2 \norm{Dg_{t-t_0}(g_{t_k}q)\cdot v^-(g_{t_k}q)}_{g_{t+k\ell_0}q}
        \nonumber\\
        &\leq 2 \norm{Dg_{t+k\ell_0}(q)}_{q\r g_{t+k\ell_0} q}^s \norm{Dg_{-t_k}(g_{t_k}q)\cdot v^-(g_{t_k}q)}_q
        ,
    \end{align*}
    for all $t\geq t_0$, where $\norm{Dg_t(q)}_{q\r g_tq}^s $ is the operator norm of the restriction of the derivative of $g_t$ to the tangent space $E^s(q)$ to the strong stable leaf through $q$.
    This operator norm tends to $0$ as $t\r\infty$ for $\mu_\T$-almost every $q\in \T(\w)$ by Corollary~\ref{cor:contraction of vertical classes}, which concludes the proof.
  \qedhere
\end{proof}

\subsection{Conclusion of the proof of Proposition~\ref{prop:match tori trivial monodromy}}

We keep the notation of the previous subsection.
Denote by $\mu_{\T_\k}$ the restriction of $\mu_\T$ to $\T_\k$, normalized to be a probability measure.
Let $\mathbb{G}_k\subset\T_\k$ denote the domain of $\Phi_k$, i.e., $\mathbb{G}_k$ consists of all the tremors arising from $G_k\subset [0,1]$.
Then, $\mathbb{G}_k$ has $\mu_{\T_\k}$-measure at least $1-\e$.

Let $\th_\k \mu_\T(\T_\k) \asymp_{N,\d,t_0} 1$ and define $\l$ to be the following Borel measure on $\T(\w)$:
\begin{align*}
    \l =  \frac{\th_\k^{-1}}{N}\sum_{k=1}^N (\Phi_k)_\ast\mu_\T|_{\mathbb{G}_k} .
\end{align*}
Then, Lemma~\ref{lem:Jacobian} then shows that $d\l/d\mu_\T \gg_{N,\d,t_0} 1$.
Moreover, since $\t_k(-) = O(\d)$, Corollary~\ref{cor:matched pts converge} implies that for all $1\leq k\leq N$,
\begin{align*}
    \limsup_{t\r\infty}\, d_\ast ((g_t)_\ast (\Phi_k)_\ast \mu_\T|_{\mathbb{G}_k}, (g_{t+k\ell_0})_\ast\mu_\T|_{\mathbb{G}_k}) \ll \d \cdot \mu_\T(\mathbb{G}_k).
\end{align*}
Hence, $\mu_\T(\mathbb{G}_k)\geq (1-O(\e)) \th_\k$ for each $k$, it follows that for all large enough $t$,
    \begin{align*}
        d_\ast \left((g_t)_\ast\l, \frac{1}{N} \sum_{k=1}^N (g_{t+k\ell_0})_\ast \mu_{\T_\k}\right )\ll \d+\e.
    \end{align*}
Taking $\d<\e$, this concludes the proof of the proposition since $\e$ was arbitrary.

\subsection{A conesequence of Theorem~\ref{thm:uniform Banach density one}}
We record here a consequence of Theorem~\ref{thm:uniform Banach density one} to equidistribution of $g_t$-pushes of the pieces $\T_\k\subset \T(\w)$.

Recall we are fixing a pseudo-Anosov element $\g$ in the Veech group of $(M,\w)$ with primitive period $\ell_0>0$.
For $\eta>0$, we let $\T_\eta \subset \T(\w)$ be the subset of the twist torus defined as in~\eqref{eq:T_kappa} with $\eta$ in place of $\k$.
We also recall that $\mu_{\T_\eta}$ is the normalized restriction of $\mu_\T$ to the piece $\T_\eta$.

\begin{prop}\label{prop:Banach applied to torus slivers}
    For every $\e>0$ and  $f\in C_c(\Mcal)$, there is $N=N(\e,f)>0$ such that for any $\eta>0$, there is $T=T(\eta, \e,f)\geq 1$ so that for all $t\geq T$, we have
    \begin{equation}
    \left|\frac{1}{N}\sum_{k=1}^N \int f \,d (g_{t+k\ell_0})_\ast \mu_{\T_\eta}
 - \int f\;d\mu_\Mcal\right| <\e.
\end{equation}
\end{prop}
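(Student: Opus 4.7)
The plan is to apply Theorem~\ref{thm:uniform Banach density one} to horocycle averages arising from a $U$-invariant disintegration of $\mu_{\T_\eta}$, and then to transfer its conclusion from integer times to the arithmetic progression $\set{t+k\ell_0}_{k=1}^N$. Since $\mu_\T$ is $U$-invariant (by the convention fixed just before the statement of Proposition~\ref{prop:match tori trivial monodromy}) and $u(1)\w = \w$, the restricted measure $\mu_{\T_\eta}$ decomposes as $\nu_\eta \otimes ds$, where $\nu_\eta$ is a probability measure on the transversal
\begin{align*}
    Y_\eta := \set{\Trem(\w, r\b) : \b \in \twist^0(\w),\; \norm{\b}_\w = 1,\; |r|<\eta},
\end{align*}
and $ds$ parametrizes the $U$-orbit on $[0,1]$. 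Consequently, for every $\tau \in \R$,
\begin{align*}
    \int f\, d(g_\tau)_\ast\mu_{\T_\eta} = \int_{Y_\eta}\int_0^1 f(g_\tau u(s)y)\, ds\, d\nu_\eta(y).
\end{align*}

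I would then use Lemma~\ref{lem:Mcal is ergodic} together with the fact that $\Mcal = \overline{\SL\cdot\T(\w)}$ to extract a set of generic starting points. Applying Theorem~\ref{thm:uniform Banach density one} to $f$ and an auxiliary parameter $\e'>0$ (to be chosen in terms of $\e$, $f$, and $\ell_0$) produces a length $L_0 = L_0(\e', f)$ and proper $\SL$-orbit closures $\Ncal_1, \dots, \Ncal_k \subsetneq \Mcal$. One cannot have $\T(\w)\subset \Ncal_i$ for any $i$, else $\Mcal\subset\Ncal_i$ by $\SL$-invariance. Since each $\Ncal_i$ is locally affine in period coordinates and the twist torus is linearly parametrized in those coordinates, a proper affine subset of the torus has Lebesgue measure zero; hence $\mu_\T(\Ncal_i) = 0$ for every $i$, so for each $\eta>0$ one can choose a compact set $F_\eta \subset \T_\eta \setminus \bigcup_i \Ncal_i$ with $\nu_\eta(Y_\eta \cap F_\eta) \geq 1 - \e'$.

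The main technical step is the transfer from integer to arithmetic-progression times. For this, I would invoke the Lebesgue-measure version underlying Theorem~\ref{thm:uniform Banach density one} (essentially Corollary~\ref{cor:weak close}): for $y\in F_\eta$ and $t$ large enough depending on $F_\eta$, the set of $\ell \in [t+\ell_0, t+N\ell_0]$ satisfying $\bigl|\int_0^1 f(g_\ell u(s)y)\, ds - \int f\, d\mu_\Mcal\bigr| < \delta$ has Lebesgue measure at least $(1-\e')(N-1)\ell_0$, where $\delta=\delta(\ell_0,\e')$ is calibrated so that equation~\eqref{eq:push close}, iterated $\lceil\ell_0\rceil$ times, converts $d_\ast$-closeness by $\delta$ to $d_\ast$-closeness by $\e'$ under $g_s$ for $|s|\leq \ell_0/2$. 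A pigeonhole argument on the partition of $[t+\ell_0, t+N\ell_0]$ into $N-1$ subintervals of length $\ell_0$ centered at the AP points shows that at most $O(\e' N)$ of the points $\ell_k := t+k\ell_0$ are farther than $\ell_0/2$ from a Lebesgue-good time; for each remaining AP point, the horocycle average at time $\ell_k$ lies within $\e'$ of $\int f\, d\mu_\Mcal$.

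Finally, one chooses $\e'$ small enough (in terms of $\e, f$, and the fixed $\ell_0$) so that the combined error $O(\e')(1+\norm{f}_\infty)$ is less than $\e$. This fixes $N := \lceil L_0(\e', f)/\ell_0 \rceil$ depending only on $\e$ and $f$, as required. Then for each $\eta>0$, one takes $T = T(\eta)$ large enough that the Banach-density estimate applies uniformly over $y \in F_\eta$. The contribution from $F_\eta$ to the average over $\nu_\eta$ is within $O(\e'(1+\norm{f}_\infty))$ of $\int f\, d\mu_\Mcal$, and the contribution from $Y_\eta\setminus F_\eta$ is bounded by $2\e'\norm{f}_\infty$; summing yields total error below $\e$. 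The principal obstacle is the pigeonhole transfer to arithmetic progressions, and it is precisely $\ell_0$ that dictates how $N$ must be chosen; crucially, $N$ remains independent of $\eta$ because $L_0$ and the exceptional closures $\Ncal_i$ are determined once $\e$ and $f$ are fixed.
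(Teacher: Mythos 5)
Your proposal is correct and follows essentially the same route as the paper: disintegrate $\mu_{\T_\eta}$ via $U$-invariance, exclude the exceptional closures $\Ncal_i$ using absolute continuity, and transfer the Lebesgue-density statement of Corollary~\ref{cor:weak close} to the arithmetic progression $\{t+k\ell_0\}$ by a pigeonhole/equicontinuity argument. The paper merely factors this through intermediate statements (Lemma~\ref{lem:weak close}, stated without proof as analogous to Corollary~\ref{cor:weak close}, together with Lemmas~\ref{lem:meas ex} and~\ref{lem:to sum}); your sketch of the pigeonhole transfer is precisely the content of Lemma~\ref{lem:weak close}, and the remaining bookkeeping matches the paper's.
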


Analogously to how Theorem \ref{thm:uniform Banach density one} follows from Corollary \ref{cor:weak close} we have the following, which generalizes $[S,S+T] \cap \mathbb{N}$ to more general arithmetic progressions $[S,S+T]+(t+\ell \mathbb{N})$:
\begin{lem}\label{lem:weak close}

Let $\ell \in \mathbb{R}^+$ and 
$\delta>0$. 
There exist $T_0>0$ and proper $\SL$-orbit closures $\Ncal_1, \dots, \Ncal_n$ in $\Mcal$ such that for any compact set $F\subset \Mcal\setminus \cup_{i=1}^k \Ncal_i$, we can find $S_0\geq 0$, so that for all $T\geq T_0$, $t>0$,  $S\geq S_0$ and $x\in F$, we have
\begin{equation}
\bigg|\bigg\{k \in [S,S+T] \cap \ell \mathbb{N}: d_\ast\big( \int_{0}^1 \delta_{g_k u(s)g_tx}ds,\mu_{\Mcal}\big)<\delta \bigg\}\bigg|>(1-\delta)T.
\end{equation}
\end{lem}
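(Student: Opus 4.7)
The strategy is to parallel the derivation of Theorem~\ref{thm:uniform Banach density one} from Corollary~\ref{cor:weak close}, modifying it to accommodate two changes: replacing the integer lattice by the arithmetic progression $\ell\mathbb{N}$, and allowing the extra $g_t$-factor on the starting point $x$. As a first step, using $g_r$-invariance of $\mu_\Mcal$ and continuity of the pushforward $(g_r)_\ast$ on the weak-$\ast$ compact space of sub-probability measures, fix $\delta' \in (0,\delta/2)$ so small that $d_\ast(\nu,\mu_\Mcal)<\delta'$ implies $d_\ast((g_r)_\ast\nu,\mu_\Mcal)<\delta$ for every $|r|\le \ell/2$. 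Apply Corollary~\ref{cor:weak close} with $\delta'$ in place of $\delta$ to produce $T_0$ and proper orbit closures $\mathcal{N}_1,\dots,\mathcal{N}_n$; for every compact $F\subset \Mcal\setminus\bigcup_i \mathcal{N}_i$ one obtains $S_0\ge 0$ such that for any $y\in F$, $S\ge S_0$, and $T\ge T_0$, the real ``good'' set
\[
E(y,S,T) := \left\{r\in [S,S+T] : d_\ast\!\left(\int_0^1 \delta_{g_r u(s)y}\,ds,\,\mu_\Mcal\right)<\delta'\right\}
\]
has Lebesgue measure exceeding $(1-\delta')T$.

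To handle the $g_t$-factor uniformly in $t>0$, note first that since each $\mathcal{N}_i$ is $\SL$-invariant, the orbit $\{g_tx:t>0\}$ stays outside $\bigcup_i\mathcal{N}_i$ whenever $x\in F$. The delicate point is that the threshold $S_0$ in Corollary~\ref{cor:weak close} depends on the starting point through the Margulis functions $f_{\mathcal{N}_i}$, which can blow up along the forward $g_t$-orbit. Following the averaging device used in the proof of Corollary~\ref{cor:weak close} itself, one uses the commutation $u(s)g_t=g_tu(e^{-2t}s)$ and subdivides the $u$-integration into unit sub-intervals paired with integer shifts $u(m)$, expressing the measure $\int_0^1\delta_{g_k u(s)g_tx}\,ds$, up to an $O(e^{-2t})$ additive error, as a weighted average of measures of the form $\int_0^1 \delta_{g_k u(s)u(m)x}\,ds$ whose starting points $u(m)x$ lie in a bounded horocycle neighborhood of $F$. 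Combined with the averaged control of Margulis functions supplied by Lemma~\ref{lem:2.13 for hor} and Proposition~\ref{prop:EMM 2.13}, this yields a single $S_0=S_0(F,\delta)$ valid for all $t>0$.

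The remaining step transfers density from $\mathbb{R}$ to $\ell\mathbb{N}$. Any $k\in \ell\mathbb{N}\cap[S,S+T]$ not within $\ell/2$ of the good set $E(g_tx,S,T)$ sits in the $\ell/2$-interior of its complement, a set of Lebesgue measure at most $\delta'T$; since such $k$'s are $\ell$-separated, there are at most $\delta'T/\ell + O(1)$ of them, so a $(1-O(\delta'))$-fraction of the lattice points of $\ell\mathbb{N}\cap[S,S+T]$ lie within $\ell/2$ of $E(g_tx,S,T)$. For each such good $k$, pick $m\in E(g_tx,S,T)$ with $|k-m|\le \ell/2$; then $\int_0^1\delta_{g_ku(s)g_tx}\,ds=(g_{k-m})_\ast\int_0^1\delta_{g_mu(s)g_tx}\,ds$, which by the first step is within $\delta$ of $\mu_\Mcal$. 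Choosing $\delta'$ sufficiently small at the outset (with the appropriate $\ell$-dependent factor) yields the claimed density bound.

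The main obstacle is the uniformity claim of the second paragraph: arranging that the threshold $S_0$ is independent of $t>0$ despite the possible drift of $g_tx$ toward the cusps or toward the exceptional orbit closures $\mathcal{N}_i$. The horocycle-averaging reduction above effectively replaces the (possibly poorly behaved) starting point $g_tx$ by horocycle averages anchored in the fixed compact set $F$, where the Margulis functions are uniformly controlled; this removes the $t$-dependence and supplies the required uniformity, after which the passage from $\mathbb{R}$ to $\ell\mathbb{N}$ and the closeness transfer are routine.
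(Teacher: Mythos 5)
Your first and third paragraphs correctly mirror the paper's implicit argument (the paper gives no explicit proof, just the remark that the lemma follows ``analogously'' to the deduction of Theorem~\ref{thm:uniform Banach density one} from Corollary~\ref{cor:weak close}): choose $\delta'$ small enough that pushing forward by $g_r$ with $|r|\le\ell/2$ preserves $\delta$-closeness, apply Corollary~\ref{cor:weak close}, then count lattice points of $\ell\mathbb{N}$ near the real good set. These steps are sound.

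The problem is the second paragraph. The ``horocycle averaging device'' you propose does not produce the identity you claim. From $u(s)g_t=g_tu(e^{-2t}s)$ one gets
\[
\int_0^1 \delta_{g_k u(s) g_t x}\,ds \;=\; e^{2t}\int_0^{e^{-2t}} \delta_{g_{k+t} u(s') x}\,ds',
\]
which is the normalized restriction to a sub-interval of parameter length $e^{-2t}<1$ of the arc at $x$ pushed to geodesic time $k+t$. The measures $\int_0^1\delta_{g_k u(s)u(m)x}\,ds$, on the other hand, are full parameter-length-$1$ pieces pushed only to time $k$, based at $g_k x$ rather than $g_{k+t}x$; no weighted average of them, over integers $m$, reproduces the left-hand side. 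The averaging device in the proof of Corollary~\ref{cor:weak close} has a different structure (it rewrites a unit arc at $x$ pushed to time $\ell$ as an average of unit arcs at $u(m)g_S x$ pushed to time $\ell-S$; the base point carries the extra $g_S$, not the original $x$), and it does not transfer to your situation.

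In fact the obstacle you are trying to work around is spurious under the intended reading of the lemma. Comparing with the application in the proof of Lemma~\ref{lem:to sum} (where the expression is $g_{t+k\ell}u(s)x$ with starting point $x\in F$) and with the preamble describing arithmetic progressions of the form $t+\ell\mathbb{N}$, the $g_t$ is meant to combine with $g_k$ in the geodesic flow, i.e.\ the expression is $g_{k+t}u(s)x$ (equivalently $g_k g_t u(s)x$) — the starting point of the horocycle arc stays at $x\in F$, and $t$ merely shifts the geodesic time window to $[S+t,S+T+t]$. Since this window still lies to the right of $S_0$, Corollary~\ref{cor:weak close} applies directly with no dependence on $t$, and the deduction is then verbatim the proof of Theorem~\ref{thm:uniform Banach density one} with the lattice $\mathbb{N}$ replaced by $\ell\mathbb{N}$ and the push-forward radius $1$ replaced by $\ell/2$. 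Your worry about $f_{\mathcal{N}_i}(g_t x)$ blowing up would indeed be a real concern if $g_tx$ were the starting point, and Corollary~\ref{cor:weak close} does not hand you uniformity in that situation — but that is not what the lemma is asserting, and the fix you invent to handle it is not correct. Drop the second paragraph entirely; the remaining argument is complete.
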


We also need the following elementary lemma in measure theory and its proof is left to the reader.
\begin{lem}\label{lem:meas ex}
For any $f \in C_c(\mathcal{M})$ and $\epsilon>0$ there exists $\delta>0$ so that if $\Omega$ is the set of Borel probability measures of mass at most 1 and $\mathbb{P}$ is a probability measure on $\Omega$ with the property that 
$$\mathbb{P}(\{\sigma \in \Omega: |\int f d\sigma- \int f d\nu)<\delta\})>1-\delta$$ then
$$\bigg| \int \int f d \sigma d\mathbb{P}(\sigma) -\int f d\nu \bigg | <\epsilon.$$
\end{lem}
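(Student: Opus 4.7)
The plan is to prove this by a direct split of the integral into its contributions from the ``good'' concentration set and its complement, bounding the latter using the uniform bound $\|f\|_\infty < \infty$ (which is finite because $f \in C_c(\Mcal)$).

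First, I would set $M = \|f\|_\infty$ and note that for every Borel measure $\sigma$ of total mass at most $1$, one has $|\int f\,d\sigma| \leq M$, hence $|\int f\,d\sigma - \int f\,d\nu| \leq 2M$ for any probability measure $\nu$. Next, given $\epsilon > 0$, I would choose
\[
\delta \;=\; \min\!\left\{1,\; \frac{\epsilon}{2(M+1)}\right\},
\]
so that in particular $\delta + 2M\delta \leq \epsilon$.

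Now, given $\mathbb{P}$ satisfying the hypothesis, denote the ``good set'' by
\[
G \;=\; \set{\sigma \in \Omega : \left|\textstyle\int f\,d\sigma - \int f\,d\nu\right| < \delta},
\]
so that $\mathbb{P}(G) > 1-\delta$ and $\mathbb{P}(\Omega \setminus G) < \delta$. By Jensen's inequality (triangle inequality for integrals) and the fact that $\mathbb{P}$ is a probability measure on $\Omega$,
\[
\left|\int_\Omega \!\int f\,d\sigma \,d\mathbb{P}(\sigma) - \int f\,d\nu\right|
\;\leq\; \int_\Omega \left|\textstyle\int f\,d\sigma - \int f\,d\nu\right| d\mathbb{P}(\sigma).
\]
Splitting the right-hand side over $G$ and $\Omega\setminus G$, the first contribution is at most $\delta\cdot \mathbb{P}(G) \leq \delta$, while the second is at most $2M\cdot \mathbb{P}(\Omega\setminus G) \leq 2M\delta$. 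Summing yields the bound $\delta + 2M\delta \leq \epsilon$, as required.

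Since this is a routine measure-theoretic estimate, there is no real obstacle; the only subtlety is to record the uniform sup-norm bound on $f$, which is available because $f$ has compact support in $\Mcal$, and to quantify $\delta$ in terms of $\epsilon$ and $\|f\|_\infty$ before invoking the hypothesis.
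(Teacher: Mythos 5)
Your proof is correct, and it is the standard argument the authors had in mind: the paper explicitly leaves this lemma to the reader, and the split of $\int_\Omega |\int f\,d\sigma - \int f\,d\nu|\,d\mathbb{P}(\sigma)$ into the contribution from the good set $G$ (bounded by $\delta$) and its complement (bounded by $2\|f\|_\infty \delta$), with $\delta$ chosen so that the sum is below $\epsilon$, is exactly the routine estimate intended.
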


\begin{lem}\label{lem:to sum}
Let $\epsilon>0$ be given, there exist $N>0$ and proper $\SL$-orbit closures $\Ncal_1, \dots, \Ncal_n$ in $\mathcal{M}$ such that for any compact set $F\subset \mathcal{M}\setminus \cup_{i=1}^k \Ncal_i$, we can find $S_0\geq 0$, so that for all $t\geq S_0$,   and $x\in F$, we have
\begin{equation}
\label{eq:sum density}
\bigg| \sum_{k=1}^N \frac 1 N \int_{0}^1 f(g_{k\ell_0+t}u(s)x)ds -\int f d\mu_{\mathcal{M}}\bigg|<\epsilon.
\end{equation}
\end{lem}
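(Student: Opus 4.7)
The plan is to deduce Lemma~\ref{lem:to sum} directly from Lemma~\ref{lem:weak close} applied with $\ell=\ell_0$, combined with a standard splitting into ``good'' and ``bad'' times controlled via $\|f\|_\infty$. The conceptual content is that Lemma~\ref{lem:weak close} already gives weak-$\ast$ convergence along a density-one subset of the arithmetic progression $\{t+k\ell_0\}$; integrating a single test function $f$ and handling the exceptional times trivially then yields the Ces\`aro-type estimate \eqref{eq:sum density}.

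Concretely, I would proceed as follows. Given $\epsilon>0$ and $f\in C_c(\Mcal)$, first choose $\delta=\delta(\epsilon,f)>0$ small enough that two conditions hold: (i) $d_\ast(\nu,\mu_\Mcal)<\delta$ implies $|\int f\,d\nu-\int f\,d\mu_\Mcal|<\epsilon/2$, and (ii) $2\delta\,\|f\|_\infty<\epsilon/2$. Now apply Lemma~\ref{lem:weak close} with this $\delta$ and with $\ell=\ell_0$; this produces proper $\SL$-orbit closures $\Ncal_1,\dots,\Ncal_n\subset\Mcal$ and a threshold $T_0$. Fix any compact $F\subset\Mcal\setminus\bigcup_i\Ncal_i$; the lemma furnishes $S_0\geq 0$ (depending on $F$) such that for all $T\geq T_0$, $t>0$, $S\geq S_0$ and $x\in F$, a proportion at least $1-\delta$ of the integers $m\in[S,S+T]\cap\ell_0\mathbb{N}$ satisfy
\[
d_\ast\Big(\textstyle\int_0^1 \delta_{g_m u(s)g_t x}\,ds,\ \mu_\Mcal\Big)<\delta.
\]
Choose $N$ large enough that $(N-1)\ell_0\geq T_0$, apply the above with $S=\ell_0$, $T=(N-1)\ell_0$, so that $[S,S+T]\cap\ell_0\mathbb{N}=\{\ell_0,2\ell_0,\dots,N\ell_0\}$. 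For $x\in F$ and $t\geq S_0$, let $\mathcal{G}\subset\{1,\dots,N\}$ be the indices $k$ for which the horocycle push at time $k\ell_0+t$ is $\delta$-close to $\mu_\Mcal$; then $|\mathcal{G}|\geq(1-\delta)N$.

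By condition (i), for each $k\in\mathcal{G}$ we have $\big|\int_0^1 f(g_{k\ell_0+t}u(s)x)\,ds-\int f\,d\mu_\Mcal\big|<\epsilon/2$; for the at most $\delta N$ remaining indices, the integrand is bounded by $\|f\|_\infty$ and its deviation from $\int f\,d\mu_\Mcal$ is at most $2\|f\|_\infty$. Averaging over $k\in\{1,\dots,N\}$ and applying the triangle inequality yields
\[
\bigg|\frac{1}{N}\sum_{k=1}^N \int_0^1 f(g_{k\ell_0+t}u(s)x)\,ds-\int f\,d\mu_\Mcal\bigg|
\leq \frac{\epsilon}{2}+2\delta\|f\|_\infty<\epsilon,
\]
by condition (ii), proving the lemma.

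The argument is essentially a routine bookkeeping exercise once Lemma~\ref{lem:weak close} is in hand. The only potential subtlety is to interpret the density statement in Lemma~\ref{lem:weak close} correctly relative to the cardinality $N$ of the progression (as opposed to $T=(N-1)\ell_0$); but since $\ell_0$ is a fixed constant, this only costs a harmless factor absorbed into the choice of $\delta$. The substantive work---namely, obtaining uniform density-one convergence along arithmetic progressions of times starting from arbitrary points in a compact generic set---is already done in Lemma~\ref{lem:weak close} and ultimately in Theorem~\ref{thm:uniform Banach density one}.
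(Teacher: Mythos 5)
Your proof is correct and takes essentially the same approach as the paper's: both deduce the Ces\`aro estimate from Lemma~\ref{lem:weak close} applied with $\ell=\ell_0$ by splitting the indices into a density-$(1-\delta)$ good set controlled via the definition of $d_\ast$ and an exceptional set controlled crudely by $2\|f\|_\infty$. The only presentational difference is that the paper packages the good/bad bookkeeping into the abstract Lemma~\ref{lem:meas ex} and cites it, whereas you unpack that lemma explicitly with your conditions (i) and (ii); these are equivalent, and your remark about absorbing the $\ell_0$-normalization discrepancy between $T$ and $N$ into the choice of $\delta$ is the same implicit step the paper takes.
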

\begin{proof}
Let $\delta>0$ be given by Lemma \ref{lem:meas ex}. We apply Lemma \ref{lem:weak close}, specialized to the case of a single function $f$, to obtain $N$ 
and letting $\mathbb{P}$ be uniform measure on $\{1,\dots,N\}$.  By Lemma \ref{lem:weak close}, we get for every $x\in F$ that
$$\mathbb{P}\left(\set{k \in \{1, \dots ,N\}: \bigg |\int_0^1f(g_{t+k\ell}u(s)x)-\int f d\mu_{\mathcal{M}}\bigg|<\delta } \right) >1-\delta.$$
Thus by Lemma \ref{lem:meas ex} we have 
$$\left|\sum_{k=1}^N\int_{0}^1 f({g_{k\ell_0+t}u(s)x})ds -\int f d\mu_{\mathcal{M}})\right |<\epsilon.$$
\end{proof}
\begin{proof}[Proof of Proposition \ref{prop:Banach applied to torus slivers}]
Let $\delta>0$ be given by Lemma \ref{lem:meas ex} for $f$. We apply Lemma \ref{lem:to sum} with $\delta$ in place of $\epsilon$ and obtain suborbit closures $\Ncal_1, \dots , \Ncal_n$. Observe that each $\Ncal_i$ has $\mu_{\mathcal{M}}$ measure 0. Because the $\Ncal_i$ are all $\SL$-invariant, it follows that $\mu_{\mathbb{T}_\eta}(\Ncal_i)=0$ for all $\eta>0$. 
We now choose a compact set 
$F\subset \mathbb{T}_\eta\setminus \bigcup_{i=1}^n \Ncal_i$ with $\mu_{\mathbb{T}_\eta}(F)>1-\delta.$
By Lemma \ref{lem:to sum} we have that 
$$\mu_{\mathbb{T}_\eta} \left(\set{x \in \mathbb{T}_\eta: \bigg|\sum_{k=1}^N \frac 1 N \int_{0}^1 f({g_{k\ell_0+t}u(s)x})ds -\int f d\mu_{\mathcal{M}}\bigg|<\delta }\right)>1-\delta.$$
By Lemma \ref{lem:meas ex} with $\mathbb{P}=\mu_{\mathbb{T}_\eta}$ we have 
$$\bigg|  \int \sum_{k=1}^N \frac 1 N \int_{0}^1 f({g_{k\ell_0+t}u(s)x})dsd\mu_{\mathbb{T}_\eta}	-\int f d\mu_{\mathcal{M}}\bigg|<\epsilon.$$
Because $\mathbb{T}_\eta$ is foliated by periodic horocycles, $u(s)\mu_{\mathbb{T}_\eta}=\mu_{\mathbb{T}_\eta}$ for all $s$, and so we have 
$$\bigg|  \int  \sum_{k=1}^N \frac 1 N  fd(g_{k\ell_0+t})_*\mu_{\mathbb{T}_\eta}	-\int f d\mu_{\mathcal{M}}\bigg|<\epsilon.$$ 
This completes the proof of the proposition.
\qedhere
\end{proof}

\subsection{Proof of Theorem~\ref{thm:full support}}

Let $U\subset \Mcal$ be a non-empty open set.
Let $\e>0$ to be chosen depending only on $U$.
Let $N$ be the parameter provided by Proposition~\ref{prop:Banach applied to torus slivers} when applied with $\e/2$ and with $f$ being a smooth bump supported in $U$ with Lipschitz constant $\mrm{Lip}(f)=O_U(1)$, and having $\int f\,\mu_\Mcal \geq \mu_\Mcal(U)-\e/2$.

Let $\d$ and $t_0$ be the parameters provided by Proposition~\ref{prop:match tori trivial monodromy}, and let $\k=\d e^{-2(t_0+N\ell_0)}$.
Then, by Proposition~\ref{prop:Banach applied to torus slivers} applied with $\eta=\k$, for all large enough $t$, we have
\begin{align*}
    \frac{1}{N} \sum_{k=1}^N (g_{t+k\ell_0})_\ast \mu_{\T_\k}(U) >\mu_\Mcal(U)-\e.
\end{align*}
Hence, applying the weak-$\ast$ closeness from Proposition~\ref{prop:match tori trivial monodromy} to the bump function $f$, we obtain for all large enough $t$ that
\begin{align*}
    (g_t)_\ast \l(U) > \mu_\Mcal(U) - 2\e - O_{\mrm{Lip}(f)}( \e).
\end{align*}
Taking $\e$ small enough depending on $U$, the above lower bound is $>\mu_\Mcal(U)/2$.
Thus, the estimate on the Radon-Nikodym derivative in Proposition~\ref{prop:match tori trivial monodromy} implies that
\begin{align*}
    (g_t)_\ast \mu_\T(U) \gg_{N,\d,t_0} \mu_\Mcal(U),
\end{align*}
for all large enough $t$. This concludes the proof.

\appendix

\section{Density of Translates of Twist Tori of the Decagon}
\label{sec:decagon}

In this section, we outline the modifications on the proof of Theorem~\ref{thm:density of tori} to show that its conclusion holds for the decagon surface despite it not satisfying the assumption of that result. 
\begin{thm}\label{thm:decagon dense}
Let $(M_5,\omega_5)$ denote the horizontally periodic translation surface obtained from the regular decagon with one horizontal edge by identifying parallel sides by translations. Then, for every $\epsilon>0$ and $\mathcal{K} \subset \mathcal{H}(1,1)$, there exists $t_0>0$, so that for all $t\geq t_0$ we have $g_t\T(\omega_5)$ is $\epsilon$-dense in $\mathcal{K}$.  
\end{thm}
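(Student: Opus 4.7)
The proof of Theorem~\ref{thm:decagon dense} will follow the strategy of Theorem~\ref{thm:density of tori} verbatim, modifying only the single step where $\Mcal$-primitivity is invoked, namely Lemma~\ref{lem:dist to exceptional orbit closures}. For the decagon one has $\Mcal = \Hcal(1,1)$, and by McMullen's classification of $\SL$-orbit closures in genus two, the unique proper $\SL$-orbit closure strictly between $\Vcal = \SL\cdot\omega_5$ and $\Hcal(1,1)$ is the eigenform locus $\mathcal{E}_5 \subset \Hcal(1,1)$ for real multiplication by the quadratic order $\mathcal{O}_5 \subset \Q(\sqrt{5})$. The task thus reduces to showing, for an appropriate choice of $\b \in \twist^0(\omega_5)$, that the set $\Rcal_{\d_0}$ from~\eqref{eq:Rcal_delta0} stays uniformly away from $\mathcal{E}_5$.

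The key structural input is the $\mrm{KZ}$-invariant splitting of absolute cohomology $H^1_{\R,\mrm{abs}} = V_+ \oplus V_-$ over $\mathcal{E}_5$ induced by real multiplication, where $V_+$ coincides with the tautological plane along $\Vcal$. Writing $p:H^1_\R \to H^1_{\R,\mrm{abs}}$ for the forgetful map, a direct check shows $T_q\mathcal{E}_5 \cap H^1_\R = V_+(q) \oplus \ker(p)$ at every $q \in \mathcal{E}_5$, so $V_-$ is transverse to $T\mathcal{E}_5$ uniformly over compact subsets of $\Vcal$. By M\"oller's theorem on rationality and positivity of KZ exponents on non-arithmetic eigenform loci, the KZ-Lyapunov exponent of $V_-$ over $\mathcal{E}_5$ is a constant $\lambda \in (0,1)$. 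Consequently, the top Lyapunov exponent of the renormalized cocycle $e^t\mrm{KZ}(g_t,-)$ restricted to $\cylspace^0(-) \subseteq V_- \oplus \ker(p)$ equals $1+\lambda$ and is attained only on vectors with nonzero $V_-$-component, while vectors in $\ker(p)$ grow only at rate $e^t$.

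We will choose $\b \in \twist^0(\omega_5)$ so that the absolute projection $p(\b)$ has nonzero component in $V_-(\omega_5)$ and does not lie in the slow Oseledets line of $V_-$ at $\omega_5$; the existence of such $\b$ follows from inspection of the horizontal cylinder decomposition of the decagon together with the one-dimensionality of the slow Oseledets line. By Oseledets' theorem applied to $\mu_\Vcal$ together with equidistribution of expanding horocycle arcs on $\Vcal$, for every $\eta > 0$ and all sufficiently large $t$, the set of $s \in [0,1]$ for which
\begin{align*}
\norm{p_{V_-}(\b(t,s))}_{\omega(t,s)} \;\geq\; \tfrac{1}{2}\,\norm{\b(t,s)}_{\omega(t,s)}
\end{align*}
has Lebesgue measure at least $1-\eta$, where $p_{V_-}$ denotes projection onto $V_-$ along $V_+ \oplus \ker(p)$. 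Restricting the definition of $\Rcal_{\d_0}$ to such good $s$ introduces only an $O(\eta)$ loss, which is absorbed by shrinking the tolerance $\e$ used throughout Section~\ref{sec:end of density proof}.

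For every surviving tremor $\Trem_\b(t,s,r) \in \Rcal_{\d_0}$, the shear vector $r\b(t,s)$ has $V_-$-component of AGY norm $\asymp \d_0$. Since $V_-$ is transverse to $T_q\mathcal{E}_5$ uniformly over $q$ in the compact set $\Kcal \subset \Vcal$, the period-coordinate bi-Lipschitz estimate~\eqref{eq:AGY vs period coords} forces the resulting tremor to lie at AGY distance $\gg \d_0$ from $\mathcal{E}_5$, supplying the analogue of Lemma~\ref{lem:dist to exceptional orbit closures}. The remainder of the argument in Section~\ref{sec:end of density proof} then applies verbatim. The main obstacle is coordinating the Oseledets good set with the other generic sets arising from Proposition~\ref{prop:key matching} and Theorem~\ref{thm:uniform Banach density one}; this is handled by propagating the small-measure losses through the proof in compatible orders.
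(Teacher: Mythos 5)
Your proposal follows the same high-level structure as the paper's Appendix~\ref{sec:decagon}: both observe that $\Mcal=\Hcal(1,1)$, that by McMullen's classification the unique intermediate orbit closure is $\Ecal_5$, that $\Ecal_5$ has rank one so $T_q\Ecal_5 = \mrm{Taut}_q \oplus \ker(p)$, and that the task reduces to showing the tremor directions $\b(t,s)$ stay uniformly (projectively) away from $T_q\Ecal_5$ for most $s$. However, your mechanism for establishing the last fact — Möller's positivity of the $V_-$-exponent together with an Oseledets/Egorov argument to show the $V_-$-component of $\b(t,s)$ dominates for most $s$ — is genuinely different from, and substantially heavier than, the paper's. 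The paper instead uses a purely algebraic observation: because $\b$ is a balanced class and the splitting $H^1_{\R,\mrm{abs}}=\mrm{Taut}_\bullet\oplus\mrm{Taut}^0_\bullet$ is $\SL$-invariant, the absolute projection $p(\b(t,s))$ has identically zero $\mrm{Taut}$-component for all $(t,s)$, not just for most $s$. Since $p(T_q\Ecal_5)=\mrm{Taut}_q$, this forces the projective directions to stay away from $\P T_q\Ecal_5$ with no Lyapunov-exponent analysis at all. This is a real simplification.

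Beyond the extra machinery, your proposal has concrete gaps. First, you write as if there is freedom to ``choose $\b\in\twist^0(\omega_5)$'' subject to an open condition, but the decagon has exactly two horizontal cylinders, so $\twist(\omega_5)$ is two-dimensional and $\twist^0(\omega_5)$ is one-dimensional; $\b$ is determined up to scale, and the condition would need to be \emph{verified} for that single line, not conjured by ``inspection''. (The paper's Lemma~\ref{lem:decagon twist outside E5} is precisely such a verification, showing $p(\b)\neq 0$.) Second, ``the slow Oseledets line of $V_-$ at $\omega_5$'' is not well-defined: $\omega_5$ is horizontally periodic, so its forward geodesic orbit diverges into the cusp of $\Vcal$, and the forward Oseledets filtration does not exist there. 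The relevant Oseledets data is at $u(s)\omega_5$ for a.e.\ $s$, where the slow line varies measurably with $s$; you would then need to argue that the fixed vector $p(\b)$ avoids this slowly varying line for a.e.\ $s$, which is a genuine additional claim left unjustified. Third, your step asserting that for all sufficiently large $t$ the set of $s$ with $\|p_{V_-}(\b(t,s))\|\geq\tfrac12\|\b(t,s)\|$ has measure $\geq 1-\eta$ is an Egorov-type uniformization of the a.e.\ Oseledets convergence; this is plausible but needs to be carried out carefully (you cite ``equidistribution of expanding horocycle arcs'', which controls where $g_tu(s)\omega_5$ lands but not directly the Oseledets ratio along the whole trajectory $\{g_ru(s)\omega_5\}_{0\le r\le t}$). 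None of these issues arise in the paper's argument because the balanced-class invariance is exact rather than asymptotic.
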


  The proof of Theorem~\ref{thm:decagon dense} follows the same steps of the proof of Theorem~\ref{thm:density of tori} given in Section~\ref{sec:reduce to matching}, with the exception of Subsection~\ref{sec:R_delta}, which was the only place in the argument where $\Mcal$-primitivity was used.
  Our goal is to show how to carry out this part of the argument in absence of this hypothesis. 
   More concretely, we will define a suitable substitute of the sets $\Rcal_\d$ in~\eqref{eq:Rcal_delta0}, consisting of tremors at positive distance from any fixed finite collection of proper orbit closures, and satisfying Lemma~\ref{lem:producing sequence of approximating horocycles}.

We retain the notation of Section~\ref{sec:applying Banach density}, applied with $\w_5$ in place of $\w$.
In particular, throughout this section, we fix a choice of $\b\in \twist^0(\w_5)$ and $x=\Trem(\w_5,r\b)$ in~\eqref{eq:choice of beta} and~\eqref{eq:choice of x}, such that
\begin{align*}
    \overline{\SL\cdot x} = \overline{\SL \cdot \T(\w_5)}.
\end{align*}
Let $\mathcal{D}=\SL\cdot (M_5,\omega_5)$.
Recall that by McMullen's classification of orbit closures in genus two \cite{McMullen-ClassificationGenusTwo}, there is exactly one orbit closure between $\Dcal$ and $\Hcal(1,1)$, namely, the eigenform locus $\mathcal{E}_5$ with discriminant $5$. 
For $q\in\Ecal_5$, we denote by $T_q\Ecal_5\subset H^1_\C$ the tangent space of $\Ecal_5$ at $q$.
Recall the forgetful projection $p:H^1(M_5,\Sigma(\w_5); \R) \r H^1(M_5;\R)$ from relative to absolute cohomology.
Note that $\mathcal{E}_5$ has rank one, i.e., its tangent space splits as follows:
\begin{align}\label{eq:E5=taut+rel}
    T_q \Ecal_5 = \mrm{Taut}_q \oplus \mrm{Ker}(p),
\end{align}
 where $\mrm{Taut}_q$ is the tautological plane $\mrm{Taut}_q$; cf.~\textsection\ref{sec:balanced spaces} for definitions.

We begin by showing that our fixed tremor $\b$ is not trapped in $\Ecal_5$.
\begin{lem}
    \label{lem:decagon twist outside E5}
The restriction of the forgetful projection $p$ to $\twist^0(\w_5)$ is injective.
In particular, for $\b$ as above, $\b\notin T_{\w_5}\Ecal_5$.
\end{lem}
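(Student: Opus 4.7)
The plan is to first establish injectivity of $p$ on $\twist^0(\w_5)$ by a rationality argument on the horizontal cylinder core curves, and then deduce $\b\notin T_{\w_5}\Ecal_5$ via the rank-one splitting $T_{\w_5}\Ecal_5 = \mrm{Taut}_{\w_5} \oplus \ker(p)$ and the symplectic structure on $H^1(M_5;\R)$.

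For injectivity, the decagon has exactly two horizontal cylinders $C_1, C_2$ (a direct check using the side-edge identifications), so $\twist(\w_5) = \mrm{Span}(\b_{C_1},\b_{C_2})$ and $\twist^0(\w_5)$ has dimension at most one. Evaluating the class $\b_C$ from \textsection\ref{sec:twists} on absolute cycles yields the formula
\begin{align*}
    p(\b_{C_i}) = h_{C_i}\cdot \mrm{PD}(\gamma_i) \in H^1(M_5;\R),
\end{align*}
where $\gamma_i$ is the core curve of $C_i$ and $\mrm{PD}$ denotes Poincar\'e duality. Since the $\gamma_i$ are integral classes, any $\R$-linear dependence is equivalent to a $\Z$-linear dependence; pairing $a_1\gamma_1+a_2\gamma_2=0$ with $\mrm{Re}(\w_5)$ would then force the circumference ratio $w_{C_2}/w_{C_1}$ to be rational. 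A direct computation on the decagon --- obtained by summing the widths of the strips each closed horizontal leaf traverses under the side-edge identifications --- gives $w_{C_1}=\sqrt{5}$, $w_{C_2}=(5+\sqrt{5})/2$, and hence
\begin{align*}
    \frac{w_{C_2}}{w_{C_1}} = \frac{1+\sqrt{5}}{2} \notin \Q,
\end{align*}
so $\gamma_1,\gamma_2$ are $\R$-independent and $p|_{\twist^0(\w_5)}$ is injective. Alternatively, the same irrationality can be extracted from the fact that the trace field of $\mrm{SL}(M_5,\w_5)$ is $\Q(\sqrt{5})$ (cf.~Proposition~\ref{prop:dense examples}) combined with Kenyon--Smillie's theorem on horizontal length ratios.

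For the ``in particular'' assertion, suppose for contradiction that $\b \in T_{\w_5}\Ecal_5$. Taking real parts in the splitting $T_{\w_5}\Ecal_5 = \mrm{Taut}_{\w_5} \oplus \ker(p)$ produces $\b = v + w$ with $v\in\mrm{Taut}_{\w_5}$ and $w \in \ker(p)\cap H^1_\R$; thus $p(\b) = p(v) \in p(\mrm{Taut}_{\w_5})$. On the other hand, $\b\in\twist^0(\w_5) \subset \mrm{Taut}_{\w_5}^0$ is symplectically orthogonal to $\mrm{Taut}_{\w_5}$, so $p(\b)$ lies in the symplectic orthogonal of $p(\mrm{Taut}_{\w_5})$ in $H^1(M_5;\R)$. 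Since $\mrm{Re}(\w_5)\wedge\mrm{Im}(\w_5)$ integrates to the nonzero area of $M_5$, the plane $p(\mrm{Taut}_{\w_5})$ is symplectic and meets its symplectic orthogonal trivially, forcing $p(\b)=0$; the injectivity just established then yields $\b = 0$, contradicting $\b\neq 0$.

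The sole substantive obstacle is the circumference-ratio computation itself; this is the only place where specific decagon geometry (beyond its being a horizontally periodic Veech surface in $\mathcal{H}(1,1)$) enters the argument, and it is a routine verification. Every other step uses only general structural facts already set up in the paper: the formula for $p(\b_C)$ in terms of core curves, the rank-one splitting of $T\Ecal_5$, and the nondegeneracy of the intersection pairing on the tautological plane.
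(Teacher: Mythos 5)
Your proof is correct, but the injectivity step takes a genuinely different route from the paper's. The paper exhibits a single absolute cycle --- a saddle connection $\gamma$ contained in one horizontal cylinder $C$ and joining a cone point to itself --- on which every nonzero $\alpha\in\twist^0(\w_5)$ evaluates nontrivially: writing $\alpha=a\b_C+a'\b_{C'}$, the balancing condition (nonzero pairing of each $\b_{C_i}$ with $\mrm{Re}(\w_5)$) forces $a,a'\neq 0$, then $\b_{C'}(\gamma)=0$ because $\gamma\subset C$, while $\b_C(\gamma)\neq 0$ because $\gamma$ crosses $C$; hence $p(\alpha)([\gamma])=\alpha(\gamma)\neq 0$. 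You instead use the identity $p(\b_{C_i})=h_{C_i}\,\mrm{PD}([\gamma_i])$ for the core curves $\gamma_i$, reduce injectivity to $\R$-independence of $[\gamma_1],[\gamma_2]$ in absolute homology, and derive that from the irrationality of the circumference ratio (via Kenyon--Smillie/Wright and the trace field $\Q(\sqrt5)$). Both are valid; the paper's argument is entirely geometric and self-contained, whereas yours is arithmetic but generalizes immediately to any two-cylinder horizontally periodic Veech surface with irrational trace field. One small point you should make explicit: passing from $\R$-dependence of $[\gamma_1],[\gamma_2]$ to a rational ratio presumes both core curve classes are nonzero. This holds for the decagon, and in any case at most one can vanish since $\hol^{(y)}_{\w_5}\in\twist(\w_5)$ and $p(\hol^{(y)}_{\w_5})=\mrm{Im}(\w_5)\neq 0$; if exactly one $[\gamma_i]$ vanished, $p$ would trivially be nonzero on $\twist^0(\w_5)$ anyway. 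Your ``in particular'' deduction, via nondegeneracy of the intersection pairing on $p(\mrm{Taut}_{\w_5})$, is essentially the same as the paper's.
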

The proof is done by identifying a saddle connection $\g$ crossing a cylinder and connecting a cone point to itself. This gives a non-trivial integral absolute homology class $[\g]$ so that $p(\b)(\g)\neq 0$.

\begin{proof}[Proof of Lemma~\ref{lem:decagon twist outside E5}]

    Let $e$ be the top horizontal edge of the regular decagon, and let $p$ and $q$ be its endpoints. 
    Note that $p$ and $q$ give rise to two different singular points in $M_5$.
    In particular, $e$ is a saddle connection in $(M_5,\w_5)$ on the top boundary of a horizontal cylinder $C$. 
    Moreover, since $M_5$ has only two singularities, the bottom boundary of $C$ must contain another copy of either $p$ or $q$, say $p$.
    Consider the saddle connection $\g$ joining the two copies of $p$ on the top and bottom of $C$, and contained entirely within $C$. Such $\g$ exists by convexity of cylinders.
    Let $\b_C\in \twist(\w_5)$ be the class corresponding to horizontal twists in $C$; cf.~\textsection\ref{sec:twists} for a definition.
    Then, since $\g$ crosses $C$, we have $\b_C(\g)\neq 0$.

    Now, let $0\neq \a\in\twist^0(\w_5)$ be arbitrary.
 Since $M_5$ has exactly two horizontal cylinders, $C$ and $C'$, with disjoint interiors, we have that $\alpha=a\beta_C+a'\beta_{C'}$, for some $a,a'\in \R$. Moreover, since $\a$ has $0$ intersection pairing with $\mrm{Re}(\w_5)$, while the intersection pairing of $\b_C$ and $\b_{C'}$ with $\mrm{Re}(\w_5)$ is the non-zero (signed) area of their respective cylinders, we have that both coefficients $a$ and $a'$ are non-zero.
 Also, since $v\subset C$, we have $\beta_{C'}(\g)=0$, and thus $\alpha(\g)=a\beta_C(\g)\neq0$.
 
 On the other hand, since $\g$ joins a singularity to itself and has non-zero holonomy, it also represents a non-zero class in the absolute homology group $H_1(M_5;\Z)$, which we denote by the same name.
 In particular, $p(\a)(\g)\neq 0$. This proves the first assertion. 

 For the second assertion of the lemma, note that since $\b$ is a balanced class, its image $p(\b)$ belongs to $\mrm{Taut}^0_{\w_5}$, and is non-zero by the first assertion. On the other hand, we have by~\eqref{eq:E5=taut+rel} that $p(T_{\w_5}\Ecal_5)= \mrm{Taut}_{\w_5}$, which has trivial intersection with $\mrm{Taut}^0$, thus proving our claim.
\end{proof}

The following corollary is immediate from the classification results of McMullen and Lemma~\ref{lem:decagon twist outside E5}.
\begin{cor}[{\hspace{.1pt}\cite{McMullen-ClassificationGenusTwo}}]
    We have $\overline{\SL\cdot \T(\w_5)} = \Hcal(1,1)$.
\end{cor}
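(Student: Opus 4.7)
The plan is to invoke McMullen's classification \cite{McMullen-ClassificationGenusTwo} of $\SL$-orbit closures in $\Hcal(1,1)$: any orbit closure in this stratum is a closed orbit (Teichm\"uller curve), an eigenform locus $\Ecal_D$ of some discriminant $D$, or the entire stratum. Combining this with the fact stated in the excerpt that $\Ecal_5$ is the unique proper orbit closure strictly containing the Veech curve $\Dcal = \SL \cdot (M_5,\w_5)$, there are only three possibilities for $\Ncal := \overline{\SL\cdot \T(\w_5)}$, namely $\Dcal$, $\Ecal_5$, or $\Hcal(1,1)$. Thus, the corollary reduces to ruling out the first two options.

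The strategy for both exclusions is identical: exhibit a point of $\T(\w_5)$ not lying in the candidate orbit closure. We use the tremor path $r\mapsto \Trem(\w_5, r\b) \in \T(\w_5)$, where $\b\in\twist^0(\w_5)\setminus\{0\}$ is the class fixed in Section~\ref{sec:applying Banach density} (with $\w=\w_5$). By formula~\eqref{eq:tremor in coordinates}, this path has tangent vector $\b\in H^1_{\R} \subset H^1_{\C}$ at $r=0$ in period coordinates.

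To rule out $\Dcal$: the Veech curve $\Dcal$ has tangent space at $\w_5$ equal to the complexified tautological plane $\mrm{Taut}_{\w_5}\otimes\C$, which by construction has trivial intersection with $\mrm{Taut}^0_{\w_5} \supseteq \twist^0(\w_5)$ (see \textsection\ref{sec:balanced spaces}). Hence $\b\notin T_{\w_5}\Dcal$, so for all sufficiently small $r\neq 0$, $\Trem(\w_5, r\b)\notin\Dcal$, forcing $\Ncal\supsetneq\Dcal$. To rule out $\Ecal_5$: Lemma~\ref{lem:decagon twist outside E5} provides exactly what is needed, namely $\b\notin T_{\w_5}\Ecal_5$; the same argument then yields $\Trem(\w_5, r\b)\notin\Ecal_5$ for small $r\neq 0$, hence $\Ncal \not\subseteq \Ecal_5$.

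The only remaining possibility is $\Ncal = \Hcal(1,1)$. The main obstacle in this argument is not located in the corollary itself, but in Lemma~\ref{lem:decagon twist outside E5}, which is where the genuine geometric input (finding a saddle connection crossing the cylinder $C$ and joining a cone point to itself, witnessing nontriviality of $p(\b)$) enters. Once that lemma is in hand, the corollary is a one-line application of McMullen's classification together with the transversality observations above.
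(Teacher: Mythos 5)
Your proposal is correct and matches the paper's intended argument: the paper dispatches this corollary in one sentence as "immediate from the classification results of McMullen and Lemma~\ref{lem:decagon twist outside E5}," and your write-up is precisely the unwinding of that sentence. You list the three candidates for $\overline{\SL\cdot\T(\w_5)}$ afforded by McMullen's classification, rule out $\Dcal$ via transversality of $\twist^0(\w_5)$ to the tautological plane (the same observation the paper makes in the proof of Lemma~\ref{lem:dist to exceptional orbit closures}), and rule out $\Ecal_5$ via Lemma~\ref{lem:decagon twist outside E5}. The one small thing worth making explicit is that $\twist^0(\w_5)\neq 0$ (the decagon has two horizontal cylinders, so $\twist(\w_5)$ is two-dimensional and $\twist^0(\w_5)$ is one-dimensional), so that a nonzero $\b$ does exist; once that is noted, the rest is as you say.
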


Next, we show that the image tremors $\b(t,s)$ do not collapse on $\Ecal_5$.
To this end, 
recall that the compact set $\Kcal\subset \Dcal$ and the parameter $\e$ chosen in \textsection\ref{sec:applying Banach density}.
For $t>0$, we let $\l_t$ denote the uniform measure on the following set
\begin{align*}
    F_t \stackrel{\mrm{def}}{=}
    \set{(\w_5(t,s),\b(t,s)/N_\b(t,s)): s\in [0,1], \, \w_5(t,s)\in\Kcal}
    \subset \P \mrm{Taut}_\bullet^0,
\end{align*}
where we view $F_t$ as a subset of the projective bundle over $\Dcal$ with fibers the projective space of the balanced space  $\mrm{Taut}_\bullet^0$, i.e., the  complementary space of the tautological plane defined in \textsection\ref{sec:balanced spaces}.

\begin{lem}\label{lem:avoid linear eigen}
    We have
    \begin{align*}
        \underset{t \to \infty}{\lim}\, \lambda_t(\{(q,\mathbb{P}T_q\mathcal{E}_5): q\in \mathcal{D}\})=0,
    \end{align*}
    where $\P T_q\Ecal_5$ is the projectivization of the tangent space of $\Ecal_5$.
\end{lem}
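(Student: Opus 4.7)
The plan is to pass to an arbitrary weak-$\ast$ subsequential limit $\hat\nu$ of $\{\lambda_t\}$ and establish the stronger claim that $\hat\nu(B)=0$, where $B:=\{(q,\P T_q\mathcal{E}_5)\cap\P\cylspace^0_\bullet:q\in\mathcal{D}\}$. Upper semicontinuity of closed-set measure under weak-$\ast$ convergence then yields the form actually needed for the analog of \S\ref{sec:R_delta}, namely $\lim_{\varepsilon\to 0}\limsup_{t\to\infty}\lambda_t(B_\varepsilon)=0$ for any open neighborhood $B_\varepsilon$ of $B$. First I would identify $B$ concretely: by \eqref{eq:E5=taut+rel} together with the transversality of $\mrm{Taut}_q$ with $\mrm{Taut}_q^0$, we have $T_q\mathcal{E}_5\cap\mrm{Taut}_q^0 = \mrm{Ker}(p)$, which is one-dimensional on $\Hcal(1,1)$. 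If $\mrm{Ker}(p)\cap\cylspace^0_q=0$ the lemma is vacuous, so I focus on the remaining case, in which $B$ is a continuous $\SL$-equivariant section of $\P\cylspace^0_\bullet$ over $\mathcal{D}$ and, by semi-simplicity of the KZ cocycle on the fibers, $\cylspace^0_\bullet$ splits as an $\SL$-equivariant direct sum $\mrm{Ker}(p)\oplus Q$ with $Q$ two-dimensional, irreducible and proximal (irreducibility via Zariski density of $\mrm{SL}(M_5,\omega_5)$ in $\SL$, proximality from strict positivity of the non-tautological Lyapunov exponent on $\mathcal{D}$).

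By Theorem~\ref{thm:A-invariance of flag distribution}, $\hat\nu$ is $A$-invariant, and combined with the $U$-invariance built into the horocycle averaging, $\hat\nu$ is $P$-invariant with base projection $\mu_{\mathcal{D}}$. Supposing for contradiction $c:=\hat\nu(B)>0$, the restriction $\hat\nu|_B$ is $P$-invariant and, under the identification $B\cong\mathcal{D}$, must equal $c\tilde\mu_{\mathcal{D}}$ by unique ergodicity of the $P$-action on the Veech curve. I would then analyze the complementary part $\hat\nu_1:=\hat\nu|_{B^c}$ via the $\SL$-equivariant projection $\pi\colon\P\cylspace^0_\bullet\setminus B\to\P Q_\bullet$: by the proximal irreducible refinement of Theorem~\ref{thm:A-invariance of flag distribution} in Theorem~\ref{thm:convergence of flag distribution proximal} of the appendix, the unique $P$-invariant probability measure $\mu_\infty$ on $\P Q_\bullet$ projecting to $\mu_{\mathcal{D}}$ is fiberwise supported on the top Lyapunov section, so $\pi_\ast\hat\nu_1=(1-c)\mu_\infty$. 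Disintegrating $\hat\nu_1$ along the one-dimensional fibers of $\pi$ above this section, the $A$-action on each such fiber is, in the affine chart centered at the $B$-point, multiplication by $e^{\lambda t}$ where $\lambda>0$ is the top Lyapunov exponent on $Q$; the only $A$-invariant probability measures on this line are therefore convex combinations of the Dirac masses at the two fixed points (the $B$-point and the top Lyapunov lift), and since $\hat\nu_1$ avoids $B$ its conditionals are Dirac at the top Lyapunov lift. This forces $\hat\nu = c\tilde\mu_{\mathcal{D},B}+(1-c)\tilde\mu_\infty$.

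The contradiction with $c>0$ comes from the specific form of $\lambda_t$. By Lemma~\ref{lem:decagon twist outside E5}, $p(\beta)\neq 0$, so $[\beta]\notin B$ at $t=0$, and for every $t,s$ the point $(\w(t,s),[\beta(t,s)])$ avoids $B$. The condition $[\beta(t,s)]\to B$ amounts to $\|p(\beta(t,s))\|/\|\beta(t,s)\|\to 0$; since the top Lyapunov exponent of the KZ cocycle on $\cylspace^0$ is realized on $Q$ (while the cocycle is trivial on the invariant line $\mrm{Ker}(p)\cap\cylspace^0$), Oseledets' theorem forces this ratio to be bounded below as $t\to\infty$ for Lebesgue-a.e.\ $s\in[0,1]$. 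Hence the $\lambda_t$-mass in any neighborhood of $B$ vanishes in the iterated limit, contradicting $\hat\nu(B)=c>0$.

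The main obstacle I expect is the Lebesgue-a.e.\ Oseledets transversality along the horocycle through $\omega_5$: for a.e.\ $s$, the parallel transport of $\beta$ to $u(s)\omega_5$ must have nonzero component in the top Lyapunov subspace of the cocycle on $\cylspace^0$. This ultimately rests on non-$U$-invariance of the top Lyapunov section of $Q$ on $\mathcal{D}$ (otherwise $A$-invariance and $U$-invariance would combine to produce an $\SL$-invariant line in $Q$, contradicting irreducibility), together with absolute continuity of the horocycle disintegration of $\mu_{\mathcal{D}}$ relative to the Oseledets structure, which follows from unique ergodicity of the horocycle flow on the Veech curve $\mathcal{D}$.
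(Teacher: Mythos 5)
Your argument is correct in broad outline but takes a considerably heavier route than the paper's, and the central simplification is missing. The paper's proof is a short linear-algebra argument: apply the forgetful projection $p$ to absolute cohomology. Since $\beta(t,s)$ is balanced by construction, $p(\beta(t,s))$ always lies in the absolute balanced space $\mrm{Taut}^0_q$, while by~\eqref{eq:E5=taut+rel} the image $p(T_q\mathcal{E}_5)$ is the tautological plane $\mrm{Taut}_q$; these are transverse subspaces of $H^1_{\R,\mrm{abs}}$, hence have disjoint projectivizations. Combined with $p(\beta(t,s))\neq 0$ for all $t,s$ (Lemma~\ref{lem:decagon twist outside E5} plus $\SL$-equivariance of $p$), this already places the pushed-forward measures $p_\ast\lambda_t$ on the compact set $\P\mrm{Taut}^0_\bullet\vert_\Kcal$, at positive distance from $p(\{(q,\P T_q\Ecal_5)\})\subseteq\P\mrm{Taut}_\bullet$, and the lemma follows with no cocycle dynamics whatsoever. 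Your proposal instead invokes Theorem~\ref{thm:A-invariance of flag distribution}, the unique-ergodicity results for $P$ from the appendix, semisimplicity, and an Oseledets genericity argument. The plan — classify $P$-invariant limits as a convex combination of a Dirac on $B$ and the top Lyapunov section, then rule out the Dirac piece — is defensible, but each step needs substantial verification the paper avoids: irreducibility and proximality of the $\SL$-invariant complement $Q$; triviality (or at least zero exponent) of the cocycle on $\mrm{Ker}(p)\cap\cylspace^0$, which hinges on the affine group fixing the singular set pointwise after the torsion-free reduction; and, most delicately, the final step "Oseledets forces the ratio to be bounded below a.e., hence $\lambda_t$-mass near $B$ vanishes" needs Egorov-type uniformity in $s$ to pass from a.e.\ pointwise boundedness to control of the averaged measures, which you only gesture at. You do reach the key fact $p(\beta(t,s))\neq 0$ at the end, but you use it to set up a Lyapunov argument rather than noticing that it, together with balanced-ness of $\beta(t,s)$, closes the proof directly in absolute cohomology. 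In short, what the paper treats as a one-paragraph observation about equivariant subspaces, your route proves as a theorem; both reach the conclusion, but the simpler route is both shorter and more robust.
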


\begin{proof}
Let $\babs(t,s) = p(\b(t,s))$ denote the image of $\b(t,s)$ in real absolute cohomology $H^1_{\R,\mrm{abs}}$.
Let $\l_t^{\mrm{abs}}$ denote the associated measure on projective space.
Then, $\babs \neq 0$ by Lemma~\ref{lem:decagon twist outside E5}.
Using the $\SL$-invariant decomposition $H^1_{\R,\mrm{abs}}=\mrm{Taut}_q\oplus \mrm{Taut}^0_q$ at every $q\in \Dcal$, we write $\babs(t,s) = \babs^{st}(t,s)+\babs^0(t,s)$ for its components along the tautological and balanced spaces respectively.
Then, since $\b(t,s)$ is a balanced class by construction, we have $\babs^{st}(t,s)=0$, i.e., $\babs(t,s)=\babs^0(t,s)$.
It follows that $\l_t^{\mrm{abs}}$ lives on the projectivization of the balanced bundle, and hence so do all of its weak-$\ast$ limits.

On the other hand, by~\eqref{eq:E5=taut+rel}, the image of $T_{q}\Ecal_5$ in $H^1_{\R,\mrm{abs}}$ is the tautological plane $\mrm{Taut}_q$.
Hence, since the forgetful projection $p$ induces a continuous map on projective spaces, if the lemma fails to hold, we get a weak-$\ast$ limit $\l_\infty$ of the measures $\l_t$ whose image in $\P H^1_{\R,\mrm{abs}}$ lives on the projectivized tautological bundle, which yields a contradiction.
\qedhere
\end{proof}

As a first step towards constructing our good set of tremors, for $\d_0,t>0$,  we define 
$$\mathcal{P}_{\d_0}(t) 
= \set{\Trem_\b(t,s,r):s \in [0,1], \, \w_5(t,s)\in \Kcal, \, \delta_0/2<rN_\b(t,s)<\delta_0 }.$$ 
The following lemma substitutes for Lemma \ref{lem:dist to exceptional orbit closures}.
\begin{lem}
    \label{lem:avoid eigen} 
    Let $\Ncal \subsetneq \mathcal{H}(1,1)$ be a proper orbit closure. 
    Then, for all sufficiently small $\d_0>0$, if $\eta>0$ is sufficiently small,  then the set
     \begin{align}\label{eq:Pcal_delta cap Ncal(eta)}
     \set{s\in [0,1]: \Trem_\b(t,s,r) \in \Pcal_{\d_0}(t) \cap \Ncal(\d_0) \text{ for some } r \text{ with } \d_0/2 < r N_\b(t,s)<\d_0 }
 \end{align}
 has Lebesgue measure $<\e$ for all large enough $t$, where $\Ncal(\eta)$ is the $\eta$-neighborhood of $\Ncal$.
 \end{lem}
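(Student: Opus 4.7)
The plan is to proceed by cases on $\Ncal$ relative to the Veech curve $\Dcal$. First, if $\Dcal\not\subset\Ncal$, then the closed $\SL$-orbit $\Dcal$ is disjoint from the $\SL$-invariant closed set $\Ncal$, so $d:=\dAGY(\Kcal,\Ncal)>0$ by compactness of $\Kcal\subset\Dcal$. Since every point of $\Pcal_{\d_0}(t)$ lies within AGY distance $O(\d_0)$ of $\Kcal$ (via the Lipschitz control of tremors, cf.~Proposition~\ref{prop:stable exp lipschitz}), the set in~\eqref{eq:Pcal_delta cap Ncal(eta)} becomes empty once $\d_0$ and $\eta$ are smaller than $d/100$. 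If instead $\Dcal\subset\Ncal$, then by McMullen's classification~\cite{McMullen-ClassificationGenusTwo} of $\SL$-orbit closures in $\Hcal(1,1)$ either $\Ncal=\Dcal$ or $\Ncal=\Ecal_5$, and these two subcases must be analyzed individually.

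In both remaining subcases the strategy is the same: quantify the transversality of the tremor direction $\b(t,s)$ to the horizontal part of $T_{\w_5(t,s)}\Ncal$, and combine this with a linearization of the tremor map, in the style of the proof of Lemma~\ref{lem:tremors are Lipschitz}, to obtain $\dAGY(\Trem_\b(t,s,r),\Ncal)\gtrsim\d_0\sin\th$, where $\th$ is the projective angle between $\b(t,s)$ and the horizontal part of $T_{\w_5(t,s)}\Ncal$. For $\Ncal=\Dcal$, this is straightforward: the horizontal part of $T_q\Dcal$ is contained in $\mrm{Taut}_q$, while $\SL$-invariance of the balanced bundle places $\b(t,s)$ in $\mrm{Taut}^0_{\w_5(t,s)}$, and $\mrm{Taut}_q\cap\mrm{Taut}^0_q=0$ because $\mrm{Taut}_q$ is symplectically non-degenerate. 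Compactness of $\Kcal$ then gives a uniform positive angle $\th_0$, and choosing $\eta<\d_0\sin\th_0/2$ confines the exceptional set to $\{s:\w_5(t,s)\notin\Kcal\}$, which has Lebesgue measure $\leq\e/2$ by~\eqref{eq:matching proof cpt set}.

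The case $\Ncal=\Ecal_5$ is the main point and requires Lemma~\ref{lem:avoid linear eigen}, because the horizontal tangent space $T_q\Ecal_5\cap H^1_\R=\mrm{Taut}_q\oplus\mrm{Ker}(p)$ contains $\mrm{Ker}(p)$, which itself sits inside $\mrm{Taut}^0_q$, so uniform transversality fails along the purely relative subspace. Here I would fix $\th>0$, let $B_\th\subset\P\mrm{Taut}^0_\bullet|_{\Kcal}$ denote the projective $\th$-neighborhood of $\bigcup_{q\in\Kcal}\P T_q\Ecal_5$, and use weak-$\ast$ compactness of the probability measures $\l_t$ on this compact projective bundle, together with Lemma~\ref{lem:avoid linear eigen}, to produce $\th_1>0$ and $t_1>0$ so that $\l_t(B_{\th_1})<\e/2$ for all $t\geq t_1$. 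Outside $B_{\th_1}$ the angle bound $\th_1$ substitutes for $\th_0$ in the previous paragraph, and choosing $\eta$ correspondingly small confines the exceptional set to $B_{\th_1}\cup\{s:\w_5(t,s)\notin\Kcal\}$, which has Lebesgue measure $<\e$ for large $t$.

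The main obstacle I anticipate is upgrading the pointwise statement of Lemma~\ref{lem:avoid linear eigen} to the open-neighborhood bound $\l_t(B_{\th_1})<\e/2$. This should follow from weak-$\ast$ compactness together with the observation, already inherent in the proof of Lemma~\ref{lem:avoid linear eigen}, that pushing a weak-$\ast$ limit forward by the forgetful projection $p$ produces a measure on $\P H^1_{\R,\mrm{abs}}$ living on the absolute balanced bundle; any limiting mass concentrated near $\bigcup_q \P T_q\Ecal_5$ would be forced to project onto the tautological bundle, contradicting this support condition, modulo a careful handling of subsequences whose absolute images degenerate in norm.
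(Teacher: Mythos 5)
Your proposal is correct and follows essentially the same route as the paper: split on whether $\Dcal\subseteq\Ncal$, dispose of the easy cases (which include $\Ncal=\Dcal$, which you treat separately but by the identical transversality argument) via a positive-distance argument as in Lemma~\ref{lem:dist to exceptional orbit closures}, and for $\Ncal=\Ecal_5$ combine Lemma~\ref{lem:avoid linear eigen} with continuity of the tremor map and compactness of $\Kcal$ to pass from a projective-angle bound to an $\dAGY$-distance bound. The ``main obstacle'' you flag---upgrading the pointwise conclusion of Lemma~\ref{lem:avoid linear eigen} to the open-neighborhood bound $\l_t(U)<\e$ for $t$ large---is exactly the step the paper also passes over lightly when it asserts ``we can find an open neighborhood $U$\dots'', and your sketch of deriving it from weak-$\ast$ compactness together with the forgetful-projection support argument is the intended mechanism.
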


\begin{proof}
 
Lemma \ref{lem:dist to exceptional orbit closures} treats the possibility $\Ncal=\mathcal{D}$, as well as any proper orbit closure that does not contain $\Dcal$.
Indeed, this Lemma shows that if $\d_0>0$ is sufficiently small, then in fact $\cup_{t>0}\Pcal_{\d_0}(t)$ lies at positive distance from $\Ncal$.
In particular, taking $\eta$ to be half this positive distance, we get that the exceptional set in~\eqref{eq:Pcal_delta cap Ncal(eta)} is empty in those cases.

It remains to treat the case $\mathcal{D} \subsetneq \mathcal{N}\subsetneq \mathcal{H}(1,1)$.
As noted before, McMullen's classification gives that $\Ncal=\Ecal_5$.
Let $\l_t$ be the measures in Lemma~\ref{lem:avoid linear eigen}, and consider the map
\begin{equation}\label{eq:cont map}
(s,\ell) \mapsto \Trem_\beta(t,s,\ell/N_\b(t,s)).
\end{equation}
By Lemma~\ref{lem:avoid linear eigen}, we can find an open neighborhood $U$ of $\set{(q,\P T_q\Ecal_5):q\in \Dcal}$ so that $\l_t(U)<\e$ for all large enough $t$.
Since the map~\eqref{eq:cont map} is continuous and $\Kcal$ is compact, there exist $\delta_0,\eta>0$ so that if $\w_5(t,s) \in \mathcal{K}$ and 
$$(\w_5(t,s),\b(t,s)/N_\b(t,s)) \notin U,$$
then for all $\delta_0/2<r<\delta_0$, we have $\dAGY(\Trem_\beta(t,s,r/N_\b(t,s)), \Ecal_5)>\eta$. 
This proves the lemma.
\qedhere
\end{proof}

Next, let $\Ncal_1, \dots, \Ncal_k\subsetneq \Hcal(1,1)$ denote the finite collection of exceptional orbit closures produced using Theorem~\ref{thm:uniform Banach density one} as in \textsection\ref{sec:applying Banach density}.
Given $\d_0,\eta>0$, we define
\begin{align*}
     \Rcal^\Dcal_{\d_0,\eta} \stackrel{\mrm{def}}{=} \bigcup_{t>0} \Pcal_{\d_0}(t) \setminus  \bigcup_{i=1}^k\Ncal_i(\eta),
\end{align*}
where $\Ncal_i(\eta)$ denotes the $\eta$-neighborhood of $\Ncal_i$.
We now show that Lemma \ref{lem:producing sequence of approximating horocycles} holds with $\Rcal^\Dcal_{\d_0,\eta}$ in place of $\Rcal_{\d_0}$.
Recall the notation of that lemma.

\begin{lem}
    For all sufficiently small $\d_0>0$ and $\eta>0$,
    the statement of Lemma~\ref{lem:producing sequence of approximating horocycles} holds with $\Rcal_{\d_0 \eta}^\Dcal$ in place of $\Rcal_{\d_0}$ for all large enough $t$.
\end{lem}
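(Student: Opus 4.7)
The plan is to adapt the argument of Lemma~\ref{lem:producing sequence of approximating horocycles} by imposing the additional requirement that the chosen interval $I$ contains a point $s_0$ whose tremor avoids the $\eta$-neighborhoods of every exceptional orbit closure $\Ncal_i$. Set $t' := t - T - L_0$ and decompose $[0,1]$ into intervals of length $1/\lceil e^{2t'}\rceil$ exactly as in the original. The same measure-theoretic averaging argument, using $|S_\ell| \geq 1-\e$, shows that the union of intervals $I$ for which $\sum_{\ell\in[0,L_0]\cap\N} |I\cap S_\ell| < (1-\sqrt{\e}) L_0 |I|$ has total length at most $\sqrt{\e}$.

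Next, I would control the ``bad'' parameter set
\[
B_{t'} := \set{s\in[0,1]: \exists\, r \text{ with } \d_0/2 < r N_\b(t',s) < \d_0 \text{ and } \Trem_\b(t',s,r)\in \textstyle\bigcup_{i=1}^k \Ncal_i(\eta) }.
\]
For those $\Ncal_i$ that do not contain $\Dcal$, Lemma~\ref{lem:dist to exceptional orbit closures} provides, for sufficiently small $\d_0$, a positive lower bound on the AGY distance between $\bigcup_{t>0}\Pcal_{\d_0}(t)$ and $\Ncal_i$, so choosing $\eta$ smaller than half this bound rules these orbit closures out entirely. By McMullen's classification, the only other possibility is $\Ncal_i = \Ecal_5$, and Lemma~\ref{lem:avoid eigen} furnishes a compatible choice of $\d_0,\eta$ for which the contribution of $\Ecal_5$ to $B_{t'}$ has measure $<\e$ for all large $t'$. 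Thus $|B_{t'}|<\e$ uniformly in $t'$ sufficiently large.

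Combining the three small-measure sets ($S_0^c$, $B_{t'}$, and the union of intervals violating the averaging bound), for $\e$ small enough one can find an interval $I$ of length $1/\lceil e^{2t'}\rceil$ such that $I\cap S_0 \cap B_{t'}^c$ has positive measure and the averaging bound $\sum_\ell |I\cap S_\ell| \geq (1-\sqrt{\e}) L_0 |I|$ holds. Picking any $s_0$ in this intersection together with $r\in(\d_0/(2N_\b(t',s_0)),\d_0/N_\b(t',s_0))$ yields $\Trem_\b(t',s_0,r)\in \Rcal^\Dcal_{\d_0,\eta}$ by construction. The verifications of items~\eqref{item:good indices} and~\eqref{item:landing near torus} from Lemma~\ref{lem:producing sequence of approximating horocycles} then carry over verbatim, as the bounded distortion of $N_\b(t',\cdot)$ over $I$ and the application of Proposition~\ref{prop:key matching} do not rely on $\Mcal$-primitivity at all.

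The principal obstacle --- ruling out that tremors escape into $\Ecal_5$ --- has already been resolved by Lemma~\ref{lem:avoid eigen}, whose content is ultimately the observation in Lemma~\ref{lem:decagon twist outside E5} that the forgetful image of a nonzero balanced twist class is a nonzero element of $\mrm{Taut}^0_{\w_5}$, hence transverse to the image $\mrm{Taut}$ of $T\Ecal_5$ in absolute cohomology. The present lemma is then essentially a bookkeeping consolidation: assemble the three small-measure exceptional sets, pigeonhole to find a good interval, and invoke the original closeness estimate.
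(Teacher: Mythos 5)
Your proof is correct and follows essentially the same route as the paper's: rerun the pigeonhole argument from Lemma~\ref{lem:producing sequence of approximating horocycles}, additionally intersecting with the complement of the small-measure exceptional set controlled by Lemma~\ref{lem:avoid eigen} (the paper calls this complement $\mathrm{Good}(t-T-L_0)$), and observe that the remaining items are formal consequences of Proposition~\ref{prop:key matching} with no primitivity needed. One small imprecision: Lemma~\ref{lem:avoid eigen} bounds the measure of $s$ for which $\Trem_\b(t',s,r) \in \Pcal_{\d_0}(t')\cap\Ncal_i(\eta)$, which carries the built-in requirement $\w_5(t',s)\in\Kcal$, so the claim $|B_{t'}|<\e$ for your set $B_{t'}$ (which omits the $\Kcal$ condition) is not quite what the lemma supplies --- but since you ultimately intersect with $S_0$, on which membership in $\Kcal$ is automatic by Proposition~\ref{prop:key matching}\eqref{item:key match in cpt} with $\ell=0$, one has $S_0\cap B_{t'} = S_0\cap\tilde{B}_{t'}$ for the $\Kcal$-restricted set $\tilde{B}_{t'}$, and the measure count goes through unchanged.
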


\begin{proof}
 Parts~\eqref{item:good indices} and~\eqref{item:landing near torus} of Lemma~\ref{lem:producing sequence of approximating horocycles} are formal consequences of Proposition~\ref{prop:key matching}.
Part \eqref{item:s_0 in Rcal_delta0} also holds with the same argument after removing a small measure set coming from Lemma~\ref{lem:avoid eigen} as we now describe.

    First, we assume that $\d_0$ and $\eta$ are sufficiently small so that Lemma \ref{lem:avoid eigen} holds for $\Ncal= \Ncal_i$, for all $1\leq i\leq k$.
 Let $\mrm{Good}(t)\subseteq [0,1]$ be the complement of the set in \eqref{eq:Pcal_delta cap Ncal(eta)}.
Recall that the compact set $\Kcal\subset \Dcal$ was provided by Proposition~\ref{prop:key matching}. Then, Part \eqref{item:key match in cpt} of that proposition, applied with $\ell=0$, implies that $\w_5(t-T-L_0,s)$ belongs to $\Kcal$, for all but a set of $s\in [0,1]$ of measure at most $\e$.
Together with Lemma~\ref{lem:avoid eigen}, this implies that $\mrm{Good}(t-T-L_0)$ has measure at least $1-2\e$.
Hence, Part \eqref{item:s_0 in Rcal_delta0} of Lemma \ref{lem:producing sequence of approximating horocycles} follows by a very similar argument, but where we pick our interval $I$ so that $I\cap S_0 \cap \mrm{Good}(t-T_0-L_0) \neq \emptyset$.
The latter can be arranged since $S_0 \cap \mrm{Good}(t-T_0-L_0) $ has measure at least $1-3\e$.
 \qedhere
\end{proof}

The rest of the proof of Theorem~\ref{thm:decagon dense} now follows exactly as in \textsection\ref{sec:end of density proof} with the set $\Rcal^\Dcal_{\d_0,\eta}$ defined above in place of $\Rcal_{\d_0}$ defined in~\eqref{eq:Rcal_delta0}.

\section{Limiting Distributions of Output Directions}
\label{sec:appendix}

In this section, we show how Theorem~\ref{thm:A-invariance of flag distribution} can be used in the presence of natural additional hypotheses on the cocycle to establish uniqueness of the limit of the measures in~\eqref{eq:exp horocycles skewprods} as $t\r\infty$. 
The additional hypotheses we consider are either that the image of the representation is bounded (Theorem~\ref{thm:convergence of flag distribution compact}), or is proximal and irreducible (Theorem~\ref{thm:convergence of flag distribution proximal}).
In fact, we identify the limiting distribution in these two cases.

\subsection{Bounded representations}
\label{sec:compact monodromy}
The goal of this section is to outline a strengthening of Theorem~\ref{thm:A-invariance of flag distribution} under the added hypothesis on the image of the lattice $\Gamma$ landing in a compact group $K$.
In fact, we prove the following stronger statement regarding convergence of the distribution of the values of the cocycle as a measure on the compact group containing its image.
Note that, in this case, the cocycle induces a skew product action on $\Vcal\times_\Gamma K \stackrel{\mrm{def}}{=} (\SL\times K)/\Gamma$, where the action on the second factor is by left multiplication.

\begin{thm}\label{thm:convergence of flag distribution compact}
Assume that the image of the representation of $\Gamma$ is bounded, and let $K$ denote the smallest compact group containing its image.
Then, for all $(x,k)\in \Vcal\times_\Gamma K$, and all $f \in C_c(\Vcal\times_\Gamma K)$,
        \begin{align*}
            \lim_{t\to\infty} \int_0^1 f(g_tu(s) \cdot (x,k))\;ds = \int f\;d \mu_\Vcal \otimes m_K,
        \end{align*}
    where $\mu_\Vcal$ is the $\SL$-invariant probability measure on $\Vcal$ and $m_K$ is the Haar probability measure on $K$.
\end{thm}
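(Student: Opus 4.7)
\medskip

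The plan is to deduce Theorem~\ref{thm:convergence of flag distribution compact} from Proposition~\ref{prop:meas class on suspension} combined with an entropy-based rigidity upgrade and a classification of $G$-invariant measures on the suspension.

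First, I would verify tightness. Since $K$ is compact and the projections of the measures $\mu_t := \int_0^1 \delta_{g_tu(s)(x,k)}\,ds$ to $\Vcal$ are the standard expanding-horocycle averages on the finite-volume quotient $\Vcal$, which are known to equidistribute to the probability measure $\mu_\Vcal$, there is no escape of mass. Hence any weak-$\ast$ subsequential limit $\hat{\nu}$ is a probability measure on $\Vcal\times_\Gamma K$ whose projection to $\Vcal$ is $\mu_\Vcal$. Since $\rho(\Gamma)$ is bounded, the boundedness hypothesis \eqref{eq:bounded cocycle} of Theorem~\ref{thm:A-invariance of flag distribution} is trivially satisfied (the cocycle is an isometry on each fiber). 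Consequently, Proposition~\ref{prop:meas class on suspension}, applied to almost every $U$-ergodic component of $\hat{\nu}$, yields that $\hat{\nu}$ is $P = AU$-invariant.

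The main step is to upgrade $P$-invariance to full $\SL$-invariance; this is exactly the content of Proposition~\ref{prop:SL-invariance} referenced in the paper. The mechanism, which I would follow, is a Margulis--Tomanov style entropy argument: the element $g_1$ expands the $U$-direction with Jacobian $e^2$, and since the $K$-action on fibers is by isometries the cocycle contributes no positive Lyapunov exponents. Thus the Margulis--Ruelle inequality gives $h_{g_1}(\hat{\nu}) \leq 2$. On the other hand, $U$-invariance of $\hat{\nu}$ forces its conditional measures along $U$-orbits to be Haar, and combined with the fact that $\hat{\nu}$ projects to $\mu_\Vcal$ (which has $g_1$-entropy $2$), one obtains $h_{g_1}(\hat{\nu}) = 2$. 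This saturation of the entropy bound forces the conditionals of $\hat{\nu}$ along the full unstable manifold for $g_1$ to be Haar, which in the classical Margulis--Tomanov framework implies invariance under the opposite horocycle $U^-$. Since $U^-$ together with $P$ generates $\SL$, this yields $\SL$-invariance of $\hat{\nu}$.

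It remains to classify $\SL$-invariant probability measures on $\Vcal\times_\Gamma K$ projecting to $\mu_\Vcal$. Lifting to $\SL\times K$, any such measure must be of the form $dg \otimes \lambda$ for some probability measure $\lambda$ on $K$, and right $\Gamma$-invariance under the diagonal action $(g,k)\cdot\gamma = (g\gamma, \rho(\gamma)^{-1}k)$ forces $\lambda$ to be $\rho(\Gamma)$-invariant. By minimality of $K$ as the compact group containing $\overline{\rho(\Gamma)}$, the only such $\lambda$ is the Haar probability $m_K$; hence $\hat{\nu} = \mu_\Vcal \otimes m_K$. Since every subsequential limit is this common probability measure, the full family $\mu_t$ converges to $\mu_\Vcal \otimes m_K$ as $t\to\infty$, uniformly over $(x,k)$ after standard continuity considerations, completing the proof. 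The main obstacle is the entropy upgrade in the second step, for which I would carefully adapt the Margulis--Tomanov framework to our fiber-bundle setting, using crucially that the bounded cocycle contributes no Lyapunov exponents to the $g_1$-action.
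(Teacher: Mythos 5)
Your proposal is correct and follows essentially the same route as the paper's: (i) tightness and non-escape of mass since $K$ is compact and the base averages equidistribute; (ii) $U$-invariance is automatic and $A$-invariance follows from Theorem~\ref{thm:A-invariance of flag distribution}; (iii) a Margulis--Tomanov / Ledrappier entropy argument upgrades $P$-invariance to full $\SL$-invariance, which is exactly Proposition~\ref{prop:SL-invariance}; and (iv) $\SL$-invariant probability measures on $\Vcal\times_\Gamma K$ projecting to $\mu_\Vcal$ are classified as $\mu_\Vcal\otimes m_K$ using $\Gamma$-equivariance of the fiber conditionals (the paper phrases this via the stabilizer $g\Gamma g^{-1}$ of $x$ acting on the fiber by $\rho(\Gamma)$-translations; your lift-to-the-cover formulation is the same computation).

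One small slip in the entropy step: the unstable manifold of $g_1$ is the $U$-orbit, so saturation of $h_{g_1}(\hat\nu) = 2$ with conditionals along the \emph{unstable} leaf being Haar only re-derives the $U$-invariance you already have. To obtain invariance under the opposite horocycle $U^-$ you must apply the same saturation argument to $g_1^{-1}$ (equivalently, use the stable manifolds of $g_1$, as the paper's sketch does via the conditional entropy along $U^-$-orbits being $2t$ and equal to the contraction rate). This is a wording fix, not a conceptual gap; the underlying idea is the one the paper uses.
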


In addition to Theorem~\ref{thm:A-invariance of flag distribution}, we need the following result which follows from entropy considerations and the fact that the action on the fibers is isometric. The method of proof is well-known and goes back to the proof of Ratner's theorems~\cite{Ratner-measure,Ratner-orbit} due to Margulis and Tomanov~\cite{MargulisTomanov}. It has also been applied in many other works on measure rigidity since.
\begin{prop}\label{prop:SL-invariance}
    Let $\hat{\nu}$ be an $A$-invariant Borel measure which projects to $\mu_\Vcal$. Then, $\hat\nu$ is $\SL$-invariant.
\end{prop}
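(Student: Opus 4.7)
The plan is to combine the Margulis--Ruelle inequality with the Margulis--Tomanov entropy rigidity argument to upgrade $A$-invariance to invariance under both opposite horocycle subgroups $U$ and $U^-$. Since $\SL$ is generated by $U$, $A$, and $U^-$, $\SL$-invariance follows. A key observation throughout is that the suspension $\Vcal \times_\Gamma K = (\SL \times K)/\Gamma$ is itself a homogeneous quotient of the Lie group $\SL\times K$ by the lattice $\Gamma$ embedded via the graph homomorphism $\gamma\mapsto(\gamma,\rho(\gamma))$. This allows the classical homogeneous entropy arguments of Margulis--Tomanov to be invoked essentially without modification, with the geodesic flow on the suspension corresponding to left multiplication by $(g_t,\mathrm{Id})$.

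First, I would compute the Lyapunov spectrum of $g_1$ with respect to $\hat\nu$. Because $K$ is compact, the cocycle acts by isometries on the fibers, so the Lyapunov exponents arising from the fiber direction all vanish. Combined with the standard spectrum $\{2,0,-2\}$ of $g_1$ on $\Vcal$, the Margulis--Ruelle inequality yields
\[
h_{\hat\nu}(g_1) \;\leq\; 2.
\]
Conversely, since $\hat\nu$ projects to $\mu_\Vcal$, the factor-map inequality gives
\[
h_{\hat\nu}(g_1) \;\geq\; h_{\mu_\Vcal}(g_1) \;=\; 2,
\]
where the final equality is the fact that $\mu_\Vcal$ realizes the topological entropy of the geodesic flow on $\Vcal$. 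Hence $h_{\hat\nu}(g_1) = 2$, saturating the Margulis--Ruelle bound.

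Next, I would invoke the Margulis--Tomanov characterization of measures attaining equality in Margulis--Ruelle: saturation forces the conditional measures of $\hat\nu$ along the strong unstable foliation of $g_1$ to coincide with Haar measure on the unstable horospherical subgroup. Because fiber exponents vanish, this unstable subgroup inside $\SL\times K$ is precisely $U\times\{e\}$, whose action on the suspension is the lift of the $U$-action on $\Vcal$. Haar conditionals along $U$-orbits are exactly the statement that $\hat\nu$ is $U$-invariant. Running the same argument with $g_{-1}$ in place of $g_1$, whose unstable horospherical subgroup in $\SL\times K$ is $U^-\times\{e\}$, gives $U^-$-invariance. Together with $A$-invariance, this yields invariance under a set of generators of $\SL$, hence $\SL$-invariance.

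The principal technical obstacle is ensuring that the Margulis--Tomanov entropy rigidity statement, originally formulated for homogeneous quotients of semisimple groups, applies in our skew-product setting. As noted above, this is resolved by realizing $\Vcal\times_\Gamma K$ itself as a homogeneous quotient of $\SL\times K$ by a lattice (using that $K$ is compact so the diagonal embedding of $\Gamma$ is still discrete with finite covolume), reducing the analysis to the classical case. Compactness of $K$ is thus used in two crucial ways: to make $\Gamma$ a lattice in $\SL\times K$, and to ensure that no Lyapunov exponent from the fiber contributes to the unstable decomposition of $g_{\pm 1}$.
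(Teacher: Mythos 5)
Your proof is correct and takes essentially the same route as the paper's: both establish $h_{\hat\nu}(g_1)=2$ from the isometric fiber action (you via Margulis--Ruelle together with the factor inequality, the paper via the Abramov--Rokhlin reduction to base entropy) and then invoke Margulis--Tomanov/Ledrappier-type entropy rigidity to upgrade saturation of the entropy bound to invariance under both horospherical subgroups. Your observation that $\Vcal\times_\Gamma K$ is itself a homogeneous quotient of $\SL\times K$ by the graph lattice is a clean way to justify applying the classical homogeneous statements directly, whereas the paper cites more general conditional-entropy lemmas not requiring the homogeneous structure, but this is packaging rather than a genuinely different argument.
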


\begin{proof}[Sketch of Proof of Proposition~\ref{prop:SL-invariance}]
    First note that it suffices to prove the statement under the assumption that $\hat{\nu}$ is $A$-ergodic.
    Indeed, by $A$-ergodicity of $\mu_\Vcal$, the image of almost every $A$-ergodic component of $\hat{\nu}$ under the projection to $\Vcal$ is $\mu_\Vcal$.
    
    Let $U^-\subset\SL$ be the subgroup of lower triangular unipotent matrices and $t>0$.
        Since the action on fibers is isometric, the measure theoretic entropy of $\hat\nu$ with respect to $g_t$ agrees with the metric entropy of $\mu_\Vcal$, which is $2t$.
        The isometric action on the fibers also implies that the stable manifolds of $g_t$ are given by orbits of $U^-$.
        It follows by~\cite[Eq (8.3)]{Brown-ZimmerNotes-arXiv} that the conditional entropy along those orbits is also equal to $2t$, which in turn agrees with the Lyapunov exponent (rate of contraction in this case) along the $U^-$-orbits.
        By work of Ledrappier~\cite{Ledrappier-EntropyInvariance}, it follows that $\hat\nu$ is invariant by $U^-$; cf.~\cite[Theorem 9.5]{Brown-ZimmerNotes-arXiv} for the precise statement and~\cite{EinsiedlerLindenstrauss-ClayNotes} for an exposition of the theory of entropy, conditional measures, and invariance in the context of homogeneous dynamics.
        
        Similarly, since the entropy of $g_t$ is the same as that of $g_t^{-1}$, we conclude that $\hat\nu$ is $U$-invariant.
        Since $U$ and $U^-$ generate $\SL$, this implies that $\hat{\nu}$ is $\SL$-invariant and concludes the proof.
\end{proof}

We are now ready for the proof of Theorem~\ref{thm:convergence of flag distribution compact}.

\begin{proof}[Proof of Theorem~\ref{thm:convergence of flag distribution compact}]
Let $\hat\nu$ be as in the statement of the theorem.
By Theorem~\ref{thm:A-invariance of flag distribution} and Proposition~\ref{prop:SL-invariance}, $\hat\nu$ is $\SL$-invariant\footnote{Theorem~\ref{thm:A-invariance of flag distribution} concerns distributions on  the projective bundle $\P\hV$, however the same proof works in the setting of Theorem~\ref{thm:convergence of flag distribution compact}. Indeed, the key sub-polynomial divergence estimate, Lemma~\ref{lem:subpolynomial divergence}, is immediate in this case since the action on the fiber is isometric.}.
Let $\hat\nu_x$ be a system of conditional measures for $\hat\nu$ on $K$ with respect to the projection to $\Vcal$. 
Since $\hat{\nu}$ is $\SL$-invariant and projects to Haar measure on $\Vcal$, uniqueness of the conditional measures $\hat\nu_x$ implies they are equivariant with respect to the $\SL$-action, i.e., for all $g\in\SL$, $\hat\nu_{gx} = B(g,x)_\ast \hat\nu_x$, where we recall that $B(g,x)$ denotes the map from the fiber over $x$ to the fiber over $gx$ induced from left multiplication by $g\in\SL$.
By considering the set of $g\in\SL$ such that $gx=x$, it follows that $\hat\nu_x$ is $K$-invariant and is independent of $x$.
It follows that $\hat\nu = \mu_\Vcal \otimes m_K$, where $\mu_\Vcal$ and $m_K$ are the Haar measures on $\Vcal$ and $K$ respectively. This concludes the proof.
\end{proof}

\subsection{Proximal and irreducible representations}
\label{sec:proximal}

Recall that the cocycle $B(g_t,x)$ is said to be \textit{proximal} with respect to an $A$-invariant measure $\mu$ on $\Vcal$ if its top Lyapunov exponent with respect to $\mu$ is simple, i.e., the corresponding top Lyapunov space has dimension $1$.
In this section, we use Theorem~\ref{thm:A-invariance of flag distribution} to show that proximality and irreducibility guarantee that expanding horocycle arcs on the projective bundle $\P\hV$ converge to a unique limiting measure, with atomic disintegration along fibers of the projection $\P\hV\r \Vcal$.

\begin{thm}\label{thm:convergence of flag distribution proximal}
Assume that the representation of $\Gamma$ is irreducible and that the cocycle $ B(g_t,x)$ is proximal with respect to the $\SL$-invariant measure $\mu_\Vcal$ on $\Vcal$.
Then, there exists a Borel probability measure $\hat{\nu}$ on $\P\hV$ which projects to $\mu_\Vcal$, so that for all $(x,v)\in \P\hV$, and all $f \in C_c(\P\hV)$, 
        \begin{align}\label{eq:proximal congergence of flags}
            \lim_{t\to\infty} \int_0^1 f(g_tu(s) \cdot (x,v))\;ds = \int f\;d\hat{\nu}.
        \end{align}
\end{thm}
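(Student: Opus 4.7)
My plan is to reduce Theorem~\ref{thm:convergence of flag distribution proximal} to uniqueness of a $P$-invariant probability measure on $\P\hV$ projecting to $\mu_\Vcal$, then to construct such a measure using the Oseledets decomposition and identify it as the unique one via proximality and irreducibility. For any $(x,v)\in\P\hV$, the measures $\mu_t^{(x,v)} := \int_0^1 \d_{g_tu(s)\cdot(x,v)}\,ds$ admit weak-$*$ accumulation points by compactness of the fibers together with tightness in the base. The identity $u(r)g_t = g_tu(e^{-2t}r)$ renders any such limit $U$-invariant, and Theorem~\ref{thm:A-invariance of flag distribution} yields $A$-invariance, hence $P$-invariance. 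Equidistribution of expanding horocycles on the finite-volume quotient $\Vcal$ forces any limit to project to $\mu_\Vcal$. Therefore, if a \emph{unique} $P$-invariant probability measure $\hat\nu$ on $\P\hV$ projecting to $\mu_\Vcal$ exists, all accumulation points of $\set{\mu_t^{(x,v)}}$ coincide with $\hat\nu$, upgrading to full weak-$*$ convergence for every $(x,v)$.

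To construct $\hat\nu$, I would apply Oseledets' theorem to the cocycle $B(g_t,\cdot)$ over $(\Vcal,g_t,\mu_\Vcal)$: proximality gives a simple top Lyapunov exponent and an associated measurable, $A$-equivariant, $1$-dimensional sub-bundle $E^+\subset\hV$ defined $\mu_\Vcal$-a.e. The natural candidate is
\begin{align*}
\hat\nu := \int_\Vcal \d_{[E^+(x)]}\,d\mu_\Vcal(x),
\end{align*}
whose $A$-invariance is immediate from the $A$-equivariance of $E^+$. For $U$-invariance, I would argue that $[E^+(x)]$ depends only on the weak-unstable class of $x$ under $g_{-t}$; concretely, applying Lemma~\ref{lem:subpolynomial divergence} to pairs of points on the same $U$-orbit whose fiber components sit in the top Lyapunov direction shows those directions coalesce projectively under $B(g_t,\cdot)$, forcing $[E^+(u(s)x)] = B(u(s),x)\cdot[E^+(x)]$ for $\mu_\Vcal$-a.e.\ $x$ and every $s$.

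For uniqueness, let $\hat\mu$ be any $P$-invariant probability measure on $\P\hV$ projecting to $\mu_\Vcal$, with fiber disintegration $\hat\mu = \int \mu_x\,d\mu_\Vcal(x)$. $A$-equivariance of the disintegration gives $B(g_t,x)_*\mu_x = \mu_{g_tx}$ for $\mu_\Vcal$-a.e.\ $x$ and every $t\in\R$. Using the Oseledets splitting $V_x = E^+(x)\oplus F^-(x)$ together with proximality, for every $[v]\in\RP_x\setminus\RP(F^-(x))$ one has $\dist\bigl([B(g_t,x)v],[E^+(g_tx)]\bigr)\to 0$ as $t\to\infty$, and combined with $A$-equivariance this forces $\mu_x|_{\RP_x\setminus\RP(F^-(x))}$ to be concentrated on $[E^+(x)]$ for a.e.\ $x$. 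The remaining step, which I expect to be the main obstacle, is to rule out positive $\mu_x$-mass on $\RP(F^-(x))$ on a positive-$\mu_\Vcal$-measure set of $x$: such mass would yield, after normalization, a $P$-invariant probability measure supported on the measurable, $A$-equivariant sub-bundle $\set{\RP(F^-(x)) : x\in\Vcal}$ of proper dimension. This is where the irreducibility hypothesis must be used: via the boundary and invariance-principle machinery of~\cite{BonattiEskinWilkinson} (or the classical Furstenberg-Guivarc'h-Raugi theory for proximal irreducible cocycles), any such sub-bundle-supported $P$-invariant measure must come from a $\rho(\G)$-invariant proper linear subspace of $\R^{d+1}$, contradicting irreducibility of $\rho$. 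Thus $\mu_x = \d_{[E^+(x)]} = \hat\nu_x$ for $\mu_\Vcal$-a.e.\ $x$, so $\hat\mu = \hat\nu$, and convergence in~\eqref{eq:proximal congergence of flags} follows.
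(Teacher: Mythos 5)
Your overall architecture matches the paper's: reduce to unique ergodicity of the $P$-action on $\P\hV$ (weak-$\ast$ limits are automatically $U$-invariant, Theorem~\ref{thm:A-invariance of flag distribution} supplies $A$-invariance, projection to $\mu_\Vcal$ is forced), then use proximality and irreducibility to identify the fiberwise conditionals as the Dirac mass on the top Lyapunov line. Two points of divergence are worth flagging.

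First, your explicit construction of the candidate $\hat\nu = \int_\Vcal \delta_{[E^+(x)]}\,d\mu_\Vcal(x)$ is unnecessary work: since you have already shown that every weak-$\ast$ accumulation point of the family $\int_0^1 \delta_{g_t u(s)\cdot(x,v)}\,ds$ is a $P$-invariant probability measure projecting to $\mu_\Vcal$, existence is free from compactness, and all you need is uniqueness. The paper skips the construction for this reason. (Your $U$-equivariance argument for $E^+$ is also stated loosely — the cleaner route is to observe that $E^+(x)$ is the bottom piece of the \emph{backward} Oseledets filtration, which is invariant under the forward-unstable, i.e.\ $U$-, holonomy — but since the construction is redundant this is a side issue.)

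Second, and more importantly, you correctly identify the crux — ruling out positive conditional mass on the slower-growth sub-bundle $\RP(F^-(x))$ — but then outsource it to the "invariance-principle machinery of~\cite{BonattiEskinWilkinson}" or Furstenberg–Guivarc'h–Raugi theory. That is precisely the reliance the paper is trying to avoid: Remark~\ref{remark on A-inv thm} explicitly notes that the conclusion follows from~\cite{BonattiEskinWilkinson}, and states that Theorem~\ref{thm:convergence of flag distribution proximal} is meant as a \emph{short self-contained proof} of this special case. The paper's substitute for the black box is Lemma~\ref{lem:nonconc on Pminus bundles}: any $P$-invariant probability measure projecting to $\mu_\Vcal$ assigns zero mass to every proper measurable $P^-$-invariant sub-bundle. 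Its proof is elementary — pass to a $P$-ergodic component (which is automatically $A$-ergodic), take a minimal-dimensional sub-bundle of positive (hence full) measure, show it is $\SL$-invariant a.e., and then invoke Zimmer's cocycle reduction lemma~\cite[Lemma 5.2.11]{Zimmer1984} to extract a $\rho(\G)$-invariant proper subspace, contradicting irreducibility. Since $\hV^{<\l_1}$ is a proper measurable $P^-$-invariant sub-bundle, the lemma applies directly. Your proposal would be complete if you supplied an argument of this kind rather than citing~\cite{BonattiEskinWilkinson}; as written, it proves a weaker statement (that the result follows from known machinery) than what the paper actually establishes.
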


\subsubsection{Irreducibility and non-concentration on $P^-$-invariant sub-bundles}

Recall that $P=AU$ (resp. $P^-=AU^-$) is the subgroup of upper (resp. lower) triangular matrices in $\SL$.
The following lemma is the key consequence of irreducibility we use in our proof. It is used to show that the Oseledets' distributions of slower-than-maximal growth receive $0$ mass.
    \begin{lem}\label{lem:nonconc on Pminus bundles}
        Let $\hat{\nu}$ be a $P$-invariant probability measure on $\P\hV$.
        Then, $\hat{\nu}(\Qcal)=0$, for every 
         proper, measurable, $P^-$-invariant sub-bundle $\Qcal\subset\P\hV$.
    \end{lem}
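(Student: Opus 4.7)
The plan is to argue by contradiction: assuming $\hat{\nu}(\Qcal) > 0$ for some proper measurable $P^-$-invariant sub-bundle $\Qcal \subsetneq \P\hV$, I aim to produce a proper $\Gamma$-invariant subspace of the representation space $\R^{d+1}$, contradicting irreducibility. A preliminary reduction is to assume $\hat{\nu}$ is $P$-ergodic by taking an ergodic decomposition; since $U \subset P$ acts ergodically on $(\Vcal, \mu_\Vcal)$ by Moore's theorem, $\mu_\Vcal$-almost every $P$-ergodic component of $\hat{\nu}$ projects to $\mu_\Vcal$, and some such component must still charge $\Qcal$.

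The next step is to linearize by passing from the projective sub-bundle $\Qcal$ to a linear one. Let $W_x \subseteq V_x$ denote the linear span in $V_x$ of the cone over the fiber $Q_x \subseteq \P V_x$. The $P^-$-invariance of $\Qcal$ transfers to $P^-$-equivariance of the measurable assignment $x \mapsto W_x$, and the inclusion $\Qcal \subseteq \P W_\bullet$ guarantees $\hat{\nu}(\P W_\bullet) > 0$. Using proximality of the cocycle $B(g_t,\bullet)$ with respect to $\mu_\Vcal$ in conjunction with the $A$-equivariance of $W_\bullet$, an Oseledets argument should show that for $\mu_\Vcal$-almost every $x$, either $W_x$ contains the one-dimensional top Oseledets line $E^+_x$, or $W_x$ lies inside the complementary slow Oseledets sub-bundle. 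In the latter case I would consider the sub-bundle $g_{-t}\Qcal$ for $t\to\infty$, which retains positive $\hat{\nu}$-measure by $A$-invariance of $\hat{\nu}$, and apply the analogous Oseledets argument to the time-reversed cocycle to produce a sub-bundle of the first type; so, without loss of generality, $W_x \supseteq E^+_x$ on a $\mu_\Vcal$-conull set.

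The core obstacle, where I expect the main technical difficulty, is then to upgrade the $P^-$-equivariance of $W_\bullet$ to $\SL$-equivariance. The plan is to exploit the Bruhat-type decomposition $\SL = P^- U$, valid up to a lower-dimensional exceptional subset, in concert with the $U$-invariance of $\hat{\nu}$: since $\hat{\nu}(\P W_\bullet) > 0$, equidistribution of $U$-orbits on $\Vcal$ together with the positivity of $\hat\nu$-mass should allow me to produce a $U$-equivariant measurable enlargement $\widetilde{W}_\bullet$ of $W_\bullet$ that remains a proper linear sub-bundle; the delicate point is ensuring that this enlargement does not destroy $P^-$-equivariance, and the top-Oseledets-line property obtained above is what makes the enlargement canonical enough to survive $P^-$-translation. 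Combining $U$-equivariance and $P^-$-equivariance of $\widetilde{W}_\bullet$ on the open Bruhat cell $P^-U$ yields $\SL$-equivariance almost everywhere.

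The final step is routine: a measurable $\SL$-equivariant proper linear sub-bundle of $\hV = (\SL \times \R^{d+1})/\Gamma$ lifts, via the trivialization on $\SL$, to a $\Gamma$-invariant linear subspace of $\R^{d+1}$, which by irreducibility of $\rho$ must be either $0$ or all of $\R^{d+1}$. The former contradicts positivity of $\hat\nu(\P\widetilde{W}_\bullet)$, and the latter contradicts properness of $\Qcal$ inside $\P\hV$, completing the contradiction.
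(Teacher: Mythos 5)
Your outline identifies the right target (a $\Gamma$-invariant proper subspace, contradicting irreducibility), but the middle of your argument—where you concede the "core obstacle" lies—does not work as proposed, and you've missed the simple trick that the paper uses to get past it.

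The paper's key move, absent from your plan, is to take $\Qcal$ to be \emph{minimal}: among all proper measurable $P^-$-invariant sub-bundles with $\hat\nu(\Qcal)>0$, fix one whose fibers have smallest dimension. After reducing to the $P$-ergodic case (which, via \cite[Prop.~11.8]{EinsiedlerWard}, forces $A$-ergodicity), the $A$-invariance of $\Qcal$ gives $\hat\nu(\Qcal)=1$. Then $U$-invariance of $\hat\nu$ gives $\hat\nu(\Qcal\cap u\Qcal)=1$ for every $u\in U$, and since $\langle P^-, U\rangle = \SL$, in fact $\hat\nu(\Qcal\cap g\Qcal)=1$ for all $g\in\SL$. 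Minimality then upgrades this measure-theoretic intersection statement to the pointwise equality $\Qcal_x = B(g,g^{-1}x)\Qcal_{g^{-1}x}$ for $\mu_\Vcal$-a.e.\ $x$; i.e.\ $\Qcal$ itself is $\SL$-equivariant. By contrast, your plan is to \emph{enlarge} a given $P^-$-equivariant $W_\bullet$ to a $U$-equivariant $\widetilde W_\bullet$ using the Bruhat cell and a top-Oseledets-line normalization, then hope $P^-$-equivariance survives. This is backwards: any nontrivial $U$-equivariant enlargement will typically destroy $P^-$-equivariance, and the forward-time top Oseledets line $E^+_x$ is $P^-$-equivariant but emphatically not $U$-equivariant, so it cannot serve as the "canonical anchor" you want. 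You also invoke proximality, which this lemma does not need (and the paper's proof does not use); proximality is used elsewhere in the surrounding theorem, and its appearance here is a sign the argument has wandered off track.

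Finally, the last step you call "routine" is not: because the $\SL$-equivariance of the fiber map $x\mapsto \Qcal_x$ is only measurable and almost-everywhere, extracting a genuine $\Gamma$-fixed subspace of $\R^{d+1}$ requires Zimmer's cocycle reduction lemma \cite[Lemma~5.2.11]{Zimmer1984} (using that the $\Gamma$-action on the Grassmannian is smooth in the Borel--Serre sense), followed by a Fubini argument over a fundamental domain. This is exactly what the paper does, and it cannot be dismissed as an immediate consequence of the trivialization $\hV\cong(\SL\times\R^{d+1})/\Gamma$.
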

    \begin{proof}
        
        By the ergodic decomposition, it suffices to prove the lemma under the additional assumption that $\hat{\nu}$ is $P$-ergodic.
        Note that $\hat{\nu}$ projects to the $\SL$-invariant measure $\mu_\Vcal$ by $P$-invariance.
        By~\cite[Prop.~11.8]{EinsiedlerWard}, since $\hat{\nu}$ is $P$-ergodic, it is also $A$-ergodic.

        Suppose towards a contradiction that $\hat{\nu}$ does not satisfy the conclusion of the lemma and let $\Qcal$ be a proper $P^-$-invariant sub-bundle with minimal dimensional fibers having $\hat{\nu}(\Qcal)>0$.
        Since $\hat{\nu}$ is $A$-ergodic and $\Qcal$ is $A$-invariant,
        we have $\hat{\nu}(\Qcal)=1$.
        Since $\hat{\nu}$ is also $U$-invariant, it follows that for every $u\in U$, we have that $\hat{\nu}(\Qcal\cap u\Qcal)=1$.
        Hence, since $\Qcal$ is $P^-$ invariant, and $\SL$ is generated by $P^-$ and $U$, we get that
        $\hat{\nu}(\Qcal\cap g\Qcal)=1$ for all $g\in\SL$.

        Let $\set{\hat{\nu}_x}_x$ denote a disintegration of $\hat{\nu}$ with respect to the projection $\pi:\P\hV\r\Vcal$.
        In particular, each $\hat{\nu}_x$ is supported on the fiber $\pi^{-1}(x)$.
        It follows that for every $g\in\SL$, we have that $\hat{\nu}_x(\Qcal\cap g\Qcal)
        = \hat{\nu}_x(\Qcal_x \cap B(g,g^{-1}x)\Qcal_{g^{-1}x}) =1$ for a $\mu_\Vcal$-full measure set of $x\in \Vcal$ (depending on $g$).
        Here, $\Qcal_x$ denotes the fiber of $\Qcal$ over $x$, and $\mu_\Vcal$ is the $\SL$-invariant probability measure on $\Vcal$.
        
        By minimality of $\Qcal$, it follows that for all $g\in\SL$,
        \begin{align}\label{eq:inv subspace}
            \Qcal_x = B(g,g^{-1}x)\Qcal_{g^{-1}x},
            \qquad \text{for } \mu_\Vcal{-a.e. } x.
        \end{align}
        To see that this gives a contradiction to our irreducibility hypothesis, let $k$ is the almost sure constant value of the dimension of the fiber vector space corresponding to $\Qcal_x$.
        Such $k$ exists by ergodicity of $\mu_\Vcal$ and invariance of $\Qcal$.
        Let $\mrm{Gr}_k$ be the Grassmannian of $k$-dimensional subspaces of $\R^{d+1}$.
        Then, fixing a measurable trivialization of $\hV \cong \Vcal\times \R^{d+1}$, we may regard $x\mapsto \Qcal_x$ as a $B(-)$-invariant measurable map $\Vcal \r \mrm{Gr}_k$, where invariance is in the sense of~\eqref{eq:inv subspace}.
        Moreover, we can regard the cocycle $B(-)$ as taking values in the image of $\G$ under the representation, which we continue to denote by $\G$ for simplicity.

        Hence, we may apply Zimmer's cocycle reduction lemma~\cite[Lemma 5.2.11]{Zimmer1984} to the cocycle $B:\Vcal\times \SL\r \G$ and the induced $\G$-action\footnote{This algebraic $\G$-action satisfies the smoothness hypothesis of the cited lemma by a result of Borel-Serre~\cite[Theorem 3.1.3]{Zimmer1984}. Here, smoothness means that $\mrm{Gr}_k/\G$ is countably separated; cf.~\cite[Def.~2.1.9]{Zimmer1984}. The proof of~\cite[Lemma 5.2.11]{Zimmer1984} follows from an application of ergodicity of $\mu_\Vcal$ to the invariant measurable map $x\mapsto \Qcal_x$, viewed as a map to $\mrm{Gr}_k/\G$.} on $\mrm{Gr}_k$ to get a measurable change-of-basis map $\vp:\Vcal \r \G$, and a proper subspace $W\in \mrm{Gr}_k$ such that for all $g\in\SL$
        \begin{align}\label{eq:cocycle reduction}
            \vp(gx)^{-1}B(g,x)\vp(x) \in \mrm{Stab}_\G(W),
        \end{align}
        for $\mu_\Vcal$-a.e. $x\in\Vcal$.

        By Fubini's theorem, we get that for $\mu_\Vcal$-a.e. $x\in\Vcal$, there is a full measure set $G(x)\subseteq \SL$ so that~\eqref{eq:cocycle reduction} holds for all $g\in G(x)$.
        Moreover, since $\G$ is countable, there is a positive measure set $E\subseteq\Vcal$ on which $\vp$ is constant. 
        Let $F\subset G$ be a fundamental domain for $\G$ and let $\tilde{E}\subseteq F$ be a lift of $E$.
        Then, given $x\in E$ with lift $\tilde{x}\in \tilde{E}$, since $G(x)$ has full measure in $\SL$, it follows that $G(x)\cdot \tilde{x}$ intersects the positive measure set $\tilde{E}\g$ for all $\g\in\G$.
        It follows that $\set{B(g,x):g\in G(x), gx\in E}=\G$.
        Thus, by~\eqref{eq:cocycle reduction}, this means that $\G$ fixes the proper subspace $W$, which contradicts our irreducibility assumption.
        \qedhere

    \end{proof}

\subsubsection{Proof of Theorem~\ref{thm:convergence of flag distribution proximal}}

Let $\hat\nu$ we a weak-$\ast$ limit of the measures on the left hand-side of~\eqref{eq:proximal congergence of flags} as $t\to\infty$.
Then, as before $\hat\nu$ is automatically $U$-invariant. Moreover, by Theorem~\ref{thm:A-invariance of flag distribution}, we have that $\hat{\nu}$ is $A$-invariant.
Theorem~\ref{thm:convergence of flag distribution proximal} is an immediate consequence of the following measure classification statement.

\begin{prop}
    The $P$-action on $\P\hV$ is uniquely ergodic.
\end{prop}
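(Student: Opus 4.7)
The plan is to show that every $P$-invariant Borel probability measure $\hat\nu$ on $\P\hV$ is fully determined, namely as the pushforward of $\mu_\Vcal$ under the measurable section $x \mapsto (x, E^+_x)$, where $E^+_x \subset V_x$ denotes the top Lyapunov subspace of the cocycle $B(g_t,-)$ with respect to $\mu_\Vcal$. Such $\hat\nu$ automatically projects to $\mu_\Vcal$, since $\mu_\Vcal$ is $P$-ergodic on $\Vcal$, so the content is to identify its fiber conditionals. The strategy is to combine proximality with Lemma~\ref{lem:nonconc on Pminus bundles} to rule out any mass on the ``slow'' Oseledets sub-bundle, and then to exploit $A$-invariance via a dominated convergence argument to force concentration on the top Lyapunov section.

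Applying Oseledets' theorem to $B(g_t,-)$ with respect to $\mu_\Vcal$, proximality furnishes a measurable $A$-invariant section $x \mapsto E^+_x$ of one-dimensional top Lyapunov subspaces, together with a complementary codimension-one measurable sub-bundle $V^{<\lambda_1}_\bullet$, whose fiber at $x$ consists of vectors $v$ with $\limsup_{t\to\infty} t^{-1}\log\|B(g_t,x)v\|_{g_tx}<\lambda_1$. The key step is to verify that $V^{<\lambda_1}_\bullet$ is in fact $P^-$-invariant. Combining the relation $g_t u^-(s)=u^-(se^{-2t})g_t$ with the cocycle identity gives
\begin{align*}
    B(g_t,u^-(s)x)\,B(u^-(s),x)=B(u^-(se^{-2t}),g_tx)\,B(g_t,x),
\end{align*}
and the boundedness hypothesis~\eqref{eq:bounded cocycle} ensures that $\|B(u^-(se^{-2t}),g_tx)\|_{\mrm{op}}$ remains bounded uniformly in $t\geq 0$, since $|se^{-2t}|\leq|s|$. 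Hence $B(u^-(s),x)v$ has the same sub-maximal growth rate at $u^-(s)x$ as $v$ has at $x$, which yields $U^-$-invariance and, together with the automatic $A$-invariance, full $P^-$-invariance of the sub-bundle (the underlying generic set of $x$ is arranged to be $P^-$-invariant by intersecting with its $P^-_\Q$-translates, using $\SL$-invariance of $\mu_\Vcal$, analogously to the construction in the proof of Proposition~\ref{prop:meas class on suspension}). Since proximality forces $V^{<\lambda_1}_x \subsetneq V_x$ for $\mu_\Vcal$-a.e.\ $x$, Lemma~\ref{lem:nonconc on Pminus bundles} applies to the proper $P^-$-invariant sub-bundle $\P V^{<\lambda_1}_\bullet\subset \P\hV$ and gives $\hat\nu(\P V^{<\lambda_1}_\bullet)=0$.

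Consequently, for $\hat\nu$-almost every $(x,v)$, $v$ has a non-trivial component along $E^+_x$, and the standard attracting property of the top Lyapunov direction under proximality yields $\dist(\overline{B(g_t,x)v},\,E^+_{g_tx})\to 0$ as $t\to\infty$, where $\dist$ is the fiberwise projective metric from~\eqref{eq:proj dist}. Setting $\phi(x,v):=\dist(\bar v,E^+_x)$, the function $\phi$ is bounded on $\P\hV$, while $\phi\circ g_t\to 0$ pointwise $\hat\nu$-a.e., and $\int \phi\circ g_t\,d\hat\nu=\int\phi\,d\hat\nu$ by $A$-invariance. Dominated convergence then forces $\int\phi\,d\hat\nu=0$, so $\hat\nu$ is supported on the section $\{(x,E^+_x): x\in\Vcal\}$, which uniquely determines $\hat\nu$ as the pushforward of $\mu_\Vcal$ under this section. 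The main obstacle I anticipate is the bookkeeping required to verify the precise measurability and $P^-$-invariance of $V^{<\lambda_1}_\bullet$ on a single $P^-$-invariant generic set of full $\mu_\Vcal$-measure; once these Oseledets generic sets are correctly constructed, the remainder of the argument is formal. Irreducibility of the representation enters only indirectly, ensuring that proximality is meaningful and that Lemma~\ref{lem:nonconc on Pminus bundles} has non-trivial content by excluding trivial invariant sub-bundles in the first place.
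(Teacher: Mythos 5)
Your proof is correct and follows the same two-stage skeleton as the paper's: first use Lemma~\ref{lem:nonconc on Pminus bundles} (via $P^-$-invariance of the slow Oseledets sub-bundle, argued exactly as in the paper from $g_t u^-(s)=u^-(se^{-2t})g_t$ and the boundedness hypothesis) to rule out mass on $\P V^{<\lambda_1}_\bullet$, then exploit the simplicity of the top exponent to collapse the fiber conditionals to a point. The difference is in how the second stage is executed. The paper works quantitatively: it fixes $\epsilon>0$, extracts Oseledets-regular sets $F_1,F_2$, derives the exponential projective contraction estimate~\eqref{eq:projective contraction} on their intersection, and then combines $A$-equivariance of the conditionals $\hat\nu_{x}=B(g_t,g_{-t}x)_\ast\hat\nu_{g_{-t}x}$ with Poincar\'e recurrence to conclude that $\hat\nu_x$ has an atom of mass $\geq 1-\sqrt\epsilon$; uniqueness then requires a separate pass applying the same contraction to a pair of measures. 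You instead observe that, once $\hat\nu(\P V^{<\lambda_1}_\bullet)=0$, the pointwise a.e.\ convergence $\dist(\overline{B(g_t,x)v},E^+_{g_tx})\to 0$ is just the standard attracting property of the top Lyapunov line, and then a single dominated-convergence argument using $g_t$-invariance of $\hat\nu$ forces $\int\dist(\bar v,E^+_x)\,d\hat\nu=0$; this simultaneously identifies the conditional as $\delta_{E^+_x}$ and gives uniqueness, since any $P$-invariant measure is the pushforward of $\mu_\Vcal$ under the section $x\mapsto(x,E^+_x)$. The tradeoff is that your route implicitly invokes the part of the multiplicative ergodic theorem controlling the subexponential decay of angles between Lyapunov subspaces to justify the pointwise convergence, whereas the paper's quantitative $F_1,F_2$ argument makes this self-contained. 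One small imprecision worth correcting: irreducibility is not merely ensuring that the hypotheses of Lemma~\ref{lem:nonconc on Pminus bundles} are non-vacuous — it is the actual engine of that lemma's proof via Zimmer's cocycle reduction, so it enters quite directly.
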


\begin{proof}
    
Let $\hat{\nu}$ be a $P$-invariant measure on $\P\hV$.
 Then, $\hat \nu$ projects to Haar measure on $\Vcal$ by $P$-invariance.
  Let $\hat{\nu}_x$ denote the conditional measures of $\hat\nu$ along the fibers of the natural projection $\P\hV\r \Vcal$.
  The proof will follow upon uniquely characterizing the measures $\hat{\nu}_x$.
 By $A$-invariance, we have for $\hat\nu$-almost every $x$ that
    \begin{align}\label{eq:A-invariance of conditionals}
        \hat{\nu}_x = B(g_t,g_{-t}x)_\ast \hat{\nu}_{g_{-t}x} ,
    \end{align}
    where $B(-)$ is the cocycle given by the action on fibers.

 Let $\l_1=\lim_{t\r\infty}(1/t)\log\norm{B(g_{t},x)}_{\mrm{op}}$ be the top Lyapunov exponent for the cocycle over $g_{t}$.
 Define the sub-bundle $\hV^{<\l_1}$ with slower growth, i.e., fibers of $\hV^{<\l_1}$ are given by
 \begin{align*}
     V_x^{<\l_1}=\set{v\in V_x: \limsup_{t\r\infty} \log\norm{B(g_{t},x)v}^{1/t}_{g_tx} <\l_1 }.
 \end{align*}
 Then, arguing as in the proof of Lemma~\ref{lem:subpolynomial divergence}, we have that $\widehat{\Vcal}^{<\l_1}$ is a proper measurable $P^-$-invariant sub-bundle.
 Hence, by Lemma~\ref{lem:nonconc on Pminus bundles}, $\hat{\nu}(\widehat{\Vcal}^{<\l_1})=0$.

The rest of the argument is now similar to the proof of Lemma~\ref{lem:subpolynomial divergence}.
Let $\l_2\leq \l_1$ be the second Lyapunov exponent of the cocycle $B(g_t,-)$.
In particular, we have that
\begin{align*}
    \l_1+\l_2 = \lim_{t\to\infty} \log\norm{\wedge^2 B(g_t,x)}^{1/t}_{\mrm{op}}.
\end{align*}
with value independent of $x$. 
By assumption we have that $\l_2<\l_1$.
Let $0<\e \leq (\l_1-\l_2)/2$.
Then, we can find $t_\e>0$ so that the sets $F_1$ and $F_2$ defined by
\begin{align}\label{eq:proximal F1 and F2}
    F_1 &= \set{y\in\Vcal: \norm{\wedge^2 B(g_t,y)}_{\mrm{op} }\leq e^{(\l_1+\l_2+\e/2)t} \text{ for all } t>t_\e},
    \nonumber \\
    F_2 &= \set{(y,v)\in \P\hV: \norm{B(g_t,y)v}_{g_ty} \geq e^{(\l_1-\e/2) t}\norm{v}_y \text{ for all } t>t_\e },
\end{align}
each has measure $\geq 1-\e/2$ with respect to $\mu_\Vcal$ and $\hat{\nu}$ respectively.
Let $F=\pi^{-1}(F_1)\cap F_2$, where $\pi:\P\hV\r\Vcal$ denotes the natural projection.
In particular, we have that $\hat{\nu}_x(F)>1-\sqrt{\e}$ for a $E\subseteq \Vcal$ of measure $\geq 1-\sqrt{\e}$.
Moreover, given $(y,v),(y,w)\in F$ and $t>t_\e$, we have
\begin{align}\label{eq:projective contraction}
    \dist(B(g_t,y)v, B(g_t,y)w) \leq 
    \frac{\norm{\wedge^2 B(g_t,y)}_{\mrm{op}} \norm{v \wedge w}_y}{\norm{B(g_t,y)v}_{g_ty} \norm{B(g_t,y)w}_{g_ty}}
    \leq e^{-(\l_1-\l_2) t/2} \dist(v,w),
\end{align}
where $\dist$ is the metric on the fibers defined in~\eqref{eq:proj dist}.

Now, note that by Poincar\'e recurrence, we have that $\mu_\Vcal$-a.e. $x$ admits a sequence $t_n\r\infty$ such that $g_{-t_n}x\in E$.
Hence, in view of the identity~\eqref{eq:A-invariance of conditionals}, for $\mu_\Vcal$-a.e. $x$, $\hat{\nu}_x$ has an atom of mass $\geq 1-\sqrt{\e}$.
Taking $\e$ to $0$, it follows that the conditional measure $\hat{\nu}_x$ is a Dirac mass almost surely.

Finally, to show that this property implies uniqueness of $\hat{\nu}$, let $\hat{\nu}^i, i=1,2$ be two $P$-invariant probability measures on $\P\hV$.
Let $\k_i(x)\in V_x$ denote the single point supporting $\hat{\nu}^i_x,i=1,2$.
Let $\e=\min\set{(\l_1-\l_2)/2, 1/2}$ and let $F$ be the set defined below~\eqref{eq:proximal F1 and F2} for this $\e$.
Then, arguing as above, there is $E\subseteq \Vcal$ with $\mu_\Vcal(E)\geq 1-\sqrt{\e}$ so that for all $y\in E$, we have $(y,\k_i(y))\in F$ for $i=1,2$.
Hence, for $\mu_\Vcal$-almost every $x$,  we can apply~\eqref{eq:projective contraction} with $y=g_{-n}x, v=\k_1(g_{-n}x)$, and $w=\k_2(g_{-n}x)$ along a sequence of times $n\r\infty$ such that $g_{-n}x\in E$ to conclude that $\k_1(x)=\k_2(x)$, in view of the equivariance relation~\eqref{eq:A-invariance of conditionals}.
\qedhere
\end{proof}

\bibliography{bibliography}{} 
\bibliographystyle{amsalpha}

\end{document}